\theoremstyle{plain}
\newtheorem{theorem}{Theorem}[section]
\newtheorem{proposition}[theorem]{Proposition}
\newtheorem{lemma}[theorem]{Lemma}
\newtheorem{corollary}[theorem]{Corollary}
\theoremstyle{definition}
\newtheorem{definition}[theorem]{Definition}
\newtheorem{remark}[theorem]{Remark}
\newcommand{\IN}{\ensuremath{\mathbb{N}}}
\newcommand{\FF}{\ensuremath{\mathbb{F}}}
\newcommand{\nset}[1]{\ensuremath{[{#1}]}}
\newcommand{\card}[1]{\ensuremath{\lvert{#1}\rvert}}
\newcommand{\gendefault}{}
\newcommand{\gen}[2][\gendefault]{\ensuremath{\langle{#2}\rangle_{#1}}}
\newcommand{\clonegen}[1]{\gen[]{#1}}
\newcommand{\vect}[1]{\ensuremath{\mathbf{#1}}}
\newcommand{\clAll}{\ensuremath{\mathsf{\Omega}}}
\newcommand{\clTa}[1]{\ensuremath{\mathsf{T}_{#1}}}
\newcommand{\clTo}{\ensuremath{\mathsf{T}_0}}
\newcommand{\clTi}{\ensuremath{\mathsf{T}_1}}
\newcommand{\clTc}{\ensuremath{\mathsf{T}_\mathrm{c}}}
\newcommand{\clM}{\ensuremath{\mathsf{M}}}
\newcommand{\clS}{\ensuremath{\mathsf{S}}}
\newcommand{\clSc}{\ensuremath{\mathsf{S}_\mathrm{c}}}
\newcommand{\clSM}{\ensuremath{\mathsf{SM}}}
\newcommand{\clL}{\ensuremath{\mathsf{L}}}
\newcommand{\clLa}[1]{\ensuremath{\mathsf{L}_{#1}}}
\newcommand{\clLo}{\ensuremath{\mathsf{L}_0}}
\newcommand{\clLi}{\ensuremath{\mathsf{L}_1}}
\newcommand{\clLc}{\ensuremath{\mathsf{L}_\mathrm{c}}}
\newcommand{\clLS}{\ensuremath{\mathsf{LS}}}
\newcommand{\clLambdac}{\ensuremath{\mathsf{\Lambda}_\mathrm{c}}}
\newcommand{\clVc}{\ensuremath{\mathsf{V}_\mathrm{c}}}
\newcommand{\clIstar}{\ensuremath{\mathsf{I}^{*}}}
\newcommand{\clIa}[1]{\ensuremath{\mathsf{I}_{#1}}}
\newcommand{\clIo}{\ensuremath{\mathsf{I}_0}}
\newcommand{\clIi}{\ensuremath{\mathsf{I}_1}}
\newcommand{\clIc}{\ensuremath{\mathsf{I}_\mathrm{c}}}
\newcommand{\clEmpty}{\ensuremath{\mathsf{\emptyset}}}
\newcommand{\clD}[1]{\ensuremath{\mathsf{D}_{#1}}}
\newcommand{\clChar}[1]{\ensuremath{\mathsf{X}_{#1}}}
\newcommand{\clVal}[2]{\ensuremath{\clAll_{\ifthenelse{\equal{#1}{}}{\mathord{*}}{#1}\ifthenelse{\equal{#2}{}}{\mathord{*}}{#2}}}}
\newcommand{\clEq}{\ensuremath{\clAll_{=}}}
\newcommand{\clNeq}{\ensuremath{\clAll_{\neq}}}
\newcommand{\clCon}[1]{\ensuremath{\clVal{#1}{}}}
\newcommand{\clYksi}[1]{\ensuremath{\clVal{}{#1}}}
\newcommand{\clBoth}[2]{\ensuremath{\clVal{#1}{#2}}}
\newcommand{\clEven}{\ensuremath{\clEq}}
\newcommand{\clOdd}{\ensuremath{\clNeq}}
\newcommand{\ParRel}{\ensuremath{\approx}}
\newcommand{\clParity}[1]{\ensuremath{\ifthenelse{\equal{#1}{0}}{\clEven}{\ifthenelse{\equal{#1}{1}}{\clOdd}{\ifthenelse{\equal{#1}{a}}{\clAll_{\ParRel}}{\marginpar{???}}}}}}
\newcommand{\QuantifyParRel}{\ensuremath{\mathord{\ParRel} \in \{\mathord{=}, \mathord{\neq}\}}}
\newcommand{\id}{\ensuremath{\mathrm{id}}}
\newcommand{\cf}[2]{\ensuremath{\mathrm{c}^{({#1})}_{#2}}}
\newcommand{\GF}{\ensuremath{\mathrm{GF}}}
\newcommand{\pr}{\ensuremath{\mathrm{pr}}}
\newcommand{\Char}{\ensuremath{\mathrm{ch}}}
\newcommand{\monomials}[1]{\ensuremath{M_{#1}}}
\newcommand{\arity}[1]{\ensuremath{\mathrm{ar}({#1})}}
\newcommand{\parity}[1]{\ensuremath{\mathrm{par}({#1})}}
\newcommand{\charrank}[1]{\ensuremath{\chi({#1})}}
\newcommand{\symmdiff}{\bigtriangleup}
\newcommand{\monster}[1]{\ensuremath{W_{#1}}}
\newcommand{\closys}[1]{\ensuremath{\mathcal{L}_{#1}}}
\newcommand{\clProj}[1]{\ensuremath{\mathsf{J}_{#1}}}
\begin{document}
\title[Stability of Boolean function classes]{Stability of Boolean function classes with respect to clones of linear functions}

\author{Miguel Couceiro}
\address[M. Couceiro]%
   {Universit\'e de Lorraine, CNRS, LORIA \\
    F-54000 Nancy \\
    France}

\author{Erkko Lehtonen}
\address[E. Lehtonen]%
   {Centro de Matem\'atica e Aplica\c{c}\~oes \\
    Faculdade de Ci\^encias e Tecnologia \\
    Universidade Nova de Lisboa \\
    Quinta da Torre \\
    2829-516 Caparica \\
    Portugal}

\dedicatory{This paper is dedicated to Maurice Pouzet to whom we are deeply thankful for his guidance, friendship, knowledgeable support, and for being always a source of great motivation and inspiration.}

\date{\today}

\begin{abstract}
We consider classes of Boolean functions stable under compositions both from the right and from the left with clones.
Motivated by the question how many properties of Boolean functions can be defined by means of linear equations,
we focus on stability under compositions with the clone of linear idempotent functions.
It follows from a result by Sparks that there are countably many such linearly definable classes of Boolean functions.
In this paper, we refine this result by completely describing these classes.
This work is tightly related with the theory of function minors, stable classes, clonoids, and hereditary classes, topics that have been widely investigated in recent years by several authors including Maurice Pouzet and his coauthors.
\end{abstract}

\maketitle


\section{Introduction}

This paper is a study of classes of functions of several arguments from a set $A$ to a set $B$ that are closed under composition from the right with a clone $C_1$ on $A$ and under composition from the left with a clone $C_2$ on $B$, in brief, \emph{$(C_1,C_2)$\hyp{}stable} classes of functions (see \cite{CouFol-2009}).
Special instances of the notion of $(C_1,C_2)$\hyp{}stability appear in the literature.
For example, if both $C_1$ and $C_2$ are clones of projections on the respective sets, then we get \emph{minor\hyp{}closed classes} or \emph{minions} or \emph{equational classes} (see Pippenger~\cite{Pippenger}, Ekin et al.\ \cite{EFHH}).
If $C_1$ is the clone of projections and $C_2$ is the clone of an algebra $\mathbf{B}$, then we get \emph{clonoids} with source set $A$ and target algebra $\mathbf{B}$ (see Aichinger and Mayr \cite{AicMay-2016}).

If both $C_1$ and $C_2$ are equal to the clone $\clLc$ of idempotent linear functions on $\{0,1\}$,
then the $(C_1,C_2)$\hyp{}stable classes are exactly the classes of Boolean functions definable by linear equations (see \cite{CouFol-2004}).
It was already observed in \cite{CouFol-2004} that there are infinitely many such linearly definable classes, but at the time it remained an open question whether there are countably or uncountably many such classes and exactly what these classes are.
It follows from the results of Aichinger and Mayr~\cite{AicMay-2016} that for all clones $C$, the number of $(\clLc,C)$\hyp{}stable classes is at most $\aleph_0$.
Indeed, there is no infinite descending chain of $(\clLc,\clIc)$\hyp{}stable classes by \cite{AicMay-2016}; thus each such class is determined by a single relation pair $(R,S)$.
One of the main goals of the current paper is to provide an explicit description of the countably infinite number of $(\clLc,\clLc)$\hyp{}stable classes (in brief, \emph{$\clLc$\hyp{}stable} classes).

More generally, we would like to describe $(C_1,C_2)$\hyp{}stable classes.
This problem seems unfeasible in full generality, since there are uncountably many clones on sets with at least three elements (see Yanov and Muchnik \cite{YanMuc-1959}).
However, Post~\cite{Post} showed that there are countably many clones on the two\hyp{}element set and he provided a classification thereof, currently known as Post's lattice, thus making the description of $(C_1,C_2)$\hyp{}stable classes of Boolean functions potentially feasible.
This is the research program we are currently setting forth.
A complete description still eludes us but, motivated by the framework of \cite{CouFol-2004,CouFol-2007} dealing with linear definability of function classes, we address stability with respect to those clones that contain the clone $\clLc$ of idempotent linear functions.

The paper is organized as follows.
\begin{itemize}
\item Section~\ref{sec:preliminaries}: We provide the basic definitions and preliminary results that are needed in the sequel.
\item Section~\ref{sec:stab}: We establish some auxiliary tools for studying $(C_1,C_2)$\hyp{}stability.
\item Section~\ref{sec:ff}: We make a little diversion to clones on finite fields, and we describe the $\clL$\hyp{}stable classes, where $\clL$ denotes the clone of all linear functions on the finite field $\FF_p$ of prime order $p$. 
\item Section~\ref{sec:Bf}: We define various properties of Boolean functions that are needed for describing the $\clLc$\hyp{}stable classes.
\item Section~\ref{sec:Lc-stable}: We present our main result: an explicit description of the $\clLc$\hyp{}stable classes of Boolean functions.
The proof has two parts.
First we show that the listed classes are $\clLc$\hyp{}stable; this is straightforward verification.
The more difficult part of the proof is to show that there are no further $\clLc$\hyp{}stable classes.
\item Section~\ref{sec:C1C2}: With the help of the result on $\clLc$\hyp{}stable classes, we obtain with little effort also a description of $(C_1,C_2)$\hyp{}stable classes for clones $C_1$ and $C_2$, where $C_1$ is arbitrary and $\clLc \subseteq C_2$.
\item Section~\ref{sec:remarks}: We make some concluding remarks and indicate directions for future research.
\end{itemize}

The main results of this paper were presented without proofs in the 1st International Conference on Algebras, Graphs and Ordered Sets (ALGOS~2020) \cite{CL-ALGOS2020}.
The reader should be cautious about the fact that some notation and terminology have been slightly changed from the conference paper.


\section{Preliminaries}
\label{sec:preliminaries}

The symbols $\IN$ and $\IN_{+}$ denote the set of all nonnegative integers and the set of all positive integers, respectively.
For any $n \in \IN$, the symbol $\nset{n}$ denotes the set $\{ \, i \in \IN \mid 1 \leq i \leq n \, \}$.

\begin{definition}
Let $A$ and $B$ be sets.
A mapping of the form $f \colon A^n \to B$ for some $n \in \IN_{+}$ is called a \emph{function of several arguments from $A$ to $B$} (or simply a \emph{function}).
The number $n$ is called the \emph{arity} of $f$ and denoted by $\arity{f}$.
If $A = B$, then such a function is called an \emph{operation on $A$.}
We denote by $\mathcal{F}_{AB}$ and $\mathcal{O}_A$ the set of all functions of several arguments from $A$ to $B$ and the set of all operations on $A$, respectively.
For any $n \in \IN_{+}$, we denote by $\mathcal{F}_{AB}^{(n)}$ the set of all $n$\hyp{}ary functions in $\mathcal{F}_{AB}$, and for any $C \subseteq \mathcal{F}_{AB}$, we let $C^{(n)} := C \cap \mathcal{F}_{AB}^{(n)}$ and call it the \emph{$n$\hyp{}ary part} of $C$.
\end{definition}

\begin{definition}
For $b \in B$ and $n \in \IN$, the $n$\hyp{}ary \emph{constant function} $\cf{n}{b} \colon A^n \to B$ is given by the rule $(a_1, \dots, a_n) \mapsto b$ for all $a_1, \dots, a_n \in A$.
In the case when $A = B$, for $n \in \IN$ and $i \in \nset{n}$, the $i$\hyp{}th $n$\hyp{}ary \emph{projection} $\pr_i^{(n)} \colon A^n \to A$ is given by the rule $(a_1, \dots, a_n) \mapsto a_i$ for all $a_1, \dots, a_n \in A$.
\end{definition}

\begin{definition}
Let $f \colon A^n \to B$ and $i \in \nset{n}$.
The $i$\hyp{}th argument is \emph{essential} in $f$ if there exist $a_1, \dots, a_n, a'_i \in A$ such that
\[
f(a_1, \dots, a_n) \neq f(a_1, \dots, a_{i-1}, a'_i, a_{i+1}, \dots, a_n).
\]
An argument that is not essential is \emph{fictitious.}
The \emph{essential arity} of $f$ is the number of its essential arguments.
\end{definition}

\begin{definition}
Let $f \colon B^n \to C$ and $g_1, \dots, g_n \colon A^m \to B$.
The \emph{composition} of $f$ with $g_1, \dots, g_n$ is the function $f(g_1, \dots, g_n) \colon A^m \to C$ given by the rule
\[
f(g_1, \dots, g_n)(\vect{a}) :=
f(g_1(\vect{a}), \dots, g_n(\vect{a}))
\]
for all $\vect{a} \in A^m$.
The function $f$ is called the \emph{outer function} and $g_1, \dots, g_n$ are called the \emph{inner functions} of the composition.
\end{definition}

\begin{definition}
\label{def:minor}
Let $f \colon A^n \to B$ and $\sigma \colon \nset{n} \to \nset{m}$.
Define the function $f_\sigma \colon A^m \to B$ by the rule
\[
f_\sigma(a_1, \dots, a_m) = f(a_{\sigma(1)}, \dots, a_{\sigma(n)}),
\]
for all $a_1, \dots, a_m \in A$.
Such a function $f_\sigma$ is called a \emph{minor} of $f$, formed via the \emph{minor formation map} $\sigma$.
Intuitively, minors of $f$ are all those functions that can be obtained from $f$ by manipulation of its arguments: permutation of arguments, introduction of fictitious arguments, identification of arguments.
It is clear from the definition that the minor $f_\sigma$ can be obtained as a composition of $f$ with $m$\hyp{}ary projections on $A$:
\[
f_\sigma = f(\pr_{\sigma(1)}^{(m)}, \dots, \pr_{\sigma(n)}^{(m)}).
\]

An important special case of minors is the identification of a pair of arguments.
This is obtained with minor formation maps of the following form: for $i, j \in \nset{n}$ with $i < j$, let $\sigma_{ij} \colon \nset{n} \to \nset{n-1}$ be given by
\[
\sigma_{ij}(m) =
\begin{cases}
m, & \text{if $m < j$,} \\
i, & \text{if $m = j$,} \\
m-1, & \text{if $m > j$.}
\end{cases}
\]
We call such a map $\sigma_{ij}$ an \emph{identification map,} and
we write $f_{ij}$ for $f_{\sigma_{ij}}$.
More explicitly,
\[
f_{ij}(a_1, \dots, a_{n-1}) = f(a_1, \dots, a_i, \dots, a_{j-1}, a_i, a_j, \dots, a_{n-1}).
\] 

We write $f \leq g$ if $f$ is a minor of $g$.
The minor relation $\leq$ is a quasiorder (a reflexive and transitive relation) on $\mathcal{F}_{AB}$, and it induces an equivalence relation $\equiv$ on $\mathcal{F}_{AB}$ and a partial order on the quotient $\mathcal{F}_{AB} / {\equiv}$ in the usual way: $f \equiv g$ if $f \leq g$ and $g \leq f$, and $f / {\equiv} \leq g / {\equiv}$ if $f \leq g$.
\end{definition}

The effect of successive formations of minors is captured by the composition of minor\hyp{}forming maps.

\begin{lemma}
\label{lem:minor-composition}
Let $f \colon A^n \to B$, $\sigma \colon \nset{n} \to \nset{m}$, and $\tau \colon \nset{m} \to \nset{\ell}$.
Then $(f_\sigma)_\tau = f_{\tau \circ \sigma}$.
\end{lemma}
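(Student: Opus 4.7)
The plan is to verify the identity pointwise by unfolding Definition~\ref{def:minor} twice. I would fix an arbitrary tuple $(a_1, \dots, a_\ell) \in A^\ell$ and evaluate each side of the claimed equality on this input.

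For the left\hyp{}hand side, a first application of the minor rule to the outer formation map $\tau$ gives $(f_\sigma)_\tau(a_1, \dots, a_\ell) = f_\sigma(a_{\tau(1)}, \dots, a_{\tau(m)})$. Introducing the shorthand $b_i := a_{\tau(i)}$ for $i \in \nset{m}$ and applying the minor rule a second time, now to $f_\sigma$ with the map $\sigma$, I would rewrite this as $f(b_{\sigma(1)}, \dots, b_{\sigma(n)}) = f(a_{\tau(\sigma(1))}, \dots, a_{\tau(\sigma(n))})$. For the right\hyp{}hand side, one direct application of the definition yields $f_{\tau \circ \sigma}(a_1, \dots, a_\ell) = f(a_{(\tau \circ \sigma)(1)}, \dots, a_{(\tau \circ \sigma)(n)})$, which by definition of function composition is exactly $f(a_{\tau(\sigma(1))}, \dots, a_{\tau(\sigma(n))})$. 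The two expressions coincide, so the functions agree on every input.

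There is no real technical obstacle here; the proof is a bookkeeping check and could reasonably be presented in a single line. The only subtlety worth flagging in the write\hyp{}up is that Definition~\ref{def:minor} parametrizes minors in a slightly unusual way: $\sigma \colon \nset{n} \to \nset{m}$ sends an argument index of $f$ to the argument index of $f_\sigma$ that gets substituted there, so two successive minor formations via $\sigma$ and then $\tau$ compose as $\tau \circ \sigma$ rather than the reverse. Keeping this direction straight is the one place where a careless reader might slip.
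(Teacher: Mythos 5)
Your proposal is correct and coincides with the paper's own proof: both verify the identity pointwise by unfolding Definition~\ref{def:minor} twice and observing that $f(a_{\tau(\sigma(1))}, \dots, a_{\tau(\sigma(n))})$ is exactly $f_{\tau\circ\sigma}(a_1,\dots,a_\ell)$. The remark about the direction of composition is a fair reading aid but does not change the argument.
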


\begin{proof}
For all $a_1, \dots, a_\ell \in A$, we have
\begin{align*}
(f_\sigma)_\tau(a_1, \dots, a_\ell)
&
= (f_\sigma)(a_{\tau(1)}, \dots, a_{\tau(m)})
= f(a_{\tau(\sigma(1))}, \dots, a_{\tau(\sigma(n))})
\\ &
= f(a_{(\tau \circ \sigma)(1)}, \dots, a_{(\tau \circ \sigma)(n)})
= f_{\tau \circ \sigma}(a_1, \dots, a_\ell).
\qedhere
\end{align*}
\end{proof}

\begin{remark}
\label{rem:minor}
It is well known that any function can be decomposed into a surjection and an injection.
This obviously holds for minor formation maps $\sigma \colon \nset{n} \to \nset{m}$; we obtain $\sigma = \rho \circ \tau$ where $\tau \colon \nset{n} \to \nset{\ell}$ is surjective and $\rho \colon \nset{\ell} \to \nset{m}$ is injective.
Moreover, as explained in \cite[Section~2.2]{Lehtonen:habilitation},
we can choose the surjective map $\tau$ so that it is a composition of a number of identification maps: $\tau = \sigma_{i_k j_k} \circ \dots \circ \sigma_{i_1 j_1}$ (we regard the empty composition as the identity map on $\nset{\ell}$).

Intuitively, this means that any minor of a function $f \colon A^n \to B$ can be formed by first successively identifying pairs of arguments, and then introducing fictitious arguments and permuting arguments.
\end{remark}

Composition of functions satisfies the so\hyp{}called superassociative law.
Consequently, formation of minors commutes with composition.

\begin{lemma}
\label{lem:comp-minors}
Let $f \colon C^n \to D$, $g_1, \dots, g_n \colon B^m \to C$, and $h_1, \dots, h_m \colon A^\ell \to B$.
Then
\[
(f(g_1, \dots, g_n))(h_1, \dots, h_m) = f(g_1(h_1, \dots, h_m), \dots, g_n(h_1, \dots, h_m)).
\]
Consequently, \hspace{-0.55pt}for any $\sigma \colon \nset{\ell} \to \nset{m}$, \hspace{-0.55pt}we have $(f(g_1, \dots, g_n))_\sigma = f((g_1)_\sigma, \dots, (g_n)_\sigma))$.
\end{lemma}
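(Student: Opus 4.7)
The plan is to prove both assertions by direct unwinding of the definition of composition. The first equation is the superassociative law, and the second follows formally from it together with the observation that minor formation is a particular case of composition with projections.

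For the first assertion, I would fix an arbitrary tuple $\vect{a} \in A^\ell$ and evaluate both sides pointwise. Using the definition of composition twice on the left-hand side, $(f(g_1, \dots, g_n))(h_1, \dots, h_m)(\vect{a})$ equals $(f(g_1, \dots, g_n))(h_1(\vect{a}), \dots, h_m(\vect{a}))$, which in turn equals $f\bigl(g_1(h_1(\vect{a}), \dots, h_m(\vect{a})), \dots, g_n(h_1(\vect{a}), \dots, h_m(\vect{a}))\bigr)$. Unfolding the right-hand side by the same definition yields exactly the same expression, so the two functions agree on every $\vect{a}$.

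For the second assertion, I would invoke the identity $f_\sigma = f(\pr_{\sigma(1)}^{(m)}, \dots, \pr_{\sigma(n)}^{(m)})$ from Definition~\ref{def:minor}. Applied to the function $f(g_1, \dots, g_n)$, this gives
\[
(f(g_1, \dots, g_n))_\sigma = (f(g_1, \dots, g_n))(\pr_{\sigma(1)}^{(\ell)}, \dots, \pr_{\sigma(m)}^{(\ell)}).
\]
Then I would apply the superassociative law just established, with $h_i = \pr_{\sigma(i)}^{(\ell)}$, to push the composition inside, obtaining
\[
f\bigl(g_1(\pr_{\sigma(1)}^{(\ell)}, \dots, \pr_{\sigma(m)}^{(\ell)}), \dots, g_n(\pr_{\sigma(1)}^{(\ell)}, \dots, \pr_{\sigma(m)}^{(\ell)})\bigr).
\]
Recognizing each inner expression $g_i(\pr_{\sigma(1)}^{(\ell)}, \dots, \pr_{\sigma(m)}^{(\ell)})$ as $(g_i)_\sigma$, again by the same identity in Definition~\ref{def:minor}, finishes the argument.

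There is really no obstacle here beyond bookkeeping; the only thing one must take care of is matching arities correctly (the projections used for $\sigma$ must be $\ell$-ary, not $m$-ary) and keeping track of which tuple of arguments is being fed to which function at each stage. Both displays can be given as short \texttt{align*} computations with no case analysis or inductive setup needed.
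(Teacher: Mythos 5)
Your proof is correct and coincides with the paper's own argument: the superassociative law is verified by the same pointwise unwinding of the definition of composition, and the minor statement is obtained exactly as in the paper by substituting $h_i := \pr^{(\ell)}_{\sigma(i)}$ and applying the first part. Your remark about using $\ell$-ary rather than $m$-ary projections is precisely the bookkeeping point the paper's one-line deduction relies on.
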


\begin{proof}
For any $\vect{a} \in A^\ell$, we have
\begin{align*}
&
(f(g_1, \dots, g_n))(h_1, \dots, h_m)(\vect{a})
= (f(g_1, \dots, g_n))(h_1(\vect{a}), \dots, h_m(\vect{a}))
\\ &
= f(g_1(h_1(\vect{a}), \dots, h_m(\vect{a})), \dots, g_n(h_1(\vect{a}), \dots, h_m(\vect{a})))
\\ &
= f(g_1(h_1, \dots, h_m)(\vect{a}), \dots, g_n(h_1, \dots, h_m)(\vect{a}))
\\ &
= f(g_1(h_1, \dots, h_m), \dots, g_n(h_1, \dots, h_m))(\vect{a}).
\end{align*}
The statement about minors follows by taking $h_i := \pr^{(\ell)}_{\sigma(i)}$, $1 \leq i \leq m$.
\end{proof}

The notion of functional composition extends naturally to classes of functions.

\begin{definition}
Let $C \subseteq \mathcal{F}_{BC}$ and $K \subseteq \mathcal{F}_{AB}$.
The \emph{composition} of $C$ with $K$ is defined as
\[
CK := \{ \, f(g_1, \dots, g_n) \mid f \in C^{(n)}, \, g_1, \dots, g_n \in K^{(m)}, \, n, m \in \IN_{+} \, \}.
\]
\end{definition}

\begin{remark}
\label{rem:comp-monot}
It follows immediately from the definition of function class composition that if $C, C' \subseteq \mathcal{F}_{BC}$ and $K, K' \subseteq \mathcal{F}_{AB}$ satisfy $C \subseteq C'$ and $K \subseteq K'$, then $CK \subseteq C'K'$.
\end{remark}

\begin{lemma}
\label{lem:union-left}
For any $C_1, C_2 \subseteq \mathcal{F}_{BC}$, $K \subseteq \mathcal{F}_{AB}$, it holds that $(C_1 \cap C_2) K \subseteq C_1 K \cap C_2 K$ and $(C_1 \cup C_2) K = C_1 K \cup C_2 K$.
\end{lemma}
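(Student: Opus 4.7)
The plan is to exploit the monotonicity of class composition noted in Remark~\ref{rem:comp-monot} for both containments, together with one direct unfolding of the definition of $CK$ for the nontrivial inclusion in the second identity.

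For the first claim, I would observe that $C_1 \cap C_2 \subseteq C_1$ and $C_1 \cap C_2 \subseteq C_2$. Applying Remark~\ref{rem:comp-monot} with the same inner class $K$ on the right, we obtain $(C_1 \cap C_2)K \subseteq C_1 K$ and $(C_1 \cap C_2) K \subseteq C_2 K$, and these two inclusions immediately give $(C_1 \cap C_2) K \subseteq C_1 K \cap C_2 K$.

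For the second claim, I would prove the two inclusions separately. The inclusion $C_1 K \cup C_2 K \subseteq (C_1 \cup C_2) K$ again follows from Remark~\ref{rem:comp-monot}: since $C_i \subseteq C_1 \cup C_2$ for $i = 1, 2$, we have $C_i K \subseteq (C_1 \cup C_2) K$, and taking the union yields the desired containment. For the reverse inclusion, I would simply unfold the definition: any $h \in (C_1 \cup C_2) K$ has the form $h = f(g_1, \dots, g_n)$ for some $f \in (C_1 \cup C_2)^{(n)}$ and some $g_1, \dots, g_n \in K^{(m)}$. Since $(C_1 \cup C_2)^{(n)} = C_1^{(n)} \cup C_2^{(n)}$, the outer function $f$ belongs to either $C_1$ or $C_2$, and hence $h$ belongs to $C_1 K$ or to $C_2 K$, so $h \in C_1 K \cup C_2 K$.

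There is no serious obstacle here; the only point requiring any care is the observation that taking the $n$\hyp{}ary part commutes with union, which justifies the case split on whether $f$ lies in $C_1$ or $C_2$. The analogous equality fails for intersection in general, which is why the first statement is only an inclusion and not an equality — an inner tuple $(g_1, \dots, g_n)$ witnessing membership in $C_1 K$ need not coincide with one witnessing membership in $C_2 K$, so there is no mechanism to combine them into a single witness for $(C_1 \cap C_2) K$.
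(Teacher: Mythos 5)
Your proof is correct and takes essentially the same approach as the paper's: the easy inclusions follow from the monotonicity of class composition, and the inclusion $(C_1 \cup C_2) K \subseteq C_1 K \cup C_2 K$ is obtained by unfolding the definition and splitting on whether the outer function lies in $C_1$ or in $C_2$. Your closing observation about why the intersection statement is only an inclusion matches the counterexample the paper gives in the remark immediately after the lemma.
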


\begin{proof}
We clearly have
\begin{align*}
& (C_1 \cap C_2) K = (C_1 \cap C_2) K \cap (C_1 \cap C_2) K \subseteq C_1 K \cap C_2 K,
\\ &
C_1 K \cup C_2 K \subseteq (C_1 \cup C_2) K \cup (C_1 \cup C_2) K = (C_1 \cup C_2) K.
\end{align*}
In order to prove the inclusion $(C_1 \cup C_2) K \subseteq C_1 K \cup C_2 K$, let $h \in (C_1 \cup C_2) K$.
Then $h = f(g_1, \dots, g_n)$ for some $f \in C_1 \cup C_2$ and $g_1, \dots, g_2 \in K$.
Since $f \in C_1$ or $f \in C_2$, we have that $f(g_1, \dots, g_n)$ belongs to $C_1 K$ or $C_2 K$; therefore $h = f(g_1, \dots, g_n) \in C_1 K \cup C_2 K$.
\end{proof}

\begin{remark}
The inclusion $C_1 K \cap C_2 K \subseteq (C_1 \cap C_2) K$ does not hold in general.
For a counterexample, let $C_1 := \{\pi_1^{(1)}\}$, $C_2 := \{c_0^{(1)}\}$, $K := \{c_0^{(1)}\}$, subsets of $\mathcal{O}_{\{0,1\}}$, where $c_0^{(1)}$ denotes the unary constant function taking value $0$.
Then clearly $C_1 K = C_2 K = \{c_0^{(1)}\}$, so $C_1 K \cap C_2 K = \{c_0^{(1)}\}$, but $(C_1 \cap C_2) K = \emptyset$ because $C_1 \cap C_2 = \emptyset$.
\end{remark}

Recall that a map $c \colon \mathcal{P}(A) \to \mathcal{P}(A)$ (here $\mathcal{P}(A)$ stands for the power set of $A$) is called a \emph{closure operator} if for all $X, Y \subseteq A$, we have $X \subseteq c(X)$ (extensivity), $X \subseteq Y$ implies $c(X) \subseteq c(Y)$ (monotonicity), and $c(c(X)) = c(X)$ (idempotence).
The image $c(X)$ of $X$ is called the \emph{closure} of $X$; we also say that $c(X)$ is \emph{generated} by $X$.
The \emph{closed sets,} i.e., sets of the form $c(X)$, form a complete lattice in which $A$ is the greatest element and the meet operation (of an arbitrary family) coincides with the intersection.
A collection of subsets of $A$ with these properties is called a \emph{closure system} on $A$.
It is well known that the image of every closure operator is a closure system, and, conversely, for every closure system $S$, there exists a closure operator whose closed sets are precisely the elements of $S$.

\begin{definition}
\label{def:clone}
A class $C \subseteq \mathcal{O}_A$ is called a \emph{clone} on $A$ if $C C \subseteq C$ and $C$ contains all projections.
The set of all clones on $A$ is a closure system in which the greatest and least elements are the clone $\mathcal{O}_A$ of all operations on $A$ and the clone of all projections on $A$, respectively.
For any $K \subseteq \mathcal{O}_A$, we denote by $\clonegen{K}$ the clone generated by $K$, i.e., the smallest clone on $A$ containing $K$.
\end{definition}

\begin{definition}
Let $K \subseteq \mathcal{F}_{AB}$, $C_1 \subseteq \mathcal{O}_A$, and $C_2 \subseteq \mathcal{O}_B$.
We say that $K$ is \emph{stable under right composition} with $C_1$ if $K C_1 \subseteq K$,
and that $K$ is \emph{stable under left composition} with $C_2$ is $C_2 K \subseteq K$.
If both $K C_1 \subseteq K$ and $C_2 K \subseteq K$ hold, we say that $K$ is \emph{$(C_1,C_2)$\hyp{}stable.}
If $K, C \subseteq \mathcal{O}_A$ and $K$ is $(C,C)$\hyp{}stable, we say that $K$ is \emph{$C$\hyp{}stable.}

The set of all $(C_1,C_2)$\hyp{}stable subsets of $\mathcal{F}_{AB}$ constitutes a closure system, and for any $K \subseteq \mathcal{F}_{AB}$, we denote by $\gen[(C_1,C_2)]{K}$ the \emph{$(C_1,C_2)$\hyp{}closure} of $K$, i.e., the smallest $(C_1,C_2)$\hyp{}stable class containing $K$.
We also write $\gen[C]{K}$ for $\gen[(C,C)]{K}$ and call it the \emph{$C$\hyp{}closure} of $K$.
\end{definition}

\begin{remark}
A set $K \subseteq \mathcal{F}_{AB}$ is minor\hyp{}closed if and only if it is stable under right composition with the set of all projections on $A$.
Every clone is minor\hyp{}closed.
A clone $C$ is $(C,C)$\hyp{}stable.
\end{remark}

\begin{lemma}
\label{lem:stable-impl-stable}
Let $C_1$ and $C'_1$ be clones on $A$ and $C_2$ and $C'_2$ clones on $B$ such that $C_1 \subseteq C'_1$ and $C_2 \subseteq C'_2$.
Then for every $K \subseteq \mathcal{F}_{AB}$, it holds that if $K$ is $(C'_1,C'_2)$\hyp{}stable then $K$ is $(C_1,C_2)$\hyp{}stable.
\end{lemma}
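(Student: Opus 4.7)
The plan is to derive both stability conditions $K C_1 \subseteq K$ and $C_2 K \subseteq K$ directly from the hypotheses $K C'_1 \subseteq K$ and $C'_2 K \subseteq K$, using only the monotonicity of function class composition recorded in Remark~\ref{rem:comp-monot}.

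First I would unpack the hypothesis: $K$ being $(C'_1, C'_2)$\hyp{}stable means, by definition, that $K C'_1 \subseteq K$ and $C'_2 K \subseteq K$. Next, since $C_1 \subseteq C'_1$, Remark~\ref{rem:comp-monot} applied with the left-hand class $K$ fixed gives $K C_1 \subseteq K C'_1$. Chaining this with the hypothesis yields $K C_1 \subseteq K C'_1 \subseteq K$. The symmetric argument, using $C_2 \subseteq C'_2$ with $K$ fixed on the right, gives $C_2 K \subseteq C'_2 K \subseteq K$. Together these two inclusions are exactly the definition of $K$ being $(C_1, C_2)$\hyp{}stable.

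There is no real obstacle here; the statement is essentially the observation that the closure system of $(C_1, C_2)$\hyp{}stable sets grows as the pair $(C_1, C_2)$ shrinks, and the entire content of the proof is a double application of the monotonicity remark. The only thing to be slightly careful about is not to conflate $K C_1 \subseteq K$ with $C_1 K \subseteq K$, since $K \subseteq \mathcal{F}_{AB}$ while $C_1 \subseteq \mathcal{O}_A$, so $C_1 K$ is typically not even defined here; right composition with $C_1$ is what is at stake.
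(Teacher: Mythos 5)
Your proof is correct and coincides with the paper's own argument: both apply the monotonicity of function class composition (Remark~\ref{rem:comp-monot}) to obtain $K C_1 \subseteq K C'_1 \subseteq K$ and $C_2 K \subseteq C'_2 K \subseteq K$. Nothing further is needed.
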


\begin{proof}
Assume that $K$ is $(C'_1,C'_2)$\hyp{}stable.
Then, in view of Remark~\ref{rem:comp-monot}, we have $K C_1 \subseteq K C'_1 \subseteq K$ and $C_2 K \subseteq C'_2 K \subseteq K$, i.e., $K$ is $(C_1,C_2)$\hyp{}stable.
\end{proof}


\section{Stability and generators}
\label{sec:stab}

The task of verifying whether a function class is stable under right or left compositions with certain clones may appear complicated because the defining conditions involve compositions with arbitrary members of each clone.
We now develop helpful tools that simplify this task.

For right stability, it is enough to consider closure under minors and certain simple compositions involving only generators of the clone.
In order to formalize this, let us consider the elementary superposition operations $\zeta$ (cyclic shift of arguments), $\tau$ (transposition of the first two arguments), $\Delta$ (identification of arguments or diagonalization), $\nabla$ (introduction of a fictitious argument or cylindrification), and $\ast$ (composition) defined by Mal'cev~\cite{Malcev} (see also \cite[Section~II.1.2]{Lau}).
The algebra $(\mathcal{O}_A; \zeta, \tau, \Delta, \nabla, {\ast})$ is called the \emph{iterative function algebra} on $A$, and its subuniverses are called \emph{closed classes.} Closed classes containing all projections are precisely the clones on $A$.

Let $F \subseteq \mathcal{O}_A$ and $f \in \mathcal{O}_A$.
We say that $f$ is a \emph{superposition} of $F$ if $f$ can be obtained from the members of $F$ by a finite number of applications of the operations $\zeta$, $\tau$, $\Delta$, $\nabla$, $\ast$.

\begin{lemma}
\label{lem:composition-superposition}
For any $f \in \mathcal{O}_A^{(n)}$ and $g_1, \dots, g_n \in \mathcal{O}_A^{(m)}$\hspace{-2.1pt},
\hspace{-1pt}the composition $f(g_1, \dots, g_n)$ is a superposition of $\{f, g_1, \dots, g_n\}$.
\end{lemma}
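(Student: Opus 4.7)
The plan is to construct $f(g_1, \dots, g_n)$ explicitly as a sequence of applications of Mal'cev's operations to the members of $\{f, g_1, \dots, g_n\}$. A preliminary observation I will use freely is that $\zeta$ and $\tau$ generate the full symmetric group on the arguments of any function of arity $\geq 2$, so any permutation of arguments is itself a superposition and may be invoked as a single elementary step in the construction below.

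In the first phase, I would build the $nm$\hyp{}ary function
\[
H(\vect{x}^{(1)}, \dots, \vect{x}^{(n)}) = f\bigl(g_1(\vect{x}^{(1)}), \dots, g_n(\vect{x}^{(n)})\bigr),
\]
where $\vect{x}^{(1)}, \dots, \vect{x}^{(n)}$ are pairwise disjoint $m$\hyp{}tuples of variables. Setting $h_0 := f$, I iteratively define $h_i$ for $i = 1, \dots, n$ as follows: first permute the arguments of $h_{i-1}$ so that the slot currently corresponding to the $i$\hyp{}th argument of the original $f$ lies in position~$1$ (with the arguments contributed by the previously substituted $g_1, \dots, g_{i-1}$ shifted rightward), and then apply $\ast$ with $g_i$ on the right. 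Since $\ast$ increases arity by $m - 1$, the final function $h_n$ has arity $n + n(m-1) = nm$, and after a concluding permutation it assumes the form of $H$ above.

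In the second phase, I would use $\Delta$ together with permutations to identify, for each $j \in \{1, \dots, m\}$, the $n$ copies $x_j^{(1)}, \dots, x_j^{(n)}$ of the $j$\hyp{}th coordinate across the blocks down to a single variable $x_j$. After all such identifications, the resulting $m$\hyp{}ary function is precisely $\vect{x} \mapsto f(g_1(\vect{x}), \dots, g_n(\vect{x}))$, which is $f(g_1, \dots, g_n)$ by definition. Note that $\nabla$ is not needed in this construction.

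The main obstacle is the combinatorial bookkeeping: tracking, through each application of $\ast$, which current argument position of $h_i$ corresponds to which eventual role (an as\hyp{}yet\hyp{}unfilled slot of $f$, or one of the $m$ coordinates of a particular $g_j$), and specifying the permutation applied before each $\ast$ precisely enough for the induction to go through. This is conceptually routine once the two\hyp{}phase structure is in place, but it does make a fully formal write\hyp{}up somewhat tedious.
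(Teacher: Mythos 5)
Your proposal is correct and follows essentially the same two-phase strategy as the paper's proof: the paper repeatedly applies $\zeta$ followed by $\ast$ with $g_{n}, g_{n-1}, \dots, g_1$ to build the $nm$\hyp{}ary function $f(g_1(x_1,\dots,x_m), \dots, g_n(x_{(n-1)m+1},\dots,x_{nm}))$ on disjoint variable blocks, and then identifies arguments $x_i$ and $x_j$ with $i \equiv j \pmod{m}$. Your use of a general permutation before each $\ast$ in place of the paper's cyclic shift $\zeta$ is an inessential variation.
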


\begin{proof}
Let $f_0 := (\zeta f) \ast g_n$, and
for $i = 1, \dots, n-1$, let $f_i := (\zeta f_{i-1}) \ast g_{n-i}$.
Then
\begin{align*}
& f_1(x_1, \dots, x_{n+m-1})
= (\zeta f)(g_n(x_1, \dots, x_m), x_{m+1}, \dots, x_{n+m-1})
\\ &= f(x_{m+1}, \dots, x_{n+m-1}, g_n(x_1, \dots, x_m)),
\displaybreak[0] \\
& f_2(x_1, \dots, x_{n+2m-2})
= (\zeta f_1)(g_{n-1}(x_1, \dots, x_m), x_{m+1}, \dots, x_{n+2m-2})
\\ &= f_1(x_{m+1}, \dots, x_{n+2m-2}, g_{n-1}(x_1, \dots, x_m))
\\ &= f(x_{2m+1}, \dots, x_{n+2m-2}, g_{n-1}(x_1, \dots, x_m), g_n(x_{m+1}, \dots, x_{2m})),
\displaybreak[0] \\
& f_3(x_1, \dots, x_{n+3m-3})
= (\zeta f_2)(g_{n-2}(x_1, \dots, x_m), x_{m+1}, \dots, x_{n+3m-3})
\\ &= f_2(x_{m+1}, \dots, x_{n+3m-3}, g_{n-2}(x_1, \dots, x_m))
\\ &= f(x_{3m+1}, \dots, x_{n+3m-3}, \begin{array}[t]{@{}l}g_{n-2}(x_1, \dots, x_m), \\ g_{n-1}(x_{m+1}, \dots, x_{2m}), g_n(x_{2m+1}, \dots, x_{3m})),\end{array}
\\ & \;\;\vdots \\
& f_n(x_1, \dots, x_{nm})
= f(g_1(x_1, \dots, x_m), \begin{array}[t]{@{}l}g_2(x_{m+1}, \dots, x_{2m}), \dots, \\ g_n(x_{(n-1)m + 1}, \dots, x_{nm})).\end{array}
\end{align*}
Let $\theta$ be the composition of elementary operations that identifies arguments $x_i$ and $x_j$ if and only if $i \equiv j \pmod{m}$.
Then
\begin{align*}
\theta f_n (x_1, \dots, x_m)
&= f(g_1(x_1, \dots, x_m), g_2(x_1, \dots, x_m), \dots, g_n(x_1, \dots, x_m))
\\ &
= f(g_1, \dots, g_n)(x_1, \dots, x_m).
\end{align*}
By construction, $f_1, \dots, f_n$ and $\theta f_n = f(g_1, \dots, g_n)$ are superpositions of  $\{f, \linebreak g_1, \dots, g_n\}$.
\end{proof}

\begin{lemma}
\label{lem:right-stab-gen}
Let $F \subseteq \mathcal{O}_A$.
Let $C$ be a clone on $A$, and let $G$ be a generating set of $C$.
Then the following conditions are equivalent.
\begin{enumerate}[label=\upshape{(\roman*)}, leftmargin=*, widest=iii]
\item\label{FCsubF} $F C \subseteq F$.
\item\label{minorC} $F$ is minor\hyp{}closed and $f \ast g \in F$ whenever $f \in F$ and $g \in C$.
\item\label{minorG} $F$ is minor\hyp{}closed and $f \ast g \in F$ whenever $f \in F$ and $g \in G$.
\end{enumerate}
\end{lemma}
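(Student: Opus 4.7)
The plan is to prove the cycle \ref{FCsubF} $\Rightarrow$ \ref{minorC} $\Rightarrow$ \ref{minorG} $\Rightarrow$ \ref{FCsubF}. The first two implications are routine bookkeeping. For \ref{FCsubF} $\Rightarrow$ \ref{minorC}, minor\hyp{}closedness is witnessed by writing $f_\sigma = f(\pr_{\sigma(1)}^{(m)}, \dots, \pr_{\sigma(n)}^{(m)})$ and invoking that $C$ contains every projection; moreover, for $f \in F^{(n)}$ and $g \in C^{(m)}$ the function $f \ast g$ is the composition of $f$ with an appropriate minor of $g$ together with $n-1$ projections of arity $m+n-1$, all lying in $C^{(m+n-1)}$, so $f \ast g \in FC \subseteq F$. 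The implication \ref{minorC} $\Rightarrow$ \ref{minorG} is trivial because $G \subseteq C$.

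The substantial content is \ref{minorG} $\Rightarrow$ \ref{FCsubF}, which I split into two steps. The first step is to upgrade the hypothesis from $G$ to the whole clone $C$: namely, to show $f \ast g \in F$ for every $f \in F$ and every $g \in C$. For this, I introduce
\[
H := \{\, g \in \mathcal{O}_A \mid f \ast g \in F \text{ for every } f \in F \,\}
\]
and show that $H$ is a clone containing $G$, hence $H \supseteq \clonegen{G} = C$. By \ref{minorG} one has $G \subseteq H$; all projections lie in $H$ because $f \ast \pr_i^{(m)}$ is a minor of $f$. To see that $H$ is a subuniverse of the iterative function algebra $(\mathcal{O}_A; \zeta, \tau, \Delta, \nabla, \ast)$, a direct substitution shows that for each $\square \in \{\zeta, \tau, \Delta, \nabla\}$ and each $g \in H$, the function $f \ast (\square g)$ is a minor of $f \ast g$ (the transformation is absorbed into the inner variables), so minor\hyp{}closedness of $F$ suffices. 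The delicate case is closure under $\ast$: one checks by direct unfolding of the definition that
\[
(f \ast g_1) \ast g_2 = f \ast (g_1 \ast g_2),
\]
and then, for $g_1, g_2 \in H$ and $f \in F$, the function $f \ast g_1$ lies in $F$, so applying the defining property of $g_2$ to $f \ast g_1$ gives $(f \ast g_1) \ast g_2 \in F$, whence $g_1 \ast g_2 \in H$.

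The second step is to derive $FC \subseteq F$ from this intermediate statement. Given $f \in F^{(n)}$ and $g_1, \dots, g_n \in C^{(m)}$, the construction in the proof of Lemma~\ref{lem:composition-superposition} exhibits $f(g_1, \dots, g_n)$ as the output of iteratively applying to $f$ the combined move ``$\zeta$ followed by $\ast g_i$'' for $i = n, n-1, \dots, 1$, and finally applying a composition of identification maps. The $\zeta$-steps and the final identifications are minor operations and hence preserve membership in $F$, while the $\ast g_i$-steps preserve $F$ by the first step. Thus $f(g_1, \dots, g_n) \in F$, as required.

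The main obstacle is the closure of $H$ under $\ast$ in the first step: this is the only place where the argument uses something beyond minor\hyp{}closedness of $F$, and it rests on the associativity\hyp{}like identity displayed above. All the remaining verifications reduce either to recognizing a derived function as a minor of a previously built one, or to the explicit superposition decomposition already provided by Lemma~\ref{lem:composition-superposition}.
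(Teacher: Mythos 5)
Your proof is correct and follows essentially the same route as the paper: the implication \ref{minorG} $\Rightarrow$ \ref{FCsubF} is handled by first upgrading closure under $\ast$ from $G$ to all of $C$ via the commutation identities $f \ast (\square g) \leq f \ast g$ for $\square \in \{\zeta, \tau, \Delta, \nabla\}$ and the associativity $(f \ast g_1) \ast g_2 = f \ast (g_1 \ast g_2)$, and then invoking the superposition decomposition of Lemma~\ref{lem:composition-superposition}. Your packaging of the first step as ``the set $H$ is a closed class containing $G$ and the projections, hence contains $\clonegen{G} = C$'' is just a reformulation of the paper's structural induction on the term representing $g$ as a superposition of $G$, so the two arguments are the same in substance.
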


\begin{proof}
\ref{FCsubF} $\implies$ \ref{minorG}:
For any $f \in F$, any minor of $f$ is of the form
$f(\pr^{(m)}_{i_1}, \dots, \linebreak \pr^{(m)}_{i_m})$,
for some $m \in \IN$ and $i_1, \dots, i_m \in \nset{m}$.
Since all projections are members of the clone $C$, we have $f(\pr^{(m)}_{i_1}, \dots, \pr^{(m)}_{i_m}) \in F C \subseteq F$.
Thus $F$ is minor\hyp{}closed.

Let $g \in G$ and define $g' := g(\pr^{(m+n-1)}_1, \dots, \pr^{(m+n-1)}_m)$.
Then $g' \in C$, and we have
$f \ast g = f(g',\pr^{(m+n-1)}_{m+1}, \dots, \pr^{(m+n-1)}_{m+n-1}) \in F C \subseteq F$.

\ref{minorG} $\implies$ \ref{minorC}:
Let $g \in C$.
If $g$ is a projection, then for every $f \in F$, the function $f \ast g$ is a minor of $f$, obtained by introducing fictitious arguments, so $f \ast g \in F$ because $F$ is minor\hyp{}closed.
If $g$ is not a projection, then $g$ is a superposition of $G$, that is, there is a term $t$, say $\ell$\hyp{}ary, in the language of iterative algebras and $h_1, \dots, h_\ell \in G$ such that $t^{\mathcal{O}_A}(h_1, \dots, h_\ell) = g$.
We prove by induction on the structure of the term $t$ that for every $f \in F$ it holds that $f \ast g \in F$.
If $t = x_i$, then $t^{\mathcal{O}_A}(h_1, \dots, h_\ell) = h_i \in G$, and the condition $f \ast h_i \in F$ holds by our assumption.
Consider now the case that $t = \varphi u$, where $\varphi \in \{\zeta, \tau, \Delta, \nabla\}$ and $u$ is a term, and assume that $f \ast u^{\mathcal{O}_A}(h_1, \dots, h_\ell) \in F$ for every $f \in F$.
Then also $f \ast t^{\mathcal{O}_A}(h_1, \dots, h_\ell) = f \ast \varphi u^{\mathcal{O}_A}(h_1, \dots, h_\ell) \in F$ for every $f \in F$, because $F$ is minor\hyp{}closed and the following identities hold for any functions $f$ and $h$ (say $h$ is $n$\hyp{}ary):
\begin{align*}
f \ast \zeta h  &= \pi_{(1 \; 2 \; \cdots \; n)} (f \ast h), &
f \ast \tau h   &= \tau (f \ast h), \\
f \ast \Delta h &= \Delta (f \ast h), &
f \ast \nabla h &= \nabla (f \ast h).
\end{align*}
Finally, consider the remaining case that $t = u \ast v$, and assume that $f \ast u^{\mathcal{O}_A}(h_1, \dots, \linebreak  h_\ell) \in F$ and $f \ast v^{\mathcal{O}_A}(h_1, \dots, h_\ell) \in F$ for every $f \in F$.
Then also
\begin{align*}
f \ast t^{\mathcal{O}_A}(h_1, \dots, h_\ell)
&= f \ast (u^{\mathcal{O}_A}(h_1, \dots, h_\ell) \ast v^{\mathcal{O}_A}(h_1, \dots, h_\ell))
\\ &= (f \ast u^{\mathcal{O}_A}(h_1, \dots, h_\ell)) \ast v^{\mathcal{O}_A}(h_1, \dots, h_\ell) \in F
\end{align*}
for every $f \in F$.

\ref{minorC} $\implies$ \ref{FCsubF}:
Let $f \in F^{(n)}$ and $g_1, \dots, g_n \in C^{(m)}$.
A simple inductive argument shows that, in the construction of $f(g_1, \dots, g_n)$ as a superposition of $\{f, g_1, \dots, g_n\}$ given in the proof of Lemma~\ref{lem:composition-superposition}, the functions $f_i$ are in $F$, because $F$ is minor\hyp{}closed and each $f_i$ is of the form $\zeta \varphi \ast \gamma$ for some $\varphi \in F$ and $\gamma \in G$. Finally, $f(g_1, \dots, g_n) = \theta f_n \in F$, because $F$ is minor\hyp{}closed.
\end{proof}

For left stability, it is enough to consider compositions with generators of the clone.

\begin{lemma}
\label{lem:left-stab-gen}
Let $F \subseteq \mathcal{O}_A$.
Let $C$ be a clone on $A$, and let $G$ be a generating set of $C$.
Then the following conditions are equivalent.
\begin{enumerate}[label=\upshape{(\roman*)}, leftmargin=*, widest=iii]
\item\label{CFsubF} $C F \subseteq F$.
\item\label{gf-C} $g(f_1, \dots, f_n) \in F$ whenever $g \in C^{(n)}$ and $f_1, \dots, f_n \in F^{(m)}$ for some $n, m \in \IN$.
\item\label{gf-G} $g(f_1, \dots, f_n) \in F$ whenever $g \in G^{(n)}$ and $f_1, \dots, f_n \in F^{(m)}$ for some $n, m \in \IN$.
\end{enumerate}
\end{lemma}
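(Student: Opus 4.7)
The equivalences $\ref{CFsubF} \Leftrightarrow \ref{gf-C}$ and $\ref{gf-C} \Rightarrow \ref{gf-G}$ are immediate: the first is just unfolding the definition of $CF$, and the second uses that $G \subseteq C$. So the only nontrivial direction is $\ref{gf-G} \Rightarrow \ref{gf-C}$, where we must bootstrap from the property holding on the generating set $G$ to it holding on the whole clone $C$.

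The plan is to introduce the auxiliary set
\[
F' := \{\, g \in \mathcal{O}_A \mid \text{for all } m \in \IN_{+} \text{ and all } f_1, \dots, f_{\arity{g}} \in F^{(m)},\ g(f_1, \dots, f_{\arity{g}}) \in F \,\},
\]
and show that $F'$ is a clone on $A$ that contains $G$. Since $C = \clonegen{G}$ is the smallest clone containing $G$, this yields $C \subseteq F'$, which is precisely condition \ref{gf-C}.

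That $G \subseteq F'$ is exactly the hypothesis \ref{gf-G}. Every projection $\pr_i^{(n)}$ lies in $F'$ because $\pr_i^{(n)}(f_1, \dots, f_n) = f_i \in F$ for any $f_1, \dots, f_n \in F^{(m)}$. The key step is closure of $F'$ under composition: given $h \in F'^{(k)}$ and $g_1, \dots, g_k \in F'^{(n)}$, for arbitrary $f_1, \dots, f_n \in F^{(m)}$ the superassociative identity of Lemma~\ref{lem:comp-minors} gives
\[
h(g_1, \dots, g_k)(f_1, \dots, f_n) = h\bigl(g_1(f_1, \dots, f_n), \dots, g_k(f_1, \dots, f_n)\bigr).
\]
By $g_i \in F'$, each $g_i(f_1, \dots, f_n)$ lies in $F^{(m)}$; then, since $h \in F'$, applying $h$ to these $m$\hyp{}ary members of $F$ yields an element of $F$. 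Hence $h(g_1, \dots, g_k) \in F'$, so $F'$ is a clone, and $C \subseteq F'$ follows.

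No step presents a real obstacle; the only subtlety is tracking arities correctly so that the inductive/clone\hyp{}closure argument composes $m$\hyp{}ary inner functions throughout, which is handled cleanly by the superassociativity lemma.
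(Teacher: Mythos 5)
Your proof is correct. The paper proves \ref{gf-G} $\implies$ \ref{gf-C} by an explicit induction on the structure of a term $t$ over the algebra $\mathbf{A} = (A;G)$ representing $g \in C$, with projections as the base case and superassociativity (Lemma~\ref{lem:comp-minors}) driving the inductive step. You reach the same conclusion by a slightly different packaging: you define the set $F'$ of all operations that map tuples from $F$ into $F$, verify that $F'$ contains the projections and is closed under composition (again via superassociativity), and then invoke the universal property of $\clonegen{G}$ to conclude $C \subseteq F'$. The two arguments have identical mathematical content --- the paper's term induction is in effect an inline proof of the fact that $\clonegen{G}$ is contained in every clone containing $G$ --- but your version isolates that fact as a black box, which makes the proof shorter and makes transparent that the only substantive inputs are the projection identity $\pr_i^{(n)}(f_1,\dots,f_n) = f_i$ and superassociativity. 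The arity bookkeeping you flag at the end is handled correctly: the inner functions stay $m$\hyp{}ary throughout.
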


\begin{proof}
\ref{CFsubF} $\iff$ \ref{gf-C}:
Holds by the definition of function class composition.

\ref{gf-C} $\implies$ \ref{gf-G}:
This is obvious.

\ref{gf-G} $\implies$ \ref{gf-C}:
Let $g \in C$. Then there is a term $t$ of the language of the algebra $\mathbf{A} = (A; G)$ such that $g = t^\mathbf{A}$.
We prove the claim by induction on the structure of the term $t$.
Let $f_1, \dots, f_n \in F^{(m)}$.
The inductive basis holds, because if $t = x_i$, then $t^\mathbf{A} = \pr_i^{(n)}$, and we have $\pr_i^{(n)}(f_1, \dots, f_n) = f_i \in F$.
Consider now the case when $t = h(t_1, \dots, t_\ell)$ for some $h \in G$ and terms $t_1, \dots, t_\ell$, and assume that for $i \in \{1, \dots, \ell\}$, we have already shown that $t_i^\mathbf{A}(f_1, \dots, f_n) \in F$.
It then follows from superassociativity and our assumptions that
\begin{align*}
t^\mathbf{A}(f_1, \dots, f_n)
&= h^\mathbf{A}(t_1^\mathbf{A}, \dots, t_\ell^\mathbf{A})(f_1, \dots, f_n)
\\ &= h^\mathbf{A}(t_1^\mathbf{A}(f_1, \dots, f_n), \dots, t_\ell^\mathbf{A}(f_1, \dots, f_n))
\in F.
\qedhere
\end{align*}
\end{proof}

Let us record here a simple yet useful observation on the $C$\hyp{}stable class generated by a projection.

\begin{lemma}
For any clone $C$, $\gen[C]{\pr_1^{(1)}} = C$.
\end{lemma}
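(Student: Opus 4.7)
The plan is to prove the equality by a double inclusion, exploiting two facts already recorded: clones are themselves $(C,C)$\hyp{}stable, and every clone contains every projection.

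For the inclusion $\gen[C]{\pr_1^{(1)}} \subseteq C$, I would observe that $C$ itself is a candidate. Indeed, as noted in the remark following the definition of stability, every clone $C$ is $(C,C)$\hyp{}stable, and by the definition of a clone $C$ contains all projections, in particular $\pr_1^{(1)}$. So $C$ is a $(C,C)$\hyp{}stable class containing $\pr_1^{(1)}$, and by the minimality that defines $\gen[C]{\pr_1^{(1)}}$ we get $\gen[C]{\pr_1^{(1)}} \subseteq C$.

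For the reverse inclusion $C \subseteq \gen[C]{\pr_1^{(1)}}$, set $K := \gen[C]{\pr_1^{(1)}}$. Since $K$ is $(C,C)$\hyp{}stable we have $K C \subseteq K$, and $\pr_1^{(1)} \in K$. Given any $g \in C^{(m)}$, the unary outer function $\pr_1^{(1)}$ composed with the single inner function $g$ according to the composition definition yields $\pr_1^{(1)}(g) = g$, an element of $\{\pr_1^{(1)}\} C \subseteq K C \subseteq K$. Hence every member of $C$ lies in $K$, giving $C \subseteq K$.

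There is no real obstacle here, since everything reduces to the observation that composing an arbitrary function of $C$ as the inner function under the unary projection $\pr_1^{(1)}$ returns the function itself; the only care needed is to check that this composition falls within the scope of the $KC$ notation (outer arity one, inner arity matching that of $g$), which it does. Combining the two inclusions yields $\gen[C]{\pr_1^{(1)}} = C$.
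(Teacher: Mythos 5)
Your proof is correct and follows essentially the same route as the paper: the first inclusion uses that $C$ is itself a $(C,C)$\hyp{}stable class containing $\pr_1^{(1)}$, and the second uses that $\pr_1^{(1)}(g)=g$ lies in $\gen[C]{\pr_1^{(1)}}\,C$. The only (immaterial) difference is that you invoke the definition of class composition directly, whereas the paper routes the same observation through Lemma~\ref{lem:right-stab-gen}\ref{minorC} and the $\ast$ operation.
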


\begin{proof}
Since $\pr_1^{(1)} \in C$ and $C$ is $C$\hyp{}stable, we clearly have $\gen{\pr_1^{(1)}} \subseteq C$.
By Lemma~\ref{lem:right-stab-gen}\ref{minorC}, we also have $f = \pr_1^{(1)} \ast f \in \gen[C]{\pr_1^{(1)}}$ for every $f \in C$, so $C \subseteq \gen[C]{\pr_1^{(1)}}$.
\end{proof}


\section{Linear stability over finite fields of prime order}
\label{sec:ff}
\renewcommand{\gendefault}{\clL}

In this section we consider classes of operations on a finite field and their right and left stability under clones of linear functions.
Assume that $A = \GF(q)$, a finite field of order $q = p^m$, with $p$ prime.

\begin{definition}
\label{def:polynomials}
It is well known that every $n$\hyp{}ary operation on $A$ is represented by a unique polynomial over $\GF(q)$ in $n$ variables wherein no variable appears with an exponent greater than $q - 1$.
We call such polynomials \emph{reduced polynomials.}
A reduced polynomial can be written as
\begin{equation}
\label{eq:pol}
\sum_{(a_1, \dots, a_n) \in \{0, \dots, q-1\}^n} \alpha_{(a_1, \dots, a_n)} \prod_{i \in \{1, \dots, n\}} x_i^{a_i},
\end{equation}
where each \emph{coefficient} $\alpha_{(a_1, \dots, a_n)}$ is an element of $\GF(q)$.
We will use the shorthand $\alpha_{\vect{a}} x^\vect{a}$ to designate the \emph{monomial} $\alpha_{(a_1, \dots, a_n)} \prod_{i \in \{1, \dots, n\}} x_i^{a_i}$ with $\vect{a} = (a_1, \dots, a_n)$.
A monomial with coefficient $1$ is called \emph{monic.}
The \emph{degree} of a monomial $\alpha_\vect{a} x^\vect{a}$ is $\sum_{i=1}^n a_i$.
The \emph{degree} of a polynomial $P$, denoted $\deg(P)$, is the maximum of the degrees of its monomials with a nonzero coefficient; we agree that $\deg(0) := 0$.
In general, when we speak of the monomials of a polynomial, we mean the monomials with a nonzero coefficient.
As is usual when writing polynomials, we may omit coefficients equal to $1$, and we may omit monomials with coefficient $0$.
Without any risk of confusion, we will denote functions by reduced polynomials.

The \emph{degree} of an operation $f$, denoted $\deg(f)$, is the degree of the unique reduced polynomial representing $f$.
For $k \in \IN$, denote by $\clD{k}$ the set of all operations on $A$ of degree at most $k$.
Clearly, these sets constitute an infinite ascending chain $\clD{0} \subset \clD{1} \subset \clD{2} \subset \cdots$ whose union is the set $\mathcal{O}_A$ of all operations on $A$.
In particular, $\clD{0}$ is the set of all constant operations, and $\clD{1}$ is the set of all \emph{linear} operations.\footnote{Strictly speaking, operations of degree at most $1$ are \emph{affine} in the sense of linear algebra. We go along with the term \emph{linear} that is common in the context of clone theory and especially in the theory of Boolean functions.}
We shall also use the symbol $\clL$ to denote the set $\clD{1}$.
The set $\clL$ is a clone on $A$; in fact, it is a maximal clone if and only if $q$ is prime (see Rosenberg~\cite{Rosenberg}, Szendrei~\cite[Theorem~3.1]{Szendrei-1980}).
\end{definition}

\begin{proposition}
\label{prop:Dk-L-stable}
For every $k \in \IN$, the set $\clD{k}$ is $\clL$\hyp{}stable.
\end{proposition}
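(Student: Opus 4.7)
I would prove the two required inclusions $\clD{k} \clL \subseteq \clD{k}$ and $\clL \clD{k} \subseteq \clD{k}$ directly by manipulating the reduced polynomial representations and tracking degrees, since $\clD{k}$ is defined polynomially.

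The key preliminary observation is that the reduction rule $x^q = x$ never increases degree: applied to a monomial $\alpha \prod_j x_j^{a_j}$ with some $a_i \geq q$, it replaces $x_i^{a_i}$ by $x_i^{a_i - (q-1)}$ (repeated as needed), which strictly lowers the exponent $a_i$ by $q-1 \geq 1$ without affecting any other $a_j$. Hence for any (not necessarily reduced) polynomial $P$, the reduced polynomial representing the same function has degree at most $\deg(P)$.

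For right stability, take $f \in \clD{k}^{(n)}$ with reduced representation $\sum_\vect{a} \alpha_\vect{a} x^\vect{a}$, so that $\sum_i a_i \leq k$ whenever $\alpha_\vect{a} \neq 0$, and take $g_1, \dots, g_n \in \clL^{(m)}$, each of degree at most $1$. Then $f(g_1, \dots, g_n)$ is represented by $\sum_\vect{a} \alpha_\vect{a} \prod_i g_i^{a_i}$, a polynomial whose monomials have degree at most $\sum_i a_i \deg(g_i) \leq \sum_i a_i \leq k$. By the preliminary observation, the reduced form of this polynomial also has degree at most $k$, so $f(g_1, \dots, g_n) \in \clD{k}$. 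For left stability, take $g = \alpha_0 + \sum_{i=1}^n \alpha_i x_i \in \clL^{(n)}$ and $f_1, \dots, f_n \in \clD{k}^{(m)}$; then $g(f_1, \dots, f_n) = \alpha_0 + \sum_i \alpha_i f_i$ is a linear combination of reduced polynomials (plus a constant), which remains reduced (each variable's exponent in each monomial stays $\leq q-1$) and has degree at most $\max_i \deg(f_i) \leq k$, so it belongs to $\clD{k}$.

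The only substantive subtlety is the reduction step in the right-stability direction; the left-stability direction is essentially immediate once one notes that the linear combination of reduced polynomials is itself reduced. An alternative route would be to apply Lemma~\ref{lem:right-stab-gen} and Lemma~\ref{lem:left-stab-gen} to a small generating set of $\clL$ (for example $\{x+y\} \cup \{\alpha x \mid \alpha \in \GF(q)\} \cup \{c \mid c \in \GF(q)\}$), which reduces each direction to checking addition, scalar multiplication, and constants against $\clD{k}$; but the direct polynomial argument is arguably more transparent.
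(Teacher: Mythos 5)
Your proof is correct, but it follows a genuinely different route from the paper's. The paper proves this proposition by invoking Lemmata~\ref{lem:right-stab-gen} and \ref{lem:left-stab-gen} with the generating set $\{x_1 + x_2\} \cup \{\, c x_1 \mid c \in A \,\} \cup \{\, c \mid c \in A \,\}$ of $\clL$ --- precisely the alternative you mention in your closing sentence --- so that it only needs to check minor-closure of $\clD{k}$ and the compositions $f \ast c$, $f \ast c x_1$, $f \ast (x_1 + x_2)$, together with $c(f)$, $cx_1(f)$, $(x_1+x_2)(f,g)$ on the left. Your argument instead establishes the inclusions $\clD{k} \clL \subseteq \clD{k}$ and $\clL \clD{k} \subseteq \clD{k}$ directly for arbitrary members of $\clL$, which forces you to confront the reduction rule $x^q = x$ explicitly, since the inner functions $g_1, \dots, g_n$ share variables and the substituted polynomial need not be reduced; your observation that each application of the rule lowers an exponent by $q-1$ and hence cannot raise the degree settles this cleanly. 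What the paper's route buys is economy: the generator lemmata are developed in Section~\ref{sec:stab} anyway and are reused for many later stability checks. What your route buys is self-containedness and an explicit treatment of a point the paper leaves tacit --- its assertion that ``the formation of minors does not increase the degree'' also rests on the same fact that passing to the reduced polynomial cannot increase degree (identifying two variables can produce exponents exceeding $q-1$). Both arguments are sound.
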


\begin{proof}
Noting that the clone $\clL$ is generated by $\{x_1 + x_2\} \cup \{ \, c x_1 \mid c \in A \, \} \cup \{ \, c \mid c \in A \, \}$, we apply Lemmata~\ref{lem:left-stab-gen} and \ref{lem:right-stab-gen}.
The stability under left composition with $\clL$ follows from the fact that for any $f, g \in \clD{k}$ and any $c \in A$ we have $c(f) = c \in \clD{0} \subseteq \clD{k}$, $c x_1 (f) = c \cdot f \in \clD{k}$, and $(x_1 + x_2)(f, g) = f + g \in \clD{k}$.
As for the right stability, note that $\clD{k}$ is minor\hyp{}closed because the formation of minors does not increase the degree of functions, and that for any $f \in \clD{k}$ and for any $c \in A$, it holds that $f \ast c$, $f \ast c x_1$, and $f \ast (x_1 + x_2)$ are members of $\clD{k}$.
\end{proof}

\begin{proposition}
\label{prop:empty-full-L-stable}
The empty set $\emptyset$ and the set $\mathcal{O}_A$ of all operations on $A$ are $\clL$\hyp{}stable.
\end{proposition}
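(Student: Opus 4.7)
The plan is to verify the two defining inclusions $K\clL \subseteq K$ and $\clL K \subseteq K$ directly from the definition of function class composition for each of the two candidate classes $K = \emptyset$ and $K = \mathcal{O}_A$. Both cases are essentially immediate, so the ``proof'' is really just a bookkeeping exercise.

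For $K = \emptyset$, the key observation is that the definition of $CK$ requires both an outer function from $C$ and inner functions from $K$; if either collection is empty, then no composite exists. Hence $\emptyset \cdot \clL$ and $\clL \cdot \emptyset$ are both empty, so trivially contained in $\emptyset$. I would just spell this out in one sentence.

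For $K = \mathcal{O}_A$, I would invoke the fact that $\clL \subseteq \mathcal{O}_A$ together with the trivial observation that composing operations on $A$ yields an operation on $A$; formally, $\mathcal{O}_A \cdot \mathcal{O}_A \subseteq \mathcal{O}_A$, and by monotonicity of composition (Remark~\ref{rem:comp-monot}) we get $\mathcal{O}_A \cdot \clL \subseteq \mathcal{O}_A \cdot \mathcal{O}_A \subseteq \mathcal{O}_A$ and similarly $\clL \cdot \mathcal{O}_A \subseteq \mathcal{O}_A$.

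There is no real obstacle here; the proposition is a sanity check ensuring that the trivial classes sit in the lattice of $\clL$-stable classes, so they can serve as the bottom and top when we later enumerate all $\clL$-stable classes. I would keep the writeup to two or three short sentences.
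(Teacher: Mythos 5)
Your argument is correct and matches the paper, which simply dismisses this proposition as obvious; your writeup just makes the trivial verifications explicit. Nothing to change.
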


\begin{proof}
This is obvious.
\end{proof}

\begin{lemma}
\label{lem:constants-L-stable}
Every nonempty $\clL$\hyp{}stable class contains all constant functions.
\end{lemma}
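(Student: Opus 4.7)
The plan is to exploit the fact that $\clL = \clD{1}$ contains every unary constant function $\cf{1}{d}$ (such a function has degree $0 \leq 1$), combined with both directions of $\clL$-stability. In one move this will inject a constant into $F$ via an arbitrary seed $f \in F$; a second move using minor-closure will then vary the arity.

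First I would fix some $f \in F$ of arity $n$, which is possible because $F$ is nonempty. For each $d \in A$ the unary constant $\cf{1}{d}$ belongs to $\clL$, so left stability ($\clL F \subseteq F$) yields $\cf{1}{d}(f) \in F$. Unwinding the definition of composition, $\cf{1}{d}(f)(\vect{a}) = \cf{1}{d}(f(\vect{a})) = d$ for every $\vect{a} \in A^n$, so $\cf{1}{d}(f) = \cf{n}{d}$. Thus every $n$-ary constant already lies in $F$.

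To promote these to constants of arbitrary arity $m \in \IN_{+}$, I would use right stability. Because $\clL$ contains all projections, $F\clL \subseteq F$ combined with Lemma~\ref{lem:right-stab-gen} forces $F$ to be minor-closed. For any $m \in \IN_{+}$ pick any map $\sigma \colon \nset{n} \to \nset{m}$; all arguments of $\cf{n}{d}$ are fictitious, so the minor $(\cf{n}{d})_\sigma$ equals $\cf{m}{d}$, which therefore belongs to $F$. This gives $\cf{m}{d} \in F$ for all $m \in \IN_{+}$ and all $d \in A$, as required.

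There is no substantive obstacle; the only ingredient to recognize is that the convention ``linear means affine'' places the unary constants inside $\clL$, after which one application of left stability and one invocation of minor-closure finish the job.
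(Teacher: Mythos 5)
Your proof is correct and uses the same two ingredients as the paper's: the constants lie in $\clL$ (so left stability produces a constant function from any seed $f \in F$), and right stability with projections adjusts the arity. The only difference is the order of the two steps (the paper first varies the arity of members of $K$ and then left-composes with constants, whereas you produce one constant first and then vary its arity via minors), which is an inessential rearrangement.
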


\begin{proof}
Let $K$ be a nonempty $\clL$\hyp{}stable class.
Since $\clL$ contains all projections of any arity, $K \clL$ contains functions of any arity, and so does $K$ because $K \clL \subseteq K$.
Note that for any $g_1, \dots, g_n \in \mathcal{O}_A^{(m)}$, it holds that $\cf{n}{b}(g_1, \dots, g_n) = \cf{m}{b}$.
Since all constant functions are members of $\clL$ and $K$ contains functions of any arity, it follows that $\clL K$ contains all constant functions, and so does $K$ because $\clL K \subseteq K$.
\end{proof}

\begin{lemma}
\label{lem:x1..xk}
For any $k \in \IN$, $\gen{x_1 x_2 \dots x_k} = \clD{k}$.
\end{lemma}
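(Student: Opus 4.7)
The inclusion $\gen[\clL]{x_1 x_2 \cdots x_k} \subseteq \clD{k}$ is immediate from Proposition~\ref{prop:Dk-L-stable}: the class $\clD{k}$ is $\clL$\hyp{}stable and evidently contains $x_1 x_2 \cdots x_k$, which has degree $k$. The entire substance of the lemma lies in establishing the reverse inclusion $\clD{k} \subseteq \gen[\clL]{x_1 x_2 \cdots x_k}$, and the plan is to exploit the two closures (right composition and then left composition with $\clL$) in sequence.

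The first step will use right composition to manufacture every monic monomial of degree at most $k$. Given such a monomial $x_{i_1}^{a_1} \cdots x_{i_n}^{a_n}$ with distinct indices $i_1, \dots, i_n$, exponents $a_j \leq q-1$, and $d := \sum_j a_j \leq k$, I will present it as a single right composition of $x_1 x_2 \cdots x_k$ with $m$\hyp{}ary inner functions, where $m \geq \max\{i_1, \dots, i_n\}$ is chosen uniformly. Specifically, substitute $a_1$ copies of $\pr_{i_1}^{(m)}$ into the first $a_1$ slots, then $a_2$ copies of $\pr_{i_2}^{(m)}$, and so on through $a_n$ copies of $\pr_{i_n}^{(m)}$, and fill the remaining $k-d$ argument slots with $\cf{m}{1}$. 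Each inner function is a projection or a constant, hence belongs to $\clL$, and the resulting composition evaluates to $x_{i_1}^{a_1} \cdots x_{i_n}^{a_n} \cdot 1^{k-d} = x_{i_1}^{a_1} \cdots x_{i_n}^{a_n}$, as required.

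The second step will invoke the fact, already used in the proof of Proposition~\ref{prop:Dk-L-stable}, that $\clL$ is generated by $x_1 + x_2$, the unary scalar multiplications $c x_1$ ($c \in \GF(q)$), and the constants. Left composition with $\clL$ therefore yields arbitrary $\GF(q)$\hyp{}linear combinations (with constant terms) of functions already in the closure. Since each $f \in \clD{k}$ is, by the uniqueness of reduced polynomials, precisely such a $\GF(q)$\hyp{}linear combination of monic monomials of degree at most $k$, the reverse inclusion follows.

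The degenerate case $k = 0$, in which $x_1 \cdots x_k$ denotes the empty product $1$, will be handled separately by invoking Lemma~\ref{lem:constants-L-stable}, which gives $\gen[\clL]{1} \supseteq \clD{0}$. I expect no substantive obstacle: the only delicate point is synchronising the arities of the inner functions participating in the right\hyp{}composition step, which is resolved by the uniform choice of $m$ indicated above.
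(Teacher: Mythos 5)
Your proof is correct and follows essentially the same route as the paper's: the forward inclusion via the $\clL$\hyp{}stability of $\clD{k}$ (Proposition~\ref{prop:Dk-L-stable}), then every monic monomial of degree at most $k$ obtained by right composition with projections and the constant $1$, and finally all of $\clD{k}$ by left composition with a suitable linear function. Your write\hyp{}up is merely more explicit about synchronising arities and about the degenerate case $k = 0$, which the paper's argument covers implicitly.
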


\begin{proof}
Clearly $x_1 x_2 \dots x_k \in \clD{k}$ and the class $\clD{k}$ is $\clL$-stable by Proposition~\ref{prop:Dk-L-stable}, so we have $\gen{x_1 x_2 \dots x_k} \subseteq \clD{k}$.
By identification of variables, permutation of variables, and substitution of constant $1$ for variables, we obtain every monic monomial of degree at most $k$.
By taking the sum of monic monomials of degree at most $k$, with suitable coefficients, we can obtain any polynomial of degree at most $k$, in other words, by composing a suitable linear function with functions represented by monic monomials of degree at most $k$, we obtain any function of degree at most $k$.
Therefore, $\clD{k} \subseteq \gen{x_1 x_2 \dots x_k}$.
\end{proof}

In the remainder of this section, we will assume that $A$ is a finite field $\GF(p)$ of prime order $p$.

\begin{lemma}
\label{lem:Ddegf}
Assume $A = \GF(p)$ with $p$ prime.
If the reduced polynomial of $f \colon A^n \to A$ has degree $k$, then $\gen{f} = \clD{k}$.
\end{lemma}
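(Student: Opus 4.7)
The inclusion $\gen{f} \subseteq \clD{k}$ is immediate, since $f \in \clD{k}$ and $\clD{k}$ is $\clL$\hyp{}stable by Proposition~\ref{prop:Dk-L-stable}. For the reverse inclusion, by Lemma~\ref{lem:x1..xk} it suffices to produce $x_1 x_2 \cdots x_k$ inside $\gen{f}$. The plan is to proceed in two phases: first isolate a single top-degree monomial of $f$ in $\gen{f}$, and then polarize it into a squarefree product of degree~$k$.

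\emph{Phase A (isolating a monomial).} Write the reduced polynomial of $f$ as $f = \sum_\vect{b} \alpha_\vect{b} x^\vect{b}$, and fix $\vect{a} = (a_1, \dots, a_n)$ with $\alpha_\vect{a} \neq 0$ and $a_1 + \cdots + a_n = k$. The key arithmetic input is that the Vandermonde matrix $(t^j)_{j, t \in \{0, \dots, p-1\}}$ is invertible over $\GF(p)$, because its nodes $0, 1, \dots, p-1$ are distinct elements of $\GF(p)$. Hence, for each $i \in \nset{n}$ one can solve
\[
\sum_{t \in \GF(p)} \mu_t \, t^j = \delta_{j, a_i}, \qquad j = 0, 1, \dots, p-1,
\]
obtaining scalars $\mu_t \in \GF(p)$. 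Applied to any polynomial $G$, the combination $\sum_t \mu_t \, G(x_1, \dots, t x_i, \dots, x_n)$ retains exactly the monomials of $G$ in which $x_i$ occurs with exponent $a_i$; it is obtained from $G$ by right-compositions with linear functions followed by a left-composition with a linear function. Starting from $G_0 := f$ and performing this filtering once per variable, after $n$ steps one is left with $\alpha_\vect{a} x^\vect{a}$, and a final left-composition with $y \mapsto \alpha_\vect{a}^{-1} y$ yields $x^\vect{a} \in \gen{f}$.

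\emph{Phase B (polarization).} The classical identity
\[
d! \cdot y_1 y_2 \cdots y_d = \sum_{\emptyset \neq S \subseteq \nset{d}} (-1)^{d - |S|} \Bigl( \sum_{j \in S} y_j \Bigr)^{\!d},
\]
whose proof is a routine inclusion-exclusion, allows each factor $x_i^{a_i}$ of $x^\vect{a}$ to be replaced by a product of $a_i$ fresh variables: the right-hand side is a linear combination of right-compositions of $x^\vect{a}$ (substituting $\sum_{j \in S} y_j$ in the $x_i$-slot and projections elsewhere), so lies in $\gen{f}$, and since $a_i \leq p-1$ the factor $a_i!$ is a unit in $\GF(p)$, so a left-rescaling extracts $y_1 \cdots y_{a_i}$. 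Polarizing every variable in turn and renaming produces $x_1 x_2 \cdots x_k \in \gen{f}$, and Lemma~\ref{lem:x1..xk} then delivers $\clD{k} \subseteq \gen{f}$.

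The whole argument hinges on two features peculiar to prime fields: the number of elements of $\GF(p)$ matches the number of admissible exponents, making the Vandermonde system of Phase~A nondegenerate; and every exponent in a reduced polynomial is strictly less than $p$, so the factorials arising in Phase~B remain invertible. The main place where care is needed will be the bookkeeping in Phase~A---verifying that after $n$ successive filterings only the targeted monomial survives---but the underlying ideas are standard Lagrange interpolation and polarization, and no new ingredient is required.
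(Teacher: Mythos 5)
Your proof is correct, but it follows a genuinely different route from the paper's. The paper fixes a top\hyp{}degree monomial $x^{\vect{u}}$, substitutes $0$ for the variables outside its support, and then substitutes disjoint blocks of fresh variables ($x_j \mapsto \sum_{i \in B_j} y_i$ with $\card{B_j} = u_j$) so that $y_1 \cdots y_k$ appears in the expansion while every other monomial misses at least one $y_i$; those leftover monomials are then stripped away by iterating the difference operator $h \mapsto h - h(\dots, 0, \dots)$. You instead isolate a single monomial \emph{exactly} by Lagrange interpolation (your Vandermonde filtering is legitimate: each $G(x_1, \dots, t x_i, \dots, x_n)$ is a right composition with members of $\clL$, the combination $\sum_t \mu_t(\cdot)$ is a left composition with a $p$\hyp{}ary linear map, and $\gen{f}$ is closed under both), and then convert $x^{\vect{a}}$ into $x_1 \cdots x_k$ by the polarization identity. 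Your version is more modular -- both phases are off\hyp{}the\hyp{}shelf tools, and after Phase~A there is no junk to carry along -- at the price of using the full strength of $\clL$ (all scalar multiples and $p$\hyp{}ary linear combinations), where the paper gets by with substitutions of $0$ and binary differences. Both arguments use primality in the same essential place, namely the invertibility of factorials of exponents $\leq p-1$ (your $d!$ in Phase~B; the paper's implicit coefficient $\prod_j u_j!$ of $y_1 \cdots y_k$ after the block substitution). One small correction to your closing remark: Phase~A does \emph{not} depend on $q$ being prime -- over any $\GF(q)$ there are $q$ nodes for the $q$ admissible exponents $0, \dots, q-1$, so the Vandermonde system is always nondegenerate; it is only the factorial in Phase~B that forces $q = p$ (e.g., over $\GF(p^2)$ your Phase~A would happily isolate $x_1^p$, and polarization would then fail because $p! = 0$). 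Also note that both your proof and the paper's implicitly assume $f \neq 0$ when selecting a monomial with nonzero coefficient; the case $f = 0$ is covered separately by Lemma~\ref{lem:constants-L-stable}.
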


\begin{proof}
Let $P$ be the reduced polynomial representing $f$ as in \eqref{eq:pol}.
Let 
$\vect{u} = (u_1, \dots,  \linebreak  u_n) \in \{0, \dots, p-1\}^n$
be such that $\alpha_\vect{u} x^\vect{u}$ has degree $k$ and $\alpha_\vect{u} \neq 0$.
We may assume that $\alpha_\vect{u} = 1$, because by composing $f$ from the left by $\alpha_\vect{u}^{-1} x_1$, which belongs to $\clL$, we obtain a function in $\gen{f}$ that has the same monomials as $f$ but with coefficients multiplied by $\alpha_\vect{u}^{-1}$.

Let $U := \{ \, i \in \nset{n} \mid u_i \neq 0 \, \}$.
By substituting $0$ for the variables $x_i$ with $i \in \nset{n} \setminus U$, we obtain a function $f'$ in $\gen{f}$ with reduced polynomial $P'$ such that $P'$ has degree $k$ and contains only variables $x_i$ with $i \in U$, and $\alpha_\vect{u} x^\vect{u}$ is a monomial of degree $k$ in $P'$.
We may consider the function $f'$ in place of $f$ and assume, without loss of generality, that $U = \nset{n}$.

Let $\{B_1, \dots, B_n\}$ be a partition of $\nset{k}$ in $n$ parts such that $\card{B_j} = u_j$ for all $j \in \nset{n}$.
For $j \in \nset{n}$, let $g_j = \sum_{i \in B_j} x_i$. Note that $g_j \in \clL$.
Consider the function $h := f(g_1, \dots, g_n)$, which is in $\gen{f}$.
For every $\vect{a} \in \{0, \dots, p-1\}^n$ with $\sum_{i=1}^n a_i \leq k$, the expansion of the product $\prod_{i=1}^n g_i^{a_i}$ results in a polynomial of degree at most $k$ in which no monomial contains all variables $x_1, \dots, x_k$, with the exception of $\vect{a} = \vect{u}$, for which the expansion yields a polynomial in which one of the monomials is $x_1 \dots x_k$ and the other monomials do not contain all variables $x_1, \dots, x_k$.
Consequently, $h = x_1 \dots x_k + h'$ where $h'$ is a polynomial in variables $x_1, \dots, x_k$ in which no monomial contains all variables $x_1, \dots, x_k$.

Now, let us define a sequence of functions $h_0, \dots, h_k$ recursively as follows: $h_0 := h$.
For $i = 1, \dots, k$, let $h_i := h_{i-1} - h_{i-1}(x_1, \dots, x_{i-1}, 0, x_{i+1}, \dots, x_k)$.
We have $h_i \in \gen{h_{i-1}}$.
It is easy to see that the polynomial of $h_i$ can be obtained from the polynomial of $h_{i-1}$ by removing all monomials in which $x_i$ does not occur.
Consequently, $x_1 \dots x_k = h_k \in \gen{h_{k-1}} \subseteq \gen{h_{k-2}} \subseteq \dots \subseteq \gen{h_0} \subseteq \gen{f}$.
Now it follows from Lemma~\ref{lem:x1..xk} that $\clD{k} = \gen{x_1 \dots x_k} \subseteq \gen{f} \subseteq \clD{k}$.
\end{proof}

\begin{lemma}
\label{lem:gen-L-stable}
Assume $A = \GF(p)$ with $p$ prime.
Let $K \subseteq \mathcal{O}_A$, $K \neq \emptyset$.
If the set $\{ \, \deg(f) \mid f \in K \, \}$ has a maximum $m$, then $\gen{K} = \clD{m}$.
Otherwise $\gen{K} = \mathcal{O}_A$.
\end{lemma}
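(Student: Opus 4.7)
The plan is to split into the two cases stated and reduce each one to Lemma~\ref{lem:Ddegf} together with Proposition~\ref{prop:Dk-L-stable}. The key point is that, over the prime field $\GF(p)$, the single-element generator result $\gen{f} = \clD{\deg(f)}$ already pins down exactly which $\clD{k}$ appears, so the $\clL$-closure of any set $K$ is controlled entirely by the degrees that occur in $K$.

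Suppose first that $m := \max\{\deg(f) \mid f \in K\}$ exists. For the inclusion $\gen{K} \subseteq \clD{m}$, I would note that every $f \in K$ has $\deg(f) \le m$, so $K \subseteq \clD{m}$; since $\clD{m}$ is $\clL$-stable by Proposition~\ref{prop:Dk-L-stable} and $\gen{K}$ is the smallest $\clL$-stable class containing $K$, this gives $\gen{K} \subseteq \clD{m}$. For the reverse, pick any $f_0 \in K$ attaining the degree $m$. By Lemma~\ref{lem:Ddegf}, $\gen{f_0} = \clD{m}$, and since $\{f_0\} \subseteq K$ implies $\gen{f_0} \subseteq \gen{K}$ (by monotonicity of the closure operator), we get $\clD{m} \subseteq \gen{K}$.

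Now suppose the degree set is unbounded. Then for every $k \in \IN$ there is some $f_k \in K$ with $\deg(f_k) \ge k$, and Lemma~\ref{lem:Ddegf} gives $\gen{f_k} = \clD{\deg(f_k)} \supseteq \clD{k}$. Hence $\clD{k} \subseteq \gen{K}$ for all $k$. Since every operation on $\GF(p)$ has a unique reduced polynomial representation and hence a finite degree, $\mathcal{O}_A = \bigcup_{k \in \IN} \clD{k}$, and we conclude $\mathcal{O}_A \subseteq \gen{K}$; the reverse inclusion is trivial.

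There is essentially no obstacle: the lemma is a direct corollary of Lemma~\ref{lem:Ddegf} and Proposition~\ref{prop:Dk-L-stable}, and the only mild subtlety is invoking monotonicity of the closure operator to pass from a single generator $f_0 \in K$ to the full set $K$ in the bounded case, and the observation $\mathcal{O}_A = \bigcup_k \clD{k}$ in the unbounded case. Both are immediate from the setup of Section~\ref{sec:ff}.
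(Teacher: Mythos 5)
Your proof is correct and follows essentially the same route as the paper's: both cases are handled by combining the $\clL$\hyp{}stability of $\clD{m}$ (Proposition~\ref{prop:Dk-L-stable}) with the single\hyp{}generator result $\gen{f} = \clD{\deg(f)}$ (Lemma~\ref{lem:Ddegf}) and the monotonicity of the closure operator. The only cosmetic difference is that you isolate a single degree\hyp{}$m$ element $f_0$ where the paper writes the same argument as a chain of unions over all of $K$.
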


\begin{proof}
If said maximum $m$ exists, we have $K \subseteq \clD{m}$ and there exists a $g \in K$ with $\deg(g) = m$.
Since $\clD{m}$ is $\clL$\hyp{}stable by Lemma~\ref{prop:Dk-L-stable}, we have $\gen{K} \subseteq \clD{m}$.
Lemma~\ref{lem:Ddegf} implies
\[
\clD{m}
= \clD{\deg(g)}
\subseteq \bigcup_{f \in K} \clD{\deg(f)}
= \bigcup_{f \in K} \gen{f}
\subseteq \gen{K}
\subseteq \clD{m}.
\]

Otherwise there is no finite upper bound on the degrees of the members of $K$.
Then for every $i \in \IN$, there exists an $f_i \in K$ with $\deg(f_i) \geq i$.
Now we have
\[
\mathcal{O}_A
= \bigcup_{i \in \IN} \clD{i}
\subseteq \bigcup_{i \in \IN} \clD{\deg(f_i)}
= \bigcup_{i \in \IN} \gen{f_i}
\subseteq \gen{K}
\subseteq \mathcal{O}_A.
\qedhere
\]
\end{proof}

\begin{theorem}
\label{thm:L-stable:FF}
Assume $A = \GF(p)$ with $p$ prime.
The $\clL$\hyp{}stable classes are $\mathcal{O}_A$, $\clD{k}$ for $k \in \IN$, and $\emptyset$.
\end{theorem}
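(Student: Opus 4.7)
The plan is essentially a bookkeeping exercise that ties together the earlier lemmas of this section; almost all of the real work has already been done. First, one direction of the theorem, namely that each of the listed classes is $\clL$-stable, has been established: Proposition~\ref{prop:empty-full-L-stable} handles $\emptyset$ and $\mathcal{O}_A$, and Proposition~\ref{prop:Dk-L-stable} handles $\clD{k}$ for each $k \in \IN$.

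For the converse, I would take an arbitrary $\clL$-stable class $K \subseteq \mathcal{O}_A$ and argue that it must be one of the listed classes. If $K = \emptyset$, there is nothing to show. So assume $K \neq \emptyset$. Since $K$ is $\clL$-stable and contains itself, we have $\gen{K} = K$. Now consider the set of degrees $D := \{ \, \deg(f) \mid f \in K \, \}$ and split into two cases.

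In the case where $D$ has a maximum $m$, Lemma~\ref{lem:gen-L-stable} gives $K = \gen{K} = \clD{m}$, so $K$ appears in the list. In the remaining case, $D$ is unbounded, and the same lemma yields $K = \gen{K} = \mathcal{O}_A$. This exhausts the possibilities and completes the classification.

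There is no real obstacle here: the nontrivial content has already been extracted in Lemmata~\ref{lem:x1..xk}, \ref{lem:Ddegf}, and \ref{lem:gen-L-stable}, which together show that the $\clL$-closure of any nonempty set is determined purely by the supremum of the degrees of its members. The theorem follows by observing that an $\clL$-stable class coincides with its own $\clL$-closure.
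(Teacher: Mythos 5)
Your proposal is correct and follows the paper's own argument exactly: the listed classes are $\clL$\hyp{}stable by Propositions~\ref{prop:Dk-L-stable} and \ref{prop:empty-full-L-stable}, and the converse is exactly the content of Lemma~\ref{lem:gen-L-stable} applied to a nonempty $\clL$\hyp{}stable class $K$, which satisfies $K = \gen{K}$. Your write\hyp{}up merely spells out the case split that the paper leaves implicit.
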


\begin{proof}
The classes mentioned in the statement are $\clL$\hyp{}stable by Propositions~\ref{prop:Dk-L-stable} and \ref{prop:empty-full-L-stable}.
Lemma~\ref{lem:gen-L-stable} implies that there are no further $\clL$\hyp{}stable classes.
\end{proof}


\section{Boolean functions}
\label{sec:Bf}

In this section, we define various properties and classes of Boolean functions.
For easy reference, we have collected in Table~\ref{table:notation} the notation used for the function classes.

\begin{definition}
\label{def:all}
Operations on $\{0,1\}$ are called \emph{Boolean functions.}
The class of all Boolean functions is denoted by $\clAll$.
\end{definition}

\begin{definition}
\label{def:Zhegalkin}
By particularizing Definition~\ref{def:polynomials} to the two\hyp{}element field, we obtain that
every Boolean function is represented by a unique \emph{multilinear polynomial} over the two\hyp{}element field, i.e., a polynomial with coefficients in $\GF(2)$ in which no variable appears with an exponent greater than $1$.
Since the coefficients come from the set $\{0,1\}$, every monomial with a nonzero coefficient is monic.
The unique multilinear polynomial representing a Boolean function $f$ is known as the \emph{Zhegalkin polynomial} of $f$, and it can be written as
\begin{equation}
\sum_{S \in \monomials{f}} x_S,
\label{eq:Zhegalkin}
\end{equation}
where $x_S$ is a shorthand for $\prod_{i \in S} x_i$ and $\monomials{f} \subseteq \mathcal{P}(\nset{n})$.
Note that $x_\emptyset = 1$ and $\sum_{S \in \emptyset} x_S = 0$.
The terms $x_S$ with $S \neq \emptyset$ are called \emph{monomials.}
If $\emptyset \in \monomials{f}$, then we say that $f$ has \emph{constant term $1$}; otherwise $f$ has \emph{constant term $0$}.
Without any risk of confusion, we will denote Boolean functions by their Zhegalkin polynomials, and we refer to the set $\monomials{f}$ as the \emph{set of monomials} of $f$.
\end{definition}

\begin{definition}
\label{def:Boolean-functions}
Some well\hyp{}known Boolean functions are defined in Table~\ref{table:Bfs}:
modulo\hyp{}$2$ addition $+$,
conjunction $\wedge$,
disjunction $\vee$,
triple sum $\oplus_3$,
median $\mu$.
Their Zhegalkin polynomials are the following:
\begin{gather*}
x_1 + x_2,
\qquad\qquad
x_1 \wedge x_2 = x_1 x_2,
\qquad\qquad
x_1 \vee x_2 = x_1 x_2 + x_1 + x_2,
\\
\mathord{\oplus_3}(x_1,x_2,x_3) = x_1 + x_2 + x_3,
\qquad
\mu(x_1,x_2,x_3) = x_1 x_2 + x_1 x_3 + x_2 x_3.
\end{gather*}
The triple sum is the Mal'cev operation of the abelian group $(\mathrm{GF}(2),{+})$, and
the triple sum and the median are the minority and the majority operations on $\{0,1\}$, respectively; see Section~\ref{sec:remarks}.
\end{definition}

\begin{table}
\begin{tabular}[t]{ccccc}
\toprule
$x$ & $y$ & $x + y$ & $x \wedge y$ & $x \vee y$ \\
\midrule
0 & 0 & 0 & 0 & 0 \\
0 & 1 & 1 & 0 & 1 \\
1 & 0 & 1 & 0 & 1 \\
1 & 1 & 0 & 1 & 1 \\
\bottomrule
\end{tabular}
\qquad
\begin{tabular}[t]{ccccc}
\toprule
$x$ & $y$ & $z$ & $\oplus_3(x,y,z)$ & $\mu(x,y,z)$ \\
\midrule
0 & 0 & 0 & 0 & 0 \\
0 & 0 & 1 & 1 & 0 \\
0 & 1 & 0 & 1 & 0 \\
0 & 1 & 1 & 0 & 1 \\
1 & 0 & 0 & 1 & 0 \\
1 & 0 & 1 & 0 & 1 \\
1 & 1 & 0 & 0 & 1 \\
1 & 1 & 1 & 1 & 1 \\
\bottomrule
\end{tabular}

\bigskip
\caption{Well\hyp{}known Boolean functions}
\label{table:Bfs}
\end{table}

\begin{definition}
\label{def:ab-eq-neq}
For $a, b \in \{0, 1\}$, let
\begin{align*}
\clVal{a}{} &:= \{ \, f \in \clAll \mid f(0, \dots, 0) = a \, \}, \\
\clVal{}{b} &:= \{ \, f \in \clAll \mid f(1, \dots, 1) = b \, \},
\end{align*}
and let $\clVal{a}{b} := \clVal{a}{} \cap \clVal{}{b}$.
Furthermore, define
\begin{align*}
\clEq &:= \{ \, f \in \clAll \mid f(0, \dots, 0) = f(1, \dots, 1) \, \}, \\
\clNeq &:= \{ \, f \in \clAll \mid f(0, \dots, 0) \neq f(1, \dots, 1) \, \},
\end{align*}
that is, $\clEq = \clVal{0}{0} \cup \clVal{1}{1}$ and $\clNeq = \clVal{0}{1} \cup \clVal{1}{0}$.

Clearly $\clVal{0}{} \cap \clVal{1}{} = \clEmpty$ and $\clVal{0}{} \cup \clVal{1}{} = \clAll$;
similarly, $\clVal{}{0} \cap \clVal{}{1} = \clEmpty$ and $\clVal{}{0} \cup \clVal{}{1} = \clAll$,
and $\clEq \cap \clNeq = \clEmpty$ and $\clEq \cup \clNeq = \clAll$.
It is easy to see that $\clVal{a}{}$ is the class of all Boolean functions with constant term $a$.
\end{definition}

\begin{definition}
\label{def:T0T1Tc}
For $a \in \{0,1\}$, a Boolean function $f$ is \emph{$a$\hyp{}preserving} if $f(a, \dots, a) = a$.
A function is \emph{constant\hyp{}preserving} if it is both $0$- and $1$\hyp{}preserving.
We denote the classes of all $0$\hyp{}preserving, of all $1$\hyp{}preserving, and of all constant\hyp{}preserving functions by $\clTo$, $\clTi$, and $\clTc$, respectively.
Note that $\clTc = \clTo \cap \clTi$.
It follows from the definitions that $\clTo = \clCon{0}$, $\clTi = \clYksi{1}$, and $\clTc = \clVal{0}{1}$.
\end{definition}

\begin{remark}
\label{rem:multiple-notation}
The reason why we have introduced multiple notation for the classes $\clTo = \clCon{0}$ and $\clTi = \clYksi{1}$ is to facilitate writing certain statements in a parameterized form and to make reference, as the case may be, to either the classes $\clCon{a}$ ($a \in \{0,1\}$), $\clYksi{b}$ ($b \in \{0,1\}$), or $\clTa{a}$ ($a \in \{0,1\}$).
\end{remark}

\begin{definition}
\label{def:parity}
The \emph{parity} of a Boolean function $f$, denoted $\parity{f}$, is a number, either $0$ or $1$, which is given by
\[
\parity{f} := \card{ \monomials{f} \setminus \{ \emptyset \} } \bmod{2}.
\]
We call a function \emph{even} or \emph{odd} if its parity is $0$ or $1$, respectively.
Note that $\clEven$ and $\clOdd$ are precisely the classes of even and odd functions, respectively.
\end{definition}

\begin{definition}
\label{def:monotone}
The set $\{0,1\}$ is endowed with the natural order $\leq$, with $0 < 1$, which induces the componentwise order, also denoted by $\leq$, on the Cartesian power $\{0,1\}^n$: for $(a_1, \dots, a_n), (b_1, \dots, b_n) \in \{0,1\}^n$, $(a_1, \dots, a_n) \leq (b_1, \dots, b_n)$ if and only if $a_i \leq b_i$ for all $i \in \nset{n}$.

A Boolean function $f \colon \{0,1\}^n \to \{0,1\}$ is \emph{monotone} if $f(\vect{a}) \leq f(\vect{b})$ whenever $\vect{a} \leq \vect{b}$.
We denote by $\clM$ the class of all monotone Boolean functions.
\end{definition}

\begin{definition}
\label{def:S-R}
For $a \in \{0,1\}$, let $\overline{a}$ denote the \emph{negation} of $a$, that is, $\overline{a} := 1 - a$, and for $\vect{a} = (a_1, \dots, a_n) \in \{0,1\}^n$, let $\overline{\vect{a}} := (\overline{a_1}, \dots, \overline{a_n})$.
For any $f \in \clAll^{(n)}$, denote by $\overline{f}$, $f^\mathrm{n}$, and $f^\mathrm{d}$ the \emph{\textup{(}outer\textup{)} negation,} the \emph{inner negation,} and the \emph{dual} of $f$, that is, the functions $\overline{f}, f^\mathrm{n}, f^\mathrm{d} \in \clAll^{(n)}$ with $\overline{f}(\vect{a}) = \overline{f(\vect{a})}$, $f^\mathrm{n}(\vect{a}) = f(\overline{\vect{a}})$, and $f^\mathrm{d}(\vect{a}) = \overline{f(\overline{\vect{a}})}$ for all $\vect{a} \in \{0,1\}^n$.
For $C \subseteq \clAll$, let $\overline{C} := \{ \, \overline{f} \mid f \in C \, \}$.

A function $f$ is \emph{self\hyp{}dual} if $f = f^\mathrm{d}$, i.e., $f(\vect{a}) = \overline{f(\overline{\vect{a}})}$ for all $\vect{a} \in \{0,1\}^n$.
A function $f$ is \emph{reflexive} (or \emph{self\hyp{}anti\hyp{}dual}) if $f = f^\mathrm{n}$, i.e., $f(\vect{a}) = f(\overline{\vect{a}})$ for all $\vect{a} \in \{0,1\}^n$.
We denote by $\clS$ the class of all self\hyp{}dual functions.
Furthermore, define $\clSc := \clS \cap \clTc$ and $\clSM := \clS \cap \clM$, the classes of constant\hyp{}preserving self\hyp{}dual functions and monotone self\hyp{}dual functions, respectively.
\end{definition}

\begin{lemma}
\label{lem:neg-pol}
Let $f, g \in \clAll^{(n)}$.
\begin{enumerate}[label=\upshape{(\roman*)}, leftmargin=*, widest=iii]
\item\label{lem:neg-pol:outer}
$\overline{f} = f + 1$.

\item\label{lem:neg-pol:inner-sum}
$(f + g)^\mathrm{n} = f^\mathrm{n} + g^\mathrm{n}$.

\item\label{lem:neg-pol:inner}
$f^\mathrm{n} = \sum_{S \in \monomials{f}} \sum_{T \subseteq S} x_T$.

\item\label{lem:neg-pol:monomials}
Consequently, $\monomials{f}$ and $\monomials{f^\mathrm{n}}$ have the same maximal elements with respect to subset inclusion.
\end{enumerate}
\end{lemma}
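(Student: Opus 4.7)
The four parts are essentially routine, so the plan is mainly to identify the right arithmetic/combinatorial identities to invoke.

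For \ref{lem:neg-pol:outer}, I would just note that in $\GF(2)$ we have $1 - a = 1 + a$, so pointwise $\overline{f}(\vect{a}) = 1 - f(\vect{a}) = f(\vect{a}) + 1$.

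For \ref{lem:neg-pol:inner-sum}, the point is that inner negation is substitution of $\overline{\vect{a}}$ for $\vect{a}$, and $(f+g)(\vect{x}) = f(\vect{x}) + g(\vect{x})$ is a pointwise identity, so
\[
(f+g)^{\mathrm{n}}(\vect{a}) = (f+g)(\overline{\vect{a}}) = f(\overline{\vect{a}}) + g(\overline{\vect{a}}) = f^{\mathrm{n}}(\vect{a}) + g^{\mathrm{n}}(\vect{a}).
\]

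For \ref{lem:neg-pol:inner}, I would start from the Zhegalkin expansion $f = \sum_{S \in \monomials{f}} x_S$, replace each $x_i$ by $\overline{x_i} = 1 + x_i$, and expand the product:
\[
\prod_{i \in S}(1 + x_i) = \sum_{T \subseteq S} \prod_{i \in T} x_i = \sum_{T \subseteq S} x_T.
\]
Summing over $S \in \monomials{f}$ gives the asserted expression; note that this is not yet in reduced form, because the same $x_T$ may appear several times and cancellations mod $2$ occur.

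Part \ref{lem:neg-pol:monomials} is the only one that needs a small argument. Collecting terms in the expression from \ref{lem:neg-pol:inner}, the coefficient of $x_T$ in $f^{\mathrm{n}}$ equals $\lvert\{\,S \in \monomials{f} \mid T \subseteq S\,\}\rvert \bmod 2$, so
\[
T \in \monomials{f^{\mathrm{n}}} \iff \lvert\{\,S \in \monomials{f} \mid T \subseteq S\,\}\rvert \text{ is odd.}
\]
Now if $S^{*}$ is maximal in $\monomials{f}$, the only $S \in \monomials{f}$ with $S^{*} \subseteq S$ is $S^{*}$ itself, so the count is $1$ and hence $S^{*} \in \monomials{f^{\mathrm{n}}}$; any strictly larger $T$ lies in no $\mathcal{P}(S)$ with $S \in \monomials{f}$, so cannot be in $\monomials{f^{\mathrm{n}}}$, making $S^{*}$ maximal in $\monomials{f^{\mathrm{n}}}$. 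Conversely, if $T$ is maximal in $\monomials{f^{\mathrm{n}}}$, then $T \subseteq S$ for some $S \in \monomials{f}$; choosing $S^{*} \in \monomials{f}$ maximal with $S \subseteq S^{*}$, the previous direction gives $S^{*} \in \monomials{f^{\mathrm{n}}}$ with $T \subseteq S^{*}$, whence $T = S^{*}$, so $T$ is maximal in $\monomials{f}$ as well. This bijection between maximal elements is the only genuinely non\hyp{}trivial step, but it reduces to the trivial observation that a maximal set in $\monomials{f}$ is counted exactly once.
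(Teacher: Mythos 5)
Your proof is correct and follows essentially the same route as the paper's: parts (i)--(iii) are the same pointwise and product-expansion computations, and part (iv) rests on the same observation that a maximal monomial of $f$ is counted exactly once in the expansion from (iii) and so cannot cancel. Your write-up of (iv) is somewhat more complete than the paper's one-line justification, since you also verify the converse direction (that every maximal element of $\monomials{f^{\mathrm{n}}}$ is maximal in $\monomials{f}$), but the underlying idea is identical.
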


\begin{proof}
\ref{lem:neg-pol:outer}
This is obvious.

\ref{lem:neg-pol:inner-sum}
It follows from the definition that for all $\vect{a} \in \{0,1\}^n$, we have
\[
(f + g)^\mathrm{n}(\vect{a})
= (f + g)(\overline{\vect{a}})
= f(\overline{\vect{a}}) + g(\overline{\vect{a}})
= f^\mathrm{n}(\vect{a}) + g^\mathrm{n}(\vect{a}).
\]

\ref{lem:neg-pol:inner}
Consider the function $g$ with Zhegalkin polynomial $x_S = \prod_{i \in S} x_i$ for some $S \subseteq \nset{n}$.
The inner negation of $g$ is given by $\prod_{i \in S} (x_i + 1)$.
The expansion of this product yields $\sum_{T \subseteq S} x_T$.
The claim now follows by part \ref{lem:neg-pol:inner-sum}.

\ref{lem:neg-pol:monomials}
This is clear, as the maximal monomials of $f$ will not cancel out in the summation of part \ref{lem:neg-pol:inner}.
\end{proof}

\begin{definition}
\label{def:Dk}
By particularizing the definition of degree (see Definition~\ref{def:polynomials}) to monomials and polynomials over $\GF(2)$, we obtain that the \emph{degree} of a monomial $x_S$ is just $\card{S}$, and
the \emph{degree} of a Boolean function $f$ is the size of the largest monomial in the Zhegalkin polynomial of $f$, i.e., $\deg(f) := \max_{S \in \monomials{f}} \card{S}$ for $f \neq 0$, and we agree that $\deg(0) := 0$.
As before, for $k \in \IN$, we denote by $\clD{k}$ the class of all Boolean functions of degree at most $k$.
Clearly $\clD{k} \subsetneq \clD{k+1}$ for all $k \in \IN$.

A Boolean function $f$ is \emph{linear} if $\deg(f) \leq 1$.
We denote by $\clL$ the class of all linear functions.
Thus $\clL = \clD{1}$.
We also let
\begin{align*}
\clLo &:= \clL \cap \clTo = \clL \cap \clCon{0}, &
\clLi &:= \clL \cap \clTi = \clL \cap \clYksi{1}, \\
\clLS &:= \clL \cap \clS = \clL \cap \clOdd, &
\clLc &:= \clL \cap \clTc = \clL \cap \clBoth{0}{1}.
\end{align*}
The equalities in the above definitions are clear by Remark~\ref{rem:multiple-notation}, except for the equality $\clLS = \clL \cap \clOdd$ which is easy to verify and also follows from Lemma~\ref{lem:Selezneva} below.
\end{definition}

\begin{definition}
\label{def:Xk}
Let $f$ be an $n$\hyp{}ary Boolean function.
The \emph{characteristic} of a set $S \subseteq \nset{n}$ in $f$ is a number, either $0$ or $1$, which is given by
\[
\Char(S,f) := \card{ \{ \, A \in \monomials{f} \mid S \subsetneq A \, \} } \bmod{2}.
\]
The \emph{characteristic rank} of $f$, denoted by $\charrank{f}$, is the smallest integer $m$ such that $\Char(S,f) = 0$ for all subsets $S \subseteq \nset{n}$ with $\card{S} \geq m$.
Clearly $\charrank{f} \leq n$ because $\Char(\nset{n},f) = 0$.

For $k \in \IN$, denote by $\clChar{k}$ the class of all Boolean functions of characteristic rank at most $k$.
For any $k \in \IN$, we have $\clChar{k} \subsetneq \clChar{k+1}$.
The inclusion is proper, as witnessed by the function $x_1 \dots x_{k+1} \in \clChar{k+1} \setminus \clChar{k}$.
Moreover, for any $k \in \IN$, we have $\clD{k} \subseteq \clChar{k}$.
\end{definition}

The characteristic rank of a function has an equivalent description in terms of the degree of a certain derived function.

\begin{lemma}
\label{lem:Xk-description}
A Boolean function $f$ satisfies 
$\charrank{f} = k$
if and only if
the function $\varphi := f + f^\mathrm{n}$ satisfies $\deg(\varphi) = k - 1$.
\textup{(}Here $\deg(0) = -1$.\textup{)}
\end{lemma}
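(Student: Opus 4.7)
The plan is to compute the Zhegalkin polynomial of $\varphi = f + f^\mathrm{n}$ explicitly and then read off both quantities from its monomial support. By Lemma~\ref{lem:neg-pol}\ref{lem:neg-pol:inner}, we have
\[
f^\mathrm{n} = \sum_{S \in \monomials{f}} \sum_{T \subseteq S} x_T,
\]
so adding $f = \sum_{S \in \monomials{f}} x_S$ to $f^\mathrm{n}$ over $\GF(2)$ cancels precisely the $T = S$ terms and yields
\[
\varphi = \sum_{S \in \monomials{f}} \sum_{T \subsetneq S} x_T.
\]

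Next I would collect like terms: for each $T \subseteq \nset{n}$, the coefficient of $x_T$ in $\varphi$ is $\card{\{S \in \monomials{f} \mid T \subsetneq S\}} \bmod 2$, which is exactly $\Char(T, f)$ by Definition~\ref{def:Xk}. Consequently, $\monomials{\varphi} = \{\, T \subseteq \nset{n} \mid \Char(T, f) = 1 \,\}$.

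From this identification the equivalence falls out directly. Indeed, $\charrank{f}$ is by definition the smallest $m$ such that $\Char(S,f) = 0$ whenever $\card{S} \geq m$; equivalently, $\charrank{f}$ is one more than the maximum cardinality of a set $T$ with $\Char(T,f)=1$ (taking this maximum to be $-1$ if no such $T$ exists). Since $\deg(\varphi) = \max_{T \in \monomials{\varphi}} \card{T}$ (with $\deg(0) = -1$), we obtain $\deg(\varphi) = \charrank{f} - 1$, which is exactly the claimed equivalence.

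There is no real obstacle here beyond the bookkeeping: the only care needed is in the boundary case $\charrank{f} = 0$, where no $T$ satisfies $\Char(T,f) = 1$, forcing $\varphi = 0$ and $\deg(\varphi) = -1$, consistent with the stated convention. Everything else is a direct translation between the monomial support of $\varphi$ and the defining condition of the characteristic rank.
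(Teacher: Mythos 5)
Your proposal is correct and follows essentially the same route as the paper's proof: compute $\varphi = \sum_{S \in \monomials{f}} \sum_{T \subsetneq S} x_T$ via Lemma~\ref{lem:neg-pol}\ref{lem:neg-pol:inner}, identify $\monomials{\varphi}$ with the sets of characteristic $1$, and translate degrees into the characteristic rank. The extra remark on the boundary case $\charrank{f}=0$ is a welcome clarification but does not change the argument.
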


\begin{proof}
Let $f \in \clAll^{(n)}$, and let $\varphi := f + f^\mathrm{n}$.
By Lemma~\ref{lem:neg-pol}\ref{lem:neg-pol:inner}, we have
\[
\varphi = \sum_{S \in \monomials{f}} x_S + \sum_{S \in \monomials{f}} \sum_{T \subseteq S} x_T = \sum_{S \in \monomials{f}} \sum_{T \subsetneq S} x_T.
\]
From this expression, we see that $A \in \monomials{\varphi}$ if and only if
$\card{ \{ \, S \in \monomials{f} \mid A \subsetneq S \, \} } \pmod{2} = 1$,
i.e., $\Char(A,f) = 1$.
Consequently, $\deg(\varphi) = k - 1$ if and only if
$k$ is the smallest integer $m$ such that
for all subsets $A \subseteq \nset{n}$ with $\card{A} \geq m$, we have $\Char(A,f) = 0$, i.e., $\charrank{f} = k$.
\end{proof}

Reflexive and self\hyp{}dual functions have a beautiful characterization in terms of the characteristic rank.

\begin{lemma}[{Selezneva, Bukhman \cite[Lemmata 3.1, 3.5]{SelBuk-2016}}]
\label{lem:Selezneva}
\leavevmode
\begin{enumerate}[label=\upshape{(\roman*)}, leftmargin=*, widest=iii]
\item A Boolean function $f$ is reflexive if and only if $\charrank{f} = 0$.
\item A Boolean function $f$ is self\hyp{}dual if and only if $f + x_1$ is reflexive.
\item A Boolean function $f$ is self\hyp{}dual if and only if $f$ is odd and $\charrank{f} = 1$.
\end{enumerate}
\end{lemma}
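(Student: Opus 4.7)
The plan is to derive all three parts from Lemma~\ref{lem:Xk-description}, which converts the condition $\charrank{f}=k$ into the degree condition $\deg(\varphi)=k-1$ for $\varphi:=f+f^\mathrm{n}$. Alongside it I will use Lemma~\ref{lem:neg-pol} and the polynomial identity $x_1^\mathrm{n}=x_1+1$ (immediate from $\overline{a_1}=a_1+1$) to rewrite reflexivity and self\hyp{}duality as linear identities between $f$ and $f^\mathrm{n}$.

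Part (i) is then immediate: $f$ is reflexive iff $f=f^\mathrm{n}$ iff $\varphi=0$, and under the convention $\deg(0)=-1$ this is $\deg(\varphi)=-1$, matching $\charrank{f}=0$ via Lemma~\ref{lem:Xk-description}. For part (ii), Lemma~\ref{lem:neg-pol}\ref{lem:neg-pol:inner-sum} gives $(f+x_1)^\mathrm{n}=f^\mathrm{n}+x_1^\mathrm{n}=f^\mathrm{n}+x_1+1$, so $f+x_1$ is reflexive iff $f+f^\mathrm{n}=1$. On the other hand, self\hyp{}duality of $f$ says $f(\vect{a})+f(\overline{\vect{a}})=1$ for all $\vect{a}$, i.e.\ the same identity $f+f^\mathrm{n}=1$; the two conditions therefore coincide.

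Part (iii) follows by pairing the two preceding observations. Since self\hyp{}duality is equivalent to $\varphi=1$ and $\deg(1)=0$, Lemma~\ref{lem:Xk-description} with $k=1$ yields $\charrank{f}=1$. For the oddness, I would evaluate $\varphi$ at $\vect{0}$: $\varphi(\vect{0})=f(\vect{0})+f(\vect{1})$, and a small case analysis (depending on whether $\emptyset\in\monomials{f}$) shows this equals $\card{\monomials{f}\setminus\{\emptyset\}}\bmod 2 = \parity{f}$. The identity $\varphi=1$ therefore forces $\parity{f}=1$, i.e.\ $f$ is odd. Conversely, $\charrank{f}=1$ alone already implies $\varphi=1$ (the unique nonzero polynomial of degree $0$), so self\hyp{}duality follows and the oddness assumption in the hypothesis is in fact automatic.

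The arguments are essentially algebraic bookkeeping; the only delicate point is keeping the degree conventions straight, especially distinguishing $\varphi=0$ from $\varphi=1$ via $\deg(\varphi)\in\{-1,0\}$ when invoking Lemma~\ref{lem:Xk-description}, and correctly identifying the constant term of $\varphi$ with $\parity{f}$ via Lemma~\ref{lem:neg-pol}. No substantive obstacle is expected.
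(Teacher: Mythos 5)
The paper does not prove this lemma at all: it is quoted from Selezneva and Bukhman \cite[Lemmata 3.1, 3.5]{SelBuk-2016}, so there is no in-paper argument to compare against. Your derivation is correct and self-contained, and it is the natural one given the surrounding material: Lemma~\ref{lem:Xk-description} translates $\charrank{f}=k$ into $\deg(f+f^\mathrm{n})=k-1$ (with $\deg(0)=-1$), reflexivity is literally $f+f^\mathrm{n}=0$, and self-duality is $f+f^\mathrm{n}=1$, which via $(f+x_1)^\mathrm{n}=f^\mathrm{n}+x_1+1$ (Lemma~\ref{lem:neg-pol}\ref{lem:neg-pol:inner-sum}) is exactly reflexivity of $f+x_1$. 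Your identification of the constant term of $\varphi=f+f^\mathrm{n}$ with $\parity{f}$ is also right, since $\varphi(\vect{0})=f(\vect{0})+f(\vect{1})$ and the paper already records that $\clOdd$ is both the class of functions with $f(\vect{0})\neq f(\vect{1})$ and the class of odd-parity functions. Your closing observation -- that $\charrank{f}=1$ by itself forces $\varphi=1$ and hence self-duality, so the oddness hypothesis in part (iii) is redundant -- is correct and consistent with the paper's remark that $\clChar{1}$ is the (disjoint) union of the reflexive and self-dual functions. The only point worth being explicit about, which you already flag, is that the two degree conventions ($\deg(0)=0$ in Definition~\ref{def:polynomials} versus $\deg(0)=-1$ in Lemma~\ref{lem:Xk-description}) must not be mixed; you use the latter consistently, so the argument goes through.
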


In other words, $\clChar{0} = \clChar{1} \cap \clEven$ is the class of all reflexive functions, $\clChar{1} \cap \clOdd$ is the class of all self\hyp{}dual functions, and $\clChar{1}$ is the class of all self\hyp{}dual or reflexive functions.

\begin{definition}
\label{def:conj-disj}
Let $\clLambdac$ and $\clVc$ denote the classes of all conjunctions of arguments and of all disjunctions of arguments, respectively, that is,
\begin{align*}
\clLambdac &:= \{ \, f \in \clAll^{(n)} \mid n \in \IN_{+},\, \emptyset \neq \{i_1, \dots, i_r\} \subseteq \nset{n}, \, f(a_1, \dots, a_n) = a_{i_1} \wedge \dots \wedge a_{i_r} \, \}, \\
\clVc &:= \{ \, f \in \clAll^{(n)} \mid n \in \IN_{+},\, \emptyset \neq \{i_1, \dots, i_r\} \subseteq \nset{n}, \, f(a_1, \dots, a_n) = a_{i_1} \vee \dots \vee a_{i_r} \, \}.
\end{align*}
Let $\clIc$, $\clIo$, $\clIi$, and $\clIstar$ denote the class of all projections, the class of all projections and constant $0$ functions, the class of all projections and constant $1$ functions, and the class of all projections and negated projections, respectively, that is,
\begin{align*}
\clIc &:= \{ \, \pr_i^{(n)} \mid i, n \in \IN_{+},\, 1 \leq i \leq n \, \}, \\
\clIo &:= \clIc \cup \{ \, \cf{n}{0} \mid n \in \IN_{+} \, \}, \\
\clIi &:= \clIc \cup \{ \, \cf{n}{1} \mid n \in \IN_{+} \, \}, \\
\clIstar &:= \clIc \cup \overline{\clIc}.
\end{align*}
\end{definition}

It was shown by Post~\cite{Post} that there are a countably infinite number of clones of Boolean functions.
In this paper, we will only need a handful of them, namely the clones
$\clAll$, $\clTo$, $\clTi$, $\clTc$, $\clM$, $\clS$, $\clSc$, $\clSM$, $\clL$, $\clLo$, $\clLi$, $\clLS$, $\clLc$, $\clLambdac$, $\clVc$, $\clIstar$, $\clIo$, $\clIi$, and $\clIc$ that were defined above.
The lattice of clones of Boolean functions, the so-called \emph{Post's lattice,} is shown in Figure~\ref{fig:Post}, and the above\hyp{}mentioned clones are indicated in the diagram.
In what follows, we will often make use of the following generating sets for some of these clones.
\begin{align*}
\clAll &= \clonegen{x_1 x_2 + 1},
&
\clS &= \clonegen{\mu, \, x_1 + 1},
&
\clSM &= \clonegen{\mu},
&
\clL &= \clonegen{x_1 + x_2, 1},
\\
\clLS &= \clonegen{\mathord{\oplus_3}, \, x_1 + 1},
&
\clLc &= \clonegen{\mathord{\oplus_3}},
&
\clLambdac &= \clonegen{\mathord{\wedge}},
&
\clVc &= \clonegen{\mathord{\vee}},
\\
\clIstar &= \clonegen{x_1 + 1},
&
\clIo &= \clonegen{0},
&
\clIi &= \clonegen{1},
&
\clIc &= \clonegen{\emptyset}.
\end{align*}

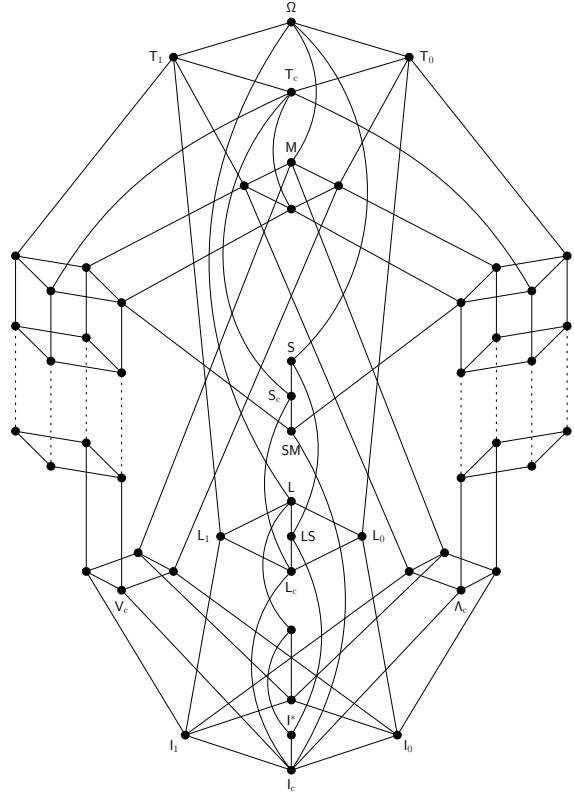
\begin{figure}
\begin{center}
\scalebox{0.31}{%
\tikzstyle{every node}=[circle, draw, fill=black, scale=1, font=\huge]
\begin{tikzpicture}[baseline, scale=1]
   \node [label = below:$\clIc$] (Ic) at (0,-1) {};
   \node [label = above:$\clIstar$] (Istar) at (0,0.5) {};
   \node [label = below right:$\clIo$] (I0) at (4.5,0.5) {};
   \node [label = below left:$\clIi$] (I1) at (-4.5,0.5) {};
   \node (I) at (0,2) {};
   \node (Omega1) at (0,5) {};
   \node [label = below:$\clLc$] (Lc) at (0,7.5) {};
   \node [label = right:$\clLS$] (LS) at (0,9) {};
   \node [label = right:$\clLo$] (L0) at (3,9) {};
   \node [label = left:$\clLi$] (L1) at (-3,9) {};
   \node [label = above:$\clL$] (L) at (0,10.5) {};
   \node [label = below:$\clSM$] (SM) at (0,13.5) {};
   \node [label = left:$\clSc$] (Sc) at (0,15) {};
   \node [label = above:$\clS$] (S) at (0,16.5) {};
   \node (Mc) at (0,23) {};
   \node (M0) at (2,24) {};
   \node (M1) at (-2,24) {};
   \node [label = above:$\clM$] (M) at (0,25) {};
   \node [label = below:$\clLambdac$] (Lamc) at (7.2,6.7) {};
   \node (Lam1) at (5,7.5) {};
   \node (Lam0) at (8.7,7.5) {};
   \node (Lam) at (6.5,8.3) {};
   \node (McUi) at (7.2,11.5) {};
   \node (MUi) at (8.7,13) {};
   \node (TcUi) at (10.2,12) {};
   \node (Ui) at (11.7,13.5) {};
   \node (McU3) at (7.2,16) {};
   \node (MU3) at (8.7,17.5) {};
   \node (TcU3) at (10.2,16.5) {};
   \node (U3) at (11.7,18) {};
   \node (McU2) at (7.2,19) {};
   \node (MU2) at (8.7,20.5) {};
   \node (TcU2) at (10.2,19.5) {};
   \node (U2) at (11.7,21) {};
   \node [label = below:$\clVc$] (Vc) at (-7.2,6.7) {};
   \node (V0) at (-5,7.5) {};
   \node (V1) at (-8.7,7.5) {};
   \node (V) at (-6.5,8.3) {};
   \node (McWi) at (-7.2,11.5) {};
   \node (MWi) at (-8.7,13) {};
   \node (TcWi) at (-10.2,12) {};
   \node (Wi) at (-11.7,13.5) {};
   \node (McW3) at (-7.2,16) {};
   \node (MW3) at (-8.7,17.5) {};
   \node (TcW3) at (-10.2,16.5) {};
   \node (W3) at (-11.7,18) {};
   \node (McW2) at (-7.2,19) {};
   \node (MW2) at (-8.7,20.5) {};
   \node (TcW2) at (-10.2,19.5) {};
   \node (W2) at (-11.7,21) {};
   \node [label = above:$\clTc$] (Tc) at (0,28) {};
   \node [label = right:$\clTo$] (T0) at (5,29.5) {};
   \node [label = left:$\clTi$] (T1) at (-5,29.5) {};
   \node [label = above:$\clAll$] (Omega) at (0,31) {};
   \draw [thick] (Ic) -- (Istar) to[out=135,in=-135] (Omega1);
   \draw [thick] (I) -- (Omega1);
   \draw [thick] (Omega1) to[out=135,in=-135] (L);
   \draw [thick] (Ic) -- (I0) -- (I);
   \draw [thick] (Ic) -- (I1) -- (I);
   \draw [thick] (Ic) to[out=128,in=-134] (Lc);
   \draw [thick] (Ic) to[out=58,in=-58] (SM);
   \draw [thick] (I0) -- (L0);
   \draw [thick] (I1) -- (L1);
   \draw [thick] (Istar) to[out=60,in=-60] (LS);
   \draw [thick] (Ic) -- (Lamc);
   \draw [thick] (I0) -- (Lam0);
   \draw [thick] (I1) -- (Lam1);
   \draw [thick] (I) -- (Lam);
   \draw [thick] (Ic) -- (Vc);
   \draw [thick] (I0) -- (V0);
   \draw [thick] (I1) -- (V1);
   \draw [thick] (I) -- (V);
   \draw [thick] (Lamc) -- (Lam0) -- (Lam);
   \draw [thick] (Lamc) -- (Lam1) -- (Lam);
   \draw [thick] (Lamc) -- (McUi);
   \draw [thick] (Lam0) -- (MUi);
   \draw [thick] (Lam1) -- (M1);
   \draw [thick] (Lam) -- (M);
   \draw [thick] (Vc) -- (V0) -- (V);
   \draw [thick] (Vc) -- (V1) -- (V);
   \draw [thick] (Vc) -- (McWi);
   \draw [thick] (V0) -- (M0);
   \draw [thick] (V1) -- (MWi);
   \draw [thick] (V) -- (M);
   \draw [thick] (McUi) -- (TcUi) -- (Ui);
   \draw [thick] (McUi) -- (MUi) -- (Ui);
   \draw [thick,loosely dashed] (McUi) -- (McU3);
   \draw [thick,loosely dashed] (MUi) -- (MU3);
   \draw [thick,loosely dashed] (TcUi) -- (TcU3);
   \draw [thick,loosely dashed] (Ui) -- (U3);
   \draw [thick] (McU3) -- (TcU3) -- (U3);
   \draw [thick] (McU3) -- (MU3) -- (U3);
   \draw [thick] (McU3) -- (McU2);
   \draw [thick] (MU3) -- (MU2);
   \draw [thick] (TcU3) -- (TcU2);
   \draw [thick] (U3) -- (U2);
   \draw [thick] (McU2) -- (TcU2) -- (U2);
   \draw [thick] (McU2) -- (MU2) -- (U2);
   \draw [thick] (McU2) -- (Mc);
   \draw [thick] (MU2) -- (M0);
   \draw [thick] (TcU2) to[out=120,in=-25] (Tc);
   \draw [thick] (U2) -- (T0);

   \draw [thick] (McWi) -- (TcWi) -- (Wi);
   \draw [thick] (McWi) -- (MWi) -- (Wi);
   \draw [thick,loosely dashed] (McWi) -- (McW3);
   \draw [thick,loosely dashed] (MWi) -- (MW3);
   \draw [thick,loosely dashed] (TcWi) -- (TcW3);
   \draw [thick,loosely dashed] (Wi) -- (W3);
   \draw [thick] (McW3) -- (TcW3) -- (W3);
   \draw [thick] (McW3) -- (MW3) -- (W3);
   \draw [thick] (McW3) -- (McW2);
   \draw [thick] (MW3) -- (MW2);
   \draw [thick] (TcW3) -- (TcW2);
   \draw [thick] (W3) -- (W2);
   \draw [thick] (McW2) -- (TcW2) -- (W2);
   \draw [thick] (McW2) -- (MW2) -- (W2);
   \draw [thick] (McW2) -- (Mc);
   \draw [thick] (MW2) -- (M1);
   \draw [thick] (TcW2) to[out=60,in=-155] (Tc);
   \draw [thick] (W2) -- (T1);

   \draw [thick] (SM) -- (McU2);
   \draw [thick] (SM) -- (McW2);

   \draw [thick] (Lc) -- (LS) -- (L);
   \draw [thick] (Lc) -- (L0) -- (L);
   \draw [thick] (Lc) -- (L1) -- (L);
   \draw [thick] (Lc) to[out=120,in=-120] (Sc);
   \draw [thick] (LS) to[out=60,in=-60] (S);
   \draw [thick] (L0) -- (T0);
   \draw [thick] (L1) -- (T1);
   \draw [thick] (L) to[out=125,in=-125] (Omega);
   \draw [thick] (SM) -- (Sc) -- (S);
   \draw [thick] (Sc) to[out=142,in=-134] (Tc);
   \draw [thick] (S) to[out=42,in=-42] (Omega);
   \draw [thick] (Mc) -- (M0) -- (M);
   \draw [thick] (Mc) -- (M1) -- (M);
   \draw [thick] (Mc) to[out=120,in=-120] (Tc);
   \draw [thick] (M0) -- (T0);
   \draw [thick] (M1) -- (T1);
   \draw [thick] (M) to[out=55,in=-55] (Omega);
   \draw [thick] (Tc) -- (T0) -- (Omega);
   \draw [thick] (Tc) -- (T1) -- (Omega);
\end{tikzpicture}
}
\end{center}
\caption{Post's lattice.}
\label{fig:Post}
\end{figure}

Let us conclude this introductory section with a couple of lemmata that help us express sums and minors of Boolean functions in terms of their sets of monomials.

\begin{lemma}
\label{lem:sum-monomials}
Let $f, g \colon \{0,1\}^n \to \{0,1\}$.
Then $\monomials{f+g} = \monomials{f} \symmdiff \monomials{g}$.
\end{lemma}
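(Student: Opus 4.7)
The plan is to use the uniqueness of the Zhegalkin polynomial representation from Definition~\ref{def:Zhegalkin}, so the work reduces to writing down the Zhegalkin polynomial of $f+g$ and reading off its monomial set. I would start by expanding
\[
f + g = \sum_{S \in \monomials{f}} x_S + \sum_{S \in \monomials{g}} x_S,
\]
which, after grouping like terms, can be rewritten as
\[
f + g = \sum_{S \in \monomials{f} \cap \monomials{g}} (x_S + x_S) + \sum_{S \in \monomials{f} \symmdiff \monomials{g}} x_S.
\]
Since arithmetic is performed over $\GF(2)$, each term $x_S + x_S$ vanishes, leaving $f + g = \sum_{S \in \monomials{f} \symmdiff \monomials{g}} x_S$.

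The polynomial on the right\hyp{}hand side is multilinear (no variable occurs with exponent greater than $1$) and all of its coefficients are $1$, so it is a reduced polynomial in the sense of Definition~\ref{def:polynomials}. By the uniqueness of the Zhegalkin polynomial representing a Boolean function, this expression must be the Zhegalkin polynomial of $f + g$, and therefore its monomial set is exactly $\monomials{f + g}$. Comparing yields $\monomials{f+g} = \monomials{f} \symmdiff \monomials{g}$.

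There is essentially no obstacle in this argument; the only subtlety is to be explicit that the cancellation happens precisely on $\monomials{f} \cap \monomials{g}$ and that the survivors form a bona fide Zhegalkin polynomial so that the uniqueness statement applies.
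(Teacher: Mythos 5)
Your argument is correct and is essentially identical to the paper's own proof: both add the two Zhegalkin polynomials, cancel the monomials in $\monomials{f} \cap \monomials{g}$ modulo $2$, and conclude via the uniqueness of the Zhegalkin polynomial. Your version merely spells out the grouping step a little more explicitly.
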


\begin{proof}
By adding the polynomials of $f$ and $g$ and by cancelling equal monomials (because we do addition modulo $2$), we obtain
\[
f + g
= \sum_{S \in \monomials{f}} x_S + \sum_{S \in \monomials{g}} x_S
= \sum_{S \in \monomials{f} \symmdiff \monomials{g}} x_S.
\]
Consequently, $\monomials{f + g} = \monomials{f} \symmdiff \monomials{g}$ by the uniqueness of Zhegalkin polynomials.
\end{proof}

\begin{lemma}
\label{lem:minor-monomials}
Let $f \colon \{0,1\}^n \to \{0,1\}$ and $\sigma \colon \nset{n} \to \nset{m}$.
Then
\[
\monomials{f_{\sigma}} =
\bigl\{ \, S \subseteq \nset{m} \bigm| \card{ \{ \, T \in \monomials{f} \mid \sigma(T) = S \, \} } \equiv 1 \pmod{2} \, \bigr\}.
\]
\end{lemma}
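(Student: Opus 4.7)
The plan is to compute the Zhegalkin polynomial of $f_\sigma$ directly from that of $f$ via substitution, then collect terms and invoke uniqueness.

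First, I would start from the representation $f = \sum_{T \in \monomials{f}} x_T$ and use the definition of $f_\sigma$ to write
\[
f_\sigma(x_1, \dots, x_m) = f(x_{\sigma(1)}, \dots, x_{\sigma(n)}) = \sum_{T \in \monomials{f}} \prod_{i \in T} x_{\sigma(i)}.
\]
The expression $\prod_{i \in T} x_{\sigma(i)}$ is in general not multilinear: a variable $x_j$ occurs once for each $i \in T$ with $\sigma(i) = j$, so the exponent of $x_j$ is $\lvert T \cap \sigma^{-1}(j) \rvert$. Since we are dealing with Boolean functions, the identity $x_j^k = x_j$ holds (as a function) for every $k \geq 1$, so this product represents the same Boolean function as $\prod_{j \in \sigma(T)} x_j = x_{\sigma(T)}$. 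Hence, as Boolean functions,
\[
f_\sigma = \sum_{T \in \monomials{f}} x_{\sigma(T)}.
\]

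Next I would regroup the sum by the value of $\sigma(T)$. For each $S \subseteq \nset{m}$, let $c_S := \card{\{\, T \in \monomials{f} \mid \sigma(T) = S \,\}}$. Then
\[
f_\sigma = \sum_{S \subseteq \nset{m}} c_S \, x_S = \sum_{\substack{S \subseteq \nset{m} \\ c_S \equiv 1 \pmod{2}}} x_S,
\]
where the second equality uses that the coefficients live in $\GF(2)$. The right\hyp{}hand side is now a multilinear polynomial, hence the unique Zhegalkin polynomial of $f_\sigma$. By the uniqueness asserted in Definition~\ref{def:Zhegalkin}, we conclude
\[
\monomials{f_\sigma} = \bigl\{\, S \subseteq \nset{m} \bigm| c_S \equiv 1 \pmod{2} \,\bigr\},
\]
which is exactly the claimed equality.

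The only step requiring care is the reduction $\prod_{i \in T} x_{\sigma(i)} \equiv x_{\sigma(T)}$ when $\sigma$ is not injective on $T$; this is where the Boolean (as opposed to formal polynomial) setting is used, and it is the only nontrivial point in an otherwise direct computation. Everything else is just rewriting a sum and applying uniqueness of the Zhegalkin representation.
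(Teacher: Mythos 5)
Your proposal is correct and follows essentially the same route as the paper's proof: the key step in both is the reduction $\prod_{i \in T} x_{\sigma(i)} = x_{\sigma(T)}$ (valid over $\{0,1\}$ because exponents collapse), followed by grouping the summands according to the value of $\sigma(T)$, cancelling modulo $2$, and invoking the uniqueness of the Zhegalkin polynomial. The only cosmetic difference is that the paper carries out the computation pointwise on arguments $a_1, \dots, a_m$, whereas you manipulate the polynomials symbolically; the substance is identical.
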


\begin{proof}
A straightforward calculation using the definitions of minor and $\monomials{f}$ (Definitions~\ref{def:minor} and \ref{def:Zhegalkin}) shows that for all $a_1, \dots, a_m \in \{0,1\}$,
\[
f_\sigma(a_1, \dots, a_m)
= f(a_{\sigma(1)}, \dots, a_{\sigma(n)})
= \sum_{T \in \monomials{f}} \prod_{i \in T} a_{\sigma(i)}
= \sum_{T \in \monomials{f}} \prod_{i \in \sigma(T)} a_i
.
\]
By cancelling pairs of summands corresponding to indices $T, T' \in \monomials{f}$ such that $\sigma(T) = \sigma(T')$, which are equal for any $a_1, \dots, a_m$, we get
\[
\sum_{T \in \monomials{f}} \prod_{i \in \sigma(T)} a_i
= \sum_{S \in M'} \prod_{i \in S} a_i,
\]
where
\[
M' =
\bigl\{ \, S \subseteq \nset{m} \bigm| \card{ \{ \, T \in \monomials{f} \mid \sigma(T) = S \, \} } \equiv 1 \pmod{2} \, \bigr\}.
\]
Consequently, $\monomials{f_\sigma} = M'$ by the uniqueness of Zhegalkin polynomials.
\end{proof}

\begin{table}
\begin{tabular}{lll}
\toprule
Class & Description or defining condition & Definition \\
\midrule
$\clAll$ & all functions & \ref{def:all} \\
$\clCon{a}$ & $f(0, \dots, 0) = a$ & \ref{def:ab-eq-neq} \\
$\clYksi{b}$ & $f(1, \dots, 1) = b$ & \ref{def:ab-eq-neq} \\
$\clBoth{a}{b}$ & $\clCon{a} \cap \clYksi{b}$ & \ref{def:ab-eq-neq} \\
$\clEven$ & $f(0, \dots, 0) = f(1, \dots, 1)$, even functions & \ref{def:ab-eq-neq}, \ref{def:parity} \\
$\clOdd$ & $f(0, \dots, 0) \neq f(1, \dots, 1)$, odd functions & \ref{def:ab-eq-neq}, \ref{def:parity} \\
$\clTa{a}$ & $f(a, \dots, a) = a$ & \ref{def:T0T1Tc} \\
$\clTc$ & $\clTo \cap \clTi = \clBoth{0}{1}$ & \ref{def:T0T1Tc} \\
$\clM$ & monotone functions & \ref{def:monotone} \\
$\clS$ & self\hyp{}dual functions & \ref{def:S-R} \\
$\clSc$ & $\clS \cap \clTc$ & \ref{def:S-R} \\
$\clSM$ & $\clS \cap \clM$ & \ref{def:S-R} \\
$\clD{k}$ & $\deg(f) \leq k$ (degree bounded by $k$) & \ref{def:Dk} \\
$\clL$ & $\clD{1}$, linear functions & \ref{def:Dk} \\
$\clLo$ & $\clL \cap \clTo = \clL \cap \clCon{0}$ & \ref{def:Dk} \\
$\clLi$ & $\clL \cap \clTi = \clL \cap \clYksi{1}$ & \ref{def:Dk} \\
$\clLS$ & $\clL \cap \clS = \clL \cap \clOdd$ & \ref{def:Dk} \\
$\clLc$ & $\clL \cap \clTc = \clL \cap \clBoth{0}{1}$ & \ref{def:Dk} \\
$\clChar{k}$ & $\charrank{f} \leq k$ (characteristic rank bounded by $k$) & \ref{def:Xk} \\
$\clLambdac$ & conjunctions & \ref{def:conj-disj} \\
$\clVc$ & disjunctions & \ref{def:conj-disj} \\
$\clIo$ & projections and constant $0$ functions & \ref{def:conj-disj} \\
$\clIi$ & projections and constant $1$ functions & \ref{def:conj-disj} \\
$\clIstar$ & projections and negated projections & \ref{def:conj-disj} \\
$\clIc$ & projections & \ref{def:conj-disj} \\
\bottomrule
\end{tabular}

\bigskip
\caption{Notation for classes of Boolean functions. Parameters: $a, b \in \{0, 1\}$, $k \in \IN$.}
\label{table:notation}
\end{table}


\section{$\clLc$\hyp{}stable classes}
\label{sec:Lc-stable}
\renewcommand{\gendefault}{\clLc}

We are now ready to state the main result of this paper, a complete description of the $\clLc$\hyp{}stable classes of Boolean functions.

\begin{theorem}
\label{thm:Lc}
The classes of Boolean functions stable under both left and right compositions with the clone $\clLc = \clonegen{\oplus_3}$ \textup{(}$\clLc$\hyp{}stable classes\textup{)} or, equivalently, the $(\clIc, \clLc)$\hyp{}stable classes are
\begin{align*}
& \clAll, && \clCon{a}, && \clYksi{b}, && \clParity{a}, && \clBoth{a}{b}, \\
& \clD{k}, && \clD{k} \cap \clCon{a}, && \clD{k} \cap \clYksi{b}, && \clD{k} \cap \clParity{a}, && \clD{k} \cap \clBoth{a}{b}, \\
& \clChar{k}, && \clChar{k} \cap \clCon{a}, && \clChar{k} \cap \clYksi{b}, && \clChar{k} \cap \clParity{a}, && \clChar{k} \cap \clBoth{a}{b}, \\
& \clD{i} \cap \clChar{j}, && \clD{i} \cap \clChar{j} \cap \clCon{a}, && \clD{i} \cap \clChar{j} \cap \clYksi{b}, && \clD{i} \cap \clChar{j} \cap \clParity{a}, && \clD{i} \cap \clChar{j} \cap \clBoth{a}{b}, \\
& \clD{0}, && \clD{0} \cap \clCon{a}, && \clEmpty,
\end{align*}
for $a, b \in \{0,1\}$, $\QuantifyParRel$, and $i, j, k \in \IN_{+}$ with $i > j \geq 1$.
\end{theorem}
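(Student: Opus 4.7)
The proof splits into two directions. For the forward direction, that each listed class is $\clLc$-stable, Lemmas~\ref{lem:right-stab-gen} and~\ref{lem:left-stab-gen} combined with $\clLc = \clonegen{\oplus_3}$ reduce the check to minor-closure plus closure of each class under left and right composition with $\oplus_3$. The parity-type conditions ($\clCon{a}$, $\clYksi{b}$, $\clParity{a}$ and their intersections) are preserved by $\oplus_3$ because $a+a+a=a$ in $\GF(2)$, and they are unaffected by minor formation. Degree-boundedness is preserved because $\deg(f+g+h) \leq \max(\deg f, \deg g, \deg h)$. For bounded characteristic rank, Lemma~\ref{lem:Xk-description} reduces the condition to bounded degree of $f + f^\mathrm{n}$, and Lemma~\ref{lem:neg-pol}\ref{lem:neg-pol:inner-sum} gives
\[
(f + g + h) + (f + g + h)^\mathrm{n} = (f + f^\mathrm{n}) + (g + g^\mathrm{n}) + (h + h^\mathrm{n}),
\]
while minor-closure of $\clChar{k}$ follows from Lemma~\ref{lem:minor-monomials}.

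For the hard direction I would attach to a nonempty $\clLc$-stable class $K$ three invariants: the degree supremum $d \in \IN \cup \{\infty\}$, the characteristic-rank supremum $c \in \IN \cup \{\infty\}$, and the \emph{parity profile}, which records into which of $\clCon{a}$, $\clYksi{b}$, $\clParity{a}$ (and their intersections) $K$ happens to lie. The forward-direction calculations immediately give that $K$ is contained in the unique member $K^\ast$ of the listed catalogue that matches these invariants. The substantive task is the reverse inclusion $K^\ast \subseteq K$, which I would establish via a \emph{generation lemma}: if $f \in K$ attains both the degree supremum $d$ and the characteristic-rank supremum $c$, then $\gen{f}$ contains every Boolean function of degree $\leq d$, characteristic rank $\leq c$, and with the same parity profile as $f$.

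The generation lemma would follow the three-step template of Lemma~\ref{lem:Ddegf}, adapted to $\GF(2)$. First, through idempotent linear substitutions (right-$\clLc$-compositions of the form $x_i \mapsto \sum_{j \in J_i} x_j$ with each $\card{J_i}$ odd) together with identifications and permutations of variables, produce out of $f$ a function whose Zhegalkin polynomial contains the monomial $x_1 x_2 \cdots x_d$. Second, strip off smaller monomials using sequences of zero-substitutions recast as $\oplus_3$-differences: since $\oplus_3(h, h, g) = g$, even-weight cancellations can be simulated by applying $\oplus_3$ to three carefully chosen right-$\clLc$-substitutions of the same function, which, after judicious variable identifications, leaves behind the polynomial with lower-degree monomials removed. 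Third, combine the isolated top monomial with all its smaller counterparts to reach arbitrary targets of degree $\leq d$: arbitrary $\GF(2)$-sums of monomials are built from $\oplus_5, \oplus_7, \dots$ accompanied by $\oplus_3(h,h,g)$-style corrections, and the characteristic-rank bound and parity profile are respected throughout thanks to Lemmas~\ref{lem:Xk-description} and~\ref{lem:Selezneva} and the explicit effect of sums and minors on monomial sets (Lemmas~\ref{lem:sum-monomials} and~\ref{lem:minor-monomials}).

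The main obstacle will be ensuring simultaneous control of degree and characteristic rank in the mixed case $c < d$, which corresponds to the entries $\clD{i} \cap \clChar{j}$ with $i > j$: here the top-degree monomials must satisfy the rigid cancellation pattern forced by $\charrank \leq c$, and right-$\clLc$-composition can a priori modify both $d$ and $c$ unpredictably. Pinning down the correct ``universal generator'' for $\clD{d} \cap \clChar{c}$ with a prescribed parity profile is the crux: its Zhegalkin polynomial must include $x_1 \cdots x_d$ together with a tuned cloud of correction terms of degree between $c$ and $d-1$, and one must verify that this generator indeed produces every polynomial in the targeted class under the combined left- and right-$\clLc$-closure. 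The degenerate cases (only constants when $d = 0$, and $K = \emptyset$) yield the list entries $\clD{0}$, $\clD{0} \cap \clCon{a}$, and $\clEmpty$ and are disposed of by direct inspection; matching each possible invariant profile $(d, c, \text{parity profile})$ with exactly one entry on the stated list then completes the classification.
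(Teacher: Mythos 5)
Your two-part architecture --- verify stability of the listed classes, then attach to an arbitrary nonempty $\clLc$\hyp{}stable class the invariants (degree supremum, characteristic\hyp{}rank supremum, parity profile) and prove a generation lemma --- is exactly the architecture of the paper, and your forward direction matches the paper's (reduction via Lemmata~\ref{lem:right-stab-gen}, \ref{lem:left-stab-gen}, \ref{lem:Lc-simplify} to minor\hyp{}closure plus closure under triple sums, followed by the computations of Lemmata~\ref{lem:CaEa-closed}--\ref{lem:Xk-closed}). The gap is in the generation lemma, which is where all the substance lives. You propose to adapt the three\hyp{}step template of Lemma~\ref{lem:Ddegf}: force the monomial $x_1\cdots x_d$ to appear, strip away the smaller monomials to isolate it, then rebuild arbitrary targets. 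But Lemma~\ref{lem:Ddegf} works over the clone $\clL$ of \emph{all} linear functions on $\GF(p)$, and its stripping step substitutes the constant $0$ for variables and forms differences $h_{i-1}-h_{i-1}(\dots,0,\dots)$; under $\clLc$\hyp{}stability neither operation is available, since one may only form minors and \emph{odd} sums thereof (Lemma~\ref{lem:Lc-sum-of-minors}). More fundamentally, isolating $x_1\cdots x_d$ is \emph{impossible} whenever the characteristic\hyp{}rank supremum $c$ is smaller than $d$: the monomial $x_1\cdots x_d$ has characteristic rank $d$, whereas $\gen[\clLc]{f}\subseteq\clChar{c}$ for every $f\in\clChar{c}$ because $\clChar{c}$ is itself $\clLc$\hyp{}stable. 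So the ``tuned cloud of correction terms'' you mention is not a refinement but the entire difficulty, and you have named the crux without resolving it. The paper's resolution is the family of unitrade functions $\monster{k}$ with $\monomials{\monster{k}}$ equal to all nonempty proper subsets of $\nset{k+1}$, the double induction (on degree, then on arity, via the derived functions $f'_i$) showing that every member of $\clD{k}\cap\clChar{1}\cap\clBoth{0}{0}$ is a sum of minors of $\monster{k}$ (Lemma~\ref{lem:Char0-sum-Wk}), and the splitting $f=g+h$ with $g\in\clChar{0}$ and $h\in\clD{j}$ (Lemma~\ref{lem:f=g+h}), which reduces the mixed classes $\clD{i}\cap\clChar{j}$ to the two extreme cases.

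A second, independent defect is the single\hyp{}generator formulation. The class $K$ need not contain any one function attaining both suprema, which is why the paper's Propositions~\ref{prop:gen:DiXjC0E0}\ref{gen:DiXjC0E0} and \ref{prop:gen:DiXjC0E1}\ref{gen:DiXjC0E1} take two generators, $f\notin\clD{i-1}$ and $g\notin\clChar{j-1}$. When the degree supremum is infinite (the rows $\clChar{k}$, $\clCon{a}$, \dots, $\clAll$) no single function attains it and an infinite family $f_i\notin\clD{i}$ is needed. And since every Boolean function lies in exactly one $\clBoth{a}{b}$ and each $\clBoth{a}{b}$ is $\clLc$\hyp{}stable, a single generator can never leave its own parity block; classes such as $\clD{k}$ or $\clCon{a}$ that meet several blocks therefore require the additional witnesses $h,h'$ and $h_1,\dots,h_6$ of Propositions~\ref{prop:gen:DiXjCa}--\ref{prop:gen:DiXj} before the generated pieces union to $K^{\ast}$. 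None of this defeats your strategy --- the paper carries out precisely this strategy --- but each point needs a separate argument that your sketch does not supply.
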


Several $\clLc$\hyp{}stable classes were known previously,
namely,
the clones $\clAll$, $\clS = \clChar{1} \cap \clOdd$, $\clL = \clD{1}$, $\clTo = \clCon{0}$, $\clTi = \clYksi{1}$,
$\clTc = \clBoth{0}{1}$, $\clSc = \clChar{1} \cap \clBoth{0}{1}$, $\clLo = \clD{1} \cap \clCon{0}$, $\clLi = \clD{1} \cap \clYksi{1}$, $\clLS = \clD{1} \cap \clChar{1} \cap \clOdd$, $\clLc = \clD{1} \cap \clBoth{0}{1}$,
as well as the classes $\clD{k}$ for any $k \in \IN$ and the class $\clChar{0}$ of reflexive (self\hyp{}anti\hyp{}dual) functions \cite[pp.\ 29, 33]{CouFol-2004}.
The classes $\clD{k}$ for $k \in \IN$ were also known to be $\clLo$\hyp{}stable \cite[Example 1, p.\ 111]{CouFol-2009}.

In order to describe the structure of the lattice of $\clLc$\hyp{}stable classes, it is helpful to first look at the poset comprising the eleven classes $\clAll$, $\clEven$, $\clOdd$, $\clCon{0}$, $\clCon{1}$, $\clYksi{0}$, $\clYksi{1}$, $\clBoth{0}{0}$, $\clBoth{0}{1}$, $\clBoth{1}{0}$, $\clBoth{1}{1}$ that is shown in Figure~\ref{fig:11-poset}.
It is noteworthy that the four minimal classes of this poset are pairwise disjoint, and that the six lower covers of $\clAll$ are precisely the unions of the six different pairs of minimal classes.

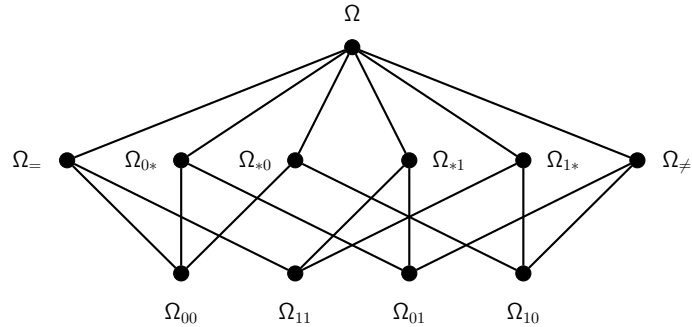
\begin{figure}
\begin{center}
\tikzstyle{every node}=[circle, draw, fill=black, scale=0.6, font=\LARGE]
\begin{tikzpicture}[baseline, scale=0.75]
      \node (S) at (0,2) {};
      \node (P0) at (-5,0) {};
      \node (C0) at (-3,0) {};
      \node (E0) at (-1,0) {};
      \node (E1) at (1,0) {};
      \node (C1) at (3,0) {};
      \node (P1) at (5,0) {};
      \node (P0C0) at (-3,-2) {};
      \node (P0C1) at (-1,-2) {};
      \node (P1C0) at (1,-2) {};
      \node (P1C1) at (3,-2) {};
      \foreach \u/\v in {P0C0/P0, P0C0/C0, P0C0/E0, P0C1/P0, P0C1/C1, P0C1/E1, P1C0/P1, P1C0/C0, P1C0/E1, P1C1/P1, P1C1/C1, P1C1/E0, P0/S, C0/S, E0/S, E1/S, C1/S, P1/S}
      {
         \draw [thick] (\u) -- (\v);
      }
\draw ($(S)+(0,0.6)$) node[draw=none,fill=none]{$\clAll$};
\draw ($(P0)+(-0.7,0)$) node[draw=none,fill=none]{$\clEven$};
\draw ($(C0)+(-0.7,0)$) node[draw=none,fill=none]{$\clCon{0}$};
\draw ($(E0)+(-0.7,0)$) node[draw=none,fill=none]{$\clYksi{0}$};
\draw ($(E1)+(0.7,0)$) node[draw=none,fill=none]{$\clYksi{1}$};
\draw ($(C1)+(0.7,0)$) node[draw=none,fill=none]{$\clCon{1}$};
\draw ($(P1)+(0.7,0)$) node[draw=none,fill=none]{\raisebox{0pt}[0pt][0pt]{$\clOdd$}\makebox[0pt][l]{\phantom{$\clEven$}}};
\draw ($(P0C0)+(0,-0.7)$) node[draw=none,fill=none]{$\clBoth{0}{0}$};
\draw ($(P0C1)+(0,-0.7)$) node[draw=none,fill=none]{$\clBoth{1}{1}$};
\draw ($(P1C0)+(0,-0.7)$) node[draw=none,fill=none]{$\clBoth{0}{1}$};
\draw ($(P1C1)+(0,-0.7)$) node[draw=none,fill=none]{$\clBoth{1}{0}$};
\end{tikzpicture}
\end{center}
\caption{A block of eleven $\clLc$\hyp{}stable classes.}
\label{fig:11-poset}
\end{figure}

The lattice of all $\clLc$\hyp{}stable classes is shown in Figure~\ref{fig:Lc-stable}.
It has rather regular structure;
it is isomorphic to the direct product of the 11\hyp{}element poset of Figure~\ref{fig:11-poset} and the set $\{ \, (i, j) \in (\IN_{+} \cup \{\infty\})^2 \mid i \geq j \geq 1 \, \}$ with the componentwise order, and a few additional elements near the bottom of the lattice.
In order to avoid clutter, we have used some shorthand notation in Figure~\ref{fig:Lc-stable}.
The diagram includes multiple copies of the 11\hyp{}element poset of Figure~\ref{fig:11-poset} (the shaded blocks) connected by thick triple lines.
Each thick triple line between a pair of blocks represents eleven edges, each connecting a vertex of one poset to its corresponding vertex in the other poset.
We have labeled in the diagram the meet\hyp{}irreducible classes, as well as a few other classes of interest; the remaining classes are intersections of the meet\hyp{}irreducible ones.

\begin{figure}
\begin{center}
\newcommand{\M}{3} 
\newcommand{\hilax}{10} 
\newcommand{\hilay}{9} 
\newcommand{\infsep}{1.5} 
\newcommand{\infshort}{0.6} 
\scalebox{0.235}{
\tikzstyle{every node}=[circle, draw, fill=black, scale=1, font=\LARGE]
\begin{tikzpicture}[baseline, scale=1]
\foreach \j in {1,...,\M}
{
  \foreach \i in {\j,...,\M}
  {
     \node (keskusD\i{}X\j) at (${\i+\j-2}*(0,\hilay)+{\i-\j}*(\hilax,0)$) {};
  }
  \node (keskusDinfX\j) at (${\j+\M+\infsep-2}*(0,\hilay)+{\M+\infsep-\j}*(\hilax,0)$) {};
}
\node (keskusAll) at (${2*(\M+\infsep)-2}*(0,\hilay)$) {};
\tikzmath{integer \MM; \MM = \M - 1;}
\foreach \j in {1,...,\MM}
{
   \foreach \i in {\j,...,\MM}
   {
      \tikzmath{integer \vv; \vv = \i + 1;}
      \draw [line width=10pt, double distance=30pt] (keskusD\i{}X\j) -- (keskusD\vv{}X\j);
      \draw [line width=10pt] (keskusD\i{}X\j) -- (keskusD\vv{}X\j);
   }
}
\foreach \i in {2,...,\M}
{
   \foreach \j in {2,...,\i}
   {
      \tikzmath{integer \vv; \vv = \j - 1;}
      \draw [line width=10pt, double distance=30pt] (keskusD\i{}X\vv) -- (keskusD\i{}X\j);
      \draw [line width=10pt] (keskusD\i{}X\vv) -- (keskusD\i{}X\j);
   }
}
\foreach \j in {2,...,\M}
{
   \tikzmath{integer \vv; \vv = \j - 1;}
   \draw [line width=10pt, double distance=30pt] (keskusDinfX\vv) -- (keskusDinfX\j);
   \draw [line width=10pt] (keskusDinfX\vv) -- (keskusDinfX\j);
}
\tikzmath{real \s; \s = \infshort*\infsep*sqrt(\hilax*\hilax+\hilay*\hilay);}
\foreach \j in {1,...,\M}
{
   \draw [line width=10pt, double distance=30pt, shorten >=\s cm] (keskusD\M{}X\j) -- (keskusDinfX\j);
   \draw [line width=10pt, shorten >=\s cm] (keskusD\M{}X\j) -- (keskusDinfX\j);
   \draw [line width=10pt, line cap=round, dash pattern=on 0pt off 40pt, dash phase=30pt] (keskusD\M{}X\j) -- (keskusDinfX\j);
}
\draw [line width=10pt, double distance=30pt, shorten >=\s cm] (keskusDinfX\M) -- (keskusAll);
\draw [line width=10pt, shorten >=\s cm] (keskusDinfX\M) -- (keskusAll);
\draw [line width=10pt, line cap=round, dash pattern=on 0pt off 40pt, dash phase=30pt] (keskusDinfX\M) -- (keskusAll);
\tikzmath{real \t; \t = 0.25*2*\infsep*\hilay;}
\draw [line width=10pt, line cap=round, dash pattern=on 0pt off 40pt, shorten >=\t cm, shorten <=\t cm] (keskusD\M{}X\M) -- (keskusAll);
\tikzmath{integer \MM; \MM = \M + 1;}
\foreach \i in {1,...,\MM}
{
   \foreach \j in {1,...,\i}
   {
      \ifthenelse{\j=\MM}{\def\nimi{All} \def\lappu{\clAll}}
                         {\ifthenelse{\i=\MM}{\def\nimi{DinfX\j} \def\lappu{\clChar{\j}}}
                                             {\def\nimi{D\i{}X\j} \ifthenelse{\i=\j}{\def\lappu{\clD{\i}}}{\def\lappu{\clD{\i}\cap\clChar{\j}}}}};
      \node (blob\nimi) at (keskus\nimi) [rectangle,rounded corners,draw=green!60!black,fill=green!20,minimum width=13cm,minimum height=6.5cm] {};
      \node (S\nimi) at ($(keskus\nimi) + (0,2)$) {};
      \draw ($(S\nimi)+(0,0.6)$) node[draw=none,fill=none]{$\lappu$};
      \node (P0\nimi) at ($(keskus\nimi) + (-5,0)$) {};
      \node (C0\nimi) at ($(keskus\nimi) + (-3,0)$) {};
      \node (E0\nimi) at ($(keskus\nimi) + (-1,0)$) {};
      \node (E1\nimi) at ($(keskus\nimi) + (1,0)$) {};
      \node (C1\nimi) at ($(keskus\nimi) + (3,0)$) {};
      \node (P1\nimi) at ($(keskus\nimi) + (5,0)$) {};
      \node (P0C0\nimi) at ($(keskus\nimi) + (-3,-2)$) {};
      \node (P0C1\nimi) at ($(keskus\nimi) + (-1,-2)$) {};
      \node (P1C0\nimi) at ($(keskus\nimi) + (1,-2)$) {};
      \node (P1C1\nimi) at ($(keskus\nimi) + (3,-2)$) {};
      \foreach \u/\v in {P0C0/P0, P0C0/C0, P0C0/E0, P0C1/P0, P0C1/C1, P0C1/E1, P1C0/P1, P1C0/C0, P1C0/E1, P1C1/P1, P1C1/C1, P1C1/E0, P0/S, C0/S, E0/S, E1/S, C1/S, P1/S}
      {
         \draw [thick] (\u\nimi) -- (\v\nimi);
      }
   }
}
\draw ($(P0All)+(-0.7,0)$) node[draw=none,fill=none]{$\clEven$};
\draw ($(C0All)+(-0.7,0)$) node[draw=none,fill=none]{$\clCon{0}$};
\draw ($(E0All)+(-0.7,0)$) node[draw=none,fill=none]{$\clYksi{0}$};
\draw ($(E1All)+(0.7,0)$) node[draw=none,fill=none]{$\clYksi{1}$};
\draw ($(C1All)+(0.7,0)$) node[draw=none,fill=none]{$\clCon{1}$};
\draw ($(P1All)+(0.7,0)$) node[draw=none,fill=none]{\raisebox{0pt}[0pt][0pt]{$\clOdd$}\makebox[0pt][l]{\phantom{$\clEven$}}};
\draw ($(P1C0All)+(0,-0.7)$) node[draw=none,fill=none]{$\clTc$};
\draw ($(P0DinfX1)+(-0.7,0)$) node[draw=none,fill=none]{$\clChar{0}$};
\draw ($(P1DinfX1)+(0.7,0)$) node[draw=none,fill=none]{$\clS$};
\draw ($(P1C0DinfX1)+(0,-0.7)$) node[draw=none,fill=none]{$\clSc$};
\draw ($(C0D1{}X1)+(-0.6,0)$) node[draw=none,fill=none]{$\clLo$};
\draw ($(E1D1{}X1)+(0.6,0)$) node[draw=none,fill=none]{$\clLi$};
\draw ($(P1D1{}X1)+(0.6,0)$) node[draw=none,fill=none]{$\clLS$};
\draw ($(P1C0D1{}X1)+(0.6,-0.6)$) node[draw=none,fill=none]{$\clLc$};
\node (D0) at ($(P0D1{}X1) + (0,-6)$) {};
\draw ($(D0)+(-0.7,0)$) node[draw=none,fill=none]{$\clD{0}$};
\node (C0D0) at ($(D0) + (2,-2)$) {};
\node (C1D0) at ($(C0D0) + (2,0)$) {};
\node (empty) at ($(C1D0) + (2,-2)$) {};
\draw ($(empty)+(0,-0.7)$) node[draw=none,fill=none]{$\clEmpty$};
\foreach \u/\v in {empty/C0D0, empty/C1D0, empty/P1C0D1{}X1, empty/P1C1D1{}X1, C0D0/D0, C0D0/P0C0D1{}X1, C1D0/D0, C1D0/P0C1D1{}X1, D0/P0D1{}X1}
{
   \draw [thick] (\u) -- (\v);
}
\end{tikzpicture}
}
\end{center}
\caption{$\clLc$\hyp{}stable classes.}
\label{fig:Lc-stable}
\end{figure}

The remainder of this section is devoted to the proof of Theorem~\ref{thm:Lc}.
The proof has two parts.
First we observe that the classes listed in Theorem~\ref{thm:Lc} are $\clLc$\hyp{}stable.
Second, we need to show that there are no other $\clLc$\hyp{}stable classes.

To this end, we start by verifying that the classes of Theorem~\ref{thm:Lc} are $\clLc$\hyp{}stable.
Since intersections of $\clLc$\hyp{}stable classes are $\clLc$\hyp{}stable, it suffices to verify this for the meet\hyp{}irreducible classes.
With the help of the following lemma, we can further simplify the task of checking the stability under left and right composition with clones containing the triple sum.
In fact $\clLc$\hyp{}stability is equivalent to $(\clIc, \clLc)$\hyp{}stability.

\begin{lemma}
\label{lem:Lc-simplify}
\leavevmode
\begin{enumerate}[label=\upshape{(\roman*)}, leftmargin=*, widest=iii]
\item\label{lem:Lc-simplify:triplesum}
For any $f \in \clAll^{(n)}$, we have $f \ast \mathord{\oplus_3} = \mathord{\oplus_3}(f_{\sigma_1}, f_{\sigma_2}, f_{\sigma_3})$, where, for $i \in \nset{3}$, $\sigma_i \colon \nset{n} \to \nset{n+2}$, $1 \mapsto i$, $j \mapsto j + 2$ for $2 \leq j \leq n$.

\item\label{lem:Lc-simplify:general}
Let $G \subseteq \clAll$, let $C_1 := \clonegen{G \cup \{ \mathord{\oplus_3} \}}$, $C'_1 := \clonegen{G}$, and let $C_2$ be a clone containing $\mathord{\oplus_3}$.
Then a class $F \subseteq \clAll$ is $(C_1, C_2)$\hyp{}stable if and only if it is $(C'_1, C_2)$\hyp{}stable.

\item\label{lem:Lc-simplify:Lc}
The following are equivalent for a class $F \subseteq \clAll$.
\begin{enumerate}[label=\upshape{(\alph*)}]
\item $F$ is $\clLc$\hyp{}stable.
\item $F$ is $(\clIc, \clLc)$\hyp{}stable.
\item $F$ is minor\hyp{}closed and $f + g + h \in F$ whenever $f, g, h \in F$.
\end{enumerate}
\end{enumerate}
\end{lemma}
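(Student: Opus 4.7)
The plan is to handle the three parts in order, leaning on the already developed tools (Lemmata~\ref{lem:stable-impl-stable}, \ref{lem:right-stab-gen}, \ref{lem:left-stab-gen}) so that part~\ref{lem:Lc-simplify:triplesum} carries essentially all the computational content.

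For part~\ref{lem:Lc-simplify:triplesum}, I would verify the identity pointwise. Both sides have arity $n+2$; the left-hand side evaluated at $(x_1,\ldots,x_{n+2})$ is $f(x_1+x_2+x_3,x_4,\ldots,x_{n+2})$ by the definition of $\ast$, and the right-hand side unfolds to $f(x_1,x_4,\ldots,x_{n+2})+f(x_2,x_4,\ldots,x_{n+2})+f(x_3,x_4,\ldots,x_{n+2})$. Fixing $\bar y:=(x_4,\ldots,x_{n+2})$, the unary Boolean function $t\mapsto f(t,\bar y)$ has a Zhegalkin polynomial of the shape $\alpha t+\beta$ with $\alpha,\beta\in\{0,1\}$ depending on $\bar y$, and in $\GF(2)$ the identity $\alpha(x_1+x_2+x_3)+\beta=(\alpha x_1+\beta)+(\alpha x_2+\beta)+(\alpha x_3+\beta)$ holds because $3\beta=\beta$. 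This gives~\ref{lem:Lc-simplify:triplesum}.

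For part~\ref{lem:Lc-simplify:general}, the implication $(C_1,C_2)$\hyp{}stable $\Rightarrow$ $(C'_1,C_2)$\hyp{}stable is an instance of Lemma~\ref{lem:stable-impl-stable}. For the converse, assume $F$ is $(C'_1,C_2)$\hyp{}stable; since left $C_2$\hyp{}stability is part of the hypothesis, only $F C_1 \subseteq F$ needs proof. Apply Lemma~\ref{lem:right-stab-gen} with the generating set $G\cup\{\oplus_3\}$ of $C_1$: it suffices to check that $F$ is minor\hyp{}closed and that $f\ast g\in F$ for every $f\in F$ and every $g\in G\cup\{\oplus_3\}$. Minor\hyp{}closedness holds because $C'_1$, being a clone, contains all projections, whence $F\clIc\subseteq F C'_1\subseteq F$. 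For $g\in G$ we have $f\ast g\in F C'_1\subseteq F$ directly. For $g=\oplus_3$, part~\ref{lem:Lc-simplify:triplesum} rewrites $f\ast\oplus_3$ as $\oplus_3(f_{\sigma_1},f_{\sigma_2},f_{\sigma_3})$; each minor $f_{\sigma_i}$ belongs to $F$ by minor\hyp{}closedness, and then $\oplus_3\in C_2$ together with $C_2 F\subseteq F$ forces $\oplus_3(f_{\sigma_1},f_{\sigma_2},f_{\sigma_3})\in F$.

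For part~\ref{lem:Lc-simplify:Lc}, the equivalence of (a) and (b) is the special case of~\ref{lem:Lc-simplify:general} obtained by taking $G=\emptyset$, which gives $C'_1=\clonegen{\emptyset}=\clIc$ and $C_1=\clonegen{\{\oplus_3\}}=\clLc$ (with $C_2=\clLc$ trivially containing $\oplus_3$). The equivalence of (b) and (c) is immediate from the definitions: the condition $F\clIc\subseteq F$ is, by definition, minor\hyp{}closedness, and Lemma~\ref{lem:left-stab-gen} applied to the generator $\oplus_3$ of $\clLc$ shows that $\clLc F\subseteq F$ is equivalent to $f_1+f_2+f_3\in F$ for all equal\hyp{}arity triples $f_1,f_2,f_3\in F$, matching (c) once minor\hyp{}closedness is used to align arities by introducing fictitious arguments. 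The one nontrivial point of the whole proof is the identity in~\ref{lem:Lc-simplify:triplesum}, which encodes that every Boolean function is affine in each of its individual arguments over $\GF(2)$.
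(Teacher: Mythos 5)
Your proof is correct and follows essentially the same route as the paper's: part~\ref{lem:Lc-simplify:triplesum} is a pointwise verification of the identity, part~\ref{lem:Lc-simplify:general} reduces right\hyp{}stability to generators via Lemma~\ref{lem:right-stab-gen} and handles $\oplus_3$ by rewriting $f \ast \mathord{\oplus_3}$ as a left composition of minors absorbed by $C_2$, and part~\ref{lem:Lc-simplify:Lc} specializes to $G = \emptyset$ together with Lemma~\ref{lem:left-stab-gen}. The only point of divergence is the one\hyp{}line justification of the identity in~\ref{lem:Lc-simplify:triplesum}: you use that every Boolean function is affine in each individual argument over $\GF(2)$ (so $\alpha(x_1+x_2+x_3)+\beta$ equals the sum of the three $\alpha x_i+\beta$), whereas the paper argues via the pigeonhole principle and the fact that $\oplus_3$ is the minority operation on $\{0,1\}$; both are valid.
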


\begin{proof}
\ref{lem:Lc-simplify:triplesum}
Let $\vect{a} = (a_1, \dots, a_{n+2}) \in \{0,1\}^{n+2}$.
By the pigeonhole principle, there exist $i$, $j$, $k$ such that $\{i, j, k\} = \{1, 2, 3\}$ and $a_i = a_j$.
Using the fact that $\oplus_3$ is the minority operation on $\{0,1\}$, we obtain
\begin{align*}
& \mathord{\oplus_3}(f_{\sigma_1}, f_{\sigma_2}, f_{\sigma_3})(\vect{a})
\\
& = \mathord{\oplus_3}(f(a_1, a_4, \dots, a_{n+2}), f(a_2, a_4, \dots, a_{n+2}), f(a_3, a_4, \dots, a_{n+2}))
\\
& = f(a_k, a_4, \dots, a_{n+2})
= f(\mathord{\oplus_3}(a_1, a_2, a_3), a_4, \dots, a_{n+2})
= (f \ast \mathord{\oplus_3})(\vect{a}),
\end{align*}
and we conclude that $f \ast \mathord{\oplus_3} = \mathord{\oplus_3}(f_{\sigma_1}, f_{\sigma_2}, f_{\sigma_3})$.

\ref{lem:Lc-simplify:general}
Since $C'_1 \subseteq C_1$, stability under right composition with $C_1$ implies stability under right composition with $C'_1$.
Assume now that $F$ is $(C'_1, C_2)$\hyp{}stable.
By Lemma~\ref{lem:right-stab-gen}, $F$ is minor\hyp{}closed and $f \ast g \in F$ whenever $f \in F$ and $g \in G$.
Moreover, $f \ast \mathord{\oplus_3} = \mathord{\oplus_3}(f_{\sigma_1},f_{\sigma_2},f_{\sigma_3})$, where $f_{\sigma_1}$, $f_{\sigma_2}$, $f_{\sigma_3}$ are the minors of $f$ specified in part~\ref{lem:Lc-simplify:triplesum}.
Since $F$ is minor\hyp{}closed, we have $f_{\sigma_1}, f_{\sigma_2}, f_{\sigma_3} \in F$.
By our assumption, $\mathord{\oplus_3} \in C_2$, and since $F$ is stable under left composition with $C_2$, it follows that $\mathord{\oplus_3}(f_{\sigma_1},f_{\sigma_2},f_{\sigma_3}) \in F$.
It follows from Lemma~\ref{lem:right-stab-gen} that $F$ is stable under right composition with $C_1$.

\ref{lem:Lc-simplify:Lc}
Since $\clLc = \clonegen{\mathord{\oplus_3}}$, this is a consequence of part~\ref{lem:Lc-simplify:general} and Lemma~\ref{lem:left-stab-gen}.
\end{proof}

In view of Lemma~\ref{lem:Lc-simplify}\ref{lem:Lc-simplify:Lc}, our task is reduced to verifying that each one of the meet\hyp{}irreducible classes shown in Figure~\ref{fig:Lc-stable}, namely $\clAll$, $\clCon{0}$, $\clCon{1}$, $\clYksi{0}$, $\clYksi{1}$, $\clEven$, $\clOdd$, $\clD{k}$, and $\clChar{k}$ for $k \in \IN$, is minor\hyp{}closed and closed under triple sums of its members.

\begin{lemma}
\label{lem:All-closed}
$\clAll$ is minor\hyp{}closed and closed under triple sums of its members.
\end{lemma}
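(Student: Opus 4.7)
The statement is essentially immediate from the definitions, so the proof proposal is short. The plan is to observe that $\clAll$ is defined as the set of \emph{all} Boolean functions (Definition~\ref{def:all}), and then verify that both operations in question — minor formation and ternary sum — produce Boolean functions out of Boolean functions, hence cannot leave $\clAll$.

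More concretely, I would first handle minor\hyp{}closure. Given any $f \in \clAll^{(n)}$ and any $\sigma \colon \nset{n} \to \nset{m}$, the minor $f_\sigma$ defined in Definition~\ref{def:minor} is a mapping $\{0,1\}^m \to \{0,1\}$, and thus a Boolean function, so $f_\sigma \in \clAll$. Next, for closure under triple sums, I would note that for $f, g, h \in \clAll^{(n)}$, the pointwise sum modulo~$2$ yields a function $\{0,1\}^n \to \{0,1\}$, which again belongs to $\clAll$ by the definition.

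There is no real obstacle here; the entire content of the lemma is that the universe of all Boolean functions is trivially closed under any operation that stays within Boolean functions. The lemma is stated mainly to fit the uniform template used for the remaining meet\hyp{}irreducible classes, where minor\hyp{}closure and closure under triple sums will be the nontrivial conditions to check (for instance for $\clD{k}$ and $\clChar{k}$). Combined with Lemma~\ref{lem:Lc-simplify}\ref{lem:Lc-simplify:Lc}, these two properties will then yield $\clLc$\hyp{}stability of $\clAll$.
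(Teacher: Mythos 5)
Your proof is correct and matches the paper, which simply dismisses this lemma as obvious: since $\clAll$ contains every Boolean function, any operation producing Boolean functions from Boolean functions cannot leave it. Nothing further is needed.
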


\begin{proof}
This is obvious.
\end{proof}

\begin{lemma}
\label{lem:CaEa-closed}
Let $a, b \in \{0,1\}$.
\begin{enumerate}[label=\upshape{(\roman*)}, leftmargin=*, widest=ii]
\item\label{lem:CaEa-closed:Ca} $\clCon{a}$ is minor\hyp{}closed and closed under triple sums of its members.
\item\label{lem:CaEa-closed:Ea} $\clYksi{b}$ is minor\hyp{}closed and closed under triple sums of its members.
\end{enumerate}
\end{lemma}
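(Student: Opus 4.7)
The plan is to verify the two closure conditions identified in Lemma~\ref{lem:Lc-simplify}\ref{lem:Lc-simplify:Lc} for each of the two classes, exploiting the fact that membership in $\clCon{a}$ and $\clYksi{b}$ is determined by the value of the function at a single constant tuple ($(0,\dots,0)$ or $(1,\dots,1)$, respectively).

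\textbf{Minor-closedness.} First I would observe that for any $f \colon \{0,1\}^n \to \{0,1\}$ and any minor formation map $\sigma \colon \nset{n} \to \nset{m}$, the constant tuple $(c,\dots,c) \in \{0,1\}^m$ is mapped by $(a \mapsto a_{\sigma(i)})$ back to $(c,\dots,c) \in \{0,1\}^n$. Hence, by Definition~\ref{def:minor},
\[
f_\sigma(c, \dots, c) = f(c, \dots, c)
\]
for $c \in \{0,1\}$. Applied with $c = 0$ this shows that $\clCon{a}$ is minor-closed, and applied with $c = 1$ this shows that $\clYksi{b}$ is minor-closed.

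\textbf{Closure under triple sums.} Next, suppose $f, g, h \in \clAll^{(m)}$ all belong to $\clCon{a}$. By definition of modulo-2 addition and using $3a \equiv a \pmod{2}$, we have
\[
(f + g + h)(0, \dots, 0) = f(0,\dots,0) + g(0,\dots,0) + h(0,\dots,0) = a + a + a = a,
\]
so $f + g + h \in \clCon{a}$. The argument for $\clYksi{b}$ is identical, evaluating instead at $(1,\dots,1)$ and using $b + b + b = b$.

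No step here is delicate; the only thing to note is that the triple-sum closure relies crucially on the coefficient $3$ being odd, which is precisely why the single ``value at a constant tuple'' invariant survives summation in $\GF(2)$. Combining both parts with Lemma~\ref{lem:Lc-simplify}\ref{lem:Lc-simplify:Lc} yields the $\clLc$-stability of $\clCon{a}$ and $\clYksi{b}$.
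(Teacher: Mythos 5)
Your proof is correct and follows essentially the same route as the paper's: minors preserve the value on constant tuples, and the triple sum of three copies of $a$ equals $a$ in $\GF(2)$. The only difference is cosmetic — you spell out why constant tuples pull back to constant tuples under a minor formation map, which the paper leaves implicit.
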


\begin{proof}
\ref{lem:CaEa-closed:Ca}
Let $f \in \clCon{a}^{(n)}$, and let $\sigma \colon \nset{n} \to \nset{m}$.
We have $f_\sigma(0, \dots, 0) = f(0, \dots, 0) = a$, so $f_\sigma \in \clCon{a}$; thus $\clCon{a}$ is minor\hyp{}closed.
Let now $f, g, h \in \clCon{a}^{(n)}$.
We have $(f + g + h)(0, \dots, 0) = f(0, \dots, 0) + g(0, \dots, 0) + h(0, \dots, 0) = a + a + a = a$; thus $f + g + h \in \clCon{a}$.

\ref{lem:CaEa-closed:Ea}
The proof is similar to that of part \ref{lem:CaEa-closed:Ca}.
\end{proof}

\begin{lemma}
\label{lem:Pa-closed}
For $\QuantifyParRel$, $\clParity{a}$ is minor\hyp{}closed and closed under triple sums of its members.
\end{lemma}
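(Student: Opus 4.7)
The plan is to verify the two required properties — minor\hyp{}closure and closure under triple sums — by directly inspecting the values of the relevant function at the all\hyp{}zero and all\hyp{}one tuples, since membership in $\clParity{a}$ depends only on the relation $\ParRel$ between these two values.

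For minor\hyp{}closure, let $f \in \clParity{a}^{(n)}$ and $\sigma \colon \nset{n} \to \nset{m}$. From the definition of $f_\sigma$ one reads off that $f_\sigma(0,\dots,0) = f(0,\dots,0)$ and $f_\sigma(1,\dots,1) = f(1,\dots,1)$, because applying $\sigma$ to a constant tuple yields the same constant tuple. Hence the relation $f(0,\dots,0) \ParRel f(1,\dots,1)$ is inherited by $f_\sigma$, so $f_\sigma \in \clParity{a}$.

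For closure under triple sums, let $f, g, h \in \clParity{a}^{(n)}$ and write $u_\vect{0} := (f+g+h)(0,\dots,0)$ and $u_\vect{1} := (f+g+h)(1,\dots,1)$, which by linearity over $\mathrm{GF}(2)$ split as sums of the corresponding values of $f$, $g$, $h$. If $\ParRel$ is $=$, each summand at $(1,\dots,1)$ equals the corresponding summand at $(0,\dots,0)$, so $u_\vect{0} = u_\vect{1}$. If $\ParRel$ is $\neq$, then each of the three summands at $(1,\dots,1)$ differs from the corresponding summand at $(0,\dots,0)$ by $1$, and since $3 \equiv 1 \pmod 2$ we obtain $u_\vect{1} = u_\vect{0} + 1 \neq u_\vect{0}$. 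Either way, $f+g+h \in \clParity{a}$, and by Lemma~\ref{lem:Lc-simplify}\ref{lem:Lc-simplify:Lc} the two verified properties suffice.

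There is no real obstacle; the argument is a direct computation, with the only subtlety being to handle both choices of $\ParRel$ uniformly, which is resolved by the simple observation that $3 \equiv 1 \pmod 2$.
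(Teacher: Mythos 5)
Your proof is correct and follows essentially the same route as the paper's: verify minor\hyp{}closure by noting that constant tuples are fixed under minor formation, and verify closure under triple sums by splitting the values at $\vect{0}$ and $\vect{1}$ and using that an odd number of summands each shifted by $1$ yields a net shift of $1$. The closing reference to Lemma~\ref{lem:Lc-simplify} is not needed for the lemma itself (that step is taken later, in Proposition~\ref{prop:Lc-sufficiency}), but it does no harm.
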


\begin{proof}
We show first that $\clParity{a}$ is minor\hyp{}closed.
Let $f \in \clParity{a}^{(n)}$ and $\sigma \colon \nset{n} \to \nset{m}$.
We have $f_\sigma(0, \dots, 0) = f(0, \dots, 0) \ParRel f(1, \dots, 1) = f_\sigma(1, \dots, 1)$, so $f_\sigma \in \clParity{a}$.

We now show that $\clParity{a}$ is closed under triple sums of its members.
For $f, g, h \in \clEven^{(n)}$, we have
$(f + g + h)(\vect{0})
= f(\vect{0}) + g(\vect{0}) + h(\vect{0})
= f(\vect{1}) + g(\vect{1}) + h(\vect{1})
= (f + g + h)(\vect{1})$,
where $\vect{0} := (0, \dots, 0)$ and $\vect{1} := (1, \dots, 1)$;
therefore $f + g + h \in \clEven$.
For $f, g, h \in \clOdd^{(n)}$, we have
$(f + g + h)(\vect{0})
= f(\vect{0}) + g(\vect{0}) + h(\vect{0})
= f(\vect{1}) + 1 + g(\vect{1}) + 1+ h(\vect{1}) + 1
= (f + g + h)(\vect{1}) + 1$;
therefore $f + g + h \in \clOdd$.
\end{proof}

\begin{lemma}
\label{lem:Dk-closed}
For $k \in \IN$, $\clD{k}$ is minor\hyp{}closed and closed under sums of its members.
\end{lemma}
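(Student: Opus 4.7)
The plan is to use the monomial-level descriptions of sums and minors established in Lemmas~\ref{lem:sum-monomials} and~\ref{lem:minor-monomials}; once these are in hand, both claims reduce to observing that the relevant set operations cannot enlarge the size of any monomial. I expect no real obstacle here — this is essentially a routine verification, exactly analogous to Proposition~\ref{prop:Dk-L-stable} but carried out directly at the level of Zhegalkin polynomials rather than appealing to the generating-set lemmata.

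For minor-closedness, I would take $f \in \clD{k}^{(n)}$ and an arbitrary minor-formation map $\sigma \colon \nset{n} \to \nset{m}$, and apply Lemma~\ref{lem:minor-monomials}. Every set $S$ appearing in $\monomials{f_\sigma}$ has the form $S = \sigma(T)$ for some $T \in \monomials{f}$, and thus $\card{S} \leq \card{T} \leq \deg(f) \leq k$. Hence $\deg(f_\sigma) \leq k$, so $f_\sigma \in \clD{k}$.

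For closure under sums, I would take $f, g \in \clD{k}$; assuming they are of the same arity (which we may, after introducing fictitious variables, since minor-closedness has just been established). By Lemma~\ref{lem:sum-monomials}, $\monomials{f+g} = \monomials{f} \symmdiff \monomials{g} \subseteq \monomials{f} \cup \monomials{g}$, and every element of the latter union has cardinality at most $k$. Therefore $\deg(f+g) \leq k$, and $f + g \in \clD{k}$. Closure under triple sums (and in fact arbitrary finite sums) follows by an immediate induction, which also matches the form of the hypothesis needed to invoke Lemma~\ref{lem:Lc-simplify}\ref{lem:Lc-simplify:Lc} in the overall proof of Theorem~\ref{thm:Lc}.
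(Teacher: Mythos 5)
Your proposal is correct and follows the same underlying idea as the paper, which simply asserts that degree cannot increase under formation of minors or under sums; you merely make this explicit via Lemmata~\ref{lem:minor-monomials} and \ref{lem:sum-monomials}. No issues.
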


\begin{proof}
It is clear that the degree of functions cannot increase by formation of minors nor by composition with linear functions.
\end{proof}

\begin{lemma}
\label{lem:Xk-closed}
For $k \in \IN$, $\clChar{k}$ is minor\hyp{}closed and closed under sums of its members.
\end{lemma}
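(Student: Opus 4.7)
The plan is to use the equivalent description of characteristic rank given in Lemma~\ref{lem:Xk-description}, which says $f \in \clChar{k}$ iff $\deg(f + f^{\mathrm{n}}) \leq k-1$. This reduces both claims to statements about degree, which are handled by Lemma~\ref{lem:Dk-closed}.

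For closure under sums, let $f, g \in \clChar{k}^{(n)}$. Using Lemma~\ref{lem:neg-pol}\ref{lem:neg-pol:inner-sum}, we have
\[
(f + g) + (f + g)^{\mathrm{n}} = (f + g) + f^{\mathrm{n}} + g^{\mathrm{n}} = (f + f^{\mathrm{n}}) + (g + g^{\mathrm{n}}).
\]
Each summand on the right has degree at most $k-1$, so their sum also has degree at most $k-1$; hence $f + g \in \clChar{k}$ by Lemma~\ref{lem:Xk-description}. Triple sum closure then follows immediately.

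For minor closure, the key observation I would establish first is the commutativity of inner negation with minor formation: for $f \in \clAll^{(n)}$ and $\sigma \colon \nset{n} \to \nset{m}$, one has $(f_\sigma)^{\mathrm{n}} = (f^{\mathrm{n}})_\sigma$. This is verified by a direct computation, since for any $\vect{a} \in \{0,1\}^m$,
\[
(f_\sigma)^{\mathrm{n}}(\vect{a}) = f_\sigma(\overline{\vect{a}}) = f(\overline{a_{\sigma(1)}}, \dots, \overline{a_{\sigma(n)}}) = f^{\mathrm{n}}(a_{\sigma(1)}, \dots, a_{\sigma(n)}) = (f^{\mathrm{n}})_\sigma(\vect{a}).
\]
Combining this with the fact that minor formation distributes over addition (a trivial consequence of Lemma~\ref{lem:minor-monomials}, or directly from the definition), we get
\[
f_\sigma + (f_\sigma)^{\mathrm{n}} = f_\sigma + (f^{\mathrm{n}})_\sigma = (f + f^{\mathrm{n}})_\sigma.
\]
Since minor formation does not increase degree (as noted in the proof of Lemma~\ref{lem:Dk-closed}), if $\deg(f + f^{\mathrm{n}}) \leq k - 1$ then also $\deg((f + f^{\mathrm{n}})_\sigma) \leq k - 1$, and another application of Lemma~\ref{lem:Xk-description} gives $f_\sigma \in \clChar{k}$.

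Neither step presents any real obstacle; the only point that requires a moment of care is the identity $(f_\sigma)^{\mathrm{n}} = (f^{\mathrm{n}})_\sigma$, and that too is a direct unfolding of the definitions. The proof is essentially a transport of the analogous properties of $\clD{k}$ along the characterization of Lemma~\ref{lem:Xk-description}.
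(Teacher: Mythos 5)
Your proof is correct and follows essentially the same route as the paper's: both reduce the claim to degree statements via Lemma~\ref{lem:Xk-description}, use $(f+g)+(f+g)^{\mathrm{n}} = (f+f^{\mathrm{n}})+(g+g^{\mathrm{n}})$ for sums, and use $f_\sigma + (f_\sigma)^{\mathrm{n}} = (f+f^{\mathrm{n}})_\sigma$ together with the fact that minors do not increase degree. The only cosmetic difference is that you verify $(f_\sigma)^{\mathrm{n}} = (f^{\mathrm{n}})_\sigma$ by direct computation, whereas the paper invokes Lemma~\ref{lem:comp-minors}; your explicit check is if anything slightly more self-contained.
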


\begin{proof}
Let $f, g \in \clChar{k}^{(n)}$, and let $\varphi := f + f^\mathrm{n}$ and $\gamma := g + g^\mathrm{n}$.
By Lemma~\ref{lem:Xk-description}, $\deg(\varphi) \leq k-1$ and $\deg(\gamma) \leq k-1$.
Let $\sigma : \nset{m} \to \nset{n}$.
By Lemma~\ref{lem:comp-minors}, we have
$f_\sigma + (f_\sigma)^\mathrm{n} = f_\sigma + (f^\mathrm{n})_\sigma = (f + f^\mathrm{n})_\sigma = \varphi_\sigma$.
Since the formation of minors does not increase the degree, we have $\deg(\varphi_\sigma) \leq k - 1$, so $f_\sigma \in \clChar{k}$.
Furthermore, $(f + g) + (f + g)^\mathrm{n} = (f + f^\mathrm{n}) + (g + g^\mathrm{n}) = \varphi + \gamma$, which has degree at most $k - 1$, so $f + g \in \clChar{k}$.
\end{proof}

\begin{proposition}
\label{prop:Lc-sufficiency}
The classes listed in Theorem~\ref{thm:Lc} are $\clLc$\hyp{}stable.
\end{proposition}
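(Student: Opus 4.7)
The plan is to reduce the claim to the meet\hyp{}irreducible classes of the lattice depicted in Figure~\ref{fig:Lc-stable}. By Lemma~\ref{lem:Lc-simplify}\ref{lem:Lc-simplify:Lc}, a class is $\clLc$\hyp{}stable if and only if it is minor\hyp{}closed and closed under ternary sums $f + g + h$ of its members. Since the collection of $\clLc$\hyp{}stable subclasses of $\clAll$ is a closure system and hence closed under arbitrary intersections, it is enough to verify $\clLc$\hyp{}stability for the ``building blocks'' out of which every class in the statement of Theorem~\ref{thm:Lc} is obtained by intersection.

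These building blocks are precisely $\clAll$; $\clCon{a}$ and $\clYksi{b}$ for $a, b \in \{0,1\}$; $\clParity{a}$ for $\QuantifyParRel$; and $\clD{k}$ and $\clChar{k}$ for $k \in \IN$. Each of them has already been shown, in Lemmata~\ref{lem:All-closed}, \ref{lem:CaEa-closed}, \ref{lem:Pa-closed}, \ref{lem:Dk-closed}, and \ref{lem:Xk-closed}, to be minor\hyp{}closed and closed under triple sums (indeed, Lemmata~\ref{lem:Dk-closed} and \ref{lem:Xk-closed} give closure under arbitrary sums, which a fortiori implies closure under triple sums). Applying Lemma~\ref{lem:Lc-simplify}\ref{lem:Lc-simplify:Lc} to each, they are all $\clLc$\hyp{}stable.

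Now I simply inspect the list in Theorem~\ref{thm:Lc}. Every class there is either one of the building blocks, or a finite intersection of them (e.g.\ $\clBoth{a}{b} = \clCon{a} \cap \clYksi{b}$, $\clD{i} \cap \clChar{j} \cap \clParity{a}$, and $\clD{0} \cap \clCon{a}$), or the empty class, and $\clD{0}$ itself is the $k=0$ instance of $\clD{k}$. Intersections of $\clLc$\hyp{}stable classes are $\clLc$\hyp{}stable, and $\clEmpty$ is trivially $\clLc$\hyp{}stable. This covers every entry in the list.

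There is no real obstacle in this direction of the main theorem: all of the work has been done in the preceding lemmata. The only point that deserves a moment of care is the bookkeeping that the list of classes in Theorem~\ref{thm:Lc} really is the set of intersections of the building blocks (together with the explicit small classes $\clD{0}$, $\clD{0} \cap \clCon{a}$, $\clEmpty$); the indexing constraint $i > j \geq 1$ on $\clD{i} \cap \clChar{j}$ reflects the inclusion $\clD{k} \subseteq \clChar{k}$, which makes redundant combinations collapse to ones already listed, and no $\clLc$\hyp{}stability claim is affected by this redundancy.
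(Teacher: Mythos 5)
Your proposal is correct and follows essentially the same route as the paper's own proof: verify minor-closure and closure under triple sums for the meet-irreducible building blocks via Lemmata~\ref{lem:All-closed}, \ref{lem:CaEa-closed}, \ref{lem:Pa-closed}, \ref{lem:Dk-closed}, \ref{lem:Xk-closed}, invoke Lemma~\ref{lem:Lc-simplify}\ref{lem:Lc-simplify:Lc}, and conclude for the remaining classes by closure of the stability property under intersections. Your explicit remarks about $\clEmpty$ and the bookkeeping of the index constraints are harmless additions that the paper leaves implicit.
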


\begin{proof}
According to Lemmata~\ref{lem:All-closed}, \ref{lem:CaEa-closed}, \ref{lem:Pa-closed}, \ref{lem:Dk-closed}, and \ref{lem:Xk-closed}, each of the classes $\clAll$, $\clCon{0}$, $\clCon{1}$, $\clYksi{0}$, $\clYksi{1}$, $\clEven$, $\clOdd$, $\clD{k}$, and $\clChar{k}$ for $k \in \IN$ is minor\hyp{}closed and closed under triple sums of its members,
so by Lemma~\ref{lem:Lc-simplify}\ref{lem:Lc-simplify:Lc}, each is $\clLc$\hyp{}stable.
It follows that the remaining classes listed in Theorem~\ref{thm:Lc}, being intersections of the above classes, are also $\clLc$\hyp{}stable.
\end{proof}

It remains to show that the classes listed in Theorem~\ref{thm:Lc} are the only $\clLc$\hyp{}stable classes.
To this end, we are going to verify that any set of Boolean functions generates exactly what is suggested by Figure~\ref{fig:Lc-stable}.
More precisely, we prove that each class $K$ is generated by any subset of $K$ that is not contained in any proper subclass fo $K$, i.e., the subset contains for each proper subclass $C$ of $K$ an element in $K \setminus C$.
If each proper subclass is contained in a lower cover of $K$, then it suffices to consider the lower covers of $K$.
We begin with some helpful lemmata.

\begin{lemma}
\label{lem:Lc-sum-of-minors}
For any $F \subseteq \clAll$, we have $f \in \gen{F}$ if and only if $f$ is the sum of an odd number of minors of members of $F$, i.e., $f = \sum_{i=1}^{2k+1} (g_i)_{\sigma_i}$ for some $k \in \IN$, $g_i \in F$, $\sigma_i \colon \nset{n_i} \to \nset{n}$, where $n_i := \arity{g_i}$ and $n := \arity{f}$ \textup{(}$1 \leq i \leq 2k + 1$\textup{)}.
\end{lemma}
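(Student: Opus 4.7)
The plan is to prove both directions by invoking the characterization of $\clLc$\hyp{}stability from Lemma~\ref{lem:Lc-simplify}\ref{lem:Lc-simplify:Lc}: a class is $\clLc$\hyp{}stable if and only if it is minor\hyp{}closed and closed under triple sums of its members.

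For the sufficiency (``if'' direction), I would note that each single minor $(g_i)_{\sigma_i}$ of a member $g_i$ of $F$ belongs to $\gen{F}$ by minor\hyp{}closure. A triple sum of three elements of $\gen{F}$ stays in $\gen{F}$. Then a simple induction on $k$ shows that any sum $h_1 + h_2 + \dots + h_{2k+1}$ of an odd number of members of $\gen{F}$ of common arity lies in $\gen{F}$: the base case $k = 0$ is trivial, and for the inductive step, observe that $h_1 + \dots + h_{2k+1} + h_{2k+2} + h_{2k+3}$ equals the triple sum of $h_1 + \dots + h_{2k+1} \in \gen{F}$ with $h_{2k+2}$ and $h_{2k+3}$.

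For the necessity (``only if'' direction), let $S$ denote the set of all Boolean functions expressible as a sum of an odd number of minors of members of $F$ (of common arity). It contains $F$, since each $g \in F$ is its own minor via the identity map, i.e., the sum of one single minor of itself. To show $S$ is minor\hyp{}closed, let $f = \sum_{i=1}^{2k+1} (g_i)_{\sigma_i} \in S$ and let $\tau \colon \nset{n} \to \nset{m}$. Since the formation of the minor $\tau$ amounts to substitution of the variables, it distributes over finite sums, so
\[
f_\tau = \sum_{i=1}^{2k+1} ((g_i)_{\sigma_i})_\tau = \sum_{i=1}^{2k+1} (g_i)_{\tau \circ \sigma_i}
\]
by Lemma~\ref{lem:minor-composition}, and this is again a sum of an odd number of minors of members of $F$, hence $f_\tau \in S$. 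To show $S$ is closed under triple sums, take $f_j = \sum_{i=1}^{2k_j + 1} (g_{j,i})_{\sigma_{j,i}}$ for $j \in \{1,2,3\}$ (all of common arity $n$, after padding with fictitious arguments if necessary, which is allowed by minor\hyp{}closure). Then $f_1 + f_2 + f_3$ is the formal sum of $(2k_1+1) + (2k_2+1) + (2k_3+1) = 2(k_1+k_2+k_3+1) + 1$ minors, which is odd.

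By Lemma~\ref{lem:Lc-simplify}\ref{lem:Lc-simplify:Lc}, $S$ is $\clLc$\hyp{}stable, and since $F \subseteq S$, it follows that $\gen{F} \subseteq S$, completing the proof. I do not foresee a significant obstacle here; the only mild subtlety is the arity\hyp{}matching when combining expressions from different $f_j$'s, but this is handled by the observation that $S$ is minor\hyp{}closed, so we may freely introduce fictitious arguments to bring all summands to common arity before combining.
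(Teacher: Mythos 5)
Your proof is correct and follows essentially the same route as the paper's: both directions rest on Lemma~\ref{lem:Lc-simplify}\ref{lem:Lc-simplify:Lc}, with the same key computations (minors distribute over sums via Lemmata~\ref{lem:comp-minors} and \ref{lem:minor-composition}, and a sum of three odd counts is odd). The only difference is cosmetic: for necessity you verify that the set of odd sums of minors is itself $\clLc$\hyp{}stable and contains $F$, then invoke minimality of $\gen{F}$, whereas the paper runs an induction over the construction steps generating $\gen{F}$ -- these are dual phrasings of the same argument.
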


\begin{proof}
``$\Leftarrow$'':
Clear because $\gen{F}$ is closed under minors and triple sums and hence under any odd sums of its members by Lemma~\ref{lem:Lc-simplify}\ref{lem:Lc-simplify:Lc}.

``$\Rightarrow$'':
By Lemma~\ref{lem:Lc-simplify}\ref{lem:Lc-simplify:Lc}, $\gen{F}$ is the set obtained by a finite number of the following construction steps:
\begin{enumerate}[label=(\arabic*), leftmargin=*, widest=3]
\item Every $f \in F$ is a member of $\gen{F}$.
\item If $f \in \gen{F}$, $\arity{f} = n$, and $\sigma \colon \nset{n} \to \nset{m}$ for some $m \in \IN_{+}$, then $f_\sigma \in \gen{F}$.
\item If $f, g, h \in \gen{F}$, all of arity $n \in \IN_{+}$, then $f + g + h \in \gen{F}$.
\end{enumerate}
We will show by induction on the construction that every $f \in \gen{F}$ is an odd sum of minors of members of $F$.
This obviously holds for every $f \in F$: $f = \sum_{i=1}^1 f_{\id}$.
Assume $f = \sum_{i=1}^{2k+1} (g_i)_{\sigma_i}$ for some $g_i \in F$ and $\sigma_i \colon \nset{n_i} \to \nset{n}$ ($1 \leq i \leq 2k+1$).
Then for any $\tau \colon \nset{n} \to \nset{m}$, we have
\[
f_\tau
= \bigl( \sum_{i=1}^{2k+1} (g_i)_{\sigma_i} \bigr)_\tau
= \sum_{i=1}^{2k+1} ((g_i)_{\sigma_i})_\tau
= \sum_{i=1}^{2k+1} (g_i)_{\tau \circ \sigma_i},
\]
where the second and the third equalities hold by Lemmata~\ref{lem:comp-minors} and \ref{lem:minor-composition}, respectively.
Finally, assume that $f = \sum_{i=1}^{2k+1} (f_i)_{\sigma_i}$, $g = \sum_{i=1}^{2 \ell + 1} (g_i)_{\tau_i}$, $h = \sum_{i=1}^{2m+1} (h_i)_{\rho_i}$ for some $f_i, g_i, h_i \in F$, $\sigma_i \colon \nset{\arity{f_i}} \to \nset{n}$, $\tau_i \colon \nset{\arity{g_i}} \to \nset{n}$, $\rho_i \colon \nset{\arity{h_i}} \to \nset{n}$.
Then
\[
f + g + h
= \sum_{i=1}^{2k+1} (f_i)_{\sigma_i} + \sum_{i=1}^{2 \ell + 1} (g_i)_{\tau_i} + \sum_{i=1}^{2m+1} (h_i)_{\rho_i},
\]
which is an odd sum of minors of members of $F$.
\end{proof}

\begin{lemma}
\label{lem:gen-C+1}
Assume that $C$ is an $\clLc$\hyp{}stable class and $\gen{F} = C$.
Then $\overline{C}$ is $\clLc$\hyp{}stable and $\gen{\overline{F}} = \overline{C}$.
\end{lemma}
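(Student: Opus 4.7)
My plan is to exploit two clean identities for the outer negation $f \mapsto \overline{f} = f + 1$ (Lemma~\ref{lem:neg-pol}\ref{lem:neg-pol:outer}). First, a direct computation shows $(\overline{f})_\sigma = \overline{f_\sigma}$ for any $f$ and any minor formation map $\sigma$, since $(\overline{f})_\sigma(\vect{a}) = f(a_{\sigma(1)}, \dots, a_{\sigma(n)}) + 1 = f_\sigma(\vect{a}) + 1$. Second, since $3 \equiv 1 \pmod{2}$, we have $\overline{f_1} + \overline{f_2} + \overline{f_3} = f_1 + f_2 + f_3 + 1 = \overline{f_1 + f_2 + f_3}$. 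These two identities immediately show that $\overline{C}$ inherits minor\hyp{}closure and closure under triple sums from $C$, so by Lemma~\ref{lem:Lc-simplify}\ref{lem:Lc-simplify:Lc}, $\overline{C}$ is $\clLc$\hyp{}stable.

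For the equality $\gen{\overline{F}} = \overline{C}$, the inclusion $\gen{\overline{F}} \subseteq \overline{C}$ is immediate, because $\overline{F} \subseteq \overline{C}$ and $\overline{C}$ is $\clLc$\hyp{}stable. For the reverse inclusion, I would take any $\overline{f} \in \overline{C}$; then $f \in C = \gen{F}$, so by Lemma~\ref{lem:Lc-sum-of-minors} we can write $f = \sum_{i=1}^{2k+1} (g_i)_{\sigma_i}$ with each $g_i \in F$. The key calculation is
\begin{equation*}
\sum_{i=1}^{2k+1} (\overline{g_i})_{\sigma_i} = \sum_{i=1}^{2k+1} \bigl( (g_i)_{\sigma_i} + 1 \bigr) = f + (2k+1) \cdot 1 = f + 1 = \overline{f},
\end{equation*}
where we use $(\overline{g_i})_{\sigma_i} = \overline{(g_i)_{\sigma_i}} = (g_i)_{\sigma_i} + 1$ and the fact that $2k+1$ is odd in $\GF(2)$. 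This exhibits $\overline{f}$ as an odd sum of minors of members of $\overline{F}$, so applying Lemma~\ref{lem:Lc-sum-of-minors} in the other direction yields $\overline{f} \in \gen{\overline{F}}$.

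There is no real obstacle here: the whole argument is pure bookkeeping, and the only subtle point is matching parities. The oddness built into Lemma~\ref{lem:Lc-sum-of-minors} is exactly what cancels the excess constants, so that an odd sum of negated minors realizes the outer negation of an odd sum of the original minors. In a writeup I would state the two identities first, note that they yield the $\clLc$\hyp{}stability of $\overline{C}$, and then carry out the single displayed calculation to finish the equality of closures.
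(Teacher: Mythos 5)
Your proof is correct and follows essentially the same route as the paper's: both establish $\clLc$\hyp{}stability of $\overline{C}$ from the compatibility of outer negation with minors and triple sums, and both obtain $\overline{C} \subseteq \gen{\overline{F}}$ by negating each summand in the odd\hyp{}sum\hyp{}of\hyp{}minors representation from Lemma~\ref{lem:Lc-sum-of-minors}, with the oddness of $2k+1$ absorbing the constants. The only cosmetic difference is that you verify $(\overline{f})_\sigma = \overline{f_\sigma}$ by direct evaluation where the paper invokes Lemma~\ref{lem:comp-minors}.
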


\begin{proof}
Assume that $\gen{F} = C$.
Then $\overline{C}$ is $\clLc$\hyp{}stable because for all $n$\hyp{}ary $f + 1, g + 1, h + 1 \in \overline{C}$, we have $f, g, h \in C$ and hence
$(f + 1) + (g + 1) + (h + 1) = (f + g + h) + 1 \in \overline{C}$, and
for any $\sigma \colon \nset{n} \to \nset{m}$, we have, by Lemma~\ref{lem:comp-minors}, $(f + 1)_\sigma = f_\sigma + 1_\sigma = f_\sigma + 1 \in \overline{C}$.

In order to show that $\overline{C}$ is generated by $\overline{F}$, let $f + 1 \in \overline{C}$.
Then $f \in C$, and by Lemma~\ref{lem:Lc-sum-of-minors}, $f = \sum_{i=1}^{2k+1} (g_i)_{\sigma_i}$ for some $g_i \in F$ and some minor formation map $\sigma_i$ ($1 \leq i \leq 2k+1$).
Consequently,
$f + 1
= \sum_{i=1}^{2k+1} ((g_i)_{\sigma_i} + 1)
= \sum_{i=1}^{2k+1} ((g_i)_{\sigma_i} + 1_{\sigma_i})
= \sum_{i=1}^{2k+1} (g_i + 1)_{\sigma_i}$
by Lemma~\ref{lem:comp-minors}.
Since each $g_i + 1$ is in $\overline{F}$, Lemma~\ref{lem:Lc-sum-of-minors} implies that $f \in \gen{\overline{F}}$.
\end{proof}

\begin{proposition}
\label{prop:gen:D0}
\leavevmode
\begin{enumerate}[label=\upshape{(\roman*)}, leftmargin=*, widest=iii]
\item\label{gen:empty}
$\gen{\clEmpty} = \clEmpty$.
\item\label{gen:D0C0}
For any $f \in \clD{0} \cap \clCon{0}$, we have $\gen{f} = \clD{0} \cap \clCon{0}$.
\item\label{gen:D0C1}
For any $f \in \clD{0} \cap \clCon{1}$, we have $\gen{f} = \clD{0} \cap \clCon{1}$.
\item\label{gen:D0}
For any $f, g \in \clD{0}$ such that $f \notin \clCon{0}$, $g \notin \clCon{1}$, we have $\gen{f,g} = \clD{0}$.
\end{enumerate}
\end{proposition}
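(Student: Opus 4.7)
The plan is to combine Lemma~\ref{lem:Lc-simplify}\ref{lem:Lc-simplify:Lc} with Proposition~\ref{prop:Lc-sufficiency}. Proposition~\ref{prop:Lc-sufficiency} already tells us that each target class in (i)-(iv) -- namely $\clEmpty$, $\clD{0} \cap \clCon{0}$, $\clD{0} \cap \clCon{1}$, and $\clD{0}$ -- is $\clLc$-stable. Hence one inclusion in each part is automatic: whenever the proposed generators lie in the target class, the $\clLc$-closure is contained in it. The substantive content is therefore only the reverse inclusions, and for these I expect that the closure under triple sums will play no role at all; pure minor formation will suffice.

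For (i) there is nothing to prove. For (ii), any $f \in \clD{0} \cap \clCon{0}$ is of the form $\cf{n}{0}$ for some $n \in \IN_+$. Given any $m \in \IN_+$, pick any minor formation map $\sigma \colon \nset{n} \to \nset{m}$ (say the constant map $i \mapsto 1$); then $f_\sigma = \cf{m}{0}$. Thus $\gen{f}$ contains every constant-$0$ function of every positive arity, which is precisely $\clD{0} \cap \clCon{0}$. Part (iii) is entirely analogous with $0$ replaced by $1$. For (iv), the hypotheses force $f = \cf{n}{1}$ and $g = \cf{m}{0}$ for some $n, m \in \IN_+$, since the only elements of $\clD{0}$ are constant functions. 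Applying the minor argument of (ii) and (iii) separately to $f$ and to $g$ we obtain every $\cf{k}{0}$ and every $\cf{k}{1}$ in $\gen{f, g}$, and since $\clD{0}$ is the disjoint union of these two families, the inclusion $\clD{0} \subseteq \gen{f, g}$ follows.

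No real obstacle is anticipated; the whole argument is bookkeeping, resting on the trivial observation that a minor of a constant function is a constant function of the same value. In particular, the verification that triple sums of constants are constants (needed only for the $\clLc$-stability of the target classes) was already discharged inside Proposition~\ref{prop:Lc-sufficiency}, so it need not be repeated here.
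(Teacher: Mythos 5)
Your proposal is correct and follows essentially the same route as the paper: minors of constant functions yield all constant functions of the same value, the $\clLc$-stability of the target classes (from Proposition~\ref{prop:Lc-sufficiency}) gives the reverse inclusions, and part (iv) reduces to the observation that $f$ and $g$ must be the constant $1$ and constant $0$ functions respectively. The only cosmetic difference is that the paper derives (iii) from (ii) via the complementation Lemma~\ref{lem:gen-C+1} rather than repeating the symmetric argument, which changes nothing of substance.
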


\begin{proof}
\ref{gen:empty}
This is obvious.

\ref{gen:D0C0}
The function $f$ is a constant $0$ function of some arity.
We obtain any constant $0$ function by identifying arguments or introducing fictitious arguments.
Therefore $\clD{0} \cap \clCon{0} \subseteq \gen{f} \subseteq \clD{0} \cap \clCon{0}$.

\ref{gen:D0C1}
Follows from part \ref{gen:D0C0} by Lemma~\ref{lem:gen-C+1} because $\overline{\clD{0} \cap \clCon{0}} = \clD{0} \cap \clCon{1}$.

\ref{gen:D0}
Since $\clCon{0}$ and $\clCon{1}$ partition $\clAll$, it follows that $f \in \clD{0} \cap \clCon{1}$ and $g \in \clD{0} \cap \clCon{0}$.
By parts \ref{gen:D0C0} and \ref{gen:D0C1},
$\clD{0} = (\clD{0} \cap \clCon{0}) \cup (\clD{0} \cap \clCon{1}) = \gen{g} \cup \gen{f} \subseteq \gen{f,g} \subseteq \clD{0}$.
\end{proof}

\begin{lemma}
\label{lem:arity-degree}
Let $f \in \clAll$ with $n := \arity{f}$.
Let $k \in \IN$.
\begin{enumerate}[label=\upshape{(\roman*)}, leftmargin=*, widest=iii]
\item\label{lem:arity-degree:degree}
If $n > \deg(f)$, then $f$ has a minor of degree $\deg(f)$ and arity $\deg(f) + 1$.
\item\label{lem:arity-degree:atleast}
If $f \in \clChar{k}$ and $\deg(f) > k$, then $n > \deg(f)$.
\item\label{lem:arity-degree:smallest}
If $f \in \clChar{k}$ and $n - 1 = \deg(f) > k$, then $\monomials{f}$ contains all subsets of $\nset{n}$ of cardinality $n - 1$.
\item\label{lem:arity-degree:minor1}
If $f \in \clChar{k} \setminus \clChar{k-1}$, then $f$ has a $k$\hyp{}ary minor $g$ such that $\nset{k} \in \monomials{g}$ and $g \in \clD{k} \setminus \clChar{k-1}$.
\item\label{lem:arity-degree:minor2}
If $f \in \clChar{k}$ and there is an $S \in \monomials{f}$ with $\ell := \card{S} > k$, then $f$ has an $(\ell + 1)$\hyp{}ary minor $g$ such that $\monomials{g}$ contains all subsets of $\nset{\ell + 1}$ of cardinality $\ell$ but $\nset{\ell + 1} \notin \monomials{g}$.
Moreover, if $\ell > k + 1$, then $\monomials{g}$ contains also a subset of cardinality $\ell - 1$.
\item\label{lem:arity-degree:minor3}
If $\deg{f} = n$, then $f$ has a minor of arity $n-1$ and degree $n-1$.
\end{enumerate}
\end{lemma}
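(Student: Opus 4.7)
The plan is to prove all six parts uniformly using Lemma~\ref{lem:minor-monomials}---the ``preimage\hyp{}counting'' formula expressing $\monomials{f_\sigma}$ as the parity of $\sigma$\hyp{}preimages within $\monomials{f}$---together with the definition of $\charrank{f}$. The common construction is a ``collapse'' minor formation map: given a distinguished $S \subseteq \nset{n}$, we send $S$ bijectively onto an initial segment and collapse $\nset{n} \setminus S$ to the single remaining coordinate.

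For \textup{(i)}, assuming $f \neq 0$ (else the claim is trivial), pick $S \in \monomials{f}$ with $\card{S} = d := \deg f$, fix $j \in \nset{n} \setminus S$ (available since $n > d$), and let $\sigma \colon \nset{n} \to \nset{d+1}$ realize the collapse; the unique preimage of $\nset{d}$ is $T = S$, so $\nset{d} \in \monomials{f_\sigma}$ and $\deg f_\sigma = d$. For \textup{(ii)}, suppose for contradiction $n = d := \deg f > k$; then $\nset{n}$ is the unique size\hyp{}$n$ monomial of $f$, and any $S' \subseteq \nset{n}$ with $\card{S'} = n - 1 \geq k$ has only $\nset{n}$ as a strict superset, giving $\Char(S', f) = 1$ and contradicting $\charrank{f} \leq k$. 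For \textup{(iii)}, examine $\Char$ on size\hyp{}$(n-2)$ subsets $S'$: since $\card{S'} = n - 2 \geq k$ we have $\Char(S', f) = 0$, and since $\nset{n} \notin \monomials{f}$ this forces the two size\hyp{}$(n-1)$ supersets of $S'$ inside $\nset{n}$ to agree as monomials; hence either every size\hyp{}$(n-1)$ subset is a monomial of $f$ or none is, and ``none'' would contradict $\deg f = n - 1$.

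For \textup{(iv)}, pick $S \subseteq \nset{n}$ of size $k - 1$ with $\Char(S, f) = 1$ (which exists because $\charrank{f} = k$) and feed it into the collapse to obtain a $k$\hyp{}ary minor $g$; the preimages of $\nset{k}$ under $\sigma$ are exactly the $T \supsetneq S$ in $\nset{n}$, so their parity equals $\Char(S, f) = 1$, giving $\nset{k} \in \monomials{g}$, whence $g \in \clD{k}$, and for every $S' \subsetneq \nset{k}$ of size $k - 1$ the set $\nset{k}$ is the sole strict superset, so $\Char(S', g) = 1$ and $g \notin \clChar{k-1}$. For \textup{(v)}, take $S \in \monomials{f}$ with $\card{S} = \ell > k$; by \textup{(ii)} we have $n > \ell$, so the collapse to $\nset{\ell+1}$ is available, producing $g = f_\sigma$. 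Preimage counting immediately gives $\nset{\ell} \in \monomials{g}$ (unique preimage $S$) and $\nset{\ell+1} \notin \monomials{g}$ (parity $\Char(S, f) = 0$). For each $T' = \nset{\ell+1} \setminus \{i\}$ with $i \in \nset{\ell}$, the preimages of $T'$ are the monomials of the form $(S \setminus \{s_i\}) \cup V$ with $\emptyset \neq V \subseteq \nset{n} \setminus S$; splitting $\Char(S \setminus \{s_i\}, f) = 0$ (valid since $\ell - 1 \geq k$) into the portion where $s_i \in A$ (equivalently $A \supseteq S$, contributing $1 + \Char(S, f) \equiv 1$) and the complementary portion (our target), we conclude the target count is $\equiv 1 \pmod{2}$, so $T' \in \monomials{g}$.

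The ``moreover'' clause of \textup{(v)} goes by contradiction: assume no size\hyp{}$(\ell-1)$ subset lies in $\monomials{g}$ and pick any $T'' \subseteq \nset{\ell+1}$ of size $\ell - 2$ (available because $\ell > k + 1$ gives $\ell - 2 \geq k$); the strict supersets of $T''$ in $\nset{\ell+1}$ comprise three sets of size $\ell - 1$ (none monomials by hypothesis), three of size $\ell$ (all monomials by the previous step), and $\nset{\ell+1}$ (not a monomial), yielding $\Char(T'', g) \equiv 3 \equiv 1 \pmod{2}$; but $g \in \clChar{k}$ (as $\clChar{k}$ is $\clLc$\hyp{}stable and hence minor\hyp{}closed by Lemma~\ref{lem:Xk-closed}) and $\card{T''} \geq k$, a contradiction. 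Finally, for \textup{(vi)} (where tacitly $n \geq 2$), consider the identifications $\sigma_{ij}$ with $i < j$: the preimages of $\nset{n-1}$ under $\sigma_{ij}$ are exactly $\{\nset{n}, \nset{n} \setminus \{i\}, \nset{n} \setminus \{j\}\}$, so $\nset{n-1} \in \monomials{f_{\sigma_{ij}}}$ iff the indicators of $\nset{n} \setminus \{i\}$ and $\nset{n} \setminus \{j\}$ in $\monomials{f}$ agree (using $\nset{n} \in \monomials{f}$ from $\deg f = n$); pigeonhole on $n \geq 2$ binary values produces such a pair $i, j$, and the resulting minor has arity and degree $n - 1$. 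The main obstacle is the moreover clause of \textup{(v)}: one must carefully calibrate hypotheses, since $\ell > k$ suffices for the size\hyp{}$\ell$ assertions but $\ell > k + 1$ is exactly the strength required to invoke $\charrank{f} \leq k$ on size\hyp{}$(\ell - 2)$ subsets.
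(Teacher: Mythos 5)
Your treatment of parts \textup{(i)}--\textup{(v)} is correct and follows essentially the same strategy as the paper: the same ``collapse'' minor formation maps and the same parity bookkeeping via Lemma~\ref{lem:minor-monomials}. The only substantive difference is in part \textup{(v)}, where you prove directly --- by splitting $\Char(S \setminus \{s_i\}, f) = 0$ according to whether the superset contains $s_i$ --- that every $\ell$\hyp{}subset of $\nset{\ell+1}$ lies in $\monomials{g}$ and that $\nset{\ell+1}$ does not, whereas the paper first pins down $\deg(g) = \ell$ via part \textup{(ii)} and then invokes part \textup{(iii)} applied to $g$. Your route is self\hyp{}contained and equally valid; it amounts to re\hyp{}deriving \textup{(iii)} in the instance at hand.

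Part \textup{(vi)} contains a genuine gap: ``pigeonhole on $n \geq 2$ binary values produces such a pair'' is false when $n = 2$, since two binary values need not agree. The failure is not merely in the argument: for $f = x_1 x_2 + x_1$ one has (in your notation) $\epsilon_1 = 0 \neq 1 = \epsilon_2$, the unique unary minor is $f(x_1,x_1) = 0$, and statement \textup{(vi)} itself fails. To be fair, the paper's own proof has the same blind spot --- its middle case (``exactly one monomial of degree $n-1$'') requires choosing two distinct indices inside a set of cardinality $n - 1$, which is impossible for $n = 2$ --- and the lemma is only ever applied either with $n \geq 3$ or, in the step $\ell = 1$ of Proposition~\ref{prop:gen:DiXjC0E1}, to a binary $\gamma \in \clBoth{0}{1}$, where the parity constraint forces $\epsilon_1 = \epsilon_2$ and the identification does work. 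So your uniform pigeonhole argument (which is cleaner than the paper's three\hyp{}case analysis) is sound exactly for $n \geq 3$; you should either add that hypothesis to \textup{(vi)} or note the extra condition that rescues the $n = 2$ applications.
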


\begin{proof}
\ref{lem:arity-degree:degree}
Let $m := \deg(f)$.
There exists an $S \in \monomials{f}$ with $\card{S} = m$.
Let us identify all arguments not in $S$, i.e., we form the minor $f_\sigma$ with a minor formation map $\sigma \colon \nset{n} \to \nset{m+1}$ that maps $S$ onto $\nset{m}$ and every element of $\nset{n} \setminus S$ to $m+1$.
Then $f_\sigma$ has arity $m+1$.
Clearly every monomial of $f_\sigma$ has degree at most $m$, and $\nset{m} \in \monomials{f_\sigma}$; hence $\deg(f_\sigma) = m$.

\ref{lem:arity-degree:atleast}
Clearly $n = \arity{f} \geq \deg(f)$.
Assume that $n > k$, and suppose, to the contrary, that $n = \deg(f)$.
But then $\Char(\nset{n - 1},f) = 1$ and $\card{\nset{n - 1}} \geq k$, contradicting $f \in \clChar{k}$.

\ref{lem:arity-degree:smallest}
Assume that $n - 1 = \deg(f) > k$.
Then there exists an $S \in \monomials{f}$ with $\card{S} = n - 1$.
Let $A \subseteq S$ with $\card{A} = n - 2$.
Since $n - 2 \geq k$ and $f \in \clChar{k}$, there must be an even number of proper supersets of $A$ in $\monomials{f}$.
We already have $S \in \monomials{f}$, so there must be another one.
In fact there is only one other possibility, namely $A \cup \{i\}$, where $i$ is the unique element of $\nset{n} \setminus S$.
By letting $A$ range over all $(n-2)$\hyp{}element subsets of $S$, we conclude that $\monomials{f}$ indeed contains all subsets of $\nset{n}$ of cardinality $n-1$.

\ref{lem:arity-degree:minor1}
Since $f \notin \clChar{k-1}$, there exists a subset $A \subseteq \nset{n}$ with $\card{A} = k-1$ such that $\Char(A,f) = 1$.
Let us identify all arguments not in $A$, i.e., we form the minor $f_\sigma$ with a minor formation map $\sigma \colon \nset{n} \to \nset{k}$ that maps $A$ onto $\nset{k-1}$ and every element of $\nset{n} \setminus A$ to $k$.
Then $f_\sigma$ has arity $k$.
Since those subsets of $\nset{n}$ whose image under $\sigma$ equals $\nset{k}$ are precisely all proper supersets of $A$, and since $\Char(A,f) = 1$, there are an odd number of sets $T \in \monomials{f}$ such that $\sigma(T) = \nset{k}$.
By Lemma~\ref{lem:minor-monomials}, $\nset{k} \in \monomials{f_\sigma}$.
Then clearly $f_\sigma \in \clD{k} \setminus \clChar{k-1}$.

\ref{lem:arity-degree:minor2}
By part \ref{lem:arity-degree:atleast}, we must have $n > \deg(f) \geq \ell$.
By identifying all arguments that are not in $S$, we obtain a minor $g$ of $f$ that has arity $\ell + 1$ and contains a monomial of degree $\ell$.
Since $\clChar{k}$ is minor\hyp{}closed, $g \in \clChar{k}$, so by part \ref{lem:arity-degree:smallest}, $\nset{\ell + 1} \notin \monomials{g}$; hence $\deg(g) = \ell$.
By part \ref{lem:arity-degree:smallest}, $\monomials{g}$ contains all subsets of $\nset{\ell + 1}$ of cardinality $\ell$.
If $\ell > k + 1$, then $\monomials{g}$ must also contain a subset of cardinality $\ell - 1$.
For, consider a subset $A \subseteq \nset{\ell + 1}$ with $\card{A} = \ell - 2$.
Since $\ell - 2 \geq k$ and $g \in \clChar{k}$, we have $\Char(A,g) = 0$, so there must be an even number of sets $S \in \monomials{g}$ with $A \subsetneq S$.
There are exactly three such sets $S$ of cardinality $\ell$, namely $\nset{\ell + 1} \setminus \{i\}$ for each $i \in \nset{\ell + 1} \setminus A$; therefore there must also be a set of cardinality $\ell - 1$ in $\monomials{g}$.

\ref{lem:arity-degree:minor3}
If $f$ has no monomial of degree $n - 1$, then for any $i, j \in \nset{n}$ with $i < j$, the $(n - 1)$\hyp{}ary minor $f_{ij}$ has degree $n - 1$.
If $f$ has exactly one monomial of degree $n - 1$, say $S \in \monomials{f}$, $\card{S} = n - 1$, then for any $i, j \in S$ with $i < j$, the minor $f_{ij}$ has degree $n - 1$.
If $f$ has at least two monomials of degree $n - 1$, say $S, T \in \monomials{f}$, $S \neq T$, $\card{S} = \card{T} = n - 1$, then for $\{i, j\} := S \symmdiff T$ with $i < j$, the minor $f_{ij}$ has degree $n - 1$.
\end{proof}

In what follows, we are going to make use of a family of special Boolean functions $\monster{k}$ that was inspired
by the ``unitrades'' and the proof methods presented by Potapov~\cite[Section~4]{Potapov}.
There is a minor difference in the definition, though.
While Potapov's unitrade $\monster{k}$ is composed of all subsets of cardinality $k$, we nevertheless include all nonempty proper subsets of $\nset{k+1}$ in the set of monomials of $\monster{k}$, as this will serve better our needs.

\begin{definition}
\label{def:monster}
For $k \in \IN$, let $\monster{k} \colon \{0,1\}^{k+1} \to \{0,1\}$ be the function satisfying $\monomials{\monster{k}} = \{ \, S \subseteq \nset{k+1} \mid 0 < \card{S} < k+1 \, \}$.
Equivalently, $\monster{k}$ satisfies $\monster{k}(\vect{a}) = 1$ if and only if $\vect{a} \notin \{(0, \dots, 0), (1, \dots, 1)\}$.
For $n \geq k$ and $B \subseteq \nset{n}$ with $\card{B} = k$, denote by $\monster{k}^B$ the minor $(\monster{k})_\sigma$ where $\sigma \colon \nset{k} \to \nset{n}$ is an injective map with range $B$ (since $\monster{k}$ is totally symmetric, any such map $\sigma$ produces the same minor).
In other words, $\monster{k}^B$ is obtained from $\monster{k}$ by introducing $n - k$ fictitious arguments and then permuting arguments so that the essential arguments are the ones indexed by the elements of $B$.
While the arity of $\monster{k}^B$ is not explicit in the notation, it will be clear from the context.
\end{definition}

\begin{lemma}
\label{lem:monster}
\leavevmode
\begin{enumerate}[label=\upshape{(\roman*)}, leftmargin=*, widest=ii]
\item\label{lem:monster:parameters}
For any $k \in \IN$, we have
$\monster{k} \in \clD{k} \cap \clChar{0} \cap \clBoth{0}{0}$.

\item\label{lem:monster:minors}
For any $k, \ell \in \IN$ with $k \leq \ell$, $\monster{k}$ is a minor of $\monster{\ell}$.
\end{enumerate}
\end{lemma}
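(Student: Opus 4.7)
The plan is to use the two equivalent descriptions of $\monster{k}$ provided in Definition~\ref{def:monster} (via monomials and via values) and pick whichever is more convenient for each claim.

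For part~\ref{lem:monster:parameters}, I would verify the three memberships separately. The degree bound $\monster{k}\in\clD{k}$ is immediate: by definition every monomial of $\monster{k}$ has cardinality strictly less than $k+1$, so $\deg(\monster{k})\le k$. The fact that $\monster{k}\in\clBoth{0}{0}$ is read off directly from the value characterization: the definition says $\monster{k}(\vect{a})=1$ only when $\vect{a}\notin\{(0,\dots,0),(1,\dots,1)\}$, so in particular $\monster{k}$ vanishes on both constant tuples. Finally, to show $\monster{k}\in\clChar{0}$, I invoke Lemma~\ref{lem:Selezneva}(i): it is enough to show that $\monster{k}$ is reflexive. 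But this is transparent from the value description, because the two excluded tuples $(0,\dots,0)$ and $(1,\dots,1)$ form a set closed under componentwise negation, and so does its complement; hence $\monster{k}(\vect{a})=\monster{k}(\overline{\vect{a}})$ for every $\vect{a}$.

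For part~\ref{lem:monster:minors}, the natural approach is to exhibit an explicit minor formation map. Fix any surjection $\sigma\colon\nset{\ell+1}\to\nset{k+1}$ (such surjections exist precisely because $k\le\ell$) and consider the minor $(\monster{\ell})_\sigma$, which is a function of arity $k+1$. For $\vect{a}=(a_1,\dots,a_{k+1})\in\{0,1\}^{k+1}$ we have
\[
(\monster{\ell})_\sigma(\vect{a})=\monster{\ell}(a_{\sigma(1)},\dots,a_{\sigma(\ell+1)}).
\]
If $\vect{a}=(0,\dots,0)$ or $\vect{a}=(1,\dots,1)$, then the tuple $(a_{\sigma(1)},\dots,a_{\sigma(\ell+1)})$ is the corresponding constant tuple in $\{0,1\}^{\ell+1}$, so the value is $0$. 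Conversely, if $\vect{a}$ is neither all-zero nor all-one, then since $\sigma$ is surjective both values $0$ and $1$ appear among the coordinates $a_{\sigma(1)},\dots,a_{\sigma(\ell+1)}$, so again by the value characterization $\monster{\ell}$ evaluates to $1$. This matches $\monster{k}(\vect{a})$ in every case, yielding $(\monster{\ell})_\sigma=\monster{k}$ as desired.

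Neither half should present a real obstacle. The only place where one might be tempted to work via monomials—namely checking $\monster{k}\in\clYksi{0}$ through the parity count $2^{k+1}-2$—is bypassed entirely by using the value description, and this same description makes the minor computation in part~\ref{lem:monster:minors} essentially a pigeonhole observation about surjections. Thus the whole proof reduces to unpacking the definitions.
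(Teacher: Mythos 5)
Your proof is correct, and part~\ref{lem:monster:minors} takes a genuinely different (and arguably cleaner) route than the paper. The paper proves part~\ref{lem:monster:minors} by reducing to the single step $\monster{k} \leq \monster{k+1}$ via the identification map $\sigma_{k+1,k+2}$ and then computing $\monomials{(\monster{k+1})_\sigma}$ through the parity bookkeeping of Lemma~\ref{lem:minor-monomials}, with a case analysis on which subsets survive the identification; transitivity of the minor relation then finishes the argument. You instead pick an arbitrary surjection $\sigma \colon \nset{\ell+1} \to \nset{k+1}$ and verify pointwise, using the value characterization from Definition~\ref{def:monster}, that $(\monster{\ell})_\sigma = \monster{k}$: constant tuples pull back to constant tuples, and surjectivity guarantees that a non-constant tuple pulls back to a non-constant one. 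This one-step argument avoids the monomial computation entirely and is shorter; what the paper's monomial-level computation buys is consistency with the techniques (Lemma~\ref{lem:minor-monomials}, symmetric-difference bookkeeping) that are reused heavily in the surrounding generation arguments, e.g.\ in Lemma~\ref{lem:Char0-sum-Wk}. For part~\ref{lem:monster:parameters} your verification matches the paper's, which simply declares the three memberships clear from the definition; your appeal to Lemma~\ref{lem:Selezneva}(i) for $\monster{k} \in \clChar{0}$ via reflexivity is exactly the intended reading.
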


\begin{proof}
\ref{lem:monster:parameters}
It is clear from the definition that $\deg(\monster{k}) = k$, $\monster{k}$ is reflexive, and $\monster{k} \in \clBoth{0}{0}$.
Therefore, $\monster{k} \in \clD{k} \cap \clChar{0} \cap \clBoth{0}{0}$.

\ref{lem:monster:minors}
By the transitivity of the minor relation, it suffices to show that $\monster{k}$ is a minor of $\monster{k+1}$ for any $k \in \IN$.
By identifying the $(k+1)$\hyp{}st and $(k+2)$\hyp{}nd arguments, i.e., by taking $\sigma$ to be the identification map $\sigma_{k+1,k+2}$, we obtain, by Lemma~\ref{lem:minor-monomials},
\[
\monomials{(\monster{k+1})_\sigma}
= \bigl\{ \, S \subseteq \nset{k+1} \bigm| \card{ \{ \, T \in \monomials{\monster{k+1}} \mid \sigma(T) = S \, \}} \equiv 1 \pmod{2} \, \bigr\} =: M.
\]
We now determine which subsets of $\nset{k+1}$ belong to the set $M$ on the right side of the above equality.
Recall that $\monomials{\monster{k+1}} = \{ \, T \subseteq \nset{k+2} \mid 0 < \card{T} < k + 2 \, \}$.
For any $S \subseteq \nset{k}$, the only subset $S'$ of $\nset{k+2}$ such that $\sigma(S) = \sigma(S')$ is $S$ itself; hence $S \in M$ for all $\emptyset \neq S \subseteq \nset{k}$.
For any set of the form $S \cup \{k+1\}$ with $S \subseteq \nset{k}$, there are exactly three subsets $S'$ of $\nset{n+2}$ such that $\sigma(S') = S \cup \{k+1\}$, namely the sets $S \cup \{k+1\}$, $S \cup \{k+2\}$, and $S \cup \{k+1, k+2\}$.
If $S \neq \nset{k}$, then all three sets belong to $\monomials{\monster{k+1}}$.
If $S = \nset{k}$, then only the first two belong to $\monomials{\monster{k+1}}$.
Hence $S \cup \{k\} \in M$ for all $S \subsetneq \nset{k}$.
We conclude that $M = \{ \, S \subseteq \nset{k+1} \mid 0 < \card{S} < k+1 \, \} = \monomials{\monster{k}}$, that is $(\monster{k+1})_\sigma = \monster{k}$.
\end{proof}

Here is another functional construction that we will use in what follows.

\begin{definition}
For any function $f \colon \{0,1\}^n \to \{0,1\}$ and any $i \in \nset{n}$, let $f'_i \colon \{0,1\}^n \to \{0,1\}$ be the function with
$\monomials{f'_i} := \{ \, S \setminus \{i\} \mid S \in \monomials{f}, \, i \in S \, \}$.
\end{definition}

The effect of negating an argument in a function $f$ can be expressed in a convenient way with the help of $f'_i$.

\begin{lemma}
\label{lem:f'i-negation}
Let $f \colon \{0,1\}^n \to \{0,1\}$, $i \in \nset{n}$, and $g := f(x_1, \dots, x_{i-1}, x_i + 1, x_{i+1}, \dots, x_n)$.
Then $g = f + f'_i$.
\end{lemma}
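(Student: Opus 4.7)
The plan is to compute $g$ directly from the Zhegalkin polynomial of $f$ and read off the identity. Write
\[
f = \sum_{S \in \monomials{f}} x_S = \sum_{\substack{S \in \monomials{f} \\ i \notin S}} x_S + \sum_{\substack{S \in \monomials{f} \\ i \in S}} x_i \, x_{S \setminus \{i\}},
\]
splitting the monomials according to whether they involve the $i$\hyp{}th variable.

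Next I would substitute $x_i + 1$ for $x_i$ in this expression. The monomials with $i \notin S$ are unaffected, while each monomial $x_i\, x_{S \setminus \{i\}}$ with $i \in S$ becomes $(x_i+1) x_{S \setminus \{i\}} = x_i \, x_{S \setminus \{i\}} + x_{S \setminus \{i\}} = x_S + x_{S \setminus \{i\}}$. Collecting, the first two types of summands reassemble the original polynomial of $f$, and the remaining summands contribute $\sum_{S \in \monomials{f},\, i \in S} x_{S \setminus \{i\}}$.

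By the very definition of $f'_i$, we have $\monomials{f'_i} = \{ S \setminus \{i\} \mid S \in \monomials{f},\, i \in S \}$, and since the map $S \mapsto S \setminus \{i\}$ restricted to $\{S \in \monomials{f} \mid i \in S\}$ is injective (its inverse adjoins $i$), no cancellation occurs modulo $2$ in this sum, so it equals $\sum_{T \in \monomials{f'_i}} x_T = f'_i$. Hence $g = f + f'_i$, and the uniqueness of Zhegalkin polynomials (Definition~\ref{def:Zhegalkin}) justifies the equality as an identity of Boolean functions.

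No real obstacle is expected here; the only point deserving a word of care is the injectivity observation in the previous paragraph, which ensures that the ``extra'' summands genuinely form the Zhegalkin polynomial of $f'_i$ rather than something that requires further cancellation.
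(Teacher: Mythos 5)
Your proof is correct and follows essentially the same computation as the paper: split the monomials by whether they contain $x_i$, substitute $x_i+1$, expand, and recognize the leftover summands as $f'_i$. The injectivity remark is a small extra precaution that the paper leaves implicit, but the argument is otherwise identical.
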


\begin{proof}
Given $f = \sum_{S \in \monomials{f}} x_S$,
we have
\begin{align*}
g
&
= f(x_1, \dots, x_{i-1}, x_i + 1, x_{i+1}, \dots, x_n)	
= \sum_{\substack{S \in \monomials{f} \\ i \notin S}} x_S + \sum_{\substack{S \in \monomials{f} \\ i \in S}} (x_i + 1) x_{S \setminus \{i\}}
\\ &
= \sum_{\substack{S \in \monomials{f} \\ i \notin S}} x_S + \sum_{\substack{S \in \monomials{f} \\ i \in S}} (x_S + x_{S \setminus \{i\}})
= \sum_{S \in \monomials{f}} x_S + \sum_{\substack{S \in \monomials{f} \\ i \in S}} x_{S \setminus \{i\}}
= f + f'_i.
\qedhere
\end{align*}
\end{proof}

\begin{lemma}
\label{lem:f'i}
Let $f \colon \{0,1\}^n \to \{0,1\}$.
\begin{enumerate}[label=\upshape{(\roman*)}, leftmargin=*, widest=ii]
\item\label{lem:f'i:deg}
If $f \neq 0$, then $\deg(f'_i) < \deg(f)$. \textup{(}Here $\deg(0) = -1$.\textup{)}
\item\label{lem:f'i:char}
If $f \in \clChar{k}$ for some $k > 0$, then $f'_i \in \clChar{k-1}$.
\end{enumerate}
\end{lemma}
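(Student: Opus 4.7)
Both parts follow directly from the definition $\monomials{f'_i} = \{\, S \setminus \{i\} \mid S \in \monomials{f},\ i \in S \,\}$, which sets up a natural bijection between monomials of $f$ containing~$i$ and monomials of $f'_i$ (distinct $S_1, S_2 \in \monomials{f}$ containing $i$ give distinct $S_1 \setminus \{i\}$, $S_2 \setminus \{i\}$).

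For part \ref{lem:f'i:deg}, I would simply observe that every $S' \in \monomials{f'_i}$ has the form $S \setminus \{i\}$ with $S \in \monomials{f}$ and $i \in S$, so $\card{S'} = \card{S} - 1 \leq \deg(f) - 1$. If $\monomials{f'_i} = \emptyset$ then $\deg(f'_i) = -1 < \deg(f)$ (since $f \neq 0$ implies $\deg(f) \geq 0$); otherwise $\deg(f'_i) \leq \deg(f) - 1 < \deg(f)$.

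For part \ref{lem:f'i:char}, suppose $f \in \clChar{k}$ with $k > 0$, and let $S \subseteq \nset{n}$ with $\card{S} \geq k - 1$. I need to show $\Char(S, f'_i) = 0$, i.e., that the number of $A \in \monomials{f'_i}$ with $S \subsetneq A$ is even. The main calculation is a case split on whether $i \in S$.

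If $i \in S$, then every $A \in \monomials{f'_i}$ satisfies $i \notin A$, so $S \not\subseteq A$ and thus $\Char(S, f'_i) = 0$ trivially. If $i \notin S$, I would argue that the map $T \mapsto T \setminus \{i\}$ is a bijection between $\{\, T \in \monomials{f} \mid S \cup \{i\} \subsetneq T \,\}$ and $\{\, A \in \monomials{f'_i} \mid S \subsetneq A \,\}$: the condition $S \cup \{i\} \subsetneq T$ is equivalent to $i \in T$ together with $S \subsetneq T \setminus \{i\}$ (using $i \notin S$). Hence $\Char(S, f'_i) = \Char(S \cup \{i\}, f)$, and since $\card{S \cup \{i\}} = \card{S} + 1 \geq k$, the hypothesis $f \in \clChar{k}$ gives $\Char(S \cup \{i\}, f) = 0$. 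Therefore $\charrank{f'_i} \leq k - 1$, i.e., $f'_i \in \clChar{k-1}$.

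The only mild subtlety is keeping the strict versus non-strict inclusions straight in the bijection argument of part \ref{lem:f'i:char}; once that is set up, everything is a direct unravelling of definitions, so I do not expect any serious obstacle here.
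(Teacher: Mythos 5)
Your proof is correct, and part~\ref{lem:f'i:char} takes a genuinely different route from the paper's. The paper proves \ref{lem:f'i:char} by passing through Lemma~\ref{lem:Xk-description}: it sets $\varphi := f + f^\mathrm{n}$, uses Lemma~\ref{lem:f'i-negation} and Lemma~\ref{lem:neg-pol}\ref{lem:neg-pol:inner-sum} to establish the identity $f'_i + (f'_i)^\mathrm{n} = \varphi'_i$, and then concludes by applying part~\ref{lem:f'i:deg} to $\varphi$. You instead work directly with the definition of $\Char(S,\cdot)$ and the injection $T \mapsto T \setminus \{i\}$ on monomials containing $i$, arriving at the clean combinatorial identity $\Char(S, f'_i) = \Char(S \cup \{i\}, f)$ for $i \notin S$ (and the trivial vanishing for $i \in S$). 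Your bijection is set up correctly in both directions, including the strictness of the inclusions, and the cardinality bookkeeping $\card{S \cup \{i\}} = \card{S} + 1 \geq k$ is right. What your approach buys is self-containedness: it needs only the definitions of $\monomials{f'_i}$ and $\Char$, not the auxiliary lemmas on inner negation. What the paper's approach buys is reuse of machinery already in place (and the identity $(f+f^\mathrm{n})'_i = f'_i + (f'_i)^\mathrm{n}$, which is of independent use); it also reduces \ref{lem:f'i:char} to \ref{lem:f'i:deg}, whereas in your version the two parts are proved independently. Either proof would serve.
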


\begin{proof}
\ref{lem:f'i:deg}
This is obvious from the construction of $f'_i$.

\ref{lem:f'i:char}
Assume $f \in \clChar{k}$ with $k > 0$.
Then for $\varphi := f + f^\mathrm{n}$ we have $\deg(\varphi) \leq k - 1$.
Our goal is to show that $\deg(\theta) \leq k-2$ for $\theta := f'_i + (f'_i)^\mathrm{n}$.
Writing $\vect{z} := (x_1, \dots, x_{i-1}, x_i + 1, x_{i+1}, \dots, x_n)$, we have
\begin{align*}
\theta
&= f'_i + (f'_i)^\mathrm{n}
= (f + f(\vect{z})) + (f + f(\vect{z}))^\mathrm{n}
= f + f^\mathrm{n} + f(\vect{z}) + (f(\vect{z}))^\mathrm{n}
\\ &
= (f + f^\mathrm{n}) + (f + f^\mathrm{n})(\vect{z})
= \varphi + \varphi(\vect{z})
= \varphi'_i
,
\end{align*}
where the second and the last equalities hold by Lemma~\ref{lem:f'i-negation}
and the third equality holds by Lemma~\ref{lem:neg-pol}\ref{lem:neg-pol:inner}.
If $\varphi \neq 0$, then $\deg(\theta) = \deg(\varphi'_i) < \deg(\varphi) < k - 1$ by part \ref{lem:f'i:deg}, so $f'_i \in \clChar{k-1}$.
If $\varphi = 0$, then $\theta = 0$, so $f'_i \in \clChar{0} \subseteq \clChar{k-1}$.
\end{proof}

\begin{lemma}
\label{lem:Char0-sum-Wk}
For any $k \in \IN$, every function in $\clD{k} \cap \clChar{1} \cap \clBoth{0}{0}$ is a sum of minors of $\monster{k}$.
Consequently, $\gen{\monster{k}} = \clD{k} \cap \clChar{1} \cap \clBoth{0}{0}$.
\end{lemma}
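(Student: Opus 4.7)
The plan is to first establish $\gen{\monster{k}} \subseteq \clD{k} \cap \clChar{1} \cap \clBoth{0}{0}$ by combining Lemma~\ref{lem:monster}\ref{lem:monster:parameters} (which places $\monster{k}$ in $\clD{k} \cap \clChar{0} \cap \clBoth{0}{0}$, a subset of the target class since $\clChar{0} \subseteq \clChar{1}$) with the $\clLc$\hyp{}stability of that target class (Proposition~\ref{prop:Lc-sufficiency}). The substance lies in the reverse: every $f \in \clD{k} \cap \clChar{1} \cap \clBoth{0}{0}$ is a sum of minors of $\monster{k}$. Note first that every such $f$ is reflexive, because every self\hyp{}dual function is odd while $\clBoth{0}{0} \subseteq \clEven$, so Lemma~\ref{lem:Selezneva} gives $\clChar{1} \cap \clBoth{0}{0} = \clChar{0} \cap \clBoth{0}{0}$.

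The building blocks are the functions $f_B := \monster{\card{B}-1}^B$ for $B \subseteq \nset{n}$ with $2 \leq \card{B} \leq k+1$, where $n := \arity{f}$. By Lemma~\ref{lem:monster}\ref{lem:monster:minors} each $\monster{\card{B}-1}$ is a minor of $\monster{k}$, so $f_B$ is too; Lemma~\ref{lem:minor-monomials} combined with the injectivity of the defining map gives $\monomials{f_B} = \{\, S \subseteq \nset{n} \mid \emptyset \neq S \subsetneq B \,\}$. I will induct on $\ell := \deg(f)$. The base $\ell = 0$ is immediate, since $f(\vect{0}) = 0$ forces $f = 0$. For $\ell \geq 1$, one checks $n \geq \ell + 1$: when $\ell \geq 2$ this is Lemma~\ref{lem:arity-degree}\ref{lem:arity-degree:atleast} applied to $\charrank{f} \leq 1$, and for $\ell = 1$ the only element of $\clBoth{0}{0}$ of arity $1$ is $0$.

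The inductive step is a ``cone'' construction. Set $M_\ell := \{\, S \in \monomials{f} \mid \card{S} = \ell \,\}$, fix any $v \in \nset{n}$, and define
\[
c := \sum_{\substack{S \in M_\ell \\ v \notin S}} f_{S \cup \{v\}},
\]
a sum of minors of $\monster{k}$ since $\card{S \cup \{v\}} = \ell + 1 \leq k+1$. The key claim is that $c$ has the same degree\hyp{}$\ell$ part as $f$. The degree\hyp{}$\ell$ monomials of $f_{S \cup \{v\}}$ are $S$ together with the sets $(S \setminus \{j\}) \cup \{v\}$ for $j \in S$. Hence for $T \subseteq \nset{n}$ with $\card{T} = \ell$: if $v \notin T$, only the summand $S = T$ contributes, giving coefficient $[T \in M_\ell]$; if $v \in T$, writing $T' := T \setminus \{v\}$, the coefficient equals $\sum_{j \in \nset{n} \setminus (T' \cup \{v\})} [T' \cup \{j\} \in M_\ell]$, which modulo $2$ is $\Char(T',f) + [T \in M_\ell]$. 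Since $\deg(f) = \ell$ we have $\Char(T', f) = \sum_{j \notin T'} [T' \cup \{j\} \in M_\ell]$, which vanishes by reflexivity, so the coefficient of $x_T$ equals $[T \in M_\ell]$ in both cases. Therefore $g := f + c$ has degree strictly less than $\ell$ and still lies in $\clD{k} \cap \clChar{0} \cap \clBoth{0}{0}$ (closed under pairwise sums via Lemma~\ref{lem:Xk-closed} and the obvious closures of $\clD{k}$ and $\clBoth{0}{0}$), so the induction hypothesis decomposes $g$ and hence $f = g + c$.

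The consequence $\gen{\monster{k}} = \clD{k} \cap \clChar{1} \cap \clBoth{0}{0}$ follows via Lemma~\ref{lem:Lc-sum-of-minors}: for $k \geq 1$, identifying all arguments of $\monster{k}$ yields the constant $0$ (since $\card{\monomials{\monster{k}}} = 2^{k+1} - 2$ is even), so the constant $0$ is a minor of $\monster{k}$ and any sum of minors may be padded with copies of $0$ to reach odd length; the case $k = 0$ is trivial because $\monster{0} = 0$ and the class reduces to $\{0\}$. The main obstacle is the degree\hyp{}$\ell$ coefficient calculation in the cone step, where the reflexivity condition $\Char(T',f) = 0$ is precisely what makes the subtraction work.
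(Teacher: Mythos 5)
Your proof is correct, but it takes a genuinely different route from the paper's. The paper follows Potapov's scheme: an outer induction on $k$ and, for functions of top degree $\ell+1$, an inner induction on the arity, whose step passes to the derivative $f'_{m+1}$, decomposes $g + c$ by the inductive hypothesis, lifts each summand $\monster{k_i}^{S_i}$ to $\monster{k_i+1}^{S_i \cup \{m+1\}}$ (with a parity correction $c^*$), and shows the resulting $f + h$ no longer depends on the last argument. You instead run a single induction on $\deg(f)$ and kill the entire top\hyp{}degree homogeneous part in one step with the explicit cone $c = \sum_{S \in M_\ell,\, v \notin S} f_{S \cup \{v\}}$; the coefficient bookkeeping is exactly right, and the crucial cancellation for monomials $T \ni v$ comes from $\Char(T \setminus \{v\}, f) = 0$, i.e.\ from reflexivity of $f$ (your reduction $\clChar{1} \cap \clBoth{0}{0} = \clChar{0} \cap \clBoth{0}{0}$ via Lemma~\ref{lem:Selezneva} is also correct). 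Your argument is shorter and avoids the $f'_i$ machinery, Lemma~\ref{lem:arity-degree}\ref{lem:arity-degree:smallest}, and the double induction entirely; the paper's version has the advantage of reusing auxiliary tools ($f'_i$, the arity\hyp{}reduction lemmata) that are needed elsewhere anyway. Two small remarks: your convention $f_B = \monster{\card{B}-1}^B$ with $\card{B}$ equal to the \emph{arity} of $\monster{\card{B}-1}$ silently corrects the off\hyp{}by\hyp{}one in Definition~\ref{def:monster} ($\monster{k}$ is $(k+1)$\hyp{}ary), which is fine but worth flagging; and the remark that $n \geq \ell+1$ is not actually needed, since your own coefficient computation shows that a reflexive $f$ of degree $\ell \geq 1$ cannot have all its top\hyp{}degree monomials containing a common index, so the cone is automatically nonempty and the cancellation goes through regardless.
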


\begin{proof}
We follow the proof technique of Potapov \cite[Proposition 11]{Potapov}.
Note that $\clBoth{0}{0} \subseteq \clEven$, so every function in $\clD{k} \cap \clChar{1} \cap \clBoth{0}{0}$ is even.
We proceed by induction on $k$.
The claim is obvious for $k = 0$, since $\clD{0} \cap \clChar{1} \cap \clBoth{0}{0} = \clD{0} \cap \clCon{0}$, and every constant $0$ function (of any arity) can be obtained from $\monster{0}$, the unary constant $0$ function, by introducing fictitious arguments.
The claim is also clear for $k = 1$, since $\clD{1} \cap \clChar{1} \cap \clBoth{0}{0} = \clD{1} \cap \clEven \cap \clCon{0}$, and any even function of degree $1$ with constant term $0$ can be obtained by adding together suitable minors of $\monster{1} = x_1 + x_2$ obtained by introducing fictitious arguments and permuting arguments.

Assume now that the claim holds for $k = \ell$ for some $\ell \geq 1$.
Every function of degree less than $\ell + 1$ in the class $\clD{\ell + 1} \cap \clChar{1} \cap \clBoth{0}{0}$ is a sum of minors of $\monster{\ell}$ by the induction hypothesis and is therefore a sum of minors of $\monster{\ell + 1}$ because $\monster{\ell} \leq \monster{\ell + 1}$ by Lemma~\ref{lem:monster}\ref{lem:monster:minors}.
We only need to consider functions of degree exactly $\ell + 1$.
We proceed by induction on the arity of functions.
By Lemma~\ref{lem:arity-degree}\ref{lem:arity-degree:atleast}, for any $f \in \clChar{0}$ with $\deg(f) = \ell + 1$, we must have $\arity{f} \geq \ell + 2$.
Therefore, in order to establish the basis of induction, we need to consider an arbitrary function $f \in \clD{\ell + 1} \cap \clChar{1} \cap \clBoth{0}{0}$ with $\arity{f} = \ell + 2$.
By Lemma~\ref{lem:arity-degree}\ref{lem:arity-degree:smallest}, $\monomials{f}$ contains all subsets of $\nset{\ell + 2}$ of cardinality $\ell + 1$.
Then $g := f + \monster{\ell + 1} = f + \monster{\ell + 1} + 0 \in \clD{\ell} \cap \clChar{1} \cap \clBoth{0}{0}$ because $f$, $\monster{\ell + 1}$, and $0$ belong to $\clChar{1} \cap \clBoth{0}{0}$, which is $\clLc$\hyp{}stable by Proposition~\ref{prop:Lc-sufficiency}, and $\deg(g) \leq \ell$ because all monomials of degree $\ell + 1$ are cancelled in the sum $f + \monster{\ell + 1}$.
By the inductive hypothesis, $g$ is a sum of minors of $\monster{\ell}$; hence $f = g + \monster{\ell + 1}$ is a sum of minors of $\monster{\ell + 1}$.

For the inductive step, assume that every $m$\hyp{}ary function in $\clD{\ell + 1} \cap \clChar{1} \cap \clBoth{0}{0}$ of degree $\ell + 1$ is a sum of minors of $\monster{\ell + 1}$.
Let $f \in \clD{\ell + 1} \cap \clChar{1} \cap \clBoth{0}{0}$ be $(m+1)$\hyp{}ary and of degree $\ell + 1$.
If $f$ does not depend on the $(m+1)$\hyp{}st argument, then $f$ is obtained from an $m$\hyp{}ary function $f^* \in \clD{\ell + 1} \cap \clChar{1} \cap \clBoth{0}{0}$ by introducing a fictitious argument; then $f^*$ is a sum of minors of $\monster{\ell + 1}$, and by introducing a fictitious argument to the summands we obtain $f$ as a sum of minors of $\monster{\ell + 1}$.
From now on, assume that $f$ depends on the $(m+1)$\hyp{}st argument.
Let $g := f'_{m+1}$, and let $c$ be the constant term ($0$ or $1$) of $g$.
By Lemma~\ref{lem:f'i} we have $g \in \clD{\ell} \cap \clChar{0}$; furthermore, $g + c \in \clD{\ell} \cap \clChar{0} \cap \clCon{0} = \clD{\ell} \cap \clChar{1} \cap \clBoth{0}{0}$.
By the inductive hypothesis, $g + c$ is a sum of minors of $\monster{\ell}$, say $g + c = \sum_{i=1}^p \monster{k_i}^{S_i}$, with $k_i \leq \ell$ for each $i$.
Now let $h := \sum_{i=1}^p \monster{k_i + 1}^{S_i \cup \{m+1\}} + c^*$, where
\[
c^* :=
\begin{cases}
0, & \text{if $c = 0$ and $p$ is even or $c = 1$ and $p$ is odd,} \\
\monster{1}^{\{m,m+1\}} = x_m + x_{m+1}, & \text{if $c = 0$ and $p$ is odd or $c = 1$ and $p$ is even,}
\end{cases}
\]
and let $f^* := f + h$.
We have $f^* \in \clD{\ell+1} \cap \clChar{1} \cap \clBoth{0}{0}$ because $f$, $h$, and $0$ belong to the class $\clD{\ell+1} \cap \clChar{1} \cap \clBoth{0}{0}$, which is $\clLc$\hyp{}stable.

We claim that $f^*$ does not depend on the $(m+1)$\hyp{}st argument.
This will follow if we show that for every $S \subseteq \nset{m}$, $S \cup \{m+1\} \in \monomials{f}$ if and only if $S \cup \{m+1\} \in \monomials{h}$, as this implies that no monomial of $f^* = f + h$ contains $m+1$.
So, let $S \subseteq \nset{m}$.
By the definition of $g = f'_{m+1}$, we have $S \cup \{m+1\} \in \monomials{f}$ if and only if $S \in \monomials{g}$.
Consider first the case that $S \neq \emptyset$.
We have $S \in \monomials{g}$ if and only if $S \in \monomials{g + c}$, which is equivalent to the condition that $S \subseteq S_i$ for an odd number of the sets $S_i$.
This is equivalent to the condition that $S \cup \{m+1\} \subseteq S_i \cup \{m+1\}$ for an odd number of the sets $S_i$, which in turn is equivalent to $S \cup \{m+1\} \in \monomials{h}$.
As for the case $S = \emptyset$,
the definition of $c^*$ guarantees that $\{m+1\} \in \monomials{h}$ if and only if $c = 1$, that is, $\emptyset \in \monomials{g}$, or, equivalently, $\{m+1\} \in \monomials{f}$.

Let now $f^{**}$ be the $m$\hyp{}ary function obtained from $f^*$ by removing the fictitious $(m+1)$\hyp{}st argument; then $f^*$ and $f^{**}$ are minors of each other.
By the induction hypothesis, $f^{**}$ is a sum of minors of $\monster{\ell+1}$, and consequently so is $f^*$ and hence also $f^* + h = f$.

As for the last claim about $\gen{\monster{k}}$,
since $0 = \monster{0}$ is a minor of $\monster{k}$, it follows that every sum of minors of $\monster{k}$ (not just every odd sum) is in $\gen{\monster{k}}$.
Therefore, by what we have shown above,
$\clD{k} \cap \clChar{1} \cap \clBoth{0}{0} \subseteq \gen{\monster{k}} \subseteq \clD{k} \cap \clChar{1} \cap \clBoth{0}{0}$.
\end{proof}

\begin{lemma}
\label{lem:f=g+h}
A Boolean function $f$ belongs to $\clChar{k}$ if and only if $f = g + h$ for some $g \in \clChar{0}$ and $h \in \clD{k}$.
\end{lemma}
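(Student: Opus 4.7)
The statement is an equivalence, and I would separate the two implications. The $\Leftarrow$ direction is a quick calculation using the earlier lemmata characterising $\clChar{k}$; the $\Rightarrow$ direction requires constructing a witness $h$, which is the main difficulty.

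For $\Leftarrow$: assume $f = g + h$ with $g \in \clChar{0}$ (i.e., $g = g^{\mathrm{n}}$ by Lemma~\ref{lem:Selezneva}) and $h \in \clD{k}$. Then by Lemma~\ref{lem:neg-pol}\ref{lem:neg-pol:inner-sum}, $f + f^{\mathrm{n}} = (g + g^{\mathrm{n}}) + (h + h^{\mathrm{n}}) = h + h^{\mathrm{n}}$, which by Lemma~\ref{lem:neg-pol}\ref{lem:neg-pol:inner} equals $\sum_{S \in \monomials{h}} \sum_{T \subsetneq S} x_T$ and thus has degree at most $\deg(h) - 1 \leq k - 1$. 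Lemma~\ref{lem:Xk-description} then gives $\charrank{f} \leq k$, i.e., $f \in \clChar{k}$.

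For $\Rightarrow$: set $\varphi := f + f^{\mathrm{n}}$, which by Lemma~\ref{lem:Xk-description} has degree at most $k - 1$ and is reflexive since $\varphi^{\mathrm{n}} = f^{\mathrm{n}} + f = \varphi$. If we produce $h \in \clD{k}$ with $h + h^{\mathrm{n}} = \varphi$, then $g := f + h$ satisfies $g + g^{\mathrm{n}} = \varphi + (h + h^{\mathrm{n}}) = 0$, so $g \in \clChar{0}$, and $f = g + h$ is the desired decomposition. Thus everything reduces to the following core claim: \emph{for every reflexive $\varphi$ of degree $\leq k - 1$, there exists $h$ of degree $\leq k$ with $h + h^{\mathrm{n}} = \varphi$}.

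I would prove the core claim by induction on the arity $n$ of $\varphi$. The base $n = 1$ is immediate, since the only reflexive unary functions are the constants $0$ and $1$; take $h = 0$ or $h = x_1$ respectively (the second case requires $k \geq 1$, which is forced by $\deg(\varphi) \leq k - 1$ when $\varphi = 1$). For the inductive step, decompose $\varphi = \varphi_0 + x_n \varphi_1$ with $\varphi_0, \varphi_1$ of arity $n - 1$; reflexivity of $\varphi$ unpacks as \emph{$\varphi_1$ is reflexive} together with $\varphi_0 + \varphi_0^{\mathrm{n}} = \varphi_1$, and crucially $\deg(\varphi_1) \leq k - 2$. Applying the inductive hypothesis to $\varphi_1$ yields $(n-1)$-ary $h_1$ of degree $\leq k - 1$ with $h_1 + h_1^{\mathrm{n}} = \varphi_1$. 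Then $\psi := \varphi_0 + h_1^{\mathrm{n}}$ is $(n-1)$-ary, reflexive (by a short verification using the identities above), and of degree $\leq k - 1$, so the inductive hypothesis again furnishes $h_0$ of degree $\leq k$ with $h_0 + h_0^{\mathrm{n}} = \psi$. Setting $h := h_0 + x_n h_1$, direct expansion gives $h + h^{\mathrm{n}} = (h_0 + h_0^{\mathrm{n}} + h_1^{\mathrm{n}}) + x_n (h_1 + h_1^{\mathrm{n}}) = \varphi_0 + x_n \varphi_1 = \varphi$, and $\deg(h) \leq k$.

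The hard part is the core claim; what makes the induction go through is the careful tracking of degree budgets. The factor $x_n$ in front of $\varphi_1$ drops the degree of that piece by one relative to $\varphi$, which is precisely the slack needed so that $h_1$ can be found with degree $\leq k - 1$ and $x_n h_1$ still fits within degree $k$. Once this is in hand, the full theorem follows from the two short reductions above.
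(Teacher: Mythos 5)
Your proof is correct, and its skeleton coincides with the paper's: the backward direction is routine, and the forward direction reduces to producing an $h \in \clD{k}$ with $h + h^{\mathrm{n}} = f + f^{\mathrm{n}}$ and then setting $g := f + h$. Where you diverge is in how the witness $h$ is produced. The paper simply takes $h := x_1 \cdot (f + f^{\mathrm{n}})$ and checks by a two-line pointwise computation that $g = f + h$ is reflexive; since multiplying by $x_1$ raises the degree by at most one, $\deg(h) \leq \deg(f+f^{\mathrm{n}}) + 1 \leq k$. In fact this same formula solves your ``core claim'' outright: if $\varphi$ is reflexive, then $(x_1\varphi) + (x_1\varphi)^{\mathrm{n}} = x_1\varphi + (x_1+1)\varphi^{\mathrm{n}} = x_1\varphi + x_1\varphi + \varphi = \varphi$, so the induction on arity, the Shannon-type decomposition $\varphi = \varphi_0 + x_n\varphi_1$, and the two nested applications of the inductive hypothesis are all avoidable. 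Your argument is sound --- the unpacking of reflexivity into ``$\varphi_1$ reflexive and $\varphi_0 + \varphi_0^{\mathrm{n}} = \varphi_1$'', the degree bookkeeping $\deg(\varphi_1) \leq k-2$, and the final expansion of $h + h^{\mathrm{n}}$ all check out --- but it buys nothing over the closed-form witness except perhaps making visible \emph{why} a solution of $h + h^{\mathrm{n}} = \varphi$ exists structurally; the cost is about a page of induction where one line suffices. (Your $\Leftarrow$ direction is also slightly heavier than necessary: the paper just observes that $\clChar{0}, \clD{k} \subseteq \clChar{k}$ and that $\clChar{k}$ is closed under sums by Lemma~\ref{lem:Xk-closed}.)
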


\begin{proof}
``$\Leftarrow$'':
Clear because $\clChar{0} \subseteq \clChar{k}$, $\clD{k} \subseteq \clChar{k}$, and $\clChar{k}$ is closed under sums by Lemma~\ref{lem:Xk-closed}.

``$\Rightarrow$'':
Assume $f \in \clChar{k}^{(n)}$, and let $h := x_1 \cdot (f + f^\mathrm{n})$ and $g := f + h$.
We have $f = g + h$ by definition.
Since $\deg(f + f^\mathrm{n}) \leq k-1$ by Lemma~\ref{lem:Xk-description}, 
we have $\deg(h) \leq k$, so $h \in \clD{k}$.
Furthermore, for any $\vect{a} = (a_1, \dots, a_n) \in \{0,1\}^n$, we have
\begin{align*}
g(\overline{\vect{a}})
& = f(\overline{\vect{a}}) + (a_1 + 1)(f(\overline{\vect{a}}) + f^\mathrm{n}(\overline{\vect{a}}))
\\ & = f(\overline{\vect{a}}) + a_1 \cdot (f(\overline{\vect{a}}) + f^\mathrm{n}(\overline{\vect{a}})) + (f(\overline{\vect{a}}) + f^\mathrm{n}(\overline{\vect{a}}))
\\ & = f(\vect{a}) + a_1 \cdot (f(\vect{a}) + f^\mathrm{n}(\vect{a}))
= g(\vect{a}),
\end{align*}
and thus $g$ is reflexive, i.e., $g \in \clChar{0}$.
\end{proof}

\begin{lemma}
\label{lem:sum-of-two}
For any $k \geq 2$, $\gen{x_1 \dots x_k + x_1} = \clD{k} \cap \clBoth{0}{0}$.
\end{lemma}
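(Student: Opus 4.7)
The plan is to establish both inclusions of $\gen{x_1 \dots x_k + x_1} = \clD{k} \cap \clBoth{0}{0}$. Write $f := x_1 \dots x_k + x_1$. The forward inclusion is immediate: $\deg(f) = k$ and $f(0, \dots, 0) = 0 = 1 + 1 = f(1, \dots, 1)$, so $f \in \clD{k} \cap \clBoth{0}{0}$, and this class is $\clLc$\hyp{}stable by Proposition~\ref{prop:Lc-sufficiency}; hence $\gen{f} \subseteq \clD{k} \cap \clBoth{0}{0}$.

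For the reverse inclusion, I would first observe that identifying all $k$ arguments of $f$ yields the unary function $x + x = 0$, and by introducing fictitious arguments the constant $0$ function of every arity lies in $\gen{f}$. Combined with Lemma~\ref{lem:Lc-sum-of-minors}, this shows that $\gen{f}$ is closed under \emph{arbitrary} (not only odd) equal\hyp{}arity sums of its members, since any even sum of minors can be padded with a copy of $0$. Next, applying Lemma~\ref{lem:minor-monomials} to the set $\monomials{f} = \{\nset{k}, \{1\}\}$, I would verify that the nonzero minors of $f$ are exactly the functions of the form $x_A + x_i$, where $A \subseteq \nset{n}$ satisfies $2 \leq \card{A} \leq k$ and $i \in A$ (corresponding to a nonconstant minor formation map $\sigma \colon \nset{k} \to \nset{n}$ with $A = \sigma(\nset{k})$ and $i = \sigma(1)$).

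The main step, and the only place where the hypothesis $k \geq 2$ is essential, is to show that every function of the form $x_A + x_B$ with $A, B \subseteq \nset{n}$ nonempty and $\card{A}, \card{B} \leq k$ belongs to $\gen{f}$. The critical subcase is $A = \{i\}$, $B = \{j\}$ with $i \neq j$, because a single minor of $f$ never has the shape $x_i + x_j$: here the identity $x_i + x_j = (x_{\{i,j\}} + x_i) + (x_{\{i,j\}} + x_j)$ exhibits $x_i + x_j$ as a sum of two minors of $f$, using that $\card{\{i,j\}} = 2 \leq k$. The general case then reduces to this one by choosing $i \in A$ and $j \in B$ and writing $x_A + x_B = (x_A + x_i) + (x_i + x_j) + (x_j + x_B)$ if $i \neq j$, or $x_A + x_B = (x_A + x_i) + (x_B + x_i)$ if $i = j$; in every instance each summand is in $\gen{f}$.

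Finally, any $g \in \clD{k} \cap \clBoth{0}{0}$ has constant term $0$ (from $g(0,\dots,0) = 0$) and an even number of monomials (from $g(1,\dots,1) = \card{\monomials{g}} \bmod 2 = 0$), all of size between $1$ and $k$. Pairing these monomials arbitrarily writes $g$ as a sum of pieces of the form $x_S + x_T$, each shown to be in $\gen{f}$; closure under arbitrary sums then yields $g \in \gen{f}$, completing the proof. The main obstacle, as flagged above, is the $|A|=|B|=1$ case, which cannot be handled by a single minor and forces the detour through $x_{\{i,j\}}$ that pinpoints the role of the hypothesis $k \geq 2$.
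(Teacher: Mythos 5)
Your proof is correct, and it reaches the goal by a somewhat different mechanism than the paper. Both arguments share the same overall strategy: reduce membership of an arbitrary $g \in \clD{k} \cap \clBoth{0}{0}$ to the fact that $g$ is a sum of an even number of nonempty monomials of degree at most $k$, pair those monomials up, and show that every ``pair'' $x_A + x_B$ lies in $\gen{x_1 \dots x_k + x_1}$. Where you differ is in how that last fact is obtained. The paper first manufactures an auxiliary generator $h'' = x_1 \dots x_k + x_{2k-1} \dots x_{3k-2}$ (a sum of two monomials of degree $k$ on \emph{disjoint} variable sets) by adding permuted copies of $f$, so that a single minor of $h''$ already realizes any $x_A + x_B$ with $A \neq B$; the remaining step is then dismissed as clear. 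You instead classify the minors of $f$ itself (via Lemma~\ref{lem:minor-monomials}, they are exactly $x_A + x_i$ with $i \in A$ and $2 \leq \card{A} \leq k$, plus $0$) and assemble an arbitrary $x_A + x_B$ as a telescoping sum of two or three such minors, with the genuinely problematic case $x_i + x_j$ handled through the degree-$2$ monomial $x_{\{i,j\}}$ --- which is also exactly where the hypothesis $k \geq 2$ enters, a point the paper's proof uses implicitly (the construction of $g$, $h$, $h''$ silently assumes $2k-1 > k$) but never isolates. Your version is slightly longer but makes explicit both the role of $k \geq 2$ and the detail the paper leaves to the reader; the paper's version buys a cleaner final step at the cost of the intermediate construction of $h''$.
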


\begin{proof}
Let $f := x_1 \dots x_k + x_1$.
We have $f \in \clD{k} \cap \clBoth{0}{0}$, so $\gen{f} \subseteq \clD{k} \cap \clBoth{0}{0}$.
By permuting arguments we get $g := x_1 x_{k+1} x_{k+2} \dots x_{2k-1} + x_1 \in \gen{f}$, and by identifying all arguments we get $0 \in \gen{f}$; hence also $h := f + g + 0 = x_1 \dots x_k + x_1 x_{k+1} x_{k+2} \dots x_{2k-1} \in \gen{f}$.
Again by permuting the arguments of $h$ we get $h' := x_1 x_{k+1} x_{k+2} \dots x_{2k-1} + x_{2k-1} x_{2k} \dots x_{3k-2} \in \gen{f}$; hence also $h'' := h + h' + 0 = x_1 \dots x_k + x_{2k-1} x_{2k} \dots x_{3k-2} \in \gen{f}$.
It is clear that any even sum of monomials of degree at most $k$ can be obtained by adding (an odd number of) minors of $h''$.
Therefore $\clD{k} \cap \clBoth{0}{0} = \clD{k} \cap \clEven \cap \clCon{0} \subseteq \gen{h''} \subseteq \gen{f}$.
\end{proof}

\begin{proposition}
\label{prop:gen:DiXjC0E0}
Let $a \in \{0,1\}$.
\begin{enumerate}[label=\upshape{(\roman*)}, leftmargin=*, widest=iii]
\item\label{gen:DkX1C0E0}
Let $k \in \IN_{+}$. For any $f \in (\clD{k} \cap \clChar{1} \cap \clBoth{a}{a}) \setminus \clD{k-1}$, we have $\gen{f} = \clD{k} \cap \clChar{1} \cap \clBoth{a}{a}$.
\item\label{gen:DkC0E0}
Let $k \in \IN_{+}$ with $k \geq 2$. For any $g \in (\clD{k} \cap \clBoth{a}{a}) \setminus \clChar{k-1}$, we have $\gen{g} = \clD{k} \cap \clBoth{a}{a}$.
\item\label{gen:DiXjC0E0}
Let $i, j \in \IN$ with $i > j \geq 2$. For any $f, g \in \clD{i} \cap \clChar{j} \cap \clBoth{a}{a}$ such that $f \notin \clD{i-1}$ and $g \notin \clChar{j-1}$, we have $\gen{f,g} = \clD{i} \cap \clChar{j} \cap \clBoth{a}{a}$.
\end{enumerate}
\end{proposition}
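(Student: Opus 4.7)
The inclusion $\gen{F} \subseteq C$ in each of the three parts is immediate from Proposition~\ref{prop:Lc-sufficiency}, since the right-hand sides appear in the list of $\clLc$\hyp{}stable classes in Theorem~\ref{thm:Lc}. For the converse, I reduce each case with $a = 1$ to $a = 0$ via Lemma~\ref{lem:gen-C+1}: outer negation preserves $\clD{k}$ and $\clChar{k}$ (since $\Char(S,\cdot)$ looks only at proper supersets, the membership of $\emptyset$ in $\monomials{\cdot}$ plays no role) and swaps $\clBoth{0}{0}$ with $\clBoth{1}{1}$. A simplification used throughout is that, for any $f \in \clBoth{0}{0}$, identifying all arguments of $f$ produces the constant $0$ function, so $0 \in \gen{f}$ and consequently $\gen{f}$ is closed under arbitrary finite sums, not merely triple sums.

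For part \ref{gen:DkX1C0E0}, I proceed by induction on $k$, assuming $a = 0$. For $k = 1$, $f$ is a sum of an even positive number of variables; iterated pairwise identifications reduce $f$ to $\monster{1} = x_1 + x_2$, and Lemma~\ref{lem:Char0-sum-Wk} gives $\gen{f} \supseteq \gen{\monster{1}} = \clD{1} \cap \clChar{1} \cap \clBoth{0}{0}$. For $k \geq 2$, Lemma~\ref{lem:arity-degree}\ref{lem:arity-degree:degree} produces in $\gen{f}$ a minor $h$ of arity $k + 1$ and degree $k$. By Lemma~\ref{lem:Selezneva}, $h \in \clChar{1} \cap \clEven = \clChar{0}$ is reflexive, and Lemma~\ref{lem:arity-degree}\ref{lem:arity-degree:smallest} forces every $k$\hyp{}subset of $\nset{k+1}$ into $\monomials{h}$. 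Reflexivity at a $(k-2)$\hyp{}subset $U$ (or at $U = \emptyset$ when $k = 2$), combined with the fact that its three $k$\hyp{}supersets all lie in $\monomials{h}$, forces the number of $(k-1)$\hyp{}supersets of $U$ in $\monomials{h}$ to be odd; hence there exists some $V \in \monomials{h}$ with $\card{V} = k - 1$. Setting $\{i,j\} := \nset{k+1} \setminus V$, a short computation via Lemma~\ref{lem:minor-monomials} shows that $\nset{k} \setminus \{i\}$ has unique $\sigma_{ij}$\hyp{}preimage $V$ and so lies in $\monomials{h_{ij}}$, whereas the $k$\hyp{}subsets automatically cancel in the minor; thus $\deg h_{ij} = k - 1$. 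By the inductive hypothesis applied to $h_{ij}$, $\gen{h_{ij}} = \clD{k-1} \cap \clChar{1} \cap \clBoth{0}{0}$; since $h + \monster{k}$ lies in this class (the $k$\hyp{}subsets of $h$ and $\monster{k}$ coincide and cancel), $\monster{k} = h + (h + \monster{k}) \in \gen{f}$, and Lemma~\ref{lem:Char0-sum-Wk} finishes.

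For part \ref{gen:DkC0E0}, Lemma~\ref{lem:arity-degree}\ref{lem:arity-degree:minor1} produces a $k$\hyp{}ary minor $h = x_1 \dots x_k + r \in \gen{g}$ with $r \in \clD{k-1} \cap \clBoth{0}{1}$; by Lemma~\ref{lem:sum-of-two} it suffices to exhibit $x_1 \dots x_k + x_1 \in \gen{g}$. I argue in two steps. First, extract from $h$ and its permutation- and identification-minors (combined using sum-closure) a nonzero function in $\clD{k-1} \cap \clChar{1} \cap \clBoth{0}{0}$ of degree $k - 1$; part \ref{gen:DkX1C0E0} at $k - 1$ then yields $\clD{k-1} \cap \clChar{1} \cap \clBoth{0}{0} \subseteq \gen{g}$. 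Second, use elements of this newly available class together with the sum-closure of $\gen{g}$ to replace $r$ in $h$ by a single variable. Part \ref{gen:DiXjC0E0} is reduced in the same spirit: part \ref{gen:DkC0E0} applied to $g$ gives $\clD{j} \cap \clBoth{0}{0} \subseteq \gen{f,g}$, and a careful choice of $s \in \gen{f,g} \cap \clD{j} \cap \clBoth{0}{0}$ (modelled on the decomposition in Lemma~\ref{lem:f=g+h}) ensures that $f + s$ is reflexive of degree $i$; part \ref{gen:DkX1C0E0} at $i$ then gives $\clD{i} \cap \clChar{1} \cap \clBoth{0}{0} \subseteq \gen{f,g}$, and combining this inclusion with $\clD{j} \cap \clBoth{0}{0} \subseteq \gen{f,g}$ via the decomposition of Lemma~\ref{lem:f=g+h} (at $k = j$) produces every element of $\clD{i} \cap \clChar{j} \cap \clBoth{0}{0}$.

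The hardest point lies in parts \ref{gen:DkC0E0} and \ref{gen:DiXjC0E0}: one must coordinate the degree and characteristic-rank constraints simultaneously while only minors and triple sums are available (no constants, no negation). In particular, producing an $s \in \gen{f,g}$ whose inner-negation behavior matches that of $f$ (so that $f + s$ becomes reflexive), when $f^\mathrm{n}$ itself is not accessible, is the delicate step; it requires fully exploiting the flexibility afforded by part \ref{gen:DkC0E0} and the sum-closure guaranteed by $0 \in \gen{f,g}$.
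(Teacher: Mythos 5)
Parts~\ref{gen:DkX1C0E0} and \ref{gen:DiXjC0E0} of your argument are essentially the paper's: in part~\ref{gen:DkX1C0E0} you re-derive inline the content of Lemma~\ref{lem:arity-degree}\ref{lem:arity-degree:minor2} (the parity count at a $(k-2)$-set producing a degree-$(k-1)$ monomial, then an identification dropping the degree), and your ``careful choice of $s$'' in part~\ref{gen:DiXjC0E0} is exactly the decomposition of Lemma~\ref{lem:f=g+h} with constant terms adjusted, so the step you single out as delicate is in fact routine.

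The gap is in part~\ref{gen:DkC0E0}. Your plan makes available only $\clD{k-1} \cap \clChar{1} \cap \clBoth{0}{0}$ (via part~\ref{gen:DkX1C0E0} at $k-1$) and then proposes to ``replace $r$ by a single variable'' in $h = x_1 \cdots x_k + r$, which amounts to adding the correction term $r + x_1$. But $r$ is an arbitrary element of $\clD{k-1} \cap \clBoth{0}{1}$, so $r + x_1$ is an essentially arbitrary element of $\clD{k-1} \cap \clBoth{0}{0}$, whose characteristic rank can be as large as $k-1$; for $k \geq 3$ it need not lie in the class you have generated. Concretely, for $k = 3$ take $g = h = x_1x_2x_3 + x_1x_2 + x_1 + x_2 \in (\clD{3} \cap \clBoth{0}{0}) \setminus \clChar{2}$: then $r + x_1 = x_1x_2 + x_2$ has characteristic rank $2$, so it is not in $\clD{2} \cap \clChar{1} \cap \clBoth{0}{0}$, and nothing in your construction puts it into $\gen{g}$. (Your first step --- producing a nonzero degree-$(k-1)$ member of $\clChar{1} \cap \clBoth{0}{0}$ from minors of $h$ and sums --- is also asserted without justification, but even granting it the replacement step does not go through.) What is actually needed is all of $\clD{k-1} \cap \clBoth{0}{0}$, and the paper obtains it by inducting on $k$ \emph{within} part~\ref{gen:DkC0E0} itself: Lemma~\ref{lem:arity-degree}\ref{lem:arity-degree:minor3} turns the $k$-ary minor $\gamma$ with $\nset{k} \in \monomials{\gamma}$ into a $(k-1)$-ary minor of degree $k-1$, which is automatically outside $\clChar{k-2}$, so the inductive hypothesis gives $\clD{k-1} \cap \clBoth{0}{0} \subseteq \gen{g}$; then $\gamma + (x_1 \cdots x_k + x_1)$ lies in that class and $x_1 \cdots x_k + x_1 \in \gen{g}$ follows, after which Lemma~\ref{lem:sum-of-two} finishes. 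Since your part~\ref{gen:DiXjC0E0} invokes part~\ref{gen:DkC0E0}, this repair is needed there as well.
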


\begin{proof}
It suffices to prove the statements for $a = 0$.
The statements for $a = 1$ follow by Lemma~\ref{lem:gen-C+1} because $\overline{\clD{i} \cap \clChar{j} \cap \clBoth{0}{0}} = \clD{i} \cap \clChar{j} \cap \clBoth{1}{1}$, $\overline{\clD{i-1}} = \clD{i-1}$, and $\overline{\clChar{j-1}} = \clChar{j-1}$.
Note that $\clBoth{0}{0} \subseteq \clEven$.

\ref{gen:DkX1C0E0}
We proceed by induction on $k$.
For $k = 1$, take any $f \in (\clD{1} \cap \clChar{1} \cap \clBoth{0}{0}) \setminus \clD{0} = (\clD{1} \cap \clEven \cap \clCon{0}) \setminus \clD{0}$.
The function $f$ is a sum of an even nonzero number of arguments, so by identification of arguments we get $0, x_1 + x_2 \in \gen{f}$, and with these we can generate every even sum:
$\clD{1} \cap \clChar{1} \cap \clBoth{0}{0} = \clD{1} \cap \clEven \cap \clCon{0} \subseteq \gen{f} \subseteq \clD{1} \cap \clChar{1} \cap \clBoth{0}{0}$.

Assume that the claim holds for $k = \ell$ for some $\ell \geq 1$.
Let $f \in (\clD{\ell+1} \cap \clChar{1} \cap \clBoth{0}{0}) \setminus \clD{\ell}$.
Since $\clChar{1} \cap \clBoth{0}{0} \subseteq \clChar{1} \cap \clEven = \clChar{0}$, Lemma~\ref{lem:arity-degree}\ref{lem:arity-degree:minor2} implies that $f$ has an $(\ell + 2)$\hyp{}ary minor $\varphi$ such that $\deg(\varphi) = \ell + 1$ and $\monomials{\varphi}$ contains all $(\ell + 1)$\hyp{}element subsets of $\nset{\ell + 2}$ and a subset $S$ of cardinality $\ell$.
By identifying the two arguments not in $S$, we obtain a minor $\varphi'$ of $\varphi$ such that $\varphi' \in \clChar{0}$, $\arity{\varphi'} = \ell + 1$, and $\deg(\varphi') \geq \ell > 0$, so by Lemma~\ref{lem:arity-degree}\ref{lem:arity-degree:atleast} we must have $\deg(\varphi') = \ell$.
Since $\varphi' \in (\clD{\ell} \cap \clChar{1} \cap \clBoth{0}{0}) \setminus \clD{\ell-1}$, it holds that $\gen{\varphi'} = \clD{\ell} \cap \clChar{1} \cap \clBoth{0}{0}$ by the induction hypothesis.
All monomials of degree $\ell + 1$ are cancelled in the sum $\varphi + \monster{\ell + 1}$, so we have $\varphi + \monster{\ell + 1} \in \clD{\ell} \cap \clChar{1} \cap \clBoth{0}{0} = \gen{\varphi'} \subseteq \gen{f}$.
Since also $\varphi, 0 \in \gen{f}$, we get $\monster{\ell + 1} = (\varphi + \monster{\ell + 1}) + \varphi + 0 \in \gen{f}$.
By Lemma~\ref{lem:Char0-sum-Wk},
$\clD{\ell+1} \cap \clChar{1} \cap \clBoth{0}{0} = \gen{\monster{\ell + 1}} \subseteq \gen{f} \subseteq \clD{\ell+1} \cap \clChar{1} \cap \clBoth{0}{0}$.

\ref{gen:DkC0E0}
We proceed by induction on $k$.
For $k = 2$, let $g \in (\clD{2} \cap \clBoth{0}{0}) \setminus \clChar{1}$.
Since $\clD{2} \subseteq \clChar{2}$, $g$ has a binary minor $\gamma$ with $\nset{2} \in \monomials{\gamma}$ by Lemma~\ref{lem:arity-degree}\ref{lem:arity-degree:minor1}.
Since $\gamma \in \clBoth{0}{0} \subseteq \clEven$, we have $\gamma \equiv x_1 x_2 + x_1$.
It follows from Lemma~\ref{lem:sum-of-two} that
$\clD{2} \cap \clBoth{0}{0} = \gen{x_1 x_2 + x_1} \subseteq \gen{g} \subseteq \clD{2} \cap \clBoth{0}{0}$.

Assume that the claim holds for $k = \ell$ for some $\ell \geq 2$.
Let $g \in (\clD{\ell + 1} \cap \clBoth{0}{0}) \setminus \clChar{\ell}$.
Since $\clD{\ell + 1} \subseteq \clChar{\ell + 1}$, $g$ has an $(\ell+1)$\hyp{}ary minor $\gamma$ with $\nset{\ell+1} \in \monomials{\gamma}$ by Lemma~\ref{lem:arity-degree}\ref{lem:arity-degree:minor1}.
By Lemma~\ref{lem:arity-degree}\ref{lem:arity-degree:minor3}, $\gamma$ has an $\ell$\hyp{}ary minor $\gamma_{ij} \in (\clD{\ell} \cap \clBoth{0}{0}) \setminus \clChar{\ell - 1}$.
By the inductive hypothesis, $\clD{\ell} \cap \clBoth{0}{0} = \gen{\gamma_{ij}} \subseteq \gen{g}$.
The functions $\gamma' := \gamma + (x_1 \dots x_{\ell+1} + x_1)$ and $0$ are members of $\clD{\ell} \cap \clBoth{0}{0} \subseteq \gen{g}$,
so also $x_1 \dots x_{\ell + 1} + x_1 = \gamma' + \gamma + 0 \in \gen{g}$.
By Lemma~\ref{lem:sum-of-two}, $\clD{\ell + 1} \cap \clBoth{0}{0} = \gen{x_1 \dots x_{\ell + 1} + x_1} \subseteq \gen{g} \subseteq \clD{\ell + 1} \cap \clBoth{0}{0}$.

\ref{gen:DiXjC0E0}
Let $f, g \in \clD{i} \cap \clChar{j} \cap \clBoth{0}{0}$ such that $f \notin \clD{i-1}$ and $g \notin \clChar{j-1}$.
By Lemma~\ref{lem:arity-degree}\ref{lem:arity-degree:minor1}, $g$ has a $j$\hyp{}ary minor $g' \in \clD{j} \setminus \clChar{j-1}$.
Since $\clD{i} \cap \clChar{j} \cap \clBoth{0}{0}$ is minor\hyp{}closed, we have 
$g' \in (\clD{j} \cap \clBoth{0}{0}) \setminus \clChar{j-1}$,
and by part \ref{gen:DkC0E0}, $\gen{g'} = \clD{j} \cap \clBoth{0}{0}$.

By Lemma~\ref{lem:f=g+h}, $f = f_1 + f_2$ for some $f_1 \in \clChar{0}$ and $f_2 \in \clD{j}$.
Since $\clChar{0} \subseteq \clEven$ and $f \in \clBoth{0}{0} \subseteq \clEven$, we must also have $f_2 \in \clEven$.
Since $f \in \clCon{0}$, it is clear that by changing the constant terms in $f_1$ and $f_2$ if necessary, we can assume that both $f_1$ and $f_2$ are in the class $\clEven \cap \clCon{0} = \clBoth{0}{0}$.
Thus $f_2 \in \clD{j} \cap \clBoth{0}{0} = \gen{g'} \subseteq \gen{g}$,
so $f_1 = f + f_2 + 0 \in \gen{f,g}$.
Since $f_1 \in (\clD{i} \cap \clChar{0} \cap \clCon{0}) \setminus \clD{i-1} = (\clD{i} \cap \clChar{1} \cap \clBoth{0}{0}) \setminus \clD{i-1}$, we have $\gen{f_1} = \clD{i} \cap \clChar{1} \cap \clBoth{0}{0}$ by part \ref{gen:DkX1C0E0}.
It follows from Lemma~\ref{lem:f=g+h} that
\begin{align*}
\clD{i} \cap \clChar{j} \cap \clBoth{0}{0}
&
= \{ \, \alpha + \beta \mid \alpha \in \clD{i} \cap \clChar{0} \cap \clBoth{0}{0}, \, \beta \in \clD{j} \cap \clBoth{0}{0} \, \}
\\ &
= \{ \, \alpha + \beta + 0 \mid \alpha \in \clD{i} \cap \clChar{1} \cap \clBoth{0}{0}, \, \beta \in \clD{j} \cap \clBoth{0}{0} \, \}
\\ &
\subseteq
\gen{f_1, g'}
\subseteq
\gen{f,g}
\subseteq
\clD{i} \cap \clChar{j} \cap \clBoth{0}{0}.
\qedhere
\end{align*}
\end{proof}

\begin{lemma}
\label{lem:x1xk}
For any $k \in \IN_{+}$, $\gen{x_1 \dots x_k} = \clD{k} \cap \clBoth{0}{1}$.
\end{lemma}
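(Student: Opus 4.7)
The inclusion $\gen{x_1 \dots x_k} \subseteq \clD{k} \cap \clBoth{0}{1}$ is immediate, since $x_1 \dots x_k$ has degree $k$, vanishes at $\mathbf{0}$, and takes value $1$ at $\mathbf{1}$, and the class $\clD{k} \cap \clBoth{0}{1}$ is $\clLc$\hyp{}stable by Proposition~\ref{prop:Lc-sufficiency}.

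For the reverse inclusion, I will use Lemma~\ref{lem:Lc-simplify}\ref{lem:Lc-simplify:Lc}, which tells us that $\gen{x_1 \dots x_k}$ is the smallest class containing $x_1 \dots x_k$ that is minor\hyp{}closed and closed under triple sums. The first step is to produce all monic monomials of the right degrees. Identifying the arguments $x_{j+1}, \dots, x_k$ of $x_1 \dots x_k$ all with $x_j$ yields the monomial $x_1 \dots x_j$ (using $x_j^a = x_j$ in $\GF(2)$ for $a \geq 1$), so $x_1 \dots x_j \in \gen{x_1 \dots x_k}$ for every $j \in \nset{k}$. Introducing fictitious arguments and permuting arguments then shows that $x_S \in \gen{x_1 \dots x_k}^{(n)}$ for every $n \in \IN_{+}$ and every $S \subseteq \nset{n}$ with $1 \leq \card{S} \leq k$.

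The second step is to combine these monomials by iterated triple sums. Let $f \in (\clD{k} \cap \clBoth{0}{1})^{(n)}$. From $f(0, \dots, 0) = 0$ we get $\emptyset \notin \monomials{f}$, from $f(1, \dots, 1) = 1$ we get $\card{\monomials{f}} \equiv 1 \pmod 2$, and from $\deg(f) \leq k$ we get $\card{S} \leq k$ for every $S \in \monomials{f}$. Enumerate $\monomials{f} = \{S_1, \dots, S_{2m+1}\}$, so that $f = \sum_{i=1}^{2m+1} x_{S_i}$ as $n$\hyp{}ary functions. If $m = 0$, then $f = x_{S_1}$ already lies in $\gen{x_1 \dots x_k}$ by the first step. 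Otherwise, set $g_1 := x_{S_1}$ and $g_{j+1} := g_j + x_{S_{2j}} + x_{S_{2j+1}}$ for $j = 1, \dots, m$; each $g_{j+1}$ is a triple sum of members of $\gen{x_1 \dots x_k}$, so by induction all $g_j$ belong to $\gen{x_1 \dots x_k}$, and $g_{m+1} = f$. Hence $\clD{k} \cap \clBoth{0}{1} \subseteq \gen{x_1 \dots x_k}$, completing the proof. No step poses a genuine obstacle; the only minor point is to remember that odd sums are available directly from triple sums, which exactly matches the parity condition imposed by $\clBoth{0}{1}$.
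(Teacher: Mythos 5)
Your proof is correct and follows essentially the same route as the paper's: obtain every monic monomial of degree at most $k$ as a minor of $x_1 \dots x_k$, observe that membership in $\clD{k} \cap \clBoth{0}{1}$ is exactly the condition of being an odd sum of such monomials, and assemble that odd sum inside the closure. You have merely spelled out the details (the parity bookkeeping and the reduction of odd sums to iterated triple sums) that the paper leaves implicit.
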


\begin{proof}
It is clear that any monomial of degree at most $k$ can be obtained as a minor of $x_1 \dots x_k$.
Any function in $\clD{k} \cap \clBoth{0}{1} = \clD{k} \cap \clOdd \cap \clCon{0}$ is an odd sum of monomials of degree at most $k$.
Therefore $\clD{k} \cap \clBoth{0}{1} \subseteq \gen{x_1 \dots x_k} \subseteq \clD{k} \cap \clBoth{0}{1}$.
\end{proof}

\begin{proposition}
\label{prop:gen:DiXjC0E1}
Let $a \in \{0,1\}$.
\begin{enumerate}[label=\upshape{(\roman*)}, leftmargin=*, widest=iii]
\item\label{gen:D1C0E1}
For any $f \in \clD{1} \cap \clBoth{a}{\overline{a}}$, we have $\gen{f} = \clD{1} \cap \clBoth{a}{\overline{a}}$.
\item\label{gen:DkX1C0E1}
Let $k \in \IN_{+}$ with $k \geq 2$. For any $f \in (\clD{k} \cap \clChar{1} \cap \clBoth{a}{\overline{a}}) \setminus \clD{k-1}$, we have $\gen{f} = \clD{k} \cap \clChar{1} \cap \clBoth{a}{\overline{a}}$.
\item\label{gen:DkC0E1}
Let $k \in \IN_{+}$ with $k \geq 2$. For any $g \in (\clD{k} \cap \clBoth{a}{\overline{a}}) \setminus \clChar{k-1}$, we have $\gen{g} = \clD{k} \cap \clBoth{a}{\overline{a}}$.
\item\label{gen:DiXjC0E1}
Let $i, j \in \IN$ with $i > j \geq 2$. For any $f, g \in \clD{i} \cap \clChar{j} \cap \clBoth{a}{\overline{a}}$ such that $f \notin \clD{i-1}$ and $g \notin \clChar{j-1}$, we have $\gen{f,g} = \clD{i} \cap \clChar{j} \cap \clBoth{a}{\overline{a}}$.
\end{enumerate}
\end{proposition}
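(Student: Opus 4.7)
The plan is to follow the template of Proposition~\ref{prop:gen:DiXjC0E0} and its proof, handling the four parts by mutual induction. Lemma~\ref{lem:gen-C+1} reduces everything to the case $a=0$ since outer negation preserves both degree and characteristic rank while exchanging $\clBoth{0}{1}$ with $\clBoth{1}{0}$.

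Part~(i) is essentially immediate: for $f \in \clD{1} \cap \clBoth{0}{1}$, identifying all arguments yields $x_1$ as a minor, and Lemma~\ref{lem:x1xk} gives $\gen{x_1} = \clD{1} \cap \clBoth{0}{1}$. Part~(iii) proceeds by induction on $k$ with base $k=2$: Lemma~\ref{lem:arity-degree}\ref{lem:arity-degree:minor1} produces a $k$-ary minor $\gamma$ of $g$ with $\nset{k} \in \monomials{\gamma}$, Lemma~\ref{lem:arity-degree}\ref{lem:arity-degree:minor3} then supplies a $(k-1)$-ary minor $\gamma'$ of $\gamma$ of degree $k-1$, and Lemma~\ref{lem:arity-degree}\ref{lem:arity-degree:atleast} places $\gamma' \notin \clChar{k-2}$, so the inductive hypothesis (or part~(i) in the base) yields $\gen{\gamma'} = \clD{k-1} \cap \clBoth{0}{1}$. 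The function $\gamma + x_1 \cdots x_k + x_1$ has degree at most $k-1$ and lies in $\clBoth{0}{1}$, hence belongs to $\gen{\gamma'} \subseteq \gen{g}$, and the triple sum
\[
\gamma + (\gamma + x_1 \cdots x_k + x_1) + x_1 = x_1 \cdots x_k
\]
places $x_1 \cdots x_k$ in $\gen{g}$, whereupon Lemma~\ref{lem:x1xk} concludes.

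Part~(ii) requires an odd analogue of Lemma~\ref{lem:Char0-sum-Wk}. I introduce the self-dual function $\monster{k}^+ := \monster{k} + x_1$, which lies in $\clD{k} \cap \clChar{1} \cap \clBoth{0}{1}$ by Lemma~\ref{lem:Selezneva} since $\monster{k}$ is reflexive, and I establish the auxiliary identity $\gen{\monster{k}^+} = \clD{k} \cap \clChar{1} \cap \clBoth{0}{1}$ as follows. Any $h$ in the target class is self-dual by Lemma~\ref{lem:Selezneva}, so $h + x_1 \in \clD{k} \cap \clChar{1} \cap \clBoth{0}{0}$, and Lemma~\ref{lem:Char0-sum-Wk} expresses $h + x_1$ as a sum of minors of $\monster{k}$. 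The identity $(\monster{k})_\sigma = (\monster{k}^+)_\sigma + x_{\sigma(1)}$, together with the fact that every projection arises as a minor of $\monster{k}^+$ (identifying $k$ arguments to a single index gives $\monster{k}^+(a, \dots, a, b) = (a + b) + a = b$), then converts this into an odd-length sum of minors of $\monster{k}^+$ representing $h$. With this identity at hand, part~(ii) follows by induction on $k$ mirroring Proposition~\ref{prop:gen:DiXjC0E0}(i): Lemma~\ref{lem:arity-degree}\ref{lem:arity-degree:minor2} provides a $(k+1)$-ary minor $\varphi$ of $f$ containing all $k$-subsets of $\nset{k+1}$ as monomials (and, for $k \geq 3$, also a $(k-1)$-subset); identifying the two arguments outside this smaller subset produces a $k$-ary minor $\varphi'$ of degree $k-1$ to which the inductive hypothesis applies; the sum $\varphi + \monster{k}^+ + x_1$ has degree at most $k-1$ and lies in $\clBoth{0}{1}$, hence in $\gen{\varphi'} \subseteq \gen{f}$; and the triple sum $\varphi + (\varphi + \monster{k}^+ + x_1) + x_1 = \monster{k}^+$ places $\monster{k}^+$ in $\gen{f}$. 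The base $k=2$ is direct: the 3-ary minor $\varphi$ supplied by Lemma~\ref{lem:arity-degree}\ref{lem:arity-degree:minor2} has the form $\mu + L$ with $L$ linear, and adding appropriate projections via triple sums yields $\mu + x_2 + x_3 = \monster{2}^+$.

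Part~(iv) combines parts~(ii) and~(iii) following the pattern of Proposition~\ref{prop:gen:DiXjC0E0}(iii). A $j$-ary minor of $g$ supplied by Lemma~\ref{lem:arity-degree}\ref{lem:arity-degree:minor1} gives $\clD{j} \cap \clBoth{0}{1} \subseteq \gen{f,g}$ via part~(iii). Decomposing $f$ through Lemma~\ref{lem:f=g+h} as $f = f_1 + f_2$ with $f_1 \in \clD{i} \cap \clChar{1} \cap \clBoth{0}{0}$ (after the usual constant adjustment, using $\clChar{0} \cap \clCon{0} = \clChar{1} \cap \clBoth{0}{0}$) and $f_2 \in \clD{j} \cap \clBoth{0}{1}$, the triple sum $f + f_2 + x_1 = f_1 + x_1$ lies in $\gen{f,g}$; since $f_1 + x_1 \in (\clD{i} \cap \clChar{1} \cap \clBoth{0}{1}) \setminus \clD{i-1}$, part~(ii) yields $\clD{i} \cap \clChar{1} \cap \clBoth{0}{1} \subseteq \gen{f,g}$. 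For an arbitrary $h \in \clD{i} \cap \clChar{j} \cap \clBoth{0}{1}$, Lemma~\ref{lem:f=g+h} supplies an analogous decomposition $h = h_1 + h_2$, and the triple sum $(h_1 + x_1) + h_2 + x_1 = h$ closes the argument. In contrast to the even case of Proposition~\ref{prop:gen:DiXjC0E0}(iii), where $0$ is a minor of every member of $\clBoth{0}{0}$, here the projection $x_1$ plays the role of the neutral third term in triple sums. The main obstacle is the auxiliary identity $\gen{\monster{k}^+} = \clD{k} \cap \clChar{1} \cap \clBoth{0}{1}$ underlying part~(ii); its proof hinges on the bookkeeping $(\monster{k})_\sigma = (\monster{k}^+)_\sigma + x_{\sigma(1)}$, which disentangles the contributions of $\monster{k}$-minors and projections in sums of minors of $\monster{k}^+$.
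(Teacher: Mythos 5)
Your proof is correct, and parts (i), (iii) and (iv) follow essentially the same route as the paper's proof (same minors from Lemma~\ref{lem:arity-degree}, same cancellation $\gamma + x_1 \cdots x_k + x_1$, same decomposition via Lemma~\ref{lem:f=g+h} with $x_1$ serving as the odd ``neutral'' summand). The genuine difference is in part (ii). The paper never isolates $\monster{k} + x_1$ as a generator; instead it keeps $\varphi$ and $\lambda := \monster{\ell+1} + \varphi$ inside $\gen{f}$ and, for an arbitrary $h$ in the target class, rewrites the even\hyp{}world representation $h + x_1 = \sum_i (\monster{\ell+1})_{\sigma_i}$ from Lemma~\ref{lem:Char0-sum-Wk} term by term via $(\monster{\ell+1})_{\sigma_i} = (\monster{\ell+1}+\varphi)_{\sigma_i} + \varphi_{\sigma_i}$, producing an odd sum of elements of $\gen{f}$. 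You instead extract the single function $\monster{k}^{+} := \monster{k} + x_1 \in \clD{k} \cap \clChar{1} \cap \clBoth{0}{1}$ from $\gen{f}$ and prove the self\hyp{}contained generation statement $\gen{\monster{k}^{+}} = \clD{k} \cap \clChar{1} \cap \clBoth{0}{1}$, doing the parity conversion once and for all via $(\monster{k})_{\sigma} = (\monster{k}^{+})_{\sigma} + x_{\sigma(1)}$ together with the observation that projections are minors of $\monster{k}^{+}$. Both arguments turn an even\hyp{}length sum into an odd one by pairing each $\monster{k}$\hyp{}minor with auxiliary summands; the paper pairs with two copies of $\varphi$, you pair with a projection. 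Your version buys a reusable odd analogue of Lemma~\ref{lem:Char0-sum-Wk} and a slightly cleaner induction, at the cost of the extra bookkeeping around $\monster{k}^{+}$ and a separate hands\hyp{}on base case $k=2$ (which you handle correctly: the $3$\hyp{}ary minor is $\mu + L$ with $L$ a sum of an even number of variables, and a triple sum with two projections normalizes it to $\mu + x_2 + x_3 = \monster{2}^{+}$). I see no gap.
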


\begin{proof}
It suffices to prove the statements for $a = 0$.
The statements for $a = 1$ follow by Lemma~\ref{lem:gen-C+1} because $\overline{\clD{i} \cap \clChar{j} \cap \clBoth{0}{1}} = \clD{i} \cap \clChar{j} \cap \clBoth{1}{0}$, $\overline{\clD{i-1}} = \clD{i-1}$, and $\overline{\clChar{j-1}} = \clChar{j-1}$.

\ref{gen:D1C0E1}
If $f \in \clD{1} \cap \clBoth{0}{1}$, then by identifying all arguments we get $x_1 \in \gen{f}$.
By Lemma~\ref{lem:x1xk}, we have $\clD{1} \cap \clBoth{0}{1} = \clLc = \gen{x_1} \subseteq \gen{f} \subseteq \clD{1} \cap \clBoth{0}{1}$.

\ref{gen:DkX1C0E1}
We show by induction on $k$ that the claim holds for any $k \geq 1$ (not just for $k \geq 2$).
The basis of the induction, the case when $k = 1$, is, in fact, statement \ref{gen:D1C0E1} that we have already established; note that $(\clD{1} \cap \clChar{1} \cap \clBoth{0}{1}) \setminus \clD{0} = \clD{1} \cap \clBoth{0}{1}$.
For the induction step, assume that the claim holds for $k = \ell$ for some $\ell \geq 1$.
Let $f \in (\clD{\ell + 1} \cap \clChar{1} \cap \clBoth{0}{1}) \setminus \clD{\ell}$.
By Lemma~\ref{lem:arity-degree}\ref{lem:arity-degree:minor2}, $f$ has an $(\ell + 2)$\hyp{}ary minor $\varphi \in (\clD{\ell + 1} \cap \clChar{1}\cap \clBoth{0}{1}) \setminus \clD{\ell}$ such that $\monomials{\varphi}$ contains all subsets of $\nset{\ell + 2}$ of cardinality $\ell + 1$.
If $\ell \geq 2$, then $\monomials{\varphi}$ furthermore contains a subset of cardinality $\ell$; then, again by Lemma~\ref{lem:arity-degree}\ref{lem:arity-degree:minor2}, $\varphi$ has an $(\ell + 1)$\hyp{}ary minor $\varphi' \in (\clD{\ell} \cap \clChar{1} \cap \clBoth{0}{1}) \setminus \clD{\ell-1}$, and by the inductive hypothesis, $\clD{\ell} \cap \clChar{1} \cap \clBoth{0}{1} = \gen{\varphi'} \subseteq \gen{f}$.
If $\ell = 1$, then $\clD{\ell} \cap \clChar{1} \cap \clBoth{0}{1} = \clLc = \gen{x_1} \subseteq \gen{f}$ because $x_1$ is a minor of $f$ (identify all arguments).
In either case, let $\lambda := \monster{\ell + 1} + \varphi$.
We have $\monster{\ell + 1} \in \clD{\ell + 1} \cap \clChar{1} \cap \clBoth{0}{0}$ by Lemma~\ref{lem:monster} and $\varphi \in \clD{\ell + 1} \cap \clChar{1} \cap \clBoth{0}{1}$.
Consequently $\lambda \in \clD{\ell} \cap \clChar{1} \cap \clBoth{0}{1} \subseteq \gen{f}$
because all monomials of degree $\ell + 1$ are cancelled in the sum $\monster{\ell + 1} + \varphi$, $\clChar{1}$ is closed under sums by Lemma~\ref{lem:Xk-closed}, and
\begin{align*}
& \lambda(0, \dots, 0) = \monster{\ell + 1}(0, \dots, 0) + \varphi(0, \dots, 0) = 0 + 0 = 0, \\
& \lambda(1, \dots, 1) = \monster{\ell + 1}(1, \dots, 1) + \varphi(1, \dots, 1) = 0 + 1 = 1.
\end{align*}

Let now $h \in (\clD{\ell + 1} \cap \clChar{1} \cap \clBoth{0}{1}) \setminus \clD{\ell}$ be arbitrary.
Then $h + x_1 \in \clD{\ell + 1} \cap \clChar{1} \cap \clBoth{0}{0}$, so by Lemma~\ref{lem:Char0-sum-Wk}, $h + x_1 \in \gen{\monster{\ell + 1}}$, that is, $h + x_1 = \sum_{i=1}^{2m+1} \monster{k_i}^{S_i}$ with $k_i \leq \ell + 1$.
We can write $\monster{k_i}^{S_i} = (\monster{\ell+1})_{\sigma_i}$ for a suitable minor formation map $\sigma_i$.
Consequently,
\begin{align*}
h
&
= \bigl( \sum_{i=1}^{2m+1} \monster{k_i}^{S_i} \bigr) + x_1
= \bigl( \sum_{i=1}^{2m+1} (\monster{\ell+1})_{\sigma_i} \bigr) + x_1
= \bigl( \sum_{i=1}^{2m+1} (\monster{\ell+1} + \varphi + \varphi)_{\sigma_i} \bigr) + x_1
\\ &
= \bigl( \sum_{i=1}^{2m+1} \bigl( \underbrace{(\monster{\ell+1} + \varphi)_{\sigma_i}}_{\in \gen{f}} + \underbrace{\varphi_{\sigma_i}}_{\in \gen{f}} \bigr) \bigr) + \underbrace{x_1,}_{\in \gen{f}}
\end{align*}
where the last equality holds by Lemma~\ref{lem:comp-minors}.
Since the last expression is an odd sum of elements of $\gen{f}$, it follows that $h \in \gen{f}$.
We conclude that
$\clD{\ell + 1} \cap \clChar{1} \cap \clBoth{0}{1} \subseteq \gen{f} \subseteq \clD{\ell + 1} \cap \clChar{1} \cap \clBoth{0}{1}$.

\ref{gen:DkC0E1}
We show by induction on $k$ that the claim holds for any $k \geq 1$.
The basis of the induction, the case when $k = 1$, is, in fact, statement \ref{gen:D1C0E1} that we have already established, because $(\clD{1} \cap \clBoth{0}{1}) \setminus \clChar{0} = \clD{1} \cap \clBoth{0}{1}$.
For the induction step, assume that the claim holds for $k = \ell$ for some $\ell \geq 1$.
Let $g \in (\clD{\ell+1} \cap \clBoth{0}{1}) \setminus \clChar{\ell}$.
By Lemma~\ref{lem:arity-degree}\ref{lem:arity-degree:minor1}, $g$ has an $(\ell+1)$\hyp{}ary minor $\gamma$ such that $\nset{\ell+1} \in \monomials{\gamma}$ and $\gamma \in (\clD{\ell+1} \cap \clBoth{0}{1}) \setminus \clChar{\ell}$.
By Lemma~\ref{lem:arity-degree}\ref{lem:arity-degree:minor3}, $\gamma$ has an $\ell$\hyp{}ary minor $\gamma_{ij} \in (\clD{\ell} \cap \clBoth{0}{1}) \setminus \clChar{\ell-1}$.
By the inductive hypothesis, $\clD{\ell} \cap \clBoth{0}{1} = \gen{\gamma_{ij}} \subseteq \gen{g}$.
We have $\gamma' := \gamma + x_1 \dots x_{\ell+1} + x_1 \in \clD{\ell} \cap \clBoth{0}{1} \subseteq \gen{g}$ and clearly $x_1 \in \gen{g}$, so also $x_1 \dots x_{\ell + 1} = \gamma' + \gamma + x_1 \in \gen{g}$.
By Lemma~\ref{lem:x1xk}, we have $\clD{\ell+1} \cap \clBoth{0}{1} = \gen{x_1 \dots x_{\ell+1}} \subseteq \gen{g} \subseteq \clD{\ell+1} \cap \clBoth{0}{1}$.

\ref{gen:DiXjC0E1}
Let $f, g \in \clD{i} \cap \clChar{j} \cap \clBoth{0}{1}$ such that $f \notin \clD{i-1}$ and $g \notin \clChar{j-1}$.
By Lemma~\ref{lem:arity-degree}\ref{lem:arity-degree:minor1}, $g$ has a $j$\hyp{}ary minor $g' \in \clD{j} \setminus \clChar{j-1}$.
Since $\clD{i} \cap \clChar{j} \cap \clBoth{0}{1}$ is minor\hyp{}closed, we have $g' \in (\clD{j} \cap \clBoth{0}{1}) \setminus \clChar{j-1}$, and by part \ref{gen:DkC0E1}, $\gen{g'} = \clD{j} \cap \clBoth{0}{1}$.

By Lemma~\ref{lem:f=g+h}, $f = f_1 + f_2$ for some $f_1 \in \clChar{0}$ and $f_2 \in \clD{j}$.
Since $\clChar{0} \subseteq \clEven$ and $f \in \clOdd$, we must also have $f_2 \in \clOdd$.
Since $f \in \clCon{0}$, it is clear that by changing the constant terms if necessary, we may assume that both $f_1$ and $f_2$ are in $\clCon{0}$.
Thus $f_2 \in \clD{j} \cap \clOdd \cap \clCon{0} = \clD{j} \cap \clBoth{0}{1} = \gen{g'} \subseteq \gen{g}$,
so $f_1 + x_1 = f + f_2 + x_1 \in \gen{f,g}$.
Since $f_1 \in (\clD{i} \cap \clChar{0} \cap \clCon{0}) \setminus \clD{i-1} = (\clD{i} \cap \clChar{1} \cap \clEven \cap \clCon{0}) \setminus \clD{i-1}$,
we have $f_1 + x_1 \in (\clD{i} \cap \clChar{1} \cap \clOdd \cap \clCon{0}) \setminus \clD{i-1} = (\clD{i} \cap \clChar{1} \cap \clBoth{0}{1}) \setminus \clD{i-1}$,
so $\gen{f_1 + x_1} = \clD{i} \cap \clChar{1} \cap \clBoth{0}{1}$ by part \ref{gen:DkX1C0E1}.

Now, with the help of Lemma~\ref{lem:f=g+h}, we can see that for any $h \in \clD{i} \cap \clChar{j} \cap \clBoth{0}{1}$, we have $h = h_1 + h_2$ for some $h_1 \in \clD{i} \cap \clChar{0} \cap \clCon{0} = \clD{i} \cap \clChar{1} \cap \clEven \cap \clCon{0}$ and $h_2 \in \clD{j} \cap \clOdd \cap \clCon{0} = \clD{j} \cap \clBoth{0}{1}$, and hence
$h_1 + x_1 \in \clD{i} \cap \clChar{1} \cap \clBoth{0}{1} \subseteq \gen{f,g}$ and
$h_2 \in \gen{g}$.
Since $x_1 \in \gen{f}$ as well, we have $h = (h_1 + x_1) + h_2 + x_1 \in \gen{f,g}$.
We conclude that $\clD{i} \cap \clChar{j} \cap \clBoth{0}{1} \subseteq \gen{f,g} \subseteq \clD{i} \cap \clChar{j} \cap \clBoth{0}{1}$.
\end{proof}

\begin{proposition}
\label{prop:gen:CaEb}
Let $a, b \in \{0,1\}$.
\begin{enumerate}[label=\upshape{(\roman*)}, leftmargin=*, widest=iii]
\item\label{gen:X1CaEb}
For any $f_i \in (\clChar{1} \cap \clBoth{a}{b}) \setminus \clD{i}$ \textup{(}$i \in \IN_{+}$\textup{)}, we have $\gen{\{ \, f_i \mid i \in \IN_{+} \, \}} = \clChar{1} \cap \clBoth{a}{b}$.
\item\label{gen:XjCaEb}
Let $k \in \IN_{+}$ with $k \geq 2$. For any $f_i \in (\clChar{k} \cap \clBoth{a}{b}) \setminus \clD{i}$ \textup{(}$i \in \IN_{+}$\textup{)} and $g \in (\clChar{k} \cap \clBoth{a}{b}) \setminus \clChar{k-1}$, we have $\gen{\{ \, f_i \mid i \in \IN_{+} \, \} \cup \{g\}} = \clChar{k} \cap \clBoth{a}{b}$.
\item\label{gen:CaEb}
For any $g_i \in (\clBoth{a}{b}) \setminus \clChar{i}$ \textup{(}$i \in \IN_{+}$\textup{)}, we have $\gen{\{\, g_i \mid i \in \IN_{+} \,\}} = \clBoth{a}{b}$.
\end{enumerate}
\end{proposition}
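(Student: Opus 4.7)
The plan is to leverage Propositions~\ref{prop:gen:DiXjC0E0} and~\ref{prop:gen:DiXjC0E1}, which compute the $\clLc$-closure of an individual function, or of a pair with complementary extremality properties, inside a block of the form $\clD{i} \cap \clChar{j} \cap \clBoth{a}{b}$. The unifying idea across the three parts is that the hypotheses force the given functions to realize arbitrarily large degree (or, in part~\ref{gen:CaEb}, arbitrarily large characteristic rank); invoking those earlier propositions then produces an ascending union of blocks whose total is the advertised target class. The reverse inclusions in each part are immediate from the $\clLc$-stability established in Proposition~\ref{prop:Lc-sufficiency}.

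For part~\ref{gen:X1CaEb}, put $d_i := \deg(f_i) \geq i + 1 \geq 2$. Then $f_i \in (\clD{d_i} \cap \clChar{1} \cap \clBoth{a}{b}) \setminus \clD{d_i - 1}$, so Proposition~\ref{prop:gen:DiXjC0E0}\ref{gen:DkX1C0E0} (when $b = a$) or Proposition~\ref{prop:gen:DiXjC0E1}\ref{gen:DkX1C0E1} (when $b = \overline{a}$) yields $\gen{f_i} = \clD{d_i} \cap \clChar{1} \cap \clBoth{a}{b}$. Since $d_i \to \infty$, the union over $i$ exhausts $\clChar{1} \cap \clBoth{a}{b}$.

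For part~\ref{gen:XjCaEb}, apply the same device to the pair $(f_i, g)$. For every $i$ large enough that $d_i := \deg(f_i) > \max(k, \deg(g))$, both $f_i$ and $g$ lie in $\clD{d_i} \cap \clChar{k} \cap \clBoth{a}{b}$, with $f_i \notin \clD{d_i - 1}$ and $g \notin \clChar{k-1}$. Then Proposition~\ref{prop:gen:DiXjC0E0}\ref{gen:DiXjC0E0} (when $b = a$) or Proposition~\ref{prop:gen:DiXjC0E1}\ref{gen:DiXjC0E1} (when $b = \overline{a}$) gives $\gen{f_i, g} = \clD{d_i} \cap \clChar{k} \cap \clBoth{a}{b}$. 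Letting $i \to \infty$, the union of these classes is $\clChar{k} \cap \clBoth{a}{b}$.

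For part~\ref{gen:CaEb}, the only nonroutine step is to pass from each $g_i$ to a minor whose degree and characteristic rank coincide. Set $m_i := \charrank{g_i} \geq i + 1 \geq 2$. Since $g_i \in \clChar{m_i} \setminus \clChar{m_i - 1}$, Lemma~\ref{lem:arity-degree}\ref{lem:arity-degree:minor1} produces an $m_i$-ary minor $g_i' \in \clD{m_i} \setminus \clChar{m_i - 1}$, and because minor formation preserves the values at the constant tuples $(0,\dots,0)$ and $(1,\dots,1)$, we have $g_i' \in \clBoth{a}{b}$. Thus $g_i' \in (\clD{m_i} \cap \clBoth{a}{b}) \setminus \clChar{m_i - 1}$, and Proposition~\ref{prop:gen:DiXjC0E0}\ref{gen:DkC0E0} (when $b = a$) or Proposition~\ref{prop:gen:DiXjC0E1}\ref{gen:DkC0E1} (when $b = \overline{a}$) supplies $\gen{g_i'} = \clD{m_i} \cap \clBoth{a}{b} \subseteq \gen{g_i}$. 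As $m_i \to \infty$, the union over $i$ fills $\clBoth{a}{b}$.
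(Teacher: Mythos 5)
Your proof is correct and follows essentially the same route as the paper: reduce to the blocks $\clD{n}\cap\clChar{j}\cap\clBoth{a}{b}$ via Propositions~\ref{prop:gen:DiXjC0E0} and~\ref{prop:gen:DiXjC0E1} (using Lemma~\ref{lem:arity-degree}\ref{lem:arity-degree:minor1} in part~\ref{gen:CaEb}) and take the ascending union. The only cosmetic difference is in part~\ref{gen:XjCaEb}, where the paper replaces $g$ by a $k$\hyp{}ary minor $\gamma\in\clD{k}\setminus\clChar{k-1}$ so that the pair lies in every block, whereas you instead discard the finitely many indices with $d_i\leq\max(k,\deg(g))$; both work equally well.
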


\begin{proof}
\ref{gen:X1CaEb}
For $i \in \IN_{+}$, let $f_i \in (\clChar{1} \cap \clBoth{a}{b}) \setminus \clD{i}$,
and let $n_i := \deg(f_i)$; we have $n_i > i$.
Then $f_i \in (\clD{n_i} \cap \clChar{1} \cap \clBoth{a}{b}) \setminus \clD{n_i - 1}$, so by Proposition~\ref{prop:gen:DiXjC0E0}\ref{gen:DkX1C0E0} and Proposition~\ref{prop:gen:DiXjC0E1}\ref{gen:DkX1C0E1}, $\gen{f_i} = \clD{n_i} \cap \clChar{1} \cap \clBoth{a}{b}$.
Therefore
\begin{align*}
\clChar{1} \cap \clBoth{a}{b}
&
= \bigcup_{i \in \IN_{+}} (\clD{i} \cap \clChar{1} \cap \clBoth{a}{b})
\subseteq \bigcup_{i \in \IN_{+}} (\clD{n_i} \cap \clChar{1} \cap \clBoth{a}{b})
\\ &
= \bigcup_{i \in \IN_{+}} \gen{f_i}
\subseteq \gen{\{ \, f_i \mid i \in \IN_{+} \, \}}
\subseteq \clChar{1} \cap \clBoth{a}{b}.
\end{align*}

\ref{gen:XjCaEb}
For $i \in \IN_{+}$, let $f_i \in (\clChar{k} \cap \clBoth{a}{b}) \setminus \clD{i}$, and let $g \in (\clChar{k} \cap \clBoth{a}{b}) \setminus \clChar{k-1}$,
and let $n_i := \deg(f_i)$; we have $n_i > i$.
By Lemma~\ref{lem:arity-degree}\ref{lem:arity-degree:minor1}, $g$ has a $k$\hyp{}ary minor $\gamma$ of degree $k$ such that $\gamma \in (\clD{k} \cap \clBoth{a}{b}) \setminus \clChar{k - 1}$.
By Proposition~\ref{prop:gen:DiXjC0E0}\ref{gen:DiXjC0E0} and Proposition~\ref{prop:gen:DiXjC0E1}\ref{gen:DiXjC0E1}, it holds for $i \geq k$ that $\gen{f_i, g} = \clD{n_i} \cap \clChar{k} \cap \clBoth{a}{b}$.
Therefore
\begin{align*}
\clChar{k} \cap \clBoth{a}{b}
&
= \bigcup_{i \in \IN_{+}} (\clD{i} \cap \clChar{k} \cap \clBoth{a}{b})
= \bigcup_{i \geq k} (\clD{i} \cap \clChar{k} \cap \clBoth{a}{b})
\subseteq \bigcup_{i \geq k} (\clD{n_i} \cap \clChar{k} \cap \clBoth{a}{b})
\\ &
= \bigcup_{i \geq k} \gen{f_i, g}
\subseteq \gen{\{ \, f_i \mid i \in \IN_{+} \, \} \cup \{g\}}
\subseteq \clChar{k} \cap \clBoth{a}{b}.
\end{align*}

\ref{gen:CaEb}
For $i \in \IN_{+}$, let $g_i \in (\clBoth{a}{b}) \setminus \clChar{i}$,
and let $k_i := \charrank{g_i}$.
Then $g_i \in (\clChar{k_i} \cap \clBoth{a}{b}) \setminus \clChar{k_i - 1}$.
By Lemma~\ref{lem:arity-degree}\ref{lem:arity-degree:minor1}, $g_i$ has a $k_i$\hyp{}ary minor $\gamma_i$ of degree $k_i$ such that $\gamma_i \in (\clD{k_i} \cap \clBoth{a}{b}) \setminus \clChar{k_i - 1}$.
By Proposition~\ref{prop:gen:DiXjC0E0}\ref{gen:DkC0E0} and Proposition~\ref{prop:gen:DiXjC0E1}\ref{gen:DkC0E1}, $\gen{\gamma_i} = \clD{k_i} \cap \clBoth{a}{b}$.
Therefore
\begin{align*}
\clBoth{a}{b}
&
= \bigcup_{i \in \IN_{+}} (\clD{i} \cap \clBoth{a}{b})
\subseteq \bigcup_{i \in \IN_{+}} (\clD{k_i} \cap \clBoth{a}{b})
= \bigcup_{i \in \IN_{+}} \gen{\gamma_i}
\\ &
\subseteq \bigcup_{i \in \IN_{+}} \gen{g_i}
\subseteq \gen{\{ \, g_i \mid i \in \IN_{+} \, \}}
\subseteq \clBoth{a}{b}.
\qedhere
\end{align*}
\end{proof}

\begin{proposition}
\label{prop:gen:DiXjCa}
Let $a \in \{0,1\}$.
\begin{enumerate}[label=\upshape{(\roman*)}, leftmargin=*, widest=iii]
\item\label{gen:DkX1Ca}
Let $k \in \IN_{+}$. For any $f, h, h' \in \clD{k} \cap \clChar{1} \cap \clCon{a}$ with $f \notin \clD{k-1}$, $h \notin \clYksi{a}$, $h' \notin \clYksi{\overline{a}}$, we have $\gen{f, h, h'} = \clD{k} \cap \clChar{1} \cap \clCon{a}$.
\item\label{gen:DkCa}
Let $k \in \IN_{+}$ with $k \geq 2$. For any $g, h, h' \in \clD{k} \cap \clCon{a}$ with $g \notin \clChar{k-1}$, $h \notin \clYksi{a}$, $h' \notin \clYksi{\overline{a}}$, we have $\gen{g, h, h'} = \clD{k} \cap \clCon{a}$.
\item\label{gen:DiXjCa}
Let $i, j \in \IN$ with $i > j \geq 2$. For any $f, g, h, h' \in \clD{i} \cap \clChar{j} \cap \clCon{a}$ such that $f \notin \clD{i-1}$, $g \notin \clChar{j-1}$, $h \notin \clYksi{a}$, $h' \notin \clYksi{\overline{a}}$, we have $\gen{f, g, h, h'} = \clD{i} \cap \clChar{j} \cap \clCon{a}$.
\end{enumerate}
\end{proposition}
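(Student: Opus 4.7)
The plan is to reduce each of the three parts to the already\hyp{}established Propositions~\ref{prop:gen:DiXjC0E0} and \ref{prop:gen:DiXjC0E1} by exploiting the disjoint decomposition $\clCon{a} = \clBoth{a}{a} \cup \clBoth{a}{\overline{a}}$. Since $\clD{i} \cap \clChar{j} \cap \clCon{a}$ is $\clLc$\hyp{}stable by Proposition~\ref{prop:Lc-sufficiency}, the inclusion $\gen{f,g,h,h'} \subseteq \clD{i} \cap \clChar{j} \cap \clCon{a}$ is immediate. The conditions $h \notin \clYksi{a}$ and $h' \notin \clYksi{\overline{a}}$ force $h \in \clBoth{a}{\overline{a}}$ and $h' \in \clBoth{a}{a}$. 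Identifying all arguments in $h$ (resp.\ $h'$) yields the unary minor $x_1 + a$ (resp.\ the constant $a$); these, together with all their arity extensions obtained by introducing fictitious arguments, lie in $\gen{f,g,h,h'}$.

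The key tool is the following \emph{shift} operation. For any $\psi$ in the generated class of arity $n$ and any $\ell \in \nset{n}$, the triple sum $\psi + (x_\ell + a) + a$ simplifies to $\psi + x_\ell$ and lies in the same generated class. Its value at $\vect{0}$ equals $\psi(\vect{0})$ while its value at $\vect{1}$ equals $\psi(\vect{1}) + 1$, so the shift interchanges $\clBoth{a}{a}$ and $\clBoth{a}{\overline{a}}$. Two preservation properties must be verified. First, $\deg(\psi + x_\ell) = \deg(\psi)$ whenever $\deg(\psi) \geq 2$, and for $\deg(\psi) = 1$ the index $\ell$ can be chosen outside the support of the degree\hyp{}$1$ part of $\psi$ (first extending $\psi$ by a fictitious argument if necessary). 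Second, writing $\varphi_\psi := \psi + \psi^\mathrm{n}$, a direct computation using Lemma~\ref{lem:neg-pol}\ref{lem:neg-pol:inner-sum} gives $\varphi_{\psi + x_\ell} = \varphi_\psi + 1$; combined with Lemma~\ref{lem:Xk-description}, this yields $\charrank{\psi + x_\ell} = \charrank{\psi}$ whenever $\charrank{\psi} \geq 2$. In particular, under the standing hypotheses of each part, the shift preserves membership in $\clD{k}$ and (where relevant) in $\clChar{j}$.

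Each part now follows uniformly. For part~\ref{gen:DkX1Ca}, the function $f$ lies in one of the two subclasses $\clBoth{a}{a}, \clBoth{a}{\overline{a}}$ and $f + x_\ell$ lies in the other, both of degree exactly $k$ and both still in $\clChar{1}$; applying Propositions~\ref{prop:gen:DiXjC0E0}\ref{gen:DkX1C0E0} and \ref{prop:gen:DiXjC0E1}\ref{gen:DkX1C0E1} then produces the two subclasses, whose union is $\clD{k} \cap \clChar{1} \cap \clCon{a}$. Part~\ref{gen:DkCa} is analogous, using $g$ and its shift together with Propositions~\ref{prop:gen:DiXjC0E0}\ref{gen:DkC0E0} and \ref{prop:gen:DiXjC0E1}\ref{gen:DkC0E1}. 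For part~\ref{gen:DiXjCa}, shifting both $f$ and $g$ provides, inside each of $\clBoth{a}{a}$ and $\clBoth{a}{\overline{a}}$, a function of degree $i$ (coming from $f$ or its shift) together with a function of characteristic rank $j$ (coming from $g$ or its shift), whereupon Propositions~\ref{prop:gen:DiXjC0E0}\ref{gen:DiXjC0E0} and \ref{prop:gen:DiXjC0E1}\ref{gen:DiXjC0E1} generate each subclass.

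The main technical obstacle is the low\hyp{}degree bookkeeping for part~\ref{gen:DkX1Ca} when $k = 1$: a naive shift could collapse the sole degree\hyp{}$1$ term of $f$, but this is sidestepped by adjoining a fictitious argument to $f$ and then shifting along that fresh coordinate. All other degree and characteristic\hyp{}rank preservation checks are routine under the hypotheses $k \geq 2$ in part~\ref{gen:DkCa} and $i > j \geq 2$ in part~\ref{gen:DiXjCa}.
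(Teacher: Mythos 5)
Your proof is correct and follows essentially the same route as the paper's: use $h$ and $h'$ to obtain the unary functions $x_1+a$ and $a$, shift $f$ and $g$ by adding a projection so that together with the originals they supply generators for both $\clD{i} \cap \clChar{j} \cap \clBoth{a}{a}$ and $\clD{i} \cap \clChar{j} \cap \clBoth{a}{\overline{a}}$, and invoke Propositions~\ref{prop:gen:DiXjC0E0} and \ref{prop:gen:DiXjC0E1} before taking the union. The only (harmless) differences are that you treat both values of $a$ directly instead of reducing $a=1$ to $a=0$ via Lemma~\ref{lem:gen-C+1}, and that you explicitly verify the degree and characteristic-rank preservation of the shift, including the $k=1$ edge case in part~\ref{gen:DkX1Ca}, which the paper leaves implicit.
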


\begin{proof}
It suffices to prove the statements for $a = 0$.
The statements for $a = 1$ follow by Lemma~\ref{lem:gen-C+1} because $\overline{\clD{i} \cap \clChar{j} \cap \clCon{0}} = \clD{i} \cap \clChar{j} \cap \clCon{1}$, $\overline{\clD{i-1}} = \clD{i-1}$, $\overline{\clChar{j-1}} = \clChar{j-1}$, $\overline{\clYksi{0}} = \clYksi{1}$, and $\overline{\clYksi{1}} = \clYksi{0}$.
We consider only statement \ref{gen:DiXjCa}.
The proofs of statements \ref{gen:DkX1Ca} and \ref{gen:DkCa} are analogous; we just need to omit the parts of the proof that deal with the function $f$ or $g$, as the case may be, that does not appear in the statement.

Since $\{ \clYksi{0}, \clYksi{1} \}$ is a partition of $\clAll$, we have that $h \in \clYksi{1}$ and $h' \in \clYksi{0}$.
By identifying all arguments, we get $x_1 \in \gen{h}$ and $0 \in \gen{h'}$,
so we have $f + x_1 = f + x_1 + 0 \in \gen{f, h, h'}$ and $g + x_1 = g + x_1 + 0 \in \gen{g, h, h'}$.
One of the functions $f$ and $f + x_1$ belongs to the class $(\clD{i} \cap \clChar{j} \cap \clBoth{0}{0}) \setminus \clD{i-1}$ and the other to $(\clD{i} \cap \clChar{j} \cap \clBoth{0}{1}) \setminus \clD{i-1}$,
and, similarly,
one of $g$ and $g + x_1$ belongs to $(\clD{i} \cap \clChar{j} \cap \clBoth{0}{0}) \setminus \clChar{j-1}$ and the other to $(\clD{i} \cap \clChar{j} \cap \clBoth{0}{1}) \setminus \clChar{j-1}$.
Propositions~\ref{prop:gen:DiXjC0E0}\ref{gen:DiXjC0E0} and \ref{prop:gen:DiXjC0E1}\ref{gen:DiXjC0E1} imply that $\gen{f, g, h, h'}$ contains a generating set for both $\clD{i} \cap \clChar{j} \cap \clBoth{0}{0}$ and $\clD{i} \cap \clChar{j} \cap \clBoth{0}{1}$.
Therefore
\[
\clD{i} \cap \clChar{j} \cap \clCon{0}
= (\clD{i} \cap \clChar{j} \cap \clBoth{0}{0}) \cup (\clD{i} \cap \clChar{j} \cap \clBoth{0}{1})
\subseteq \gen{f, g, h, h'}
\subseteq \clD{i} \cap \clChar{j} \cap \clCon{0}.
\qedhere
\]
\end{proof}

\begin{proposition}
\label{prop:gen:DiXjEa}
Let $a \in \{0,1\}$.
\begin{enumerate}[label=\upshape{(\roman*)}, leftmargin=*, widest=iii]
\item\label{gen:DkX1Ea}
Let $k \in \IN_{+}$. For any $f, h, h' \in \clD{k} \cap \clChar{1} \cap \clYksi{a}$ with $f \notin \clD{k-1}$, $h \notin \clCon{a}$, $h' \notin \clCon{\overline{a}}$, we have $\gen{f, h, h'} = \clD{k} \cap \clChar{1} \cap \clYksi{a}$.
\item\label{gen:DkEa}
Let $k \in \IN_{+}$ with $k \geq 2$. For any $g, h, h' \in \clD{k} \cap \clYksi{a}$ with $g \notin \clChar{k-1}$, $h \notin \clCon{a}$, $h' \notin \clCon{\overline{a}}$, we have $\gen{g, h, h'} = \clD{k} \cap \clYksi{a}$.
\item\label{gen:DiXjEa}
Let $i, j \in \IN$ with $i > j \geq 2$. For any $f, g, h, h' \in \clD{i} \cap \clChar{j} \cap \clYksi{a}$ such that $f \notin \clD{i-1}$, $g \notin \clChar{j-1}$, $h \notin \clCon{a}$, $h' \notin \clCon{\overline{a}}$, we have $\gen{f, g, h, h'} = \clD{i} \cap \clChar{j} \cap \clYksi{a}$.
\end{enumerate}
\end{proposition}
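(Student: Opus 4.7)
The plan is to mirror the proof of Proposition~\ref{prop:gen:DiXjCa} and focus on part~\ref{gen:DiXjEa}; parts~\ref{gen:DkX1Ea} and~\ref{gen:DkEa} will follow by the same argument with the appropriate generators omitted and with the corresponding clauses of Propositions~\ref{prop:gen:DiXjC0E0} and~\ref{prop:gen:DiXjC0E1} invoked. As before, it suffices to treat $a = 0$: Lemma~\ref{lem:gen-C+1} handles $a = 1$ since outer negation swaps $\clYksi{0}$ and $\clYksi{1}$, swaps $\clCon{0}$ and $\clCon{1}$, and fixes both $\clD{i-1}$ and $\clChar{j-1}$.

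Assume $a = 0$. The partition $\{\clCon{0}, \clCon{1}\}$ of $\clAll$ places $h$ in $\clYksi{0} \setminus \clCon{0} = \clBoth{1}{0}$ and $h'$ in $\clYksi{0} \setminus \clCon{1} = \clBoth{0}{0}$. Identifying all arguments of $h$ yields the unary function $x_1 + 1$, and identifying all arguments of $h'$ yields the unary constant $0$, so both $x_1 + 1$ and $0$ lie in $\gen{f, g, h, h'}$. The crux is that $x_1 + 1$ acts as a ``switch'' at the all\hyp{}zero input: since $(f + x_1 + 1)(0, \dots, 0) = f(0, \dots, 0) + 1$ while $(f + x_1 + 1)(1, \dots, 1) = f(1, \dots, 1)$, one of $\{f, f + x_1 + 1\}$ lies in $(\clD{i} \cap \clChar{j} \cap \clBoth{0}{0}) \setminus \clD{i-1}$ and the other in $(\clD{i} \cap \clChar{j} \cap \clBoth{1}{0}) \setminus \clD{i-1}$, with an analogous dichotomy for $g$ and $g + x_1 + 1$ with respect to $\clChar{j-1}$. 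By Lemma~\ref{lem:Lc-simplify}\ref{lem:Lc-simplify:Lc} the triple sums $f + (x_1 + 1) + 0$ and $g + (x_1 + 1) + 0$ lie in $\gen{f, g, h, h'}$, so we can apply Proposition~\ref{prop:gen:DiXjC0E0}\ref{gen:DiXjC0E0} on the $\clBoth{0}{0}$ side and Proposition~\ref{prop:gen:DiXjC0E1}\ref{gen:DiXjC0E1} on the $\clBoth{1}{0}$ side to obtain
\[
\clD{i} \cap \clChar{j} \cap \clYksi{0} = (\clD{i} \cap \clChar{j} \cap \clBoth{0}{0}) \cup (\clD{i} \cap \clChar{j} \cap \clBoth{1}{0}) \subseteq \gen{f, g, h, h'},
\]
which combined with the trivial reverse inclusion yields the claim.

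The only point requiring care is to verify that adding $x_1 + 1$ does not violate the non\hyp{}membership hypotheses. Since $\deg(x_1 + 1) = 1$ and $i \geq 3$, the degree of $f + x_1 + 1$ remains $i$, so $f + x_1 + 1 \notin \clD{i-1}$. For the characteristic rank, a short calculation using Lemma~\ref{lem:neg-pol}\ref{lem:neg-pol:inner-sum} and the identity $(x_1 + 1)^\mathrm{n} = x_1$ gives
\[
(g + x_1 + 1) + (g + x_1 + 1)^\mathrm{n} = g + g^\mathrm{n} + 1,
\]
and by Lemma~\ref{lem:Xk-description} we have $\deg(g + g^\mathrm{n}) = j - 1 \geq 1$ since $j \geq 2$; as the constant $1$ affects only the empty monomial, $\deg(g + g^\mathrm{n} + 1) = j - 1$, so $\charrank{g + x_1 + 1} = j$ and thus $g + x_1 + 1 \notin \clChar{j-1}$. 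With these routine verifications in hand, the argument is complete.
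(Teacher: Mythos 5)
Your proof is correct and follows essentially the same route as the paper's: reduce to one value of $a$ via Lemma~\ref{lem:gen-C+1}, extract the unary minors of $h$ and $h'$, use a triple sum to move $f$ and $g$ between the two $\clBoth{a}{b}$ blocks partitioning $\clYksi{a}$, and invoke Propositions~\ref{prop:gen:DiXjC0E0} and~\ref{prop:gen:DiXjC0E1}. The only (immaterial) differences are that the paper normalizes to $a=1$ rather than $a=0$, and that you spell out the verification that adding $x_1+1$ preserves the non-membership in $\clD{i-1}$ and $\clChar{j-1}$, which the paper leaves implicit.
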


\begin{proof}
It suffices to prove the statements for $a = 1$.
The statements for $a = 0$ follow by Lemma~\ref{lem:gen-C+1} because
$\overline{\clD{i} \cap \clChar{j} \cap \clYksi{1}} = \clD{i} \cap \clChar{j} \cap \clYksi{0}$, $\overline{\clD{i-1}} = \clD{i-1}$, $\overline{\clChar{j-1}} = \clChar{j-1}$, $\overline{\clCon{1}} = \clCon{0}$, and $\overline{\clCon{0}} = \clCon{1}$.
We consider only statement \ref{gen:DiXjEa}.
The proofs of statements \ref{gen:DkX1Ea} and \ref{gen:DkEa} are analogous; we just need to omit the parts of the proof that deal with the function $f$ or $g$, as the case may be, that does not appear in the statement.

Since $\{ \clCon{0}, \clCon{1} \}$ is a partition of $\clAll$, we have that $h \in \clCon{0}$ and $h' \in \clCon{1}$.
By identifying all arguments, we get $x_1 \in \gen{h}$ and $1 \in \gen{h'}$,
so we have $f + x_1 + 1 \in \gen{f, h, h'}$ and $g + x_1 + 1 \in \gen{g, h, h'}$.
One of the functions $f$ and $f + x_1 + 1$ belongs to the class $(\clD{i} \cap \clChar{j} \cap \clBoth{0}{1}) \setminus \clD{i-1}$ and the other to $(\clD{i} \cap \clChar{j} \cap \clBoth{1}{1}) \setminus \clD{i-1}$,
and, similarly,
one of $g$ and $g + x_1 + 1$ belongs to $(\clD{i} \cap \clChar{j} \cap \clBoth{0}{1}) \setminus \clChar{j-1}$ and the other to $(\clD{i} \cap \clChar{j} \cap \clBoth{1}{1}) \setminus \clChar{j-1}$.
Propositions~\ref{prop:gen:DiXjC0E0}\ref{gen:DiXjC0E0} and \ref{prop:gen:DiXjC0E1}\ref{gen:DiXjC0E1} imply that $\gen{f, g, h, h'}$ contains a generating set for both $\clD{i} \cap \clChar{j} \cap \clBoth{0}{1}$ and $\clD{i} \cap \clChar{j} \cap \clBoth{1}{1}$.
Therefore
\[
\clD{i} \cap \clChar{j} \cap \clYksi{1}
= (\clD{i} \cap \clChar{j} \cap \clBoth{0}{1}) \cup (\clD{i} \cap \clChar{j} \cap \clBoth{1}{1})
\subseteq \gen{f, g, h, h'}
\subseteq \clD{i} \cap \clChar{j} \cap \clYksi{1}.
\qedhere
\]
\end{proof}

\begin{proposition}
\label{prop:gen:DiXjPa}
Let $\QuantifyParRel$.
\begin{enumerate}[label=\upshape{(\roman*)}, leftmargin=*, widest=iii]
\item\label{gen:DkX1Pa}
Let $k \in \IN_{+}$. For any $f, h, h' \in \clD{k} \cap \clChar{1} \cap \clParity{a}$ with $f \notin \clD{k-1}$, $h \notin \clCon{0}$, $h' \notin \clCon{1}$, we have $\gen{f, h, h'} = \clD{k} \cap \clChar{1} \cap \clParity{a}$.
\item\label{gen:DkPa}
Let $k \in \IN_{+}$ with $k \geq 2$. For any $g, h, h' \in \clD{k} \cap \clParity{a}$ with $g \notin \clChar{k-1}$, $h \notin \clCon{0}$, $h' \notin \clCon{1}$, we have $\gen{g, h, h'} = \clD{k} \cap \clParity{a}$.
\item\label{gen:DiXjPa}
Let $i, j \in \IN$ with $i > j \geq 2$. For any $f, g, h, h' \in \clD{i} \cap \clChar{j} \cap \clParity{a}$ such that $f \notin \clD{i-1}$, $g \notin \clChar{j-1}$, $h \notin \clCon{0}$, $h' \notin \clCon{1}$, we have $\gen{f, g, h, h'} = \clD{i} \cap \clChar{j} \cap \clParity{a}$.
\end{enumerate}
\end{proposition}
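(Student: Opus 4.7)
The plan is to follow the same template as Propositions~\ref{prop:gen:DiXjCa} and \ref{prop:gen:DiXjEa}, reducing each statement to the corresponding $\clBoth{\cdot}{\cdot}$-cell statement. I would write out statement~\ref{gen:DiXjPa} in full; statements~\ref{gen:DkX1Pa} and \ref{gen:DkPa} follow from the same argument by omitting whichever of $f$ and $g$ is absent. The key observation is that $\clParity{a}$ is a union of two $\clBoth{\cdot}{\cdot}$ cells---$\clEven = \clBoth{0}{0} \cup \clBoth{1}{1}$ when $a = 0$, and $\clOdd = \clBoth{0}{1} \cup \clBoth{1}{0}$ when $a = 1$---and witnesses of the requisite degree and characteristic rank are needed in each cell.

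Since $\{\clCon{0}, \clCon{1}\}$ partitions $\clAll$, the hypotheses $h \notin \clCon{0}$ and $h' \notin \clCon{1}$ force $h \in \clCon{1}$ and $h' \in \clCon{0}$. Combined with $h, h' \in \clParity{a}$, this places $h$ and $h'$ in specific cells: in the even case, $h \in \clBoth{1}{1}$ and $h' \in \clBoth{0}{0}$; in the odd case, $h \in \clBoth{1}{0}$ and $h' \in \clBoth{0}{1}$. Identifying all arguments of $h$ and $h'$ yields, in the even case, the constant functions $1 \in \gen{h}$ and $0 \in \gen{h'}$, and, in the odd case, the unary functions $x_1 + 1 \in \gen{h}$ and $x_1 \in \gen{h'}$. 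Padding to arity $\arity{f}$ with fictitious arguments, the triple sum $f + 1 + 0$ (even case) or $f + (x_1 + 1) + x_1$ (odd case) produces $f + 1 \in \gen{f, h, h'}$; analogously $g + 1 \in \gen{g, h, h'}$.

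Adding the constant $1$ toggles only the $\emptyset$-monomial of the Zhegalkin polynomial, leaving every nonempty monomial intact. Hence $\deg(f + 1) = \deg(f) = i$, and since $(f + 1) + (f + 1)^\mathrm{n} = f + f^\mathrm{n}$, Lemma~\ref{lem:Xk-description} gives $\charrank{f + 1} = \charrank{f}$. Adding $1$ also flips both $f(0, \dots, 0)$ and $f(1, \dots, 1)$, placing $f + 1$ in the $\clBoth{\cdot}{\cdot}$ cell of $\clParity{a}$ complementary to the one containing $f$. Thus $\{f, f + 1\}$ contains a witness of degree exactly $i$ in each of the two cells of $\clParity{a}$, and $\{g, g + 1\}$ contains a witness of characteristic rank exactly $j$ in each cell.

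Finally, applying Proposition~\ref{prop:gen:DiXjC0E0}\ref{gen:DiXjC0E0} to each cell of $\clEven$ (in the even case), or Proposition~\ref{prop:gen:DiXjC0E1}\ref{gen:DiXjC0E1} to each cell of $\clOdd$ (in the odd case), we obtain a generating set for each cell inside $\gen{f, g, h, h'}$; their union is $\clD{i} \cap \clChar{j} \cap \clParity{a}$, as required. The only real subtlety lies in the odd case: since $1 \notin \clOdd$, the constant $1$ cannot be obtained directly as an element of $\gen{h, h'}$, and one must instead realize $f + 1$ as the triple sum $f + x_1 + (x_1 + 1)$, exploiting the pair of opposite unary minors provided by $h'$ and $h$.
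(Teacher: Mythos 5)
Your proposal is correct and follows essentially the same route as the paper's proof: split $\clParity{a}$ into its two $\clBoth{\cdot}{\cdot}$ cells, extract $\{1,0\}$ or $\{x_1+1,x_1\}$ as minors of $h$ and $h'$ according to the parity, use the triple sum to form $f+1$ and $g+1$, and invoke Propositions~\ref{prop:gen:DiXjC0E0} and \ref{prop:gen:DiXjC0E1} cell by cell. Your explicit checks that adding the constant $1$ preserves degree and characteristic rank are left implicit in the paper but are exactly the right justifications.
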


\begin{proof}
We consider only statement \ref{gen:DiXjPa}.
The proofs of statements \ref{gen:DkX1Pa} and \ref{gen:DkPa} are analogous; we just need to omit the parts of the proof that deal with the function $f$ or $g$, as the case may be, that does not appear in the statement.

Since $\{ \clCon{0}, \clCon{1} \}$ is a partition of $\clAll$, we have that $h \in \clCon{1}$ and $h' \in \clCon{0}$.
By identifying all arguments, we get $1 \in \gen{h}$ and $0 \in \gen{h'}$ if $\mathord{\ParRel}$ is $\mathord{=}$; or $x_1 + 1 \in \gen{h}$ and $x_1 \in \gen{h'}$ if $\mathord{\ParRel}$ is $\mathord{\neq}$.
With the triple sum and these two minors of $h$ and $h'$ we are able to negate functions ($\varphi + 1 = \varphi + 1 + 0$ and $\varphi + 1 = \varphi + (x_1 + 1) + x_1$); hence $f + 1 \in \gen{f, h, h'}$ and $g + 1 \in \gen{g, h, h'}$.
One of the functions $f$ and $f + 1$ belongs to $(\clD{i} \cap \clChar{j} \cap \clCon{0} \cap \clParity{a}) \setminus \clD{i-1}$ and the other to $(\clD{i} \cap \clChar{j} \cap \clCon{1} \cap \clParity{a}) \setminus \clD{i-1}$,
and, similarly,
one of the functions $g$ and $g + 1$ belongs to $(\clD{i} \cap \clChar{j} \cap \clCon{0} \cap \clParity{a}) \setminus \clChar{j-1}$ and the other to $(\clD{i} \cap \clChar{j} \cap \clCon{1} \cap \clParity{a}) \setminus \clChar{j-1}$,
Propositions~\ref{prop:gen:DiXjC0E0}\ref{gen:DiXjC0E0} and \ref{prop:gen:DiXjC0E1}\ref{gen:DiXjC0E1} imply that $\gen{f, g, h, h'}$ contains a generating set for both $\clD{i} \cap \clChar{j} \cap \clCon{0} \cap \clParity{a}$ and $\clD{i} \cap \clChar{j} \cap \clCon{1} \cap \clParity{a}$.
Therefore
\begin{align*}
\clD{i} \cap \clChar{j} \cap \clParity{a}
&
= (\clD{i} \cap \clChar{j} \cap \clCon{0} \cap \clParity{a}) \cup (\clD{i} \cap \clChar{j} \cap \clCon{1} \cap \clParity{a})
\\ &
\subseteq \gen{f, g, h, h'}
\subseteq \clD{i} \cap \clChar{j} \cap \clParity{a}.
\qedhere
\end{align*}
\end{proof}

\begin{proposition}
\label{prop:gen:C}
Let $C \in \{\clCon{0}, \clCon{1}, \clYksi{0}, \clYksi{1}, \clEven, \clOdd\}$, and let
\[
(K_1, K_2) :=
\begin{cases}
(\clYksi{0}, \clYksi{1}), & \text{if $C \in \{\clCon{0}, \clCon{1}\}$,} \\
(\clCon{0}, \clCon{1}), & \text{if $C \in \{\clYksi{0}, \clYksi{1}, \clEven, \clOdd\}$.}
\end{cases}
\]
\begin{enumerate}[label=\upshape{(\roman*)}, leftmargin=*, widest=iii]
\item\label{gen:X1C}
For any $f_i \in (\clChar{1} \cap C) \setminus \clD{i}$ \textup{(}$i \in \IN_{+}$\textup{)}, $h_1 \in (\clChar{1} \cap C) \setminus K_1$, $h_2 \in (\clChar{1} \cap C) \setminus K_2$, we have $\gen{\{ \, f_i \mid i \in \IN_{+} \, \} \cup \{h_1, h_2\}} = \clChar{1} \cap C$.
\item\label{gen:XkC}
Let $k \in \IN_{+}$ with $k \geq 2$. For any $f_i \in (\clChar{k} \cap C) \setminus \clD{i}$ \textup{(}$i \in \IN_{+}$\textup{)}, $g \in (\clChar{k} \cap C) \setminus \clChar{k-1}$, $h_1 \in (\clChar{1} \cap C) \setminus K_1$, $h_2 \in (\clChar{1} \cap C) \setminus K_2$, we have $\gen{\{ \, f_i \mid i \in \IN_{+} \, \} \cup \{g, h_1, h_2\}} = \clChar{k} \cap C$.
\item\label{gen:C}
For $g_i \in C \setminus \clChar{i}$ \textup{(}$i \in \IN_{+}$\textup{)}, $h_1 \in C \setminus K_1$, $h_2 \in C \setminus K_2$, we have $\langle \{\, g_i \mid i \in \IN_{+} \,\} \cup \linebreak \{h_1, h_2\} \rangle_{\clLc} = C$.
\end{enumerate}
\end{proposition}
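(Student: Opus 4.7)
The plan is to reduce each of the three parts to the corresponding part of Proposition~\ref{prop:gen:CaEb}, in the spirit of the proofs of Propositions~\ref{prop:gen:DiXjCa}, \ref{prop:gen:DiXjEa}, and \ref{prop:gen:DiXjPa}. First I would observe that in every case the class $C$ decomposes as a disjoint union of two classes of the form $\clBoth{a}{b}$: namely, $\clCon{a} = \clBoth{a}{0} \cup \clBoth{a}{1}$, $\clYksi{b} = \clBoth{0}{b} \cup \clBoth{1}{b}$, $\clEven = \clBoth{0}{0} \cup \clBoth{1}{1}$, and $\clOdd = \clBoth{0}{1} \cup \clBoth{1}{0}$. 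Since $K_1 \cup K_2 = \clAll$, the hypotheses $h_1 \in C \setminus K_1$ and $h_2 \in C \setminus K_2$ force $h_1$ and $h_2$ into distinct blocks of this partition.

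Next, I would identify all arguments of $h_1$ and $h_2$ to obtain unary minors $h_1^\ast, h_2^\ast \in \{0, 1, x_1, x_1+1\}$ whose precise values are determined by the blocks containing $h_1$ and $h_2$. A routine case-by-case inspection shows that, for any $\varphi \in C$, the triple sum $\oplus_3(\varphi, \widetilde{h_1^\ast}, \widetilde{h_2^\ast})$, where $\widetilde{h_j^\ast}$ denotes $h_j^\ast$ padded with fictitious arguments to match the arity of $\varphi$, equals $\varphi + \psi$ with $\psi := h_1^\ast + h_2^\ast$ depending only on $C$: explicitly, $\psi = x_1$ when $C = \clCon{a}$, $\psi = x_1+1$ when $C = \clYksi{b}$, and $\psi = 1$ when $C \in \{\clEven, \clOdd\}$. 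In each case $\varphi + \psi$ remains in $C$ but switches to the opposite block of the partition; and $\varphi + \psi$ lies in $\gen{\{\varphi, h_1, h_2\}}$ as an odd sum of minors of the generators (Lemma~\ref{lem:Lc-sum-of-minors}).

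Consequently, for each $f_i$ in parts (i) and (ii) (resp.\ each $g_i$ in part (iii)), exactly one of $\{f_i, f_i + \psi\}$ (resp.\ $\{g_i, g_i + \psi\}$) lies in each of the two $\clBoth{a}{b}$-blocks of $C$. Since $\psi \in \clD{1} \subseteq \clChar{1}$, adding $\psi$ preserves the conditions $f_i \notin \clD{i}$ (as $i \geq 1$ forces $\deg(f_i) \geq 2$), $g \notin \clChar{k-1}$ (as $k \geq 2$), and $g_i \notin \clChar{i}$ (as $i \geq 1$): indeed, e.g.\ if $f_i + \psi$ were in $\clD{i}$ then so would $f_i = (f_i + \psi) + \psi + 0$ be, a contradiction. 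Hence in each of the two blocks the produced collection satisfies the hypotheses of the appropriate part of Proposition~\ref{prop:gen:CaEb}.

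Applying Proposition~\ref{prop:gen:CaEb} to the two blocks separately then shows that $\gen{\{f_i \mid i \in \IN_{+}\} \cup \{h_1, h_2\}}$ (or the analogous generating set in parts (ii) and (iii)) contains each $\clBoth{a}{b}$-intersection of the target class, hence their union, which is the target class itself. The reverse inclusion is immediate from Proposition~\ref{prop:Lc-sufficiency}, since every target class is $\clLc$-stable. The only real work lies in the case-by-case tabulation of $h_1^\ast$, $h_2^\ast$, and $\psi$ across the six choices of $C$, together with the verification that the block-flipping property of $\varphi \mapsto \varphi + \psi$ holds as claimed; this is entirely routine but must be carried out explicitly.
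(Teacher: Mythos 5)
Your proof is correct, but it arranges the reduction differently from the paper. The paper's proof of this proposition decomposes $\clChar{k} \cap C$ (resp.\ $C$) as an increasing union of the bounded\hyp{}degree classes $\clD{n} \cap \clChar{k} \cap C$ and invokes Propositions~\ref{prop:gen:DiXjCa}, \ref{prop:gen:DiXjEa}, \ref{prop:gen:DiXjPa} on each layer; those propositions already contain the splitting of $C$ into its two $\clBoth{a}{b}$\hyp{}blocks, so $h_1$ and $h_2$ enter only through their unary minors $\eta_1, \eta_2$, which are passed along unchanged. You swap the order of the two decompositions: you perform the block splitting at the top level, using $h_1, h_2$ to manufacture the shift $\psi \in \{x_1, x_1 + 1, 1\}$ and transport each generator into whichever block is required, and then invoke Proposition~\ref{prop:gen:CaEb}, which already contains the union over degrees. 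Both routes rest on the same underlying machinery and are of comparable length; yours has the mild advantage of calling a single earlier proposition uniformly for all six choices of $C$, at the cost of the explicit tabulation of $\psi$ and the check that adding $\psi$ preserves the exclusions $f_i \notin \clD{i}$, $g \notin \clChar{k-1}$, $g_i \notin \clChar{i}$ --- which you correctly supply via the sum\hyp{}closure of $\clD{i}$ and $\clChar{j}$ (Lemmata~\ref{lem:Dk-closed} and \ref{lem:Xk-closed}) together with $\psi \in \clD{1} \subseteq \clChar{1}$. One cosmetic remark: the parenthetical ``$i \geq 1$ forces $\deg(f_i) \geq 2$'' is not the reason the condition is preserved; the sum\hyp{}closure argument you give immediately afterwards is the one that matters, and it also covers the case of $\clChar{j}$ where no degree argument is available.
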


\begin{proof}
\ref{gen:X1C}
For $i \in \IN_{+}$, let $f_i \in (\clChar{1} \cap C) \setminus \clD{i}$, $h_1 \in (\clChar{1} \cap C) \setminus K_1$, $h_2 \in (\clChar{1} \cap C) \setminus K_2$
and let $n_i := \deg(f_i)$; we have $n_i > i$.
By identifying all arguments of $h_1$ and $h_2$, we get minors $\eta_1 \in (\clD{1} \cap \clChar{1} \cap C) \setminus K_1$, $\eta_2 \in (\clD{1} \cap \clChar{1} \cap C) \setminus K_2$.
Since $f_i \in (\clD{n_i} \cap \clChar{1} \cap C) \setminus \clD{n_i - 1}$, it follows from Propositions~\ref{prop:gen:DiXjCa}\ref{gen:DkX1Ca}, \ref{prop:gen:DiXjEa}\ref{gen:DkX1Ea}, and \ref{prop:gen:DiXjPa}\ref{gen:DkX1Pa} that $\gen{f_i, \eta_1, \eta_2} = \clD{n_i} \cap \clChar{1} \cap C$ for any $i \in \IN_{+}$.
Therefore
\begin{align*}
\clChar{1} \cap C
&
= \bigcup_{i \in \IN_{+}} (\clD{i} \cap \clChar{1} \cap C)
\subseteq \bigcup_{i \in \IN_{+}} (\clD{n_i} \cap \clChar{1} \cap C)
\\ &
= \bigcup_{i \in \IN_{+}} \gen{f_i, \eta_1, \eta_2}
\subseteq \gen{\{ \, f_i \mid i \in \IN_{+} \, \} \cup \{h_1, h_2\}}
\subseteq \clChar{1} \cap C.
\end{align*}

\ref{gen:XkC}
For $i \in \IN_{+}$, let $f_i \in (\clChar{k} \cap C) \setminus \clD{i}$, $g \in (\clChar{k} \cap C) \setminus \clChar{k-1}$, $h_1 \in (\clChar{1} \cap C) \setminus K_1$, $h_2 \in (\clChar{1} \cap C) \setminus K_2$,
and let $n_i := \deg(f_i)$; we have $n_i > i$.
By Lemma~\ref{lem:arity-degree}\ref{lem:arity-degree:minor1}, $g$ has a $k$\hyp{}ary minor $\gamma$ of degree $k$ such that $\gamma \in (\clD{k} \cap \clChar{k} \cap C) \setminus \clChar{k - 1}$.
By identifying all arguments of $h_1$ and $h_2$, we get minors $\eta_1 \in (\clD{1} \cap \clChar{k} \cap C) \setminus K_1$, $\eta_2 \in (\clD{1} \cap \clChar{k} \cap C) \setminus K_2$.
By Propositions~\ref{prop:gen:DiXjCa}\ref{gen:DiXjCa}, \ref{prop:gen:DiXjEa}\ref{gen:DiXjEa}, and \ref{prop:gen:DiXjPa}\ref{gen:DiXjPa} it holds that $\gen{f_i, g, \eta_1, \eta_2} = \clD{n_i} \cap \clChar{k} \cap C$ whenever $n_i \geq k$ (this certainly holds whenever $i \geq k$).
Therefore
\begin{align*}
\clChar{k} \cap C
&
= \bigcup_{i \in \IN_{+}} (\clD{i} \cap \clChar{k} \cap C)
= \bigcup_{i \geq k} (\clD{i} \cap \clChar{k} \cap C)
\subseteq \bigcup_{i \geq k} (\clD{n_i} \cap \clChar{k} \cap C)
\\ &
= \bigcup_{i \geq k} \gen{f_i, g, \eta_1, \eta_2}
\subseteq \gen{\{ \, f_i \mid i \in \IN_{+} \, \} \cup \{g, h_1, h_2\}}
\subseteq \clChar{k} \cap C.
\end{align*}

\ref{gen:C}
For $i \in \IN_{+}$, let $g_i \in C \setminus \clChar{i}$, $h_1 \in C \setminus K_1$, $h_2 \in C \setminus K_2$,
and let $k_i := \charrank{g_i}$.
Then $g_i \in (\clChar{k_i} \cap C) \setminus \clChar{k_i - 1}$.
By Lemma~\ref{lem:arity-degree}\ref{lem:arity-degree:minor1}, $g_i$ has a $k_i$\hyp{}ary minor $\gamma_i$ of degree $k_i$ such that $\gamma_i \in (\clD{k_i} \cap C) \setminus \clChar{k_i - 1}$.
By identifying all arguments of $h_1$ and $h_2$, we get the minors $\eta_1 \in (\clD{1} \cap C) \setminus K_1$, $\eta_2 \in (\clD{1} \cap C) \setminus K_2$.
By Propositions~\ref{prop:gen:DiXjCa}\ref{gen:DkCa}, \ref{prop:gen:DiXjEa}\ref{gen:DkEa}, and \ref{prop:gen:DiXjPa}\ref{gen:DkPa} it holds that $\gen{\gamma_i, \eta_1, \eta_2} = \clD{k_i} \cap C$ for any $i \in \IN_{+}$.
Therefore
\begin{align*}
C
&
= \bigcup_{i \in \IN_{+}} (\clD{i} \cap C)
\subseteq \bigcup_{i \in \IN_{+}} (\clD{k_i} \cap C)
= \bigcup_{i \in \IN_{+}} \gen{\gamma_i, \eta_1, \eta_2}
\\ &
\subseteq \bigcup_{i \in \IN_{+}} \gen{g_i, h_1, h_2}
\subseteq \gen{\{ \, g_i \mid i \in \IN_{+} \, \} \cup \{h_1, h_2\}}
\subseteq C.
\qedhere
\end{align*}
\end{proof}

\begin{lemma}
\label{lem:h1...h6}
For any $h_1 \in \clCon{0}$, $h_2 \in \clCon{1}$, $h_3 \in \clYksi{0}$, $h_4 \in \clYksi{1}$, $h_5 \in \clEven$, $h_6 \in \clOdd$, 
we have $\clD{1} \subseteq \gen{h_1, h_2, h_3, h_4, h_5, h_6}$.
\end{lemma}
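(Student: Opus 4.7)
The plan is to show that the three functions $0$, $1$, and $x_1$ all lie in $K := \gen{h_1, h_2, h_3, h_4, h_5, h_6}$; once that is done, every linear function $f = x_{i_1} + \cdots + x_{i_k} + c \in \clD{1}$ can be written as a sum of $k$ minors of $x_1$ together with $c$ copies of $1$, padded if necessary by one extra copy of $\cf{m}{0}$ so that the total number of summands is odd, and Lemma~\ref{lem:Lc-sum-of-minors} will place $f$ in $\gen{0, 1, x_1} \subseteq K$.

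For the first step, I would form the unary minors $\varphi_i := h_i(x_1, \dots, x_1) \in K$. Each $\varphi_i$ inherits the class membership of $h_i$, pinning it down to one of two unary Boolean functions: $\varphi_1 \in \{0, x_1\}$, $\varphi_2 \in \{1, x_1+1\}$, $\varphi_3 \in \{0, x_1+1\}$, $\varphi_4 \in \{1, x_1\}$, $\varphi_5 \in \{0, 1\}$, and $\varphi_6 \in \{x_1, x_1+1\}$. By Lemma~\ref{lem:Lc-simplify}\ref{lem:Lc-simplify:Lc}, the unary part $K^{(1)} := K \cap \clAll^{(1)}$ is closed under triple sums. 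Identifying the four unary Boolean functions with the group $\mathbb{F}_2^2$, every nonempty subset closed under triple sums is a coset of a subgroup, and therefore $\card{K^{(1)}} \in \{1, 2, 4\}$.

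The heart of the argument is to rule out $\card{K^{(1)}} \leq 2$. The singleton case is impossible, since $\varphi_5$ is constant while $\varphi_6$ is not. The six constraint pairs displayed above are precisely the six affine lines of $\mathbb{F}_2^2$, and they partition into three pairs of parallel (hence disjoint) lines: $\{0, 1\}$ with $\{x_1, x_1+1\}$, $\{0, x_1\}$ with $\{1, x_1+1\}$, and $\{0, x_1+1\}$ with $\{1, x_1\}$. If $K^{(1)}$ coincided with any one of these six lines $L$, then the $\varphi_i$ prescribed to lie in the parallel partner of $L$ would belong simultaneously to $L$ and to a set disjoint from $L$, which is impossible. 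Hence $K^{(1)} = \{0, 1, x_1, x_1+1\}$, and in particular $\{0, 1, x_1\} \subseteq K$.

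The only delicate point is the parallel-pair bookkeeping above; one could equally well substitute a direct case analysis on the four possible values of $(\varphi_5, \varphi_6)$, which in each case produces any element of $\{0, 1, x_1\}$ missing from $\{\varphi_5, \varphi_6\}$ as a single triple sum of three of the $\varphi_i$'s.
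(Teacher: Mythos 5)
Your proposal is correct and follows essentially the same route as the paper's proof: identify all arguments to obtain unary minors $\varphi_i$ constrained to the six two\hyp{}element subsets of $\{0, 1, x_1, x_1+1\}$, use the disjointness of complementary pairs to force enough distinct elements into the generated class, and then produce all of $\clD{1}$ via odd sums of minors. Your coset\hyp{}of\hyp{}$\mathbb{F}_2^2$ packaging of the middle step is a mild reformulation of the paper's observation that any transversal of the six pairs contains at least three elements of $\{0,1,x_1,x_1+1\}$, each of which is the triple sum of the other three.
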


\begin{proof}
By identifying all arguments, we see that
\begin{equation}
\begin{array}{@{\text{either} \quad}c@{\quad \text{or} \quad}c@{\quad \text{is in} \quad}c}
0 & x_1 & \gen{h_1}, \\
1 & x_1 + 1 & \gen{h_2}, \\
0 & x_1 + 1 & \gen{h_3}, \\
1 & x_1 & \gen{h_4}, \\
0 & 1 & \gen{h_5}, \\
x_1 & x_1 + 1 & \gen{h_6}. \\
\end{array}
\label{eq:h1...h6}
\end{equation}
Let $G := \{ 0, 1, x_1, x_1 + 1 \}$.
Clearly $\gen{G} = \clD{1}$.
Any three\hyp{}element subset of $G$ also generates $\clD{1}$ because each element of $G$ is the sum of the other three elements.
Any choice of functions from the six pairs in \eqref{eq:h1...h6} includes at least three different elements of $G$, so we conclude that $\clD{1} \subseteq \gen{h_1, h_2, h_3, h_4, h_5, h_6}$.
\end{proof}

\begin{proposition}
\label{prop:gen:DiXj}
\leavevmode
\begin{enumerate}[label=\upshape{(\roman*)}, leftmargin=*, widest=iii]
\item\label{gen:DkX1}
Let $k \in \IN_{+}$. For any $f, h_1, h_2, h_3, h_4, h_5, h_6 \in \clD{k} \cap \clChar{1}$ with $f \notin \clD{k-1}$, $h_1 \notin \clCon{0}$, $h_2 \notin \clCon{1}$, $h_3 \notin \clYksi{0}$, $h_4 \notin \clYksi{1}$, $h_5 \notin \clEven$, $h_6 \notin \clOdd$, we have $\gen{f, h_1, h_2, h_3, h_4, h_5, h_6} = \clD{k} \cap \clChar{1}$.
\item\label{gen:Dk}
Let $k \in \IN_{+}$ with $k \geq 2$. For any $g, h_1, h_2, h_3, h_4, h_5, h_6 \in \clD{k}$ with $g \notin \clChar{k-1}$, $h_1 \notin \clCon{0}$, $h_2 \notin \clCon{1}$, $h_3 \notin \clYksi{0}$, $h_4 \notin \clYksi{1}$, $h_5 \notin \clEven$, $h_6 \notin \clOdd$, we have $\gen{g, h_1, h_2, h_3, h_4, h_5, h_6} = \clD{k}$.
\item\label{gen:DiXj}
Let $i, j \in \IN$ with $i > j \geq 2$. For any functions $f, g, h_1, h_2, h_3, h_4, h_5, h_6 \in \clD{i} \cap \clChar{j}$ with $f \notin \clD{i-1}$, $g \notin \clChar{j-1}$, $h_1 \notin \clCon{0}$, $h_2 \notin \clCon{1}$, $h_3 \notin \clYksi{0}$, $h_4 \notin \clYksi{1}$, $h_5 \notin \clEven$, $h_6 \notin \clOdd$, we have $\gen{f, g, h_1, h_2, h_3, h_4, h_5, h_6} = \clD{i} \cap \clChar{j}$.
\end{enumerate}
\end{proposition}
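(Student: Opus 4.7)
The plan is to reduce each of the three claims to a four\hyp{}fold application of Propositions~\ref{prop:gen:DiXjC0E0} and \ref{prop:gen:DiXjC0E1}, one per class $\clBoth{a}{b}$, by first exploiting the six ``parity/preservation'' witnesses $h_1, \dots, h_6$ to extract all linear functions, and then using these linear functions as additive shifts to move $f$ (resp.\ $g$) into each of the four $\clBoth{a}{b}$\hyp{}quadrants without losing the essential degree or characteristic\hyp{}rank information.

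The first step is to invoke Lemma~\ref{lem:h1...h6}, which tells us that $\clD{1} \subseteq \gen{h_1, \dots, h_6}$; in particular $0, 1, x_1, x_1 + 1 \in \gen{f, g, h_1, \dots, h_6}$. The second step is to observe that for each $(a,b) \in \{0,1\}^2$ there is a function $\ell_{ab} \in \{0, 1, x_1, x_1 + 1\} \subseteq \clD{1}$ such that $f + \ell_{ab} \in \clBoth{a}{b}$, and similarly $g + \ell'_{ab} \in \clBoth{a}{b}$; concretely, one picks $\ell_{ab}$ so as to match the prescribed values at $\vect{0}$ and $\vect{1}$, and then $f_{ab} := f + \ell_{ab}$ and $g_{ab} := g + \ell'_{ab}$ lie in $\gen{f, g, h_1, \dots, h_6}$ (use the triple\hyp{}sum closure with $0$ as the third summand when needed).

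The crucial verification is that adding a function of degree at most $1$ preserves both $\deg$ and $\charrank$ in the regime we care about: for a linear $\ell$, one has $\ell + \ell^\mathrm{n} \in \clD{0}$, so by Lemma~\ref{lem:Xk-description} the polynomial $(g + \ell) + (g + \ell)^\mathrm{n}$ differs from $g + g^\mathrm{n}$ by a constant, and hence has the same degree as long as $\deg(g + g^\mathrm{n}) \geq 1$, i.e.\ $\charrank{g} \geq 2$. Dually, $\deg(f + \ell) = \deg(f)$ whenever $\deg(f) \geq 2$. In case (iii) we have $\deg(f) \geq i \geq 3$ and $\charrank{g} \geq j \geq 2$, so each $f_{ab}$ stays outside $\clD{i-1}$ and each $g_{ab}$ stays outside $\clChar{j-1}$. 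In case (ii) we work with $g$ alone and use $\charrank{g} \geq k \geq 2$. In case (i) with $k \geq 2$ we work with $f$ alone and use $\deg(f) \geq k \geq 2$, while for $k = 1$ the conclusion $\clD{1} \cap \clChar{1} = \clD{1}$ already follows directly from Lemma~\ref{lem:h1...h6} since $\clD{1} \subseteq \clChar{1}$.

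Finally, with the four pairs $(f_{ab}, g_{ab})$ (or the four $f_{ab}$'s alone in (i) and (ii)) at hand, we apply Proposition~\ref{prop:gen:DiXjC0E0}\ref{gen:DiXjC0E0} when $a = b$ and Proposition~\ref{prop:gen:DiXjC0E1}\ref{gen:DiXjC0E1} when $a \neq b$ (and analogously parts \ref{gen:DkX1C0E0}, \ref{gen:DkX1C0E1}, \ref{gen:DkC0E0}, \ref{gen:DkC0E1} for cases (i) and (ii)) to conclude
\[
\clD{i} \cap \clChar{j} \cap \clBoth{a}{b} \;=\; \gen{f_{ab}, g_{ab}} \;\subseteq\; \gen{f, g, h_1, \dots, h_6}
\]
for each $(a,b) \in \{0,1\}^2$. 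Taking the union over the four quadrants partitions $\clD{i} \cap \clChar{j}$, yielding the desired inclusion; the reverse inclusion is trivial because $\clD{i} \cap \clChar{j}$ is $\clLc$\hyp{}stable and contains all the generators. The main obstacle is the degree/characteristic\hyp{}rank preservation argument in the shifting step, which is precisely where the hypotheses $k \geq 2$ in (ii) and $j \geq 2$ in (iii) (and the separate handling of $k = 1$ in (i)) become indispensable.
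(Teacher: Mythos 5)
Your proof is correct, but it takes a more direct decomposition than the paper's. The paper splits $\clD{i} \cap \clChar{j}$ only into its $\clEven$ and $\clOdd$ parts, shifts $f$ and $g$ by the single linear function $x_1 + 1$ (obtained from the $h$'s via Lemma~\ref{lem:h1...h6}), and then delegates to Proposition~\ref{prop:gen:DiXjPa}, which in turn performs the further split into the quadrants $\clBoth{a}{b}$ before invoking Propositions~\ref{prop:gen:DiXjC0E0} and~\ref{prop:gen:DiXjC0E1}. You flatten this two-level reduction into one step: you split directly into the four quadrants and shift $f$ and $g$ by each of $0$, $1$, $x_1$, $x_1 + 1$. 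Both arguments rest on the same ingredients, namely Lemma~\ref{lem:h1...h6} and the quadrant generation results; what your version buys is that Propositions~\ref{prop:gen:DiXjCa}, \ref{prop:gen:DiXjEa}, and~\ref{prop:gen:DiXjPa} are bypassed entirely, at the cost of checking four shifts instead of one. A genuine merit of your write-up is that you make explicit the invariance of $\deg$ and of $\charrank{\cdot}$ under addition of a function of degree at most $1$ (via Lemma~\ref{lem:neg-pol}\ref{lem:neg-pol:inner-sum} and Lemma~\ref{lem:Xk-description}), a fact the paper uses silently when it asserts that one of $f$ and $f + x_1 + 1$ still lies outside $\clD{i-1}$; and your separate treatment of $k = 1$ in part~\ref{gen:DkX1}, where the shift can indeed destroy the degree and where the claim instead follows outright from $\clD{1} \cap \clChar{1} = \clD{1} \subseteq \gen{h_1, \dots, h_6}$, is exactly the right way to handle the one case in which that invariance fails.
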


\begin{proof}
We consider only statement \ref{gen:DiXj}.
The proofs of statements \ref{gen:DkX1} and \ref{gen:Dk} are analogous; we just need to omit the parts of the proof that deal with the function $f$ or $g$, as the case may be, that does not appear in the statement.

Since $\{ \clEven, \clOdd \}$, $\{ \clCon{0}, \clCon{1} \}$, and $\{ \clYksi{0}, \clYksi{1} \}$ are partitions of $\clAll$, we have that $h_1 \in \clCon{1}$, $h_2 \in \clCon{0}$, $h_3 \in \clYksi{1}$, $h_4 \in \clYksi{0}$, $h_5 \in \clOdd$, and $h_6 \in \clEven$.
By Lemma~\ref{lem:h1...h6}, we have $\clD{1} \subseteq \gen{h_1, h_2, h_3, h_4, h_5, h_6}$.
Hence $f + x_1 + 1 \in \gen{f, h_1, h_2, h_3, h_4, h_5, h_6}$ and $g + x_1 + 1 \in \gen{g, h_1, h_2, h_3, h_4, h_5, h_6}$.
One of $f$ and $f + x_1 + 1$ belongs to $(\clD{i} \cap \clChar{j} \cap \clEven) \setminus \clD{i-1}$ and the other to $(\clD{i} \cap \clChar{j} \cap \clOdd) \setminus \clD{i-1}$,
and, similarly,
one of $g$ and $g + x_1 + 1$ belongs to $(\clD{i} \cap \clChar{j} \cap \clEven) \setminus \clChar{j-1}$ and the other to $(\clD{i} \cap \clChar{j} \cap \clOdd) \setminus \clChar{j-1}$.
Proposition~\ref{prop:gen:DiXjPa}\ref{gen:DiXjPa} implies that $\gen{f, g, h_1, h_2, h_3, h_4, h_5, h_6}$ contains a generating set for both $\clD{i} \cap \clChar{j} \cap \clEven$ and $\clD{i} \cap \clChar{j} \cap \clOdd$.
Therefore
\[
\clD{i} \cap \clChar{j}
= (\clD{i} \cap \clChar{j} \cap \clEven) \cup (\clD{i} \cap \clChar{j} \cap \clOdd)
\subseteq \gen{f, g, h_1, h_2, h_3, h_4, h_5, h_6}
\subseteq \clD{i} \cap \clChar{j}.
\qedhere
\]
\end{proof}

\begin{proposition}
\label{prop:gen:Xk}
\leavevmode
\begin{enumerate}[label=\upshape{(\roman*)}, leftmargin=*, widest=iii]
\item\label{gen:X1}
For any $f_i \in \clChar{1} \setminus \clD{i}$ \textup{(}$i \in \IN_{+}$\textup{)} and $h_1, h_2, h_3, h_4, h_5, h_6 \in \clChar{1}$ with $h_1 \notin \clCon{0}$, $h_2 \notin \clCon{1}$, $h_3 \notin \clYksi{0}$, $h_4 \notin \clYksi{1}$, $h_5 \notin \clEven$, $h_6 \notin \clOdd$, we have $\langle \{ \, f_i \mid i \in \IN_{+} \, \} \cup \{h_1, h_2, \linebreak h_3, h_4, h_5, h_6\} \rangle_{\clLc} = \clChar{1}$.
\item\label{gen:Xk}
Let $k \in \IN_{+}$, $k \geq 2$. For any $f_i \in \clChar{k} \setminus \clD{i}$ \textup{(}$i \in \IN_{+}$\textup{)}, $g \in \clChar{k} \setminus \clChar{k-1}$, and $h_1, h_2, h_3, h_4, h_5, h_6 \in \clChar{k}$ such that $h_1 \notin \clCon{0}$, $h_2 \notin \clCon{1}$, $h_3 \notin \clYksi{0}$, $h_4 \notin \clYksi{1}$, $h_5 \notin \clEven$, $h_6 \notin \clOdd$, we have $\gen{\{ \, f_i \mid i \in \IN_{+} \, \} \cup \{g, h_1, h_2, h_3, h_4, h_5, h_6\}} = \clChar{k}$.
\item\label{gen:All}
For any $g_i \in \clAll \setminus \clChar{i}$ \textup{(}$i \in \IN_{+}$\textup{)} and $h_1, h_2, h_3, h_4, h_5, h_6 \in \clAll$ with $h_1 \notin \clCon{0}$, $h_2 \notin \clCon{1}$, $h_3 \notin \clYksi{0}$, $h_4 \notin \clYksi{1}$, $h_5 \notin \clEven$, $h_6 \notin \clOdd$, we have $\langle \{\, g_i \mid i \in \IN_{+} \,\} \cup \{h_1, h_2, \linebreak h_3, h_4, h_5, h_6\} \rangle = \clAll$.
\end{enumerate}
\end{proposition}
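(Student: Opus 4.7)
The plan is to mirror the strategy of Proposition~\ref{prop:gen:DiXj}: first use the six functions $h_1, \ldots, h_6$ to pull $\clD{1}$ into the closure, then feed suitable elements of $\clD{1}$ back as the auxiliary $h$\hyp{}functions required to invoke the previously established statements. The starting observation is that, after re\hyp{}labeling the $h_j$'s to match the hypothesis of Lemma~\ref{lem:h1...h6} (the non\hyp{}membership conditions here are the membership conditions there, applied to the complementary classes of the binary partitions $\{\clCon{0}, \clCon{1}\}$, $\{\clYksi{0}, \clYksi{1}\}$, $\{\clEven, \clOdd\}$), we obtain $\clD{1} \subseteq \gen{h_1, \ldots, h_6}$. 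In particular, the constants $0, 1$ and the functions $x_1, x_1 + 1$ all lie in the closure; each has characteristic rank at most $1$, hence lies in $\clChar{1} \cap \clD{1} \subseteq \clChar{k} \cap \clD{k}$ for every $k \geq 1$. A direct check shows that the explicit list $(1, 0, 1, 0, x_1, 0)$ simultaneously satisfies the six non\hyp{}membership conditions $(\notin \clCon{0}, \notin \clCon{1}, \notin \clYksi{0}, \notin \clYksi{1}, \notin \clEven, \notin \clOdd)$ needed by Proposition~\ref{prop:gen:DiXj}.

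For \ref{gen:X1}, set $n_i := \deg(f_i)$; the hypothesis $f_i \notin \clD{i}$ forces $n_i > i$. For each $i \in \IN_{+}$, applying Proposition~\ref{prop:gen:DiXj}\ref{gen:DkX1} to $f_i$ together with the six auxiliary $h$\hyp{}functions described above (all lying in $\clD{n_i} \cap \clChar{1}$) yields $\clD{n_i} \cap \clChar{1} \subseteq \gen{\,\cdot\,}$. Since the $n_i$ are unbounded and $\clChar{1} = \bigcup_i (\clD{i} \cap \clChar{1})$, the union exhausts $\clChar{1}$. The proof of \ref{gen:Xk} is analogous with one extra step: first apply Lemma~\ref{lem:arity-degree}\ref{lem:arity-degree:minor1} to $g$ to extract a $k$\hyp{}ary minor $\gamma \in \clD{k} \setminus \clChar{k-1}$, and then, for each $i \geq k$, apply Proposition~\ref{prop:gen:DiXj}\ref{gen:DiXj} to the triple $(f_i, \gamma, \text{auxiliary }h\text{'s})$ to obtain $\clD{n_i} \cap \clChar{k} \subseteq \gen{\,\cdot\,}$, whose union over $i \geq k$ is $\clChar{k}$.

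For \ref{gen:All}, no $f_i$'s are present, but the $g_i$'s themselves already provide functions of unbounded characteristic rank. Set $k_i := \charrank{g_i}$; the hypothesis $g_i \notin \clChar{i}$ gives $k_i > i \geq 1$, in particular $k_i \geq 2$. By Lemma~\ref{lem:arity-degree}\ref{lem:arity-degree:minor1}, $g_i$ has a $k_i$\hyp{}ary minor $\gamma_i \in \clD{k_i} \setminus \clChar{k_i - 1}$. Applying Proposition~\ref{prop:gen:DiXj}\ref{gen:Dk} with $k = k_i$, $g = \gamma_i$, and auxiliary $h$\hyp{}functions drawn from $\clD{1} \subseteq \clD{k_i}$ yields $\clD{k_i} \subseteq \gen{g_i, h_1, \ldots, h_6}$. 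Since $k_i \to \infty$ and every Boolean function has finite degree, $\clAll = \bigcup_{k \in \IN} \clD{k} = \bigcup_i \clD{k_i}$, giving the desired inclusion.

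There is no serious obstacle; the argument reduces to bookkeeping and the careful selection of auxiliary $h$\hyp{}functions from $\clD{1}$. The only point that merits verification is that these auxiliary functions lie simultaneously in the class required (either $\clChar{1}$ or $\clChar{k}$ for the relevant $k$) and violate the six partition conditions, both of which follow at once from the explicit list $\{0, 1, x_1, x_1 + 1\}$.
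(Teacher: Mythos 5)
Your proposal is correct and follows essentially the same route as the paper's own proof: invoke Lemma~\ref{lem:h1...h6} to get $\clD{1} \subseteq \gen{h_1,\dots,h_6}$, substitute the explicit functions $0, 1, x_1$ as the auxiliary witnesses for Proposition~\ref{prop:gen:DiXj} (the paper uses exactly the assignment $(1,0,1,0,x_1,0)$), extract the $k$-ary (resp.\ $k_i$-ary) minors of $g$ (resp.\ $g_i$) via Lemma~\ref{lem:arity-degree}\ref{lem:arity-degree:minor1}, and conclude by taking unions over the unbounded parameters $n_i$ (resp.\ $k_i$). Your explicit remark that $k_i > i \geq 1$ forces $k_i \geq 2$, so that Proposition~\ref{prop:gen:DiXj}\ref{gen:Dk} is applicable in part \ref{gen:All}, is a detail the paper leaves implicit.
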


\begin{proof}
Observe first that $0, 1, x_1 \in \clD{1} \subseteq \clChar{k}$ for any $k \in \IN_{+}$ and $1 \notin \clCon{0}$, $0 \notin \clCon{1}$, $1 \notin \clYksi{0}$, $0 \notin \clYksi{1}$, $x_1 \notin \clEven$, $0 \notin \clOdd$.

\ref{gen:X1}
For $i \in \IN_{+}$, let $f_i \in \clChar{1} \setminus \clD{i}$ and $h_1, h_2, h_3, h_4, h_5, h_6 \in \clChar{1}$ be such that $h_1 \notin \clCon{0}$, $h_2 \notin \clCon{1}$, $h_3 \notin \clYksi{0}$, $h_4 \notin \clYksi{1}$, $h_5 \notin \clEven$, $h_6 \notin \clOdd$,
and let $n_i := \deg(f_i)$; we have $n_i > i$.
Since $f_i \in (\clD{n_i} \cap \clChar{1}) \setminus \clD{n_i - 1}$, Proposition~\ref{prop:gen:DiXj}\ref{gen:DkX1} implies $\gen{f_i, 0, 1, x_1} = \clD{n_i} \cap \clChar{1}$ for any $i \in \IN_{+}$.
We have $\{ 0, 1, x_1 \} \subseteq \clD{1} \subseteq \gen{h_1, h_2, h_3, h_4, h_5, h_6}$ by Lemma~\ref{lem:h1...h6}.
Therefore
\begin{align*}
\clChar{1}
&
= \bigcup_{i \in \IN_{+}} (\clD{i} \cap \clChar{1})
\subseteq \bigcup_{i \in \IN_{+}} (\clD{n_i} \cap \clChar{1})
= \bigcup_{i \in \IN_{+}} \gen{f_i, 0, 1, x_1}
\\ &
\subseteq \gen{\{ \, f_i \mid i \in \IN_{+} \, \} \cup \{h_1, h_2, h_3, h_4, h_5, h_6\}}
\subseteq \clChar{1}.
\end{align*}

\ref{gen:Xk}
For $i \in \IN_{+}$, let $f_i \in \clChar{k} \setminus \clD{i}$, $g \in \clChar{k} \setminus \clChar{k-1}$, and $h_1, h_2, h_3, h_4, h_5, h_6 \in \clChar{k}$ such that $h_1 \notin \clCon{0}$, $h_2 \notin \clCon{1}$, $h_3 \notin \clYksi{0}$, $h_4 \notin \clYksi{1}$, $h_5 \notin \clEven$, $h_6 \notin \clOdd$,
and let $n_i := \deg(f_i)$; we have $n_i > i$.
By Lemma~\ref{lem:arity-degree}\ref{lem:arity-degree:minor1}, $g$ has a $k$\hyp{}ary minor $\gamma$ of degree $k$ such that $\gamma \in \clChar{k} \setminus \clChar{k - 1}$; hence $\gamma \in \clD{k} \setminus \clChar{k - 1}$.
By Proposition~\ref{prop:gen:DiXj}\ref{gen:DiXj}, it holds that $\gen{f_i, \gamma, 0, 1, x_1} = \clD{n_i} \cap \clChar{k}$ whenever $n_i \geq k$.
We have $\{ 0, 1, x_1 \} \subseteq \clD{1} \subseteq \gen{h_1, h_2, h_3, h_4, h_5, h_6}$ by Lemma~\ref{lem:h1...h6}.
Therefore
\begin{align*}
\clChar{k}
&
= \bigcup_{i \in \IN_{+}} (\clD{i} \cap \clChar{k})
= \bigcup_{i \geq k} (\clD{i} \cap \clChar{k})
\subseteq \bigcup_{i \geq k} (\clD{n_i} \cap \clChar{k})
= \bigcup_{i \geq k} \gen{f_i, \gamma, 0, 1, x_1}
\\ &
\subseteq \gen{\{ \, f_i \mid i \in \IN_{+} \, \} \cup \{g, h_1, h_2, h_3, h_4, h_5, h_6\}}
\subseteq \clChar{k}.
\end{align*}

\ref{gen:All}
For $i \in \IN_{+}$, let $g_i \in \clAll \setminus \clChar{i}$, and $h_1, h_2, h_3, h_4, h_5, h_6 \in \clChar{k}$ such that $h_1 \notin \clCon{0}$, $h_2 \notin \clCon{1}$, $h_3 \notin \clYksi{0}$, $h_4 \notin \clYksi{1}$, $h_5 \notin \clEven$, $h_6 \notin \clOdd$,
and let $k_i := \charrank{g_i}$.
Then $g_i \in \clChar{k_i} \setminus \clChar{k_i - 1}$.
By Lemma~\ref{lem:arity-degree}\ref{lem:arity-degree:minor1}, $g_i$ has a $k_i$\hyp{}ary minor $\gamma_i$ of degree $k_i$ such that $\gamma_i \in \clD{k_i} \setminus \clChar{k_i - 1}$.
By Proposition~\ref{prop:gen:DiXj}\ref{gen:Dk}, it holds that $\gen{\gamma_i, 0, 1, x_1} = \clD{k_i}$ for any $i \in \IN_{+}$.
We have $\{ 0, 1, x_1 \} \subseteq \clD{1} \subseteq \gen{h_1, h_2, h_3, h_4, h_5, h_6}$ by Lemma~\ref{lem:h1...h6}.
Therefore
\begin{align*}
\clAll
&
= \bigcup_{i \in \IN_{+}} \clD{i}
\subseteq \bigcup_{i \in \IN_{+}} \clD{k_i}
= \bigcup_{i \in \IN_{+}} \gen{\gamma_i, 0, 1, x_1}
\\ &
\subseteq \gen{\{ \, g_i \mid i \in \IN_{+} \, \} \cup \{h_1, h_2, h_3, h_4, h_5, h_6\}}
\subseteq \clAll.
\qedhere
\end{align*}
\end{proof}

\begin{proof}[Proof of Theorem~\ref{thm:Lc}]
By Lemma~\ref{lem:Lc-simplify}\ref{lem:Lc-simplify:Lc}, $\clLc$\hyp{}stability is equivalent to $(\clIc, \clLc)$\hyp{}stability.
The given classes are $\clLc$\hyp{}stable by Proposition~\ref{prop:Lc-sufficiency}.
The fact that there are no further $\clLc$\hyp{}stable classes distinct from these follows from
Propositions~\ref{prop:gen:D0}, \ref{prop:gen:DiXjC0E0}, \ref{prop:gen:DiXjC0E1}, \ref{prop:gen:CaEb}, \ref{prop:gen:DiXjCa}, \ref{prop:gen:DiXjEa}, \ref{prop:gen:DiXjPa}, \ref{prop:gen:C}, \ref{prop:gen:DiXj}, \ref{prop:gen:Xk}, in which we have shown that any set of Boolean functions generates one of the classes listed in the statement -- more precisely, that for each class $C$ and for any set $F \subseteq C$ that is not included in any proper subclass of $C$ it holds that $\gen{F} = C$.
\end{proof}


\section{$(C_1,C_2)$\hyp{}stable classes for $\clLc \subseteq C_2$}
\label{sec:C1C2}

Theorem~\ref{thm:Lc} allows us to describe also all $(C_1,C_2)$\hyp{}stable classes of Boolean functions for clones $C_1$ and $C_2$ such that $C_1$ is arbitrary and $\clLc \subseteq C_2$.
Namely, by Lemma~\ref{lem:Lc-simplify}, $\clLc$\hyp{}stability is equivalent to $(\clIc, \clLc)$\hyp{}stability.
Since $(C_1,C_2)$\hyp{}stability implies $(\clIc, \clLc)$\hyp{}stability whenever $\clLc \subseteq C_2$, it suffices to search for $(C_1,C_2)$\hyp{}stable classes among the $\clLc$\hyp{}stable ones.
To this end, we determine, for each $(\clIc, \clLc)$\hyp{}stable class $K$, the clones $C_1$ and $C_2$ for which it holds that $K C_1 \subseteq K$ and $C_2 K \subseteq K$.
The results are summarized in the following theorem which refers to Table~\ref{table:stability}.

\begin{table}
\begin{tabular}{clccl}
\toprule
    & & $K C \subseteq K$ & $C K \subseteq K$ & \\
$K$ & & if and only if & if and only if & \multicolumn{1}{c}{result} \\
    & & $C \subseteq \ldots$ & $C \subseteq \ldots$ & \\
\midrule
$\clAll$ & & $\clAll$ & $\clAll$ & Proposition~\ref{prop:all-empty} \\
$\clCon{a}$ & & $\clTo$ & $\clTa{a}$ & Proposition~\ref{prop:Ca} \\
$\clYksi{a}$ & & $\clTi$ & $\clTa{a}$ & Proposition~\ref{prop:Ca} \\
$\clEven$ & & $\clTc$ & $\clAll$ & Proposition~\ref{prop:odd-even} \\
$\clOdd$ & & $\clTc$ & $\clS$ & Proposition~\ref{prop:odd-even} \\
$\clBoth{a}{b}$ & & $\clTc$ & $\clTa{a} \cap \clTa{b}$ & Proposition~\ref{prop:CaEb} \\
\midrule
$\clChar{k}$ & $k \geq 2$ & $\clLS$ & $\clL$ & Proposition~\ref{prop:Xk} \\
& $k = 1$ & $\clS$ & $\clL$ & \\
$\clChar{k} \cap \clCon{a}$ & $k \geq 2$ & $\clLc$ & $\clLa{a}$ & Proposition~\ref{prop:XkCa} \\
& $k = 1$ & $\clSc$ & $\clLa{a}$ & \\
$\clChar{k} \cap \clYksi{a}$ & $k \geq 2$ & $\clLc$ & $\clLa{a}$ & Proposition~\ref{prop:XkEa} \\
& $k = 1$ & $\clSc$ & $\clLa{a}$ & \\
$\clChar{k} \cap \clEven$ & $k \geq 2$ & $\clLc$ & $\clL$ & Proposition~\ref{prop:XkP0} \\
& $k = 1$ & $\clS$ & $\clAll$ & \\
$\clChar{k} \cap \clOdd$ & $k \geq 2$ & $\clLc$ & $\clLS$ & Proposition~\ref{prop:XkP1} \\
& $k = 1$ & $\clS$ & $\clS$ & \\
$\clChar{k} \cap \clBoth{a}{b}$ & $k \geq 2$ & $\clLc$ & $\clLa{a} \cap \clLa{b}$ & Proposition~\ref{prop:XkCaEb} \\
& $k = 1$, $a = b$ & $\clSc$ & $\clTa{a}$ & \\
& $k = 1$, $a \neq b$ & $\clSc$ & $\clSc$ & \\
\midrule
$\clD{k}$ & & $\clL$ & $\clL$ & Proposition~\ref{prop:Dk} \\
$\clD{k} \cap \clCon{a}$ & & $\clLo$ & $\clLa{a}$ & Proposition~\ref{prop:DkCa} \\
$\clD{k} \cap \clYksi{a}$ & & $\clLi$ & $\clLa{a}$ & Proposition~\ref{prop:DkCa} \\
$\clD{k} \cap \clEven$ & $k \geq 2$ & $\clLc$ & $\clL$ & Proposition~\ref{prop:DkP0} \\
& $k = 1$ & $\clLS$ & $\clL$ & \\
$\clD{k} \cap \clOdd$ & $k \geq 2$ & $\clLc$ & $\clLS$ & Proposition~\ref{prop:DkP1} \\
& $k = 1$ & $\clLS$ & $\clLS$ & \\
$\clD{k} \cap \clBoth{a}{b}$ & & $\clLc$ & $\clLa{a} \cap \clLa{b}$ & Proposition~\ref{prop:DkCaEb} \\
\midrule
$\clD{i} \cap \clChar{j}$ & & $\clLS$ & $\clL$ & Proposition~\ref{prop:DiXj} \\
$\clD{i} \cap \clChar{j} \cap \clCon{a}$ & & $\clLc$ & $\clLa{a}$ & Proposition~\ref{prop:DiXjCa} \\
$\clD{i} \cap \clChar{j} \cap \clYksi{a}$ & & $\clLc$ & $\clLa{a}$ & Proposition~\ref{prop:DiXjCa} \\
$\clD{i} \cap \clChar{j} \cap \clEven$ & $j \geq 2$ & $\clLc$ & $\clL$ & Proposition~\ref{prop:DiXjP0} \\
& $j = 1$ & $\clLS$ & $\clL$ & \\
$\clD{i} \cap \clChar{j} \cap \clOdd$ & $j \geq 2$ & $\clLc$ & $\clLS$ & Proposition~\ref{prop:DiXjP1} \\
& $j = 1$ & $\clLS$ & $\clLS$ & \\
$\clD{i} \cap \clChar{j} \cap \clBoth{a}{b}$ & & $\clLc$ & $\clLa{a} \cap \clLa{b}$ & Proposition~\ref{prop:DiXjCaEb} \\
\midrule
$\clD{0}$ & & $\clAll$ & $\clAll$ & Proposition~\ref{prop:constants} \\
$\clD{0} \cap \clCon{a}$ & & $\clAll$ & $\clTa{a}$ & Proposition~\ref{prop:constants} \\
$\clEmpty$ & & $\clAll$ & $\clAll$ & Proposition~\ref{prop:all-empty} \\
\bottomrule
\end{tabular}

\bigskip
\caption{The $\clLc$\hyp{}stable classes $K$ and their stability under right and left compositions with clones $C$. Parameters: $a, b \in \{0,1\}$, $i, j, k \in \IN$ with $k \geq 1$, $i > j \geq 1$.}
\label{table:stability}
\end{table}

\begin{theorem}
\label{thm:C1C2-stability}
For each $\clLc$\hyp{}stable class $K$, as determined in Theorem~\ref{thm:Lc}, there exist clones $C_1^K$ and $C_2^K$, as prescribed in Table~\ref{table:stability}, such that for every clone $C$, it holds that $K C \subseteq K$ if and only if $C \subseteq C_1^K$, and $C K \subseteq K$ if and only if $C \subseteq C_2^K$.
\end{theorem}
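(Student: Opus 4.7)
The plan is to split the theorem into twenty-four propositions, one per row of Table~\ref{table:stability}, preceded by one general observation. The observation is that for any minor-closed $K \subseteq \clAll$ (which every $\clLc$-stable class is, since $\clIc \subseteq \clLc$), the collection $\{C : KC \subseteq K\}$ of clones is closed under arbitrary joins in the clone lattice: if $K C_\alpha \subseteq K$ for each $\alpha$ and each $C_\alpha$ is generated by $G_\alpha$, then $\bigcup_\alpha G_\alpha$ generates the join and satisfies the hypothesis of Lemma~\ref{lem:right-stab-gen}\ref{minorG}. Hence a unique maximum $C_1^K$ exists, and analogously (via Lemma~\ref{lem:left-stab-gen}) a maximum $C_2^K$ exists. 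The theorem is then the assertion that these maxima match the entries of the table.

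For each row, the verification has two directions. Forward: showing that $K$ is right- (resp.\ left-) stable under the proposed clone is a routine application of Lemmata~\ref{lem:right-stab-gen} and \ref{lem:left-stab-gen} to the explicit generating sets listed at the end of Section~\ref{sec:Bf}, reducing to computations with a handful of basic functions. Reverse: for each upper cover $C'$ of the proposed clone in Post's lattice, we exhibit an $f \in K$ and $g_i \in C'$ with $f(g_1,\dots,g_n) \notin K$ (and analogously for left composition). Since all the clones appearing in the table lie in a bounded neighborhood of $\clLc$ within Post's lattice, only finitely many upper-cover checks are needed per row. A further simplification: by Lemma~\ref{lem:gen-C+1}, outer negation $K \mapsto \overline{K}$ is an involution of the lattice of $\clLc$-stable classes that swaps the parameter $a \leftrightarrow \overline{a}$, roughly halving the number of genuinely distinct verifications.

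The main obstacle is the case analysis for classes constrained by characteristic rank, where $C_1^K$ depends on whether $k = 1$ or $k \geq 2$. This reflects the self-duality characterization in Lemma~\ref{lem:Selezneva}: since $\clChar{1}$ coincides with the class of all self-dual or reflexive functions, right composition by any self-dual function---not only the linear ones---preserves $\clChar{1}$, forcing $\clS$ (resp.\ $\clSc$) rather than $\clLS$ (resp.\ $\clLc$) to be the maximum. Witnessing failure of stability in the $k \geq 2$ rows requires functions whose characteristic rank jumps under a carefully chosen linear perturbation; the family $\monster{k}$ of Definition~\ref{def:monster}, together with the identification $\deg(f + f^\mathrm{n}) = \charrank{f} - 1$ from Lemma~\ref{lem:Xk-description}, supplies these witnesses and reduces the left-stability analysis for characteristic rank to degree computations that respect the linear structure of the outer clone. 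The remaining rows, constrained only by degree, value at constant inputs, or parity, reduce to elementary polynomial manipulations along the same lines as Lemmata~\ref{lem:CaEa-closed}--\ref{lem:Xk-closed}.
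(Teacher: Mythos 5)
Your proposal is sound and reaches the theorem by a genuinely different route for the necessity half. The paper never forms the maximum stabilizer $M = \bigvee\{C : KC \subseteq K\}$; instead, for each row it fixes a small list of ``obstruction'' clones ($\clIo$, $\clIi$, $\clIstar$, $\clLambdac$, $\clVc$, $\clSM$), proves in one omnibus lemma (Lemma~\ref{lem:non}) that each of them destroys the relevant class, and then invokes the fact that any clone not below the target must contain one of these obstructions (a statement about unary and binary parts, read off Post's lattice). Your join-closure observation -- immediate from Lemma~\ref{lem:right-stab-gen}\ref{minorG} and Lemma~\ref{lem:left-stab-gen}\ref{gf-G} applied to the union of generating sets -- is correct and buys you a cleaner reduction: you need only refute the upper covers of the conjectured maximum, rather than a hand-picked antichain of obstructions. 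The price is one point you should make explicit: ruling out upper covers suffices only if every nonempty interval $(C_i^K, M]$ contains a minimal element, i.e.\ an upper cover of $C_i^K$. This is \emph{not} automatic in Post's lattice (it fails above $\clUinf$ and $\clWinf$, which sit under infinite descending chains), but it does hold here because each clone named in Table~\ref{table:stability} has only finitely many clones above it; your phrase ``bounded neighborhood of $\clLc$'' gestures at this without pinning it down. The sufficiency half of your plan matches the paper's (generating-set computations, with the paper additionally factoring most rows through the intersection Lemma~\ref{lem:suff-int} to avoid repetition), your duality shortcut is legitimate provided you note that for left composition it replaces $C$ by its dual clone, and your reading of the $k=1$ versus $k\geq 2$ dichotomy via Lemma~\ref{lem:Selezneva}, together with the witnesses $\monster{k}$ and Lemma~\ref{lem:Xk-description}, is exactly how the paper's Lemma~\ref{lem:non} and Lemma~\ref{lem:X0apu} operate.
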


The proof of Theorem~\ref{thm:C1C2-stability} will be developed in the remainder of this section.
The following two lemmata will be useful.
The first one (Lemma~\ref{lem:suff-int}) provides sufficient conditions for right and left stability for classes that are intersections of classes for which we already know sufficient conditions for right and left stability.
The second one (Lemma~\ref{lem:non}) provides necessary conditions.
These will be applied in the subsequent propositions in which necessary and sufficient stability conditions are established for each $\clLc$\hyp{}stable class.

\begin{lemma}
\label{lem:suff-int}
Let $K_1, K_2, C_1, C_2 \subseteq \clAll$.
Then the following statements hold.
\begin{enumerate}[label=\upshape{(\roman*)}, leftmargin=*, widest=ii]
\item\label{lem:suff-int:right}
Assume $K_1 C \subseteq K_1$ whenever $C \subseteq C_1$ and $K_2 C \subseteq K_2$ whenever $C \subseteq C_2$.
Then $(K_1 \cap K_2) C \subseteq K_1 \cap K_2$ whenever $C \subseteq C_1 \cap C_2$.
\item\label{lem:suff-int:left}
Assume $C K_1 \subseteq K_1$ whenever $C \subseteq C_1$ and $C K_2 \subseteq K_2$ whenever $C \subseteq C_2$.
Then $C (K_1 \cap K_2) \subseteq K_1 \cap K_2$ whenever $C \subseteq C_1 \cap C_2$.
\end{enumerate}
\end{lemma}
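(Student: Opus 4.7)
The plan is straightforward and relies entirely on the monotonicity of function class composition recorded in Remark~\ref{rem:comp-monot}. Both parts of the lemma have the same shape, so I would prove part~\ref{lem:suff-int:right} in detail and then note that part~\ref{lem:suff-int:left} follows by the same argument with $C$ on the left instead of the right.

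For part~\ref{lem:suff-int:right}, I would begin by fixing $C \subseteq C_1 \cap C_2$. Then $C \subseteq C_1$ and $C \subseteq C_2$, so the hypotheses yield $K_1 C \subseteq K_1$ and $K_2 C \subseteq K_2$. Next, since $K_1 \cap K_2 \subseteq K_i$ for each $i \in \{1,2\}$, Remark~\ref{rem:comp-monot} gives
\[
(K_1 \cap K_2) C \subseteq K_i C \subseteq K_i
\]
for $i = 1$ and $i = 2$. Intersecting these two inclusions yields $(K_1 \cap K_2) C \subseteq K_1 \cap K_2$, as desired.

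Part~\ref{lem:suff-int:left} is proved by the exact same chain of reasoning, applying Remark~\ref{rem:comp-monot} to the left factor instead of the right and using the hypotheses $C K_1 \subseteq K_1$ and $C K_2 \subseteq K_2$.

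There is really no obstacle in this argument; the only point of care is to avoid invoking any reverse inclusion of the form $C_1 K \cap C_2 K \subseteq (C_1 \cap C_2) K$ (or its right analogue), since such an inclusion does not hold in general, as noted in the remark following Lemma~\ref{lem:union-left}. What we need here is only the easy direction supplied by monotonicity of composition, which is available for arbitrary intersections.
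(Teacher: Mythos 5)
Your proof is correct and follows essentially the same route as the paper's: both arguments reduce the claim to the monotonicity of function class composition (Remark~\ref{rem:comp-monot}) together with the two stability hypotheses, and then intersect the resulting inclusions. The only cosmetic difference is that you apply the hypothesis to $C$ itself after bounding $(K_1 \cap K_2)C \subseteq K_i C$, while the paper bounds $(K_1 \cap K_2)C \subseteq K_i C_i$ and applies the hypothesis with $C = C_i$; these are interchangeable.
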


\begin{proof}
\ref{lem:suff-int:right}
If $C \subseteq C_1 \cap C_2$, then
$(K_1 \cap K_2) C \subseteq K_1 C_1 \subseteq K_1$
and
$(K_1 \cap K_2) C \subseteq K_2 C_2 \subseteq K_2$
by the monotonicity of function class composition and the stability of $K_1$ and $K_2$ under right composition with $C_1$ and $C_2$, respectively.
Therefore $(K_1 \cap K_2) C \subseteq K_1 \cap K_2$.

\ref{lem:suff-int:left}
The proof is analogous to that of part \ref{lem:suff-int:right}.
\end{proof}

\begin{lemma}
\label{lem:non}
Let $a, b \in \{0, 1\}$, $\QuantifyParRel$, $i, j \in \IN_{+}$ with $i \geq j \geq 1$.
\begin{enumerate}[label={\upshape{(\roman*)}}, leftmargin=*, widest=iii]
\item\label{lem:non:any}
For any $\clEmpty \neq K \subseteq \clAll$, the following statements hold.
\begin{enumerate}[label={\upshape{(\alph*)}}, leftmargin=*, widest=m]
\item\label{lem:non:Inota-K-CaEb}
$\clIa{\overline{a}} K \nsubseteq \clCon{a} \cup \clYksi{a}$.
\item\label{lem:non:Ia-K-P1}
$\clIa{a} K \nsubseteq \clOdd$.
\item\label{lem:non:Ia-K-CaEb}
If $a \neq b$, then
$\clIo K \nsubseteq \clBoth{a}{b}$,
$\clIi K \nsubseteq \clBoth{a}{b}$.
\end{enumerate}

\item\label{lem:non:DiXjCaEb}
For $K := \clD{i} \cap \clChar{j} \cap \clBoth{a}{b}$, the following statements hold.
\begin{enumerate}[label={\upshape{(\alph*)}},resume, leftmargin=*, widest=m]
\item\label{lem:non:Istar-K-CaEb}
$\clIstar K \nsubseteq \clCon{a} \cup \clYksi{b}$.
\item\label{lem:non:LV-K-Xj}
$\clLambdac K \nsubseteq \clD{i}$,
$\clVc K \nsubseteq \clD{i}$.
If $j \geq 2$ or $a \neq b$, then
$\clLambdac K \nsubseteq \clChar{j}$,
$\clVc K \nsubseteq \clChar{j}$.
\item\label{lem:non:SM-K}
$\clSM K \nsubseteq \clD{i}$.
If $j \geq 2$, then
$\clSM K \nsubseteq \clChar{j}$.
\item\label{lem:non:K-Ia-CaEb}
$K \clIo \nsubseteq \clYksi{b}$,
$K \clIi \nsubseteq \clCon{a}$,
$K \clIstar \nsubseteq \clCon{a} \cup \clYksi{b}$.
\item\label{lem:non:K-Ia-Xj}
If $i > j$, then
$K \clIo \nsubseteq \clChar{j}$,
$K \clIi \nsubseteq \clChar{j}$.
\item\label{lem:non:K-LV}
$K \clLambdac \nsubseteq \clD{i} \cup \clChar{j}$,
$K \clVc \nsubseteq \clD{i} \cup \clChar{j}$.
\item\label{lem:non:K-SM}
$K \clSM \nsubseteq \clD{i}$.
If $j \geq 2$, then
$K \clSM \nsubseteq \clChar{j}$.
\end{enumerate}

\item\label{lem:non:X1Ca-X1Ea}
\begin{enumerate}[label={\upshape{(\alph*)}},resume, leftmargin=*, widest=m]
\item\label{lem:non:SM-X1Ca} $\clSM (\clChar{1} \cap \clCon{a}) \nsubseteq \clChar{1}$, $\clSM (\clChar{1} \cap \clYksi{a}) \nsubseteq \clChar{1}$.
\end{enumerate}

\item\label{lem:non:DiXjPa}
For $K := \clD{i} \cap \clChar{j} \cap \clParity{a}$, the following statements hold.
\begin{enumerate}[label={\upshape{(\alph*)}},resume, leftmargin=*, widest=m]
\item\label{lem:non:K-Ia-Pa}
$K \clIo \nsubseteq \clParity{a}$,
$K \clIi \nsubseteq \clParity{a}$.
\item\label{lem:non:K-Istar-Pa}
If $j \geq 2$, then
$K \clIstar \nsubseteq \clParity{a}$.
\item\label{lem:non:LV-K-Pa}
If $\mathord{\ParRel} = \mathord{\neq}$, then
$\clLambdac K \nsubseteq \clParity{a}$,
$\clVc K \nsubseteq \clParity{a}$.
\end{enumerate}
\end{enumerate}
\end{lemma}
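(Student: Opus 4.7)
I plan to prove Lemma~\ref{lem:non} by a long case analysis in which, for each non-inclusion, I exhibit an explicit witness in the composition on the left that is excluded from the class on the right. The tools available are: constant functions in $\clIo$, $\clIi$, $\clIa{a}$; negated projections $x_i + 1 \in \clIstar$; conjunctions and disjunctions from $\clLambdac$, $\clVc$ applied to variable-disjoint copies of a single member of $K$ (which multiplies degrees and boosts characteristic ranks); the median $\mu \in \clSM$, which plays a similar role; and substitutions of constants, negated projections, conjunctions, disjunctions, or the median into the arguments of some $f \in K$, whose effect on the Zhegalkin polynomial is governed by Lemmata~\ref{lem:sum-monomials}, \ref{lem:minor-monomials}, and~\ref{lem:f'i-negation}.

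Part~\ref{lem:non:any} is immediate: composing any $f \in K$ from the outside with the constant $c$ returns the constant $c$ function, which sits in $\clCon{c} \cap \clYksi{c}$, and inspection of the target classes yields the non-inclusion in each sub-case. For part~\ref{lem:non:DiXjCaEb} with $K := \clD{i} \cap \clChar{j} \cap \clBoth{a}{b}$, the opening observation is that $K$ contains, for every admissible choice of parameters, a concrete witness of maximal degree $i$ and of prescribed characteristic rank $j$: for $(a,b) = (0,1)$ one may take $x_1 \dots x_j$ when $i = j$ and $\monster{i} + x_1$ when $j = 1$; the remaining cases are handled by summing with a suitable affine or monomial term to correct $(a, b)$ or by combining a member of $\clD{i} \cap \clChar{1}$ with one of $\clD{j} \cap \clBoth{0}{1}$ via Lemma~\ref{lem:f=g+h}. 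Claims~\ref{lem:non:Istar-K-CaEb}--\ref{lem:non:SM-K} are then proved by composing such a witness with $x_1 + 1$, or with conjunctions/disjunctions of two variable-disjoint copies, or with the median of three variable-disjoint copies; disjointness of supports ensures that no monomial of maximum degree cancels, and one reads off from the Zhegalkin polynomial that the result escapes $\clD{i}$ or $\clChar{j}$. The auxiliary hypothesis ``$j \geq 2$ or $a \neq b$'' in~\ref{lem:non:LV-K-Xj} rules out the degenerate case $j = 1$, $a = b$, in which a conjunction of two disjoint linear reflexive elements of $K$ might remain in $\clChar{1}$; the alternative hypothesis supplies a nonreflexive linear witness. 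The right-composition claims~\ref{lem:non:K-Ia-CaEb}--\ref{lem:non:K-SM} are dual: starting from an appropriate $f \in K$, I substitute constants, negated projections, conjunctions, disjunctions, or the median into selected arguments of $f$, compute the degree, characteristic rank, and values at $\vect{0}$, $\vect{1}$ via Lemmata~\ref{lem:minor-monomials} and~\ref{lem:arity-degree}, and read off the desired non-inclusion.

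Parts~\ref{lem:non:X1Ca-X1Ea} and~\ref{lem:non:DiXjPa} follow the same templates. For~\ref{lem:non:SM-X1Ca}, the witness is $\mu$ applied to two projections and a constant of the appropriate value, yielding $x_1 x_2$ (or its negation), whose characteristic rank is $2$ by direct computation from Lemma~\ref{lem:Xk-description}. For claims~\ref{lem:non:K-Ia-Pa}--\ref{lem:non:LV-K-Pa}, substituting a constant or a negated projection into a single argument of $f$ produces $f + f'_i$ or $f + 1 + f'_i$ in the sense of Lemma~\ref{lem:f'i-negation}, and this carries a parity change provided $f$ has a monomial of the right size; the hypothesis $j \geq 2$ in~\ref{lem:non:K-Istar-Pa} and $\mathord{\ParRel} = \mathord{\neq}$ in~\ref{lem:non:LV-K-Pa} are precisely what guarantee the existence of such a monomial. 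The main obstacle is not any individual case but the sheer number of cases and the need, for each choice of $(a, b, i, j)$, to coordinate several invariants (degree, characteristic rank, parity, and values at $\vect{0}$ and $\vect{1}$) of the witness simultaneously; the boundary regimes $j = 1$ versus $j \geq 2$ and $a = b$ versus $a \neq b$ require the most careful adjustment of the constant and linear correction terms.
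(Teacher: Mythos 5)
Your overall strategy coincides with the paper's: part (i) by outer composition with constants, and parts (ii)--(iv) by exhibiting, for each choice of $(a,b,i,j)$, explicit members of $K$ (built from monomials, affine terms, and the functions $\monster{k}$) and explicit compositions whose Zhegalkin polynomials are computed directly to show escape from $\clD{i}$, $\clChar{j}$, $\clCon{a}$, $\clYksi{b}$, or $\clParity{a}$, with the same identification of the degenerate regimes ($j=1$ versus $j\geq 2$, $a=b$ versus $a\neq b$). The only substantive difference is cosmetic: where you compose $\clLambdac$, $\clVc$, $\clSM$ with variable-disjoint copies of a single witness, the paper pairs a high-degree witness such as $\monster{i}+a$ with a linear minor of a second witness; both yield the required degree and characteristic-rank computations.

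There is, however, one sub-claim where your described mechanism would fail: \ref{lem:non:LV-K-Pa}. This is a \emph{left}-composition statement ($\clLambdac K \nsubseteq \clOdd$ for $K = \clD{i}\cap\clChar{j}\cap\clOdd$), so it is not an instance of substituting a constant or negated projection into an argument of $f$ and tracking $f + f'_i$, and the hypothesis $\mathord{\ParRel}=\mathord{\neq}$ is not about the existence of a monomial of a certain size. The relevant fact is Lemma~\ref{lem:products-Even-Odd}\ref{lem:products-Even-Odd:Odd}: the product of two odd functions is odd if and only if they have \emph{equal} constant terms. Consequently your other tool, a conjunction of two variable-disjoint copies of a single $f \in K$, also fails here, since disjoint copies necessarily share a constant term and their product stays odd. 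The witness must use two members of $K$ with different constant terms, e.g.\ $\mathord{\wedge}(x_1, x_1+1) = 0 \notin \clOdd$; and the case $\mathord{\ParRel}=\mathord{=}$ is excluded because then $K \subseteq \clEven$ and products of even functions are even, so the non-inclusion genuinely fails. This is a local repair rather than a structural flaw, but as written that step does not go through.
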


\begin{proof}
Throughout the proof, we will use Lemmata~\ref{lem:left-stab-gen} and \ref{lem:right-stab-gen} together with the fact that $\clIo = \clonegen{0}$, $\clIi = \clonegen{1}$, $\clIstar = \clonegen{x_1 + 1}$, $\clLambdac = \clonegen{\mathord{\wedge}}$, $\clVc = \clonegen{\mathord{\vee}}$, $\clSM = \clonegen{\mu}$.

\ref{lem:non:any}
\ref{lem:non:Inota-K-CaEb}
For any $\varphi \in \clAll$, we have $\overline{a}(\varphi) = \overline{a} \notin \clCon{a} \cup \clYksi{a}$.
Therefore $\clIa{\overline{a}} K \nsubseteq \clCon{a} \cup \clYksi{a}$.

\ref{lem:non:Ia-K-P1}
For any $\varphi \in \clAll$, we have $a(\varphi) = a \notin \clOdd$.
Therefore $\clIa{a} K \nsubseteq \clOdd$.

\ref{lem:non:Ia-K-CaEb}
If $a \neq b$, then, by \ref{lem:non:Inota-K-CaEb}, we have $\clIo K \nsubseteq \clCon{1} \cup \clYksi{1}$ and $\clIi K \nsubseteq \clCon{0} \cup \clYksi{0}$.
Since $\clYksi{a}{b}$ is a subset of both $\clCon{0} \cup \clYksi{0}$ and $\clCon{1} \cup \clYksi{1}$, it follows that $\clIa{i} K \nsubseteq \clBoth{a}{b}$ for $i \in \{0,1\}$.

\ref{lem:non:DiXjCaEb}
Let
\begin{align*}
f_0 &:= x_1 + x_2 + a, & g_0 &:= \monster{i} + a, & h_0 &:= x_1 \dots x_j + x_{j+1} + a, \\
f_1 &:= x_1 + a, & g_1 &:= \monster{i} + x_{i+1} + a, & h_1 &:= x_1 \dots x_j + a,
\end{align*}
and note that $f_0, g_0, h_0 \in \clD{i} \cap \clChar{j} \cap \clBoth{a}{a}$ and $f_1, g_1, h_1 \in \clD{i} \cap \clChar{j} \cap \clBoth{a}{\overline{a}}$.

\ref{lem:non:Istar-K-CaEb}
For any $a, b \in \{0,1\}$ and $f \in \clCon{a} \cup \clYksi{b}$ we have $(x_1 + 1)(f) = f + 1 \notin \clCon{a} \cup \clYksi{b}$.
For any $a, b \in \{0,1\}$ there exists a function in $\clD{i} \cap \clChar{j} \cap \clBoth{a}{b}$; consider the functions $f_0$ and $f_1$ defined above.
It follows that $\clIstar K \nsubseteq \clCon{a} \cup \clYksi{b}$.

\ref{lem:non:LV-K-Xj}
The reduced polynomial of each of the functions
\begin{align*}
& \mathord{\wedge}(\monster{i} + a, x_{i+1} + x_{i+2} + a), & & \mathord{\wedge}(\monster{i} + x_{i+1} + a, x_{i+1} + a), \\
& \mathord{\vee}(\monster{i} + a, x_{i+1} + x_{i+2} + a), & & \mathord{\vee}(\monster{i} + x_{i+1} + a, x_{i+1} + a)
\end{align*}
contains the monomial $x_1 x_2 \dots x_{i+1}$ and hence has degree at least $i + 1$;
therefore none of them is an element of $\clD{i}$.
Note that the inner functions of the two compositions on the left (right, resp.)\ are minors of $f_0$ and $g_0$ ($f_1$ and $g_1$, resp.)\ and hence belong to $K$ if $a = b$ (if $a \neq b$, resp.).
This shows that $\clLambdac K \nsubseteq \clD{i}$, $\clVc K \nsubseteq \clD{i}$.

If $j \geq 2$, then
\begin{align*}
& \mathord{\wedge}(h_0, x_{j+1} + x_{j+2} + a) = x_1 \dots x_j x_{j+1} + x_1 \dots x_j x_{j+2} + \dots, \\
& \mathord{\vee}(h_0, x_{j+1} + x_{j+2} + a) = x_1 \dots x_j x_{j+1} + x_1 \dots x_j x_{j+2} + \dots, \\
& \mathord{\wedge}(h_1, x_{j+1} + a) = x_1 \dots x_j x_{j+1} + \dots, \\
& \mathord{\vee}(h_1, x_{j+1} + a) = x_1 \dots x_j x_{j+1} + \dots,
\end{align*}
where the terms that have not been written out have degree at most $j$.
The $j$\hyp{}element set $\{2, \dots, j+1\}$ has characteristic $1$ in each, so these functions are not in $\clChar{j}$.
Note that the inner functions of the first (last, resp.)\ two compositions are minors of $h_0$ and $f_0$ ($h_1$ and $f_1$, resp.)\ and hence belong to $K$ if $a = b$ (if $a \neq b$, resp.).
This shows that $\clLambdac K \nsubseteq \clChar{j}$, $\clVc K \nsubseteq \clChar{j}$ if $j \geq 2$.

If $j = 1$ and $a \neq b$, then
\begin{align*}
& \mathord{\wedge}(x_1 + a, x_2 + a) = x_1 x_2 + a x_1 + a x_2 + a \notin \clChar{1}, \\
& \mathord{\vee}(x_1 + a, x_2 + a) = x_1 x_2 + (a + 1) x_1 + (a + 1) x_2 + a \notin \clChar{1}.
\end{align*}
Note that the inner functions are minors of $f_1$ and hence belong to $K$.
This shows that $\clLambdac K \nsubseteq \clChar{j}$, $\clVc K \nsubseteq \clChar{j}$ also in this case.

\ref{lem:non:SM-K}
The reduced polynomial of each of the functions
\begin{align*}
& \mu(\monster{i} + a, x_{i+1} + x_{i+2} + a, a), & & \mu(\monster{i} + x_{i+1} + a, x_{i+1} + a, x_{i+2} + a)
\end{align*}
contains the monomial $x_1 x_2 \dots x_{i+1}$ and hence has degree at least $i + 1$;
therefore none of them is an element of $\clD{i}$.
Note that the inner functions of the first (second, resp.)\ composition are minors of $f_0$ and $g_0$ ($f_1$ and $g_1$, resp.)\ and hence belong to $K$ if $a = b$ (if $a \neq b$, resp.).
This shows that $\clSM K \nsubseteq \clD{i}$.

If $j \geq 2$, then
\begin{align*}
& \mu(h_0, x_{j+1} + x_{j+2} + a, a)
= x_1 \dots x_j x_{j+1} + x_1 \dots x_j x_{j+2} + x_{j+1} + x_{j+1} x_{j+2} + a, \\
& \mu(h_1, x_{j+1} + a, x_{j+2} + a)
= x_1 \dots x_j x_{j+1} + x_1 \dots x_j x_{j+2} + x_{j+1} x_{j+2} + a.
\end{align*}
Neither of these functions is in $\clChar{j}$, which can be seen by considering the characteristic of the $j$\hyp{}element set $\{2, \dots, j+1\}$.
Note that the inner functions of the first (second, resp.)\ composition are minors of $h_0$ and $f_0$ ($h_1$ and $f_1$, resp.)\ and hence belong to $K$ if $a = b$ (if $a \neq b$, resp.).
This shows that $\clSM K \nsubseteq \clChar{j}$ if $j \geq 2$.

\ref{lem:non:K-Ia-CaEb}
If $a = b$, then
\begin{gather*}
f_0(x_1, 0) = x_1 + a \notin \clYksi{b},
\qquad
f_0(x_1, 1) = x_1 + a + 1 \notin \clCon{a},
\\
f_0(x_1 + 1, x_2) = x_1 + x_2 + a + 1 \notin \clCon{a} \cup \clYksi{b}.
\end{gather*}
If $a \neq b$, then
\[
f_1(0) = a \notin \clYksi{b},
\qquad
f_1(1) = a + 1 \notin \clCon{a},
\qquad
f_1(x_1 + 1) = x_1 + a + 1 \notin \clCon{a} \cup \clYksi{b}.
\]
These calculations show the non\hyp{}inclusions
$K \clIo \nsubseteq \clYksi{b}$,
$K \clIi \nsubseteq \clCon{a}$,
and
$K \clIstar \nsubseteq \clCon{a} \cup \clYksi{b}$.

\ref{lem:non:K-Ia-Xj}
Assume that $i > j$.
Observe that the reduced polynomial of each one of the functions $g_0(x_1, \dots, x_i, 0)$, $g_0(x_1, \dots, x_i, 1)$, $g_1(x_1, \dots, x_i, 0, 0)$, and  $g_1(x_1, \dots, x_i, 1, \linebreak 1)$ contains the monomial $x_1 \dots x_i$, and it is the only monomial of degree $i$.
Therefore none of them is a member of $\clChar{j}$, which can be seen by considering the characteristic of the set $\nset{i-1}$ that has cardinality at least $j$.
We conclude that $K \clIo \nsubseteq \clChar{j}$ and $K \clIi \nsubseteq \clChar{j}$.

\ref{lem:non:K-LV}
For $i \in \{0,1\}$, the reduced polynomial of each of the functions
$g_i \ast \mathord{\wedge}$,
$g_i \ast \mathord{\vee}$
contains the monomial $x_1 x_2 \dots x_{i+1}$ and hence has degree at least (in fact, exactly) $i + 1$; therefore none of them is an element of $\clD{i}$.
Therefore
$K \clLambdac \nsubseteq \clD{i}$,
$K \clVc \nsubseteq \clD{i}$.

For $i \in \{0,1\}$, the reduced polynomial of each of
$h_i \ast \mathord{\wedge}$,
$h_i \ast \mathord{\vee}$
contains the monomial $x_1 \dots x_{j+1}$, and this is the only monomial of degree $j+1$.
We see that the characteristic of the $j$\hyp{}element set $\nset{j}$ is $1$ in each, so none is an element of $\clChar{j}$;
therefore
$K \clLambdac \nsubseteq \clChar{j}$,
$K \clVc \nsubseteq \clChar{j}$.

\ref{lem:non:K-SM}
For $i \in \{0,1\}$, the reduced polynomial of
$g_i \ast \mu$
contains the monomial $x_1 x_2 \dots x_{i+1}$ and hence has degree at least (in fact, exactly) $i + 1$; therefore $g_i \ast \mu \notin \clD{i}$.
Therefore
$K \clSM \nsubseteq \clD{i}$.

If $j \geq 2$, then
\begin{align*}
h_0 \ast \mu 
= {} & x_1 x_2 x_4 \dots x_{j+2} + x_1 x_3 x_4 \dots x_{j+2} + x_2 x_3 x_4 \dots x_{j+2} + x_{j+3} + a, \\
h_1 \ast \mu 
= {} & x_1 x_2 x_4 \dots x_{j+2} + x_1 x_3 x_4 \dots x_{j+2} + x_2 x_3 x_4 \dots x_{j+2} + a,
\end{align*}
so the characteristic of the $j$\hyp{}element set $\{1, \dots, j+1\} \setminus \{3\}$ is $1$.
Therefore, for $i \in \{0,1\}$, $h_i \ast \mu \notin \clChar{j}$,
which shows that $K \clSM \nsubseteq \clChar{j}$.

\ref{lem:non:X1Ca-X1Ea}
\ref{lem:non:SM-X1Ca}
The following calculations show that $\clSM (\clChar{1} \cap \clCon{a}) \nsubseteq \clChar{1}$ (the first line) and $\clSM (\clChar{1} \cap \clYksi{a}) \nsubseteq \clChar{1}$ (the second line) for $a \in \{0,1\}$:
\begin{align*}
& \mu(x_1, x_2, 0) = x_1 x_2, &
& \mu(x_1 + 1, x_2 + 1, 1) = x_1 x_2 + 1, \\
& \mu(x_1, x_2, 1) = x_1 x_2 + x_1 + x_2, &
& \mu(x_1 + 1, x_2 + 1, 0) = x_1 x_2 + x_1 + x_2 + 1.
\end{align*}

\ref{lem:non:DiXjPa}
\ref{lem:non:K-Ia-Pa}
We have $f := x_1 + x_2 \in \clD{i} \cap \clChar{j} \cap \clEven$ and $f' := x_1 \in \clD{i} \cap \clChar{j} \cap \clOdd$,
but
$f(x_1,0) = x_1 \notin \clEven$, $f'(0) = 0 \notin \clOdd$,
$f(x_1,1) = x_1 + 1 \notin \clEven$, $f'(1) = 1 \notin \clOdd$,
which shows that $K \clIo \nsubseteq \clParity{a}$ and $K \clIi \nsubseteq \clParity{a}$.

\ref{lem:non:K-Istar-Pa}
Assume $j \geq 2$.
We have $g := x_1 x_2 + x_2 \in \clD{i} \cap \clChar{j} \cap \clEven$ and $g' := x_1 x_2 \in \clD{i} \cap \clChar{j} \cap \clOdd$,
but
$g(x_1, x_2 + 1) = x_1 x_2 + x_1 + x_2 \notin \clEven$ and $g'(x_1, x_2 + 1) = x_1 x_2 + x_1 \notin \clOdd$;
therefore $K \clIstar \nsubseteq \clParity{a}$.

\ref{lem:non:LV-K-Pa}
We have $x_1, x_1 + 1 \in \clD{i} \cap \clChar{j} \cap \clOdd$, but
$\mathord{\wedge}(x_1, x_1 + 1) = x_1 \cdot (x_1 + 1) = x_1 + x_1 = 0 \notin \clOdd$,
$\mathord{\vee}(x_1, x_1 + 1) = x_1 \cdot (x_1 + 1) + x_1 + (x_1 + 1) = 1 \notin \clOdd$;
therefore $\clLambdac K \nsubseteq \clParity{1}$ and $\clVc K \nsubseteq \clParity{1}$.
\end{proof}

\begin{proposition}
\label{prop:all-empty}
For every clone $C$, we have
$\clAll C \subseteq \clAll$,
$C \clAll \subseteq \clAll$,
$\clEmpty C \subseteq \clEmpty$,
$C \clEmpty \subseteq \clEmpty$.
\end{proposition}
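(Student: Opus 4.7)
The plan is to observe that each of the four inclusions is immediate from the definition of function class composition, so no real work is needed. For the first two, I would note that $\clAll$ is by Definition~\ref{def:all} the class of \emph{all} Boolean functions, i.e., all operations on $\{0,1\}$, so any element of $\clAll C$ or $C \clAll$ is an operation on $\{0,1\}$ obtained by composition and hence automatically lies in $\clAll$; this is a one-line verification using only the fact that composition of Boolean functions is a Boolean function.

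For the last two, I would appeal to the definition of $CK$: it is the set of all compositions $f(g_1,\dots,g_n)$ with $f \in C^{(n)}$ and $g_1,\dots,g_n \in K^{(m)}$. If either $C = \clEmpty$ or $K = \clEmpty$, then there are no such tuples, so the resulting set is empty. Hence $\clEmpty C = \clEmpty \subseteq \clEmpty$ and $C \clEmpty = \clEmpty \subseteq \clEmpty$.

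No step here is an obstacle; the whole proposition is a direct consequence of the definitions of $\clAll$, $\clEmpty$, and function class composition, and warrants nothing more than a short remark.

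\begin{proof}
The inclusions $\clAll C \subseteq \clAll$ and $C \clAll \subseteq \clAll$ hold because $\clAll$ consists of all Boolean functions, and any composition of Boolean functions is a Boolean function. The inclusions $\clEmpty C \subseteq \clEmpty$ and $C \clEmpty \subseteq \clEmpty$ hold because, by the definition of function class composition, both $\clEmpty C$ and $C \clEmpty$ are empty.
\end{proof}
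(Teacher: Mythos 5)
Your proof is correct and matches the paper's approach; the paper simply declares the proposition obvious, while you have written out the (entirely routine) justification that compositions of Boolean functions are Boolean functions and that composing with the empty class yields the empty class. Nothing further is needed.
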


\begin{proof}
This is obvious.
\end{proof}

\begin{proposition}
\label{prop:Ca}
Let $a \in \{0,1\}$, and let $C$ be a clone.
\begin{enumerate}[label={\upshape{(\roman*)}}, leftmargin=*, widest=iii]
\item\label{lem:Ca:Ca-C}
$\clCon{a} C \subseteq \clCon{a}$ if and only if $C \subseteq \clTo$.
\item\label{lem:Ca:C-Ca}
$C \clCon{a} \subseteq \clCon{a}$ if and only if $C \subseteq \clTa{a}$.
\item\label{lem:Ca:Ea-C}
$\clYksi{a} C \subseteq \clYksi{a}$ if and only if $C \subseteq \clTi$.
\item\label{lem:Ca:C-Ea}
$C \clYksi{a} \subseteq \clYksi{a}$ if and only if $C \subseteq \clTa{a}$.
\end{enumerate}
\end{proposition}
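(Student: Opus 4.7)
The plan is to verify each of the four equivalences by direct evaluation at the all-zeros tuple $\vect{0} = (0, \dots, 0)$ (for \ref{lem:Ca:Ca-C} and \ref{lem:Ca:C-Ca}) or at the all-ones tuple $\vect{1} = (1, \dots, 1)$ (for \ref{lem:Ca:Ea-C} and \ref{lem:Ca:C-Ea}), exploiting the elementary identity $f(g_1, \dots, g_n)(\vect{c}) = f(g_1(\vect{c}), \dots, g_n(\vect{c}))$ that holds for any constant tuple $\vect{c}$.

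For the sufficiency direction of \ref{lem:Ca:Ca-C}, I would take $f \in \clCon{a}$ and $g_1, \dots, g_n \in C \subseteq \clTo$ and compute $f(g_1, \dots, g_n)(\vect{0}) = f(\vect{0}) = a$. The sufficiency in \ref{lem:Ca:Ea-C} is the mirror statement at $\vect{1}$. For \ref{lem:Ca:C-Ca}, the same identity applied to $g(f_1, \dots, f_n)(\vect{0})$ yields $g(a, \dots, a)$, which equals $a$ precisely when $g \in \clTa{a}$; part \ref{lem:Ca:C-Ea} is again symmetric.

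For the necessity direction, I would produce explicit counterexamples in each case. In \ref{lem:Ca:Ca-C}, given $g \in C$ with $g(\vect{0}) = 1$, I would use $\pr_1^{(1)}$ (when $a = 0$) or the negation $x_1 + 1$ (when $a = 1$) as the outer function; both lie in $\clCon{a}$ for their respective $a$, and the composition fails membership in $\clCon{a}$ at $\vect{0}$. Necessity in \ref{lem:Ca:Ea-C} is analogous, with $\pr_1^{(1)} \in \clYksi{1}$ or $x_1 + 1 \in \clYksi{0}$ playing the same role. For \ref{lem:Ca:C-Ca}, given $g \in C$ with $g(a, \dots, a) \neq a$, I would compose $g$ with the constant functions $\cf{m}{a}$, which all lie in $\clCon{a}$, obtaining the constant $g(a, \dots, a) \neq a$, outside $\clCon{a}$. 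Part \ref{lem:Ca:C-Ea} is symmetric, using the same constant witnesses (which also lie in $\clYksi{a}$).

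There is no real obstacle here: each of the four statements reduces to a single one-point evaluation, and the only bookkeeping is to pair the value of $a$ with the correct outer witness in the necessity arguments for the right-composition parts (namely $\pr_1^{(1)} \in \clCon{0} \cap \clYksi{1}$ versus $x_1 + 1 \in \clCon{1} \cap \clYksi{0}$).
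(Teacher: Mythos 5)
Your proposal is correct, and the sufficiency halves coincide exactly with the paper's (one-point evaluation at $\vect{0}$ or $\vect{1}$ using $f(g_1,\dots,g_n)(\vect{c}) = f(g_1(\vect{c}),\dots,g_n(\vect{c}))$). Where you genuinely diverge is in the necessity halves. The paper argues via Post's lattice: if, say, $\clCon{a} C \subseteq \clCon{a}$, then $C$ can contain neither $\clIi$ nor $\clIstar$ (by the general-purpose witnesses collected in Lemma~\ref{lem:non}), and every clone not below $\clTo$ contains one of these two, so $C \subseteq \clTo$. You instead take an arbitrary $g \in C$ violating the preservation condition and exhibit a direct counterexample composition --- $\pr_1^{(1)}$ or $x_1+1$ as outer function for the right-composition parts, constant functions $\cf{m}{a}$ as inner functions for the left-composition parts --- with the correct pairing of witness to the value of $a$ (indeed $\pr_1^{(1)} \in \clCon{0} \cap \clYksi{1}$ and $x_1 + 1 \in \clCon{1} \cap \clYksi{0}$, and $\cf{m}{a} \in \clCon{a} \cap \clYksi{a}$). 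Your route is more elementary and entirely self-contained: it needs neither Post's classification nor Lemma~\ref{lem:non}. What the paper's route buys is uniformity: the same small stock of witnesses in Lemma~\ref{lem:non} and the same ``which minimal clones lie below which maximal clone'' bookkeeping are reused across all of the roughly twenty propositions of Section~\ref{sec:C1C2}, whereas ad hoc counterexamples would have to be re-derived for each class $K$. For this particular proposition, where the classes are defined by a single point evaluation, your direct argument is arguably the cleaner one.
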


\begin{proof}
\ref{lem:Ca:Ca-C}
Assume first that $C \subseteq \clTo$.
For any $f \in \clCon{a}^{(n)}$ and $g_1, \dots, g_n \in C^{(m)}$, we have
\[
f(g_1, \dots, g_n)(0, \dots, 0)
= f(g_1(0, \dots, 0), \dots, g_n(0, \dots, 0))
= f(0, \dots, 0)
= a,
\]
so $f(g_1, \dots, g_n) \in \clCon{a}$.
We conclude that $\clCon{a} C \subseteq \clCon{a}$.
Conversely, if $\clCon{a} C \subseteq \clCon{a}$, then $C$ includes neither $\clIi$ nor $\clIstar$ by Lemma~\ref{lem:non}\ref{lem:non:K-Ia-CaEb}, so $C \subseteq \clTo$.

\ref{lem:Ca:C-Ca}
Assume first that $C \subseteq \clTa{a}$.
For any $f \in C^{(n)}$ and $g_1, \dots, g_n \in \clCon{a}^{(m)}$, we have
\[
f(g_1, \dots, g_n)(0, \dots, 0)
= f(g_1(0, \dots, 0), \dots, g_n(0, \dots, 0))
= f(a, \dots, a)
= a,
\]
so $f(g_1, \dots g_n) \in \clCon{a}$.
We conclude that $C \clCon{a} \subseteq \clCon{a}$.
Conversely, if $C \clCon{a} \subseteq \clCon{a}$, then $C$ includes neither $\clIa{\overline{a}}$ nor $\clIstar$ by Lemma~\ref{lem:non}\ref{lem:non:Inota-K-CaEb}, \ref{lem:non:Istar-K-CaEb}, so $C \subseteq \clTa{a}$.

\ref{lem:Ca:Ea-C}
Assume first that $C \subseteq \clTi$.
For any $f \in \clYksi{a}^{(n)}$ and $g_1, \dots, g_n \in C^{(m)}$, we have
\[
f(g_1, \dots, g_n)(1, \dots, 1)
= f(g_1(1, \dots, 1), \dots, g_n(1, \dots, 1))
= f(1, \dots, 1)
= a,
\]
so $f(g_1, \dots, g_n) \in \clYksi{a}$.
We conclude that $\clYksi{a} C \subseteq \clYksi{a}$.
Conversely, if $\clYksi{a} C \subseteq \clYksi{a}$, then $C$ includes neither $\clIo$ nor $\clIstar$ by Lemma~\ref{lem:non}\ref{lem:non:K-Ia-CaEb}, so $C \subseteq \clTi$.

\ref{lem:Ca:C-Ea}
Assume first that $C \subseteq \clTa{a}$.
For any $f \in C^{(n)}$ and $g_1, \dots, g_n \in \clYksi{a}^{(m)}$, we have
\[
f(g_1, \dots, g_n)(1, \dots, 1)
= f(g_1(1, \dots, 1), \dots, g_n(1, \dots, 1))
= f(a, \dots a)
= a,
\]
so $f(g_1, \dots g_n) \in \clYksi{a}$.
We conclude that $C \clYksi{a} \subseteq \clYksi{a}$.
Conversely, if $C \clYksi{a} \subseteq \clYksi{a}$, then $C$ includes neither $\clIa{\overline{a}}$ nor $\clIstar$ by Lemma~\ref{lem:non}\ref{lem:non:Inota-K-CaEb}, \ref{lem:non:Istar-K-CaEb}, so $C \subseteq \clTa{a}$.
\end{proof}

\begin{lemma}
\label{lem:products-Even-Odd}
\leavevmode
\begin{enumerate}[label={\upshape{(\roman*)}}, leftmargin=*, widest=ii]
\item\label{lem:products-Even-Odd:Even}
For any $f, g \in \clEven$, we have $f \cdot g \in \clEven$.
\item\label{lem:products-Even-Odd:Odd}
For any $f, g \in \clOdd$, we have $f \cdot g \in \clOdd$ if and only if both $f$ and $g$ have equal constant terms \textup{(}i.e, $f, g \in \clCon{0}$ or $f, g \in \clCon{1}$\textup{)}.
\end{enumerate}
\end{lemma}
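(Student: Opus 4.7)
The plan is to prove both parts by directly evaluating the product $f \cdot g$ at the two constant tuples $\vect{0} := (0, \dots, 0)$ and $\vect{1} := (1, \dots, 1)$, exploiting the fact that the classes $\clEven$ and $\clOdd$ are defined purely in terms of the values at these two points, and that pointwise multiplication factors through evaluation.

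For part \ref{lem:products-Even-Odd:Even}, I would simply note that if $f, g \in \clEven$, then $f(\vect{0}) = f(\vect{1})$ and $g(\vect{0}) = g(\vect{1})$, so
\[
(f \cdot g)(\vect{0}) = f(\vect{0}) \cdot g(\vect{0}) = f(\vect{1}) \cdot g(\vect{1}) = (f \cdot g)(\vect{1}),
\]
hence $f \cdot g \in \clEven$.

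For part \ref{lem:products-Even-Odd:Odd}, the assumption that $f, g \in \clOdd$ forces $\{f(\vect{0}), f(\vect{1})\} = \{0, 1\}$ and likewise for $g$, so $f$ and $g$ are each determined on these two tuples by their constant term alone. I would split into cases according to whether the constant terms $f(\vect{0})$ and $g(\vect{0})$ agree. If $f(\vect{0}) = g(\vect{0}) = 0$, then $(f \cdot g)(\vect{0}) = 0$ and $(f \cdot g)(\vect{1}) = 1 \cdot 1 = 1$, giving $f \cdot g \in \clOdd$; similarly if $f(\vect{0}) = g(\vect{0}) = 1$, one gets $(f \cdot g)(\vect{0}) = 1$ and $(f \cdot g)(\vect{1}) = 0$, again in $\clOdd$. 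If the constant terms differ, say $f(\vect{0}) = 0$ and $g(\vect{0}) = 1$, then $(f \cdot g)(\vect{0}) = 0 \cdot 1 = 0$ and $(f \cdot g)(\vect{1}) = 1 \cdot 0 = 0$, so $f \cdot g \in \clEven$ and in particular $f \cdot g \notin \clOdd$. This gives both directions of the equivalence.

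There is no real obstacle here; the statement is essentially a two-point verification using the $\{0,1\}$-arithmetic of multiplication. The only thing to be careful about is matching the arities of $f$ and $g$ so that the product $f \cdot g$ is well-defined with a common tuple of arguments, but since the defining conditions of $\clEven$ and $\clOdd$ only involve the two constant inputs, this is immaterial.
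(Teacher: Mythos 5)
Your proof is correct, but it takes a different route from the paper's. You work with the characterization of $\clEven$ and $\clOdd$ as the classes of functions with $f(0,\dots,0) = f(1,\dots,1)$ and $f(0,\dots,0) \neq f(1,\dots,1)$, respectively (which is how Definition~\ref{def:ab-eq-neq} introduces them, and which Definition~\ref{def:parity} notes is equivalent to the monomial-count parity), and then simply evaluate the pointwise product at the two constant tuples. The paper instead argues at the level of Zhegalkin polynomials: it reduces to the case of constant term $0$, expands the product of two sums of monomials, and tracks the parity of the number of monomials through the expansion. Your two-point evaluation is shorter and sidesteps the mildly delicate point in the paper's argument that cancellation of duplicate monomials in the expanded product (which happens in pairs mod $2$) preserves the parity of the monomial count; the paper's version has the advantage of staying within the polynomial formalism used throughout Section~\ref{sec:Bf} and of making the ``constant term'' condition in part (ii) appear directly as a property of the polynomials being multiplied. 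Your closing remark about arities is the right thing to note and is indeed immaterial here.
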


\begin{proof}
\ref{lem:products-Even-Odd:Even}
Let $\alpha, \beta \in \clEven \cap \clCon{0}$.
Then both $\alpha$ and $\beta$ are sums of an even number of monomials.
We have $\alpha \cdot \beta \in \clEven$ because the expansion of the product of the two even sums of monomials yields a sum of an even number of monomials.
We clearly also have that
$(\alpha + 1) \cdot \beta = \alpha \cdot \beta + \beta$, $\alpha \cdot (\beta + 1) = \alpha \cdot \beta + \alpha$, and $(\alpha + 1) \cdot (\beta + 1) = \alpha \cdot \beta + \alpha + \beta + 1$ belong to $\clEven$ because they are sums of polynomials with an even number of monomials plus a possible constant term.
The claim now follows because any $f \in \clEven$ is of the form $\alpha$ or $\alpha + 1$ for some $\alpha \in \clEven \cap \clCon{0}$.

\ref{lem:products-Even-Odd:Odd}
Let $\alpha, \beta \in \clOdd \cap \clCon{0}$.
Then both $\alpha$ and $\beta$ are sums of an odd number of monomials.
We have $\alpha \cdot \beta \in \clOdd$ because the expansion of the product of the two odd sums of monomials yields a sum of an odd number of monomials.
Consequently, $(\alpha + 1) \cdot \beta = \alpha \cdot \beta + \beta \in \clEven$, $\alpha \cdot (\beta + 1) = \alpha \cdot \beta + \alpha \in \clEven$, and $(\alpha + 1) \cdot (\beta + 1) = \alpha \cdot \beta + \alpha + \beta + 1 \in \clOdd$.
\end{proof}

\begin{proposition}
\label{prop:odd-even}
Let $C$ be a clone.
\begin{enumerate}[label={\upshape{(\roman*)}}, leftmargin=*, widest=iii]
\item\label{lem:odd-even:even-C}
$\clEven C \subseteq \clEven$ if and only if $C \subseteq \clTc$.
\item\label{lem:odd-even:C-even}
$C \clEven \subseteq \clEven$ for any clone $C$.
\item\label{lem:odd-even:odd-C}
$\clOdd C \subseteq \clOdd$ if and only if $C \subseteq \clTc$.
\item\label{lem:odd-even:C-odd}
$C \clOdd \subseteq \clOdd$ if and only if $C \subseteq \clS$.
\end{enumerate}
\end{proposition}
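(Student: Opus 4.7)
Approach: all four items reduce to a single identity—for a composition $h = f(g_1,\dots,g_n)$ and any diagonal point $\vect{c} \in \{\vect{0},\vect{1}\}$,
\[
h(\vect{c}) = f(g_1(\vect{c}),\dots,g_n(\vect{c})),
\]
combined with the observation that $\clEven$, $\clOdd$, and $\clS$ are (at least partly) determined by comparing function values at $\vect{0}$ and $\vect{1}$.

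The forward directions of (i)--(iii) are immediate consequences of this identity. For (i): if $C \subseteq \clTc$ then $g_i(\vect{0}) = 0$ and $g_i(\vect{1}) = 1$, so $h(\vect{0}) = f(\vect{0})$ and $h(\vect{1}) = f(\vect{1})$, whence $f \in \clEven$ lifts to $h \in \clEven$. For (ii), which is unconditional: $g_i \in \clEven$ gives $g_i(\vect{0}) = g_i(\vect{1})$, so the argument tuples of $f$ coincide at $\vect{0}$ and $\vect{1}$. The forward direction of (iii) is identical to (i), replacing equality by inequality.

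For the converses of (i) and (iii), I will argue that $\clEven C \subseteq \clEven$ (resp.\ $\clOdd C \subseteq \clOdd$) forces $C \subseteq \clTc$: if $C \nsubseteq \clTc$, then identifying all arguments of a non\hyp{}constant\hyp{}preserving $g \in C$ produces a unary minor in $C$ that belongs to $\{0,\, 1,\, x_1+1\}$ (clones are minor\hyp{}closed), and in each of the three resulting cases a short, explicit composition inside $\clEven C$ (resp.\ $\clOdd C$) falls outside $\clEven$ (resp.\ $\clOdd$). The checks use, for instance, $f := x_1 + x_2 \in \clEven$ against a unary constant, and $f := x_1 x_2 + x_3 x_4 \in \clEven$ against $x_1+1$, with analogous choices from $\clOdd$ for the other statement.

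Item (iv) has the same quick forward direction, now combining self\hyp{}duality of $f \in C \subseteq \clS$ (so $f(\overline{\vect{b}}) = \overline{f(\vect{b})}$) with oddness of each $g_i$ (so $g_i(\vect{0}) = \overline{g_i(\vect{1})}$) to obtain $h(\vect{0}) = \overline{h(\vect{1})}$. The one step that really requires a construction is the converse: given $f \in C \setminus \clS$, fix $\vect{a} = (a_1,\dots,a_n) \in \{0,1\}^n$ with $f(\vect{a}) = f(\overline{\vect{a}})$, and take unary odd inner functions $g_i := x_1$ if $a_i = 0$, $g_i := x_1 + 1$ if $a_i = 1$. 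By design $g_i(0) = a_i$ and $g_i(1) = \overline{a_i}$, so the resulting unary $h = f(g_1,\dots,g_n)$ satisfies $h(0) = f(\vect{a}) = f(\overline{\vect{a}}) = h(1)$, placing $h$ in $\clEven$ and contradicting $C \clOdd \subseteq \clOdd$. I expect this construction—matching a witness $\vect{a}$ of non\hyp{}self\hyp{}duality to a tuple of odd inner functions—to be the only genuinely nontrivial step; everything else is two\hyp{}point evaluation.
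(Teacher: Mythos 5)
Your proof is correct, but it takes a genuinely different and more elementary route than the paper's. For the sufficiency directions you exploit the fact that membership in $\clEven$ and $\clOdd$ is decided entirely by the values at the two constant tuples, so a single evaluation identity $h(\vect{c}) = f(g_1(\vect{c}),\dots,g_n(\vect{c}))$ at $\vect{c} \in \{\vect{0},\vect{1}\}$ settles (i), (ii), the forward half of (iii), and--combined with self-duality of the outer function at one well-chosen point--the forward half of (iv). The paper instead works with Zhegalkin polynomials: for (i) and (iii) it expands $f(g_1,\dots,g_n) = \sum_{S \in \monomials{f}} \prod_{i \in S} g_i$ and tracks the parity of the number of odd summands via Lemma~\ref{lem:products-Even-Odd}, and for (ii) and (iv) it reduces to the generators $\clAll = \clonegen{x_1 x_2 + 1}$ and $\clS = \clonegen{\mu, x_1+1}$ through Lemma~\ref{lem:left-stab-gen}. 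Your necessity arguments for (i) and (iii) match the paper's in substance (unary minors $0$, $1$, $x_1+1$ plus explicit bad compositions, which is what Lemma~\ref{lem:non} packages), though you leave the $\clOdd$-side witnesses implicit; they do exist ($x_1$ against the constants, $x_1 x_2$ against $x_1+1$), so this is a matter of completeness of exposition rather than a gap. Your converse of (iv) is the most interesting divergence: transporting a witness $\vect{a}$ of non-self-duality to the pair $(\vect{0},\vect{1})$ by choosing the odd unary inner functions $x_1$ or $x_1+1$ coordinatewise gives a direct, self-contained contradiction, whereas the paper shows that $C$ contains none of $\clIo$, $\clIi$, $\clLambdac$, $\clVc$ and then reads $C \subseteq \clS$ off Post's lattice. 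What the paper's route buys is uniformity--the same lemmata serve all the propositions of Section~\ref{sec:C1C2}--while yours buys brevity and independence from both the polynomial machinery and the classification of Boolean clones.
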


\begin{proof}
Recall that $\clTc = \clCon{0} \cap \clOdd$.

\ref{lem:odd-even:even-C}
Assume first that $C \subseteq \clTc$.
Let $f \in \clEven^{(n)}$ and $g_1, \dots, g_n \in C^{(m)}$.
Observing that $\clTc = \clBoth{0}{1} = \clOdd \cap \clCon{0}$,
we have
\begin{align*}
f(g_1, \dots, g_n)
= \sum_{S \in \monomials{f}} \prod_{i \in S} g_i
\in \clEven,
\end{align*}
because each summand $\prod_{i \in S} g_i$ is odd by Lemma~\ref{lem:products-Even-Odd}, and there are an even number of such summands since $f$ is even.
We conclude that $\clEven C \subseteq \clEven$.
Conversely, if $\clEven C \subseteq \clEven$, then $C$ includes neither $\clIo$, $\clIi$, nor $\clIstar$ by Lemma~\ref{lem:non}\ref{lem:non:K-Ia-Pa}, \ref{lem:non:K-Istar-Pa}, so $C \subseteq \clTc$.

\ref{lem:odd-even:C-even}
It is enough to prove the claim for $C = \clAll$.
Using the fact that $\clAll = \clonegen{x_1 x_2 + 1}$, we will apply Lemma~\ref{lem:left-stab-gen}.
Let $g_1, g_2 \in \clEven$.
Lemma~\ref{lem:products-Even-Odd} gives $(x_1 x_2 + 1)(g_1, g_2) = g_1 g_2 + 1 \in \clEven$.
Now it follows from Lemma~\ref{lem:left-stab-gen} that $\clAll \clEven \subseteq \clEven$.

\ref{lem:odd-even:odd-C}
Assume first that $C \subseteq \clTc$.
Let $f \in \clOdd^{(n)}$ and $g_1, \dots, g_n \in C^{(m)}$.
If $f \in \clCon{0}$, then $f \in \clCon{0} \cap \clOdd = \clTc$.
It follows immediately from the fact that $\clTc$ is a clone that $f(g_1, \dots, g_n) \in \clTc = \clCon{0} \cap \clOdd \subseteq \clOdd$.
If $f \in \clCon{1}$, then $f' := f + 1 \in \clCon{0} \cap \clOdd = \clTc$.
It follows from Lemma~\ref{lem:comp-minors} that
\begin{align*}
f(g_1, \dots, g_n)
& = (f' + 1)(g_1, \dots, g_n)
= f'(g_1, \dots, g_n) + 1(g_1, \dots, g_n)
\\ &
= \underbrace{f'(g_1, \dots, g_n)}_{\in \clTc = \clCon{0} \cap \clOdd} + 1
\in \clCon{1} \cap \clOdd
\subseteq \clOdd.
\end{align*}
We conclude that $\clOdd C \subseteq \clOdd$.
Conversely, if $\clOdd C \subseteq \clOdd$, then $C$ includes neither $\clIo$, $\clIi$, nor $\clIstar$ by Lemma~\ref{lem:non}\ref{lem:non:K-Ia-Pa}, \ref{lem:non:K-Istar-Pa}, so $C \subseteq \clTc$.

\ref{lem:odd-even:C-odd}
For sufficiency, it is enough to prove the claim for $C = \clS$.
Using the fact that $\clS = \clonegen{\mu, \, x_1 + 1}$, we will apply Lemma~\ref{lem:left-stab-gen}.
Let $g_1, g_2, g_3 \in \clOdd$.
We clearly have $(x_1 + 1)(g_1) = g_1 + 1 \in \clOdd$.
Applying Lemma~\ref{lem:products-Even-Odd}, we see that $\mu(g_1, g_2, g_3) = g_1 g_2 + g_1 g_3 + g_2 g_3 \in \clOdd$; for if $g_1$, $g_2$, $g_3$ have the same constant term, then the three summands $g_1 g_2$, $g_1 g_3$, $g_2 g_3$ belong to $\clOdd$; if they do not all have the same constant term, then it is easy to see that exactly one of the summands belongs to $\clOdd$ and the other two belong to $\clEven$.
Now it follows from Lemma~\ref{lem:left-stab-gen} that $\clS \clOdd \subseteq \clOdd$.

For necessity, assume that $C \clOdd \subseteq \clOdd$.
Then $C$ includes neither $\clIo$, $\clIi$, $\clLambdac$, nor $\clVc$ by Lemma~\ref{lem:non}\ref{lem:non:Ia-K-P1}, \ref{lem:non:LV-K-Pa}, so $C \subseteq \clS$.
\end{proof}

\begin{proposition}
\label{prop:CaEb}
Let $a, b \in \{0,1\}$, and let $C$ be a clone.
\begin{enumerate}[label={\upshape{(\roman*)}}, leftmargin=*, widest=ii]
\item\label{lem:CaEb:CaEb-C}
$\clBoth{a}{b} C \subseteq \clBoth{a}{b}$ if and only if $C \subseteq \clTc$.
\item\label{lem:CaEb:C-CaEb}
$C \clBoth{a}{b} \subseteq \clBoth{a}{b}$ if and only if $C \subseteq \clTa{a} \cap \clTa{b}$.
\end{enumerate}
\end{proposition}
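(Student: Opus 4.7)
The plan is to treat the two parts separately, leveraging the decomposition $\clBoth{a}{b} = \clCon{a} \cap \clYksi{b}$ and the fact that $\clBoth{a}{b}$ is always nonempty (it contains $\cf{1}{a}$ if $a=b$, $x_1$ if $(a,b)=(0,1)$, $x_1+1$ if $(a,b)=(1,0)$).

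For part~\ref{lem:CaEb:CaEb-C}, sufficiency follows immediately from Lemma~\ref{lem:suff-int}\ref{lem:suff-int:right}: if $C \subseteq \clTc = \clTo \cap \clTi$, then by Proposition~\ref{prop:Ca}\ref{lem:Ca:Ca-C},\ref{lem:Ca:Ea-C} we have $\clCon{a} C \subseteq \clCon{a}$ and $\clYksi{b} C \subseteq \clYksi{b}$, so $\clBoth{a}{b} C \subseteq \clBoth{a}{b}$. For necessity, I apply Lemma~\ref{lem:non}\ref{lem:non:DiXjCaEb}\ref{lem:non:K-Ia-CaEb} to any admissible $K := \clD{i} \cap \clChar{j} \cap \clBoth{a}{b}$, noting $K \subseteq \clBoth{a}{b}$. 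The lemma asserts $K \clIo \nsubseteq \clYksi{b}$, $K \clIi \nsubseteq \clCon{a}$, $K \clIstar \nsubseteq \clCon{a} \cup \clYksi{b}$; since $\clBoth{a}{b}$ is contained in each of $\clYksi{b}$, $\clCon{a}$, and $\clCon{a} \cup \clYksi{b}$, and the composition is monotone, each of $\clBoth{a}{b} \clIo$, $\clBoth{a}{b} \clIi$, $\clBoth{a}{b} \clIstar$ fails to be a subset of $\clBoth{a}{b}$. If $\clBoth{a}{b} C \subseteq \clBoth{a}{b}$, then $C$ contains none of $\clIo$, $\clIi$, $\clIstar$, so no nonprojection unary function lies in $C$; equivalently, $C \subseteq \clTc$.

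For part~\ref{lem:CaEb:C-CaEb}, sufficiency is the straightforward pointwise calculation: for $f \in C^{(n)}$ with $C \subseteq \clTa{a} \cap \clTa{b}$ and $g_1, \dots, g_n \in \clBoth{a}{b}^{(m)}$, we have $g_i(0, \dots, 0) = a$ and $g_i(1, \dots, 1) = b$, hence
\[
f(g_1, \dots, g_n)(0, \dots, 0) = f(a, \dots, a) = a, \qquad f(g_1, \dots, g_n)(1, \dots, 1) = f(b, \dots, b) = b.
\]
For necessity, suppose $C \clBoth{a}{b} \subseteq \clBoth{a}{b}$ and let $f \in C$. I choose a single witness $g \in \clBoth{a}{b}$ tailored to the pair $(a,b)$. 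If $a = b$, take $g := \cf{1}{a}$; then $f(g, \dots, g)$ is the unary constant $f(a, \dots, a)$, which lies in $\clBoth{a}{a}$ only if $f(a, \dots, a) = a$, giving $C \subseteq \clTa{a} = \clTa{a} \cap \clTa{b}$. If $(a,b) = (0,1)$, take $g := \pr_1^{(1)} \in \clBoth{0}{1}$; then $f(g, \dots, g)$ is a unary function whose values at $0$ and $1$ are $f(0, \dots, 0)$ and $f(1, \dots, 1)$, and membership in $\clBoth{0}{1}$ forces $f \in \clTc = \clTa{0} \cap \clTa{1}$. If $(a,b) = (1,0)$, take $g := x_1 + 1 \in \clBoth{1}{0}$ and argue analogously.

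The only mildly subtle point is that in necessity of part~\ref{lem:CaEb:CaEb-C} we must translate the non-containment statements of Lemma~\ref{lem:non} (which are phrased with respect to $\clCon{a}$ and $\clYksi{b}$ rather than $\clBoth{a}{b}$) into failures of $\clBoth{a}{b}$-stability; this is a one-line monotonicity observation. Everything else is a direct application of the bookkeeping lemmata already assembled in Section~\ref{sec:preliminaries}, Section~\ref{sec:stab}, and the prior results of Section~\ref{sec:C1C2}.
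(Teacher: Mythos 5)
Your proof is correct and follows essentially the same route as the paper: sufficiency in both parts via Lemma~\ref{lem:suff-int} together with Proposition~\ref{prop:Ca} (or the equivalent direct pointwise computation), and necessity by showing that $C$ can contain none of the clones $\clIo$, $\clIi$, $\clIstar$ and reading off the containment in $\clTc$ (resp.\ $\clTa{a} \cap \clTa{b}$). The only cosmetic difference is that in part~\ref{lem:CaEb:C-CaEb} you inline the unary witnesses $\cf{1}{a}$, $x_1$, $x_1+1$ directly instead of citing Lemma~\ref{lem:non}\ref{lem:non:Inota-K-CaEb}, \ref{lem:non:Ia-K-CaEb}, \ref{lem:non:Istar-K-CaEb}, which are proved by exactly those computations.
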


\begin{proof}
\ref{lem:CaEb:CaEb-C}
Lemma~\ref{lem:suff-int} and Proposition~\ref{prop:Ca}\ref{lem:Ca:Ca-C}, \ref{lem:Ca:Ea-C} imply that $\clBoth{a}{b} C \subseteq \clBoth{a}{b}$ whenever $C \subseteq \clTo \cap \clTi = \clTc$.
Conversely, if $\clBoth{a}{b} C \subseteq \clBoth{a}{b}$, then the clone $C$ includes neither $\clIo$, $\clIi$, nor $\clIstar$ by Lemma~\ref{lem:non}\ref{lem:non:K-Ia-CaEb}, so $C \subseteq \clTc$.

\ref{lem:CaEb:C-CaEb}
Lemma~\ref{lem:suff-int} and Proposition~\ref{prop:Ca}\ref{lem:Ca:C-Ca}, \ref{lem:Ca:C-Ea} imply that $C \clBoth{a}{b} \subseteq \clBoth{a}{b}$ whenever $C \subseteq \clTa{a} \cap \clTa{b}$.

Assume now that $C \clBoth{a}{b} \subseteq \clBoth{a}{b}$.
If $a = b$, then the clone $C$ includes neither $\clIa{\overline{a}}$ nor $\clIstar$ by Lemma~\ref{lem:non}\ref{lem:non:Inota-K-CaEb}, \ref{lem:non:Istar-K-CaEb}, so $C \subseteq \clTa{a} = \clTa{a} \cap \clTa{b}$.
If $a \neq b$, then $C$ includes neither $\clIo$, $\clIi$, nor $\clIstar$ by Lemma~\ref{lem:non}\ref{lem:non:Ia-K-CaEb}, \ref{lem:non:Istar-K-CaEb}, so $C \subseteq \clTc = \clTa{a} \cap \clTa{b}$.
\end{proof}

\begin{lemma}
\label{lem:X0apu}
\leavevmode
\begin{enumerate}[label={\upshape{(\roman*)}}, leftmargin=*, widest=ii]
\item\label{lem:X0apu:X0SinX0}
$\clChar{0} \clS \subseteq \clChar{0}$.
\item\label{lem:X0apu:OmegaX0inX0}
$\clAll \clChar{0} \subseteq \clChar{0}$.
\end{enumerate}
\end{lemma}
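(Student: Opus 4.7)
The plan is to argue both statements directly from the definition of reflexivity, namely that $f \in \clChar{0}$ iff $f(\overline{\vect{a}}) = f(\vect{a})$ for all $\vect{a}$ (this is Lemma~\ref{lem:Selezneva}(i) combined with the definition of $\clChar{0}$ via $f + f^{\mathrm{n}} = 0$).

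For part \ref{lem:X0apu:X0SinX0}, I would fix $f \in \clChar{0}^{(n)}$ and $g_1, \dots, g_n \in \clS^{(m)}$, and compute, for arbitrary $\vect{a} \in \{0,1\}^m$,
\[
f(g_1, \dots, g_n)(\overline{\vect{a}})
= f(g_1(\overline{\vect{a}}), \dots, g_n(\overline{\vect{a}}))
= f(\overline{g_1(\vect{a})}, \dots, \overline{g_n(\vect{a})})
= f(g_1(\vect{a}), \dots, g_n(\vect{a})),
\]
where the second equality uses self-duality of each $g_i$ and the third uses reflexivity of $f$ applied to the tuple $(g_1(\vect{a}), \dots, g_n(\vect{a}))$. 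Hence $f(g_1, \dots, g_n)$ is reflexive.

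For part \ref{lem:X0apu:OmegaX0inX0}, the key observation is even stronger: if $g_1, \dots, g_n \in \clChar{0}^{(m)}$, then for every $h \in \clAll^{(n)}$ and every $\vect{a} \in \{0,1\}^m$,
\[
h(g_1, \dots, g_n)(\overline{\vect{a}})
= h(g_1(\overline{\vect{a}}), \dots, g_n(\overline{\vect{a}}))
= h(g_1(\vect{a}), \dots, g_n(\vect{a}))
= h(g_1, \dots, g_n)(\vect{a}),
\]
since each $g_i$ is reflexive. Thus $h(g_1, \dots, g_n) \in \clChar{0}$, which gives $\clAll \clChar{0} \subseteq \clChar{0}$. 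No appeal to a generating set is necessary; the argument is a one-line computation from the definition. There is no real obstacle here, as the proof reduces entirely to unwinding the reflexivity condition and applying it componentwise.
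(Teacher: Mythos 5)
Your proof is correct and matches the paper's own argument essentially verbatim: both parts are handled by the same pointwise computation, using self-duality of the inner functions (resp.\ reflexivity of the inner functions) together with reflexivity of the outer function. Nothing further is needed.
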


\begin{proof}
\ref{lem:X0apu:X0SinX0}
Let $f \in \clChar{0}^{(n)}$ and $g_1, \dots, g_n \in \clS^{(m)}$.
Since $\clChar{0}$ is the class of all reflexive functions and $\clS$ is the class of all self\hyp{}dual functions, we have, for any $\vect{a} \in \{0,1\}^m$ that
\begin{align*}
f(g_1, \dots, g_n)(\overline{\vect{a}})
&
= f(g_1(\overline{\vect{a}}), \dots, g_n(\overline{\vect{a}}))
= f(\overline{g_1(\vect{a})}, \dots, \overline{g_n(\vect{a})})
\\ &
= f(g_1(\vect{a}), \dots, g_n(\vect{a}))
= f(g_1, \dots, g_n)(\vect{a}),
\end{align*}
so $f(g_1, \dots, g_n) \in \clChar{0}$.

\ref{lem:X0apu:OmegaX0inX0}
Let $f \in \clAll^{(n)}$, $g_1, \dots, g_n \in \clChar{0}^{(m)}$.
We have, for any $\vect{a} \in \{0,1\}^m$,
\[
f(g_1, \dots, g_n)(\overline{\vect{a}})
= f(g_1(\overline{\vect{a}}), \dots, g_n(\overline{\vect{a}}))
= f(g_1(\vect{a}), \dots, g_n(\vect{a}))
= f(g_1, \dots, g_n)(\vect{a}),
\]
so $f(g_1, \dots, g_n) \in \clChar{0}$.
\end{proof}

\begin{proposition}
\label{prop:Xk}
Let $k \in \IN_{+}$, and let $C$ be a clone.
\begin{enumerate}[label={\upshape{(\roman*)}}, leftmargin=*, widest=iii]
\item\label{lem:Xk:Xk-C}
For $k \geq 2$,
$\clChar{k} C \subseteq \clChar{k}$ if and only if $C \subseteq \clLS$.
\item\label{lem:Xk:X1-C}
$\clChar{1} C \subseteq \clChar{1}$ if and only if $C \subseteq \clS$.
\item\label{lem:Xk:C-Xk}
$C \clChar{k} \subseteq \clChar{k}$ if and only if $C \subseteq \clL$.
\end{enumerate}
\end{proposition}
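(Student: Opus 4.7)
I propose to handle the three parts in parallel: sufficiency is a check against a small generating set of the relevant outer clone using Lemmata~\ref{lem:left-stab-gen} and \ref{lem:right-stab-gen}, while necessity is obtained by identifying the largest clone preserving $\clChar{k}$ under the appropriate composition and placing it in Post's lattice (see Figure~\ref{fig:Post}).

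\textbf{Sufficiency.} For (iii), take $\clL = \clonegen{x_1 + x_2, 1}$: the sum of two members of $\clChar{k}$ lies in $\clChar{k}$ by Lemma~\ref{lem:Xk-closed}, and $1 \in \clChar{0} \subseteq \clChar{k}$, so Lemma~\ref{lem:left-stab-gen} delivers $\clL \clChar{k} \subseteq \clChar{k}$. For (i), take $\clLS = \clonegen{\oplus_3, x_1 + 1}$: Lemma~\ref{lem:Lc-simplify}\ref{lem:Lc-simplify:triplesum} writes $f \ast \oplus_3$ as the sum of three minors of $f$, hence in $\clChar{k}$ by Lemma~\ref{lem:Xk-closed}; and $f \ast (x_1 + 1) = f + f'_1$ by Lemma~\ref{lem:f'i-negation}, with $f'_1 \in \clChar{k-1}$ by Lemma~\ref{lem:f'i}\ref{lem:f'i:char}, so sum-closure concludes via Lemma~\ref{lem:right-stab-gen}. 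For (ii), with $\clS = \clonegen{\mu, x_1 + 1}$, the generator $x_1 + 1$ is handled as above; the delicate case is $\mu$, where I exploit its self-duality: writing $F := f \ast \mu$, a direct expansion yields $F^\mathrm{n} = f^\mathrm{n} \ast \mu$, so $F + F^\mathrm{n} = (f + f^\mathrm{n}) \ast \mu$; when $f \in \clChar{1}$, Lemma~\ref{lem:Xk-description} makes $f + f^\mathrm{n}$ a constant, and composing a constant outside with $\mu$ again yields a constant, so $F \in \clChar{1}$ by the same lemma.

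\textbf{Necessity.} In all three parts, the family of clones satisfying the relevant stability condition is closed under arbitrary joins of clones: by Lemmata~\ref{lem:left-stab-gen}\ref{gf-G} and \ref{lem:right-stab-gen}\ref{minorG}, the condition is witnessed on a generating set, and the union of generating sets for two clones generates their join. Hence there is a largest clone $C^*$ with the stability in question, and sufficiency gives $C^* \supseteq \clL$ for (iii), $C^* \supseteq \clS$ for (ii), and $C^* \supseteq \clLS$ for (i). Since $\clL$ and $\clS$ are maximal clones on $\{0, 1\}$, in (iii) we have $C^* \in \{\clL, \clAll\}$ and in (ii) $C^* \in \{\clS, \clAll\}$; inspection of Post's lattice shows that the only clones containing $\clLS$ are $\clLS, \clL, \clS, \clAll$, so in (i) $C^* \in \{\clLS, \clL, \clS, \clAll\}$. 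It remains in each case to rule out the strictly larger candidates by explicit counterexamples.

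\textbf{Counterexamples and main obstacle.} To exclude $\clAll$ in (iii), I would compute $\wedge(\monster{k}, x_{k+2}) = \sum_{\emptyset \neq S \subsetneq \nset{k+1}} x_{S \cup \{k+2\}}$ and observe that $\Char(\nset{k}, \cdot) = 1$, so the composition lies outside $\clChar{k}$. To exclude $\clL$ in (i), the composition $\monster{k+1} \ast 1$ simplifies to $1 + x_2 x_3 \cdots x_{k+2}$, whose characteristic rank equals $k + 1$. To exclude $\clS$ in (i) for $k \geq 2$, I would analyze $(x_1 x_2 \cdots x_k) \ast \mu = \mu(x_1, x_2, x_3) \cdot x_4 \cdots x_{k+2}$, whose three monomials each have size $k + 1$, omit one element of $\{1, 2, 3\}$, and contain all of $\{4, \dots, k+2\}$; a $k$-subset $A$ that omits exactly one element from $\{1, 2, 3\}$ and one from $\{4, \dots, k+2\}$ is contained in precisely one of the three monomials, so $\Char(A, \cdot) = 1$, giving characteristic rank at least $k + 1$ (this requires $\{4, \dots, k+2\}$ to be nonempty, i.e., $k \geq 2$). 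To exclude $\clAll$ in (ii), the simpler fact that $x_1 \ast \wedge = x_1 x_2 \in \clChar{2} \setminus \clChar{1}$ suffices. The main technical hurdle will be this last characteristic-rank calculation for $(x_1 \cdots x_k) \ast \mu$, which turns on a case analysis of how a $k$-subset of $\nset{k+2}$ intersects the triple $\{1, 2, 3\}$.
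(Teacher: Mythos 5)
Your proposal is correct, but it reaches the necessity direction by a genuinely different route than the paper. For sufficiency, parts (i) and (iii) coincide with the paper's argument; for part (ii) the paper instead decomposes $\clChar{1} = \clChar{0} \cup \clS$ and uses $\clChar{0}\clS \subseteq \clChar{0}$ (Lemmata~\ref{lem:union-left} and \ref{lem:X0apu}), whereas you handle the generator $\mu$ head-on via the identity $(f \ast \mu)^{\mathrm{n}} = f^{\mathrm{n}} \ast \mu$ (self-duality of $\mu$) together with Lemma~\ref{lem:Xk-description}; both work, and yours has the small advantage of staying entirely inside the $f + f^{\mathrm{n}}$ calculus. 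For necessity, the paper works ``from below'': Lemma~\ref{lem:non} shows that each of the clones $\clIo$, $\clIi$, $\clLambdac$, $\clVc$, $\clSM$ violates the stability in question, and then Post's lattice gives that a clone avoiding all of them lies in $\clLS$ (resp.\ $\clS$, $\clL$). You work ``from above'': the observation that the stability-preserving clones are closed under joins (which does follow from Lemmata~\ref{lem:left-stab-gen} and \ref{lem:right-stab-gen} applied to the union of the clones as a generating set, using that $\clChar{k}$ is minor-closed) yields a largest such clone $C^*$, and maximality of $\clL$ and $\clS$ plus the fact that the filter above $\clLS$ is $\{\clLS, \clL, \clS, \clAll\}$ leaves only finitely many candidates, each killed by one explicit composition. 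I checked your three computations: $\Char(\nset{k}, \monster{k}\cdot x_{k+2}) = 1$; $\monster{k+1} \ast 1 = 1 + x_2\cdots x_{k+2}$ of characteristic rank $k+1$; and $\Char(A, \mu(x_1,x_2,x_3)x_4\cdots x_{k+2}) = 1$ for a $k$-set $A$ omitting one index from $\{1,2,3\}$ and one from $\{4,\dots,k+2\}$ — all are right, and the last correctly needs $k \geq 2$. Your approach needs fewer counterexamples for this proposition and makes the principal-ideal structure explicit, at the cost of invoking two extra facts about Post's lattice; the paper's Lemma~\ref{lem:non} is bulkier here but is amortized over the many analogous propositions of Section~\ref{sec:C1C2}.
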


\begin{proof}
\ref{lem:Xk:Xk-C}
For sufficiency, it is enough to prove the claim for $C = \clLS$.
Using the fact that $\clLS = \clonegen{\mathord{\oplus_3}, \, x_1 + 1}$, we apply Lemma~\ref{lem:right-stab-gen}.
Let $f \in \clChar{k}$.
We have $f \ast \mathord{\oplus_3} = \mathord{\oplus_3}(f_{\sigma_1}, f_{\sigma_2}, f_{\sigma_3})$, where the $\sigma_i$ are as in Lemma~\ref{lem:Lc-simplify}.
Since $\clChar{k}$ is closed under minors and sums by Lemma~\ref{lem:Xk-closed}, we have $\mathord{\oplus_3}(f_{\sigma_1}, f_{\sigma_2}, f_{\sigma_3}) \in \clChar{k}$.
As for $f \ast (x_1 + 1)$, note that $f \ast (x_1 + 1) = f + f'_1$ by Lemma~\ref{lem:f'i-negation}.
Since $f'_1 \in \clChar{k-1} \subseteq \clChar{k}$ by Lemma~\ref{lem:f'i}, we have $f \ast (x_1 + 1) = f + f'_1 \in \clChar{k}$ by Lemma~\ref{lem:Xk-closed}.
It follows from Lemma~\ref{lem:right-stab-gen} that $\clChar{k} \, \clLS \subseteq \clChar{k}$.

For necessity, assume that $\clChar{k} C \subseteq \clChar{k}$.
Then $C$ includes neither $\clIo$, $\clIi$, $\clLambdac$, $\clVc$, nor $\clSM$ by Lemma~\ref{lem:non}\ref{lem:non:K-Ia-Xj}, \ref{lem:non:K-LV}, \ref{lem:non:K-SM}, so $C \nsubseteq \clLS$.

\ref{lem:Xk:X1-C}
Assume first that $C \subseteq \clS$.
Since $\clChar{1} = (\clChar{1} \cap \clEven) \cup (\clChar{1} \cap \clOdd) = \clChar{0} \cup \clS$ and $\clS$ is a clone, it follows from Lemmata~\ref{lem:union-left} and \ref{lem:X0apu}\ref{lem:X0apu:X0SinX0} that
\[
\clChar{1} \clS
\subseteq (\clChar{0} \cup \clS) \clS
= \clChar{0} \clS \cup \clS \clS
\subseteq \clChar{0} \cup \clS
= \clChar{1}.
\]
Conversely, if $\clChar{1} C \subseteq \clChar{1}$, then $C$ includes neither $\clIo$, $\clIi$, $\clLambdac$, nor $\clVc$ by Lemma~\ref{lem:non}\ref{lem:non:K-Ia-Xj}, \ref{lem:non:K-LV}, so $C \subseteq \clS$.

\ref{lem:Xk:C-Xk}
For sufficiency, it is enough to prove the claim for $C = \clL$.
Using the fact that $\clL = \clonegen{x_1 + x_2, 1}$, we apply Lemma~\ref{lem:left-stab-gen}.
For any $g_1, g_2 \in \clChar{k}^{(n)}$, we clearly have
$1(g_1) = 1 \in \clChar{k}$
and
$(x_1 + x_2)(g_1, g_2) = g_1 + g_2 \in \clChar{k}$ by Lemma~\ref{lem:Xk-closed}.
It follows from Lemma~\ref{lem:left-stab-gen} that $\clL \clChar{k} \subseteq \clChar{k}$.

For necessity, assume that $C \clChar{k} \subseteq \clChar{k}$.
Then $C$ includes neither $\clLambdac$, $\clVc$, nor $\clSM$ by Lemma~\ref{lem:non}\ref{lem:non:LV-K-Xj}, \ref{lem:non:SM-K}, \ref{lem:non:SM-X1Ca} so $C \subseteq \clL$.
\end{proof}

\begin{proposition}
\label{prop:XkCa}
Let $k \in \IN_{+}$, $a \in \{0,1\}$, and let $C$ be a clone.
\begin{enumerate}[label={\upshape{(\roman*)}}, leftmargin=*, widest=iii]
\item\label{lem:XkCa:XkCa-C}
For $k \geq 2$,
$(\clChar{k} \cap \clCon{a}) C \subseteq \clChar{k} \cap \clCon{a}$ if and only if $C \subseteq \clLc$.

\item\label{lem:XkCa:X1Ca-C}
$(\clChar{1} \cap \clCon{a}) C \subseteq \clChar{1} \cap \clCon{a}$ if and only if $C \subseteq \clSc$.

\item\label{lem:XkCa:C-XkCa}
$C (\clChar{k} \cap \clCon{a}) \subseteq \clChar{k} \cap \clCon{a}$ if and only if $C \subseteq \clLa{a}$.
\end{enumerate}
\end{proposition}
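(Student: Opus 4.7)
My plan is to treat each part of the proposition with the same two\hyp{}step strategy: sufficiency by intersecting previously\hyp{}established stability results via Lemma~\ref{lem:suff-int}, and necessity by ruling out, one by one, the atoms of Post's lattice that lie outside the claimed clone, using the non\hyp{}inclusions collected in Lemma~\ref{lem:non}.

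For sufficiency, the key observation is that $\clLc = \clLS \cap \clTo$, $\clSc = \clS \cap \clTo$, and $\clLa{a} = \clL \cap \clTa{a}$. In parts~\ref{lem:XkCa:XkCa-C} and \ref{lem:XkCa:X1Ca-C}, if $C \subseteq \clLc$ (resp.\ $C \subseteq \clSc$), then $\clChar{k} C \subseteq \clChar{k}$ by Proposition~\ref{prop:Xk}\ref{lem:Xk:Xk-C} (resp.\ \ref{lem:Xk:X1-C}) and $\clCon{a} C \subseteq \clCon{a}$ by Proposition~\ref{prop:Ca}\ref{lem:Ca:Ca-C}, so Lemma~\ref{lem:suff-int}\ref{lem:suff-int:right} gives the required right stability of the intersection. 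Analogously, for part~\ref{lem:XkCa:C-XkCa} and $C \subseteq \clLa{a}$, Propositions~\ref{prop:Xk}\ref{lem:Xk:C-Xk} and \ref{prop:Ca}\ref{lem:Ca:C-Ca} combined with Lemma~\ref{lem:suff-int}\ref{lem:suff-int:left} yield left stability.

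For necessity of parts~\ref{lem:XkCa:XkCa-C} and \ref{lem:XkCa:X1Ca-C}, I would fix the subclass $K := \clD{m} \cap \clChar{k} \cap \clBoth{a}{a}$ of $\clChar{k} \cap \clCon{a}$ (with $m$ chosen as $k$ or $k+1$ as needed) and exploit the fact that any non\hyp{}inclusion $K G \nsubseteq \clChar{k} \cap \clCon{a}$ for a clone $G$ immediately transfers to $(\clChar{k} \cap \clCon{a}) G \nsubseteq \clChar{k} \cap \clCon{a}$. Taking $m = k+1$, Lemma~\ref{lem:non}\ref{lem:non:K-Ia-Xj} yields $K \clIo, K \clIi \nsubseteq \clChar{k}$; taking $m = k$, Lemma~\ref{lem:non}\ref{lem:non:K-Ia-CaEb} yields $K \clIstar \nsubseteq \clCon{a}$, and Lemma~\ref{lem:non}\ref{lem:non:K-LV} yields $K \clLambdac, K \clVc \nsubseteq \clChar{k}$. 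Since both $\clChar{k}$ and $\clCon{a}$ contain $\clChar{k} \cap \clCon{a}$, each such non\hyp{}inclusion forces $C$ to avoid the corresponding atom. For $k \geq 2$, Lemma~\ref{lem:non}\ref{lem:non:K-SM} additionally forbids $\clSM \subseteq C$ via $K \clSM \nsubseteq \clChar{k}$, and inspection of Post's lattice closes part~\ref{lem:XkCa:XkCa-C} with $C \subseteq \clLc$. For $k = 1$, $\clSM$ is allowed (since $\clSM \subseteq \clSc$), so the remaining exclusions pin $C$ down to $C \subseteq \clSc$.

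For necessity of part~\ref{lem:XkCa:C-XkCa}, the non\hyp{}containments $\clIa{\overline{a}} \nsubseteq C$ and $\clIstar \nsubseteq C$ follow from Lemma~\ref{lem:non}\ref{lem:non:Inota-K-CaEb} and Lemma~\ref{lem:non}\ref{lem:non:Istar-K-CaEb} applied to any nonempty subclass of $\clChar{k} \cap \clCon{a}$. For $\clLambdac$ and $\clVc$, I would invoke Lemma~\ref{lem:non}\ref{lem:non:LV-K-Xj} with $K = \clD{k} \cap \clChar{k} \cap \clBoth{a}{a}$ when $k \geq 2$, and with $K = \clD{1} \cap \clChar{1} \cap \clBoth{a}{\overline{a}} \subseteq \clChar{1} \cap \clCon{a}$ (invoking the $a \neq b$ alternative) when $k = 1$. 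Finally, $\clSM \nsubseteq C$ comes from Lemma~\ref{lem:non}\ref{lem:non:SM-K} for $k \geq 2$ and from Lemma~\ref{lem:non}\ref{lem:non:SM-X1Ca}, which was stated precisely for this purpose, when $k = 1$. Together with the trivial observation that $\clIa{a} \subseteq \clLa{a}$, these exclusions leave only clones contained in $\clLa{a}$. The only delicate point throughout is pairing the $k = 1$ subcases with the right witness, since several parts of Lemma~\ref{lem:non} carry extra $j \geq 2$ hypotheses and one must fall back on the $a \neq b$ clause or on Lemma~\ref{lem:non}\ref{lem:non:SM-X1Ca}.
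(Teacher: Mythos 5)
Your proposal is correct and follows essentially the same route as the paper: sufficiency by combining Propositions~\ref{prop:Ca} and \ref{prop:Xk} through Lemma~\ref{lem:suff-int}, and necessity by excluding the atoms $\clIo$, $\clIi$, $\clIstar$, $\clLambdac$, $\clVc$, $\clSM$ via the same items of Lemma~\ref{lem:non} (including the fallback to the $a \neq b$ clause of \ref{lem:non:LV-K-Xj} and to \ref{lem:non:SM-X1Ca} in the $k=1$ cases). Your explicit choices of the witness subclasses $\clD{m} \cap \clChar{k} \cap \clBoth{a}{b}$ merely spell out instantiations the paper leaves implicit.
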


\begin{proof}
\ref{lem:XkCa:XkCa-C}
Lemma~\ref{lem:suff-int} and Propositions~\ref{prop:Ca}\ref{lem:Ca:Ca-C} and \ref{prop:Xk}\ref{lem:Xk:Xk-C} imply that $(\clChar{k} \cap \clCon{a}) C \subseteq \clChar{k} \cap \clCon{a}$ whenever $C \subseteq \clTo \cap \clLS = \clLc$.
Conversely, if $(\clChar{k} \cap \clCon{a}) C \subseteq \clChar{k} \cap \clCon{a}$, then $C$ includes neither $\clIo$, $\clIi$, $\clIstar$, $\clLambdac$, $\clVc$, nor $\clSM$ by Lemma~\ref{lem:non}\ref{lem:non:K-Ia-CaEb}, \ref{lem:non:K-Ia-Xj}, \ref{lem:non:K-LV}, \ref{lem:non:K-SM}, so $C \subseteq \clLc$.

\ref{lem:XkCa:X1Ca-C}
Lemma~\ref{lem:suff-int} and Propositions~\ref{prop:Ca}\ref{lem:Ca:Ca-C} and \ref{prop:Xk}\ref{lem:Xk:X1-C} imply that $(\clChar{1} \cap \clCon{a}) C \subseteq \clChar{1} \cap \clCon{a}$ whenever $C \subseteq \clTo \cap \clS = \clSc$.
Conversely, if $(\clChar{1} \cap \clCon{a}) C \subseteq \clChar{1} \cap \clCon{a}$, then $C$ includes neither $\clIo$, $\clIi$, $\clIstar$, $\clLambdac$, nor $\clVc$ by Lemma~\ref{lem:non}\ref{lem:non:K-Ia-CaEb}, \ref{lem:non:K-Ia-Xj}, \ref{lem:non:K-LV}, so $C \subseteq \clSc$.

\ref{lem:XkCa:C-XkCa}
Lemma~\ref{lem:suff-int} and Propositions~\ref{prop:Ca}\ref{lem:Ca:C-Ca} and \ref{prop:Xk}\ref{lem:Xk:C-Xk} imply that $C (\clChar{k} \cap \clCon{a}) \subseteq \clChar{k} \cap \clCon{a}$ whenever $C \subseteq \clTa{a} \cap \clL = \clLa{a}$.
Conversely, if $C (\clChar{k} \cap \clCon{a}) \subseteq \clChar{k} \cap \clCon{a}$, then $C$ includes neither $\clIa{\overline{a}}$, $\clIstar$, $\clLambdac$, $\clVc$, nor $\clSM$ by Lemma~\ref{lem:non}\ref{lem:non:Inota-K-CaEb}, \ref{lem:non:Istar-K-CaEb}, \ref{lem:non:LV-K-Xj}, \ref{lem:non:SM-K}, \ref{lem:non:SM-X1Ca}, so $C \subseteq \clLa{a}$.
\end{proof}

\begin{proposition}
\label{prop:XkEa}
Let $k \in \IN_{+}$, $a \in \{0,1\}$, and let $C$ be a clone.
\begin{enumerate}[label={\upshape{(\roman*)}}, leftmargin=*, widest=iii]
\item\label{lem:XkEa:XkEa-C}
For $k \geq 2$,
$(\clChar{k} \cap \clYksi{a}) C \subseteq \clChar{k} \cap \clYksi{a}$ if and only if $C \subseteq \clLc$.

\item\label{lem:XkEa:X1Ea-C}
$(\clChar{1} \cap \clYksi{a}) C \subseteq \clChar{1} \cap \clYksi{a}$ if and only if $C \subseteq \clSc$.

\item\label{lem:XkEa:C-XkEa}
$C (\clChar{k} \cap \clYksi{a}) \subseteq \clChar{k} \cap \clYksi{a}$ if and only if $C \subseteq \clLa{a}$.
\end{enumerate}
\end{proposition}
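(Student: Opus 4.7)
The proof will mirror, essentially verbatim up to notation, the proof of Proposition~\ref{prop:XkCa}, exploiting the symmetry that swaps $\clCon{a}$ with $\clYksi{a}$ (and correspondingly $(0,\dots,0)$ with $(1,\dots,1)$). For each of the three sufficiency implications I would invoke Lemma~\ref{lem:suff-int} together with the known stability conditions for the two factors $\clChar{k}$ and $\clYksi{a}$: for \ref{lem:XkEa:XkEa-C}, Propositions~\ref{prop:Ca}\ref{lem:Ca:Ea-C} and \ref{prop:Xk}\ref{lem:Xk:Xk-C} give stability whenever $C \subseteq \clTi \cap \clLS$, and I would remark that $\clTi \cap \clLS = \clLc$ (a linear self\hyp{}dual function $c_0 + \sum_{i \in S} x_i$ must have $|S|$ odd, so the condition $f(1,\dots,1) = 1$ forces $c_0 = 0$, i.e., $f \in \clLc$). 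Part \ref{lem:XkEa:X1Ea-C} uses Proposition~\ref{prop:Xk}\ref{lem:Xk:X1-C} instead and the identity $\clTi \cap \clS = \clSc$, which follows from the same computation applied to an arbitrary self\hyp{}dual function. Part \ref{lem:XkEa:C-XkEa} uses Lemma~\ref{lem:suff-int}\ref{lem:suff-int:left} together with Propositions~\ref{prop:Ca}\ref{lem:Ca:C-Ea} and \ref{prop:Xk}\ref{lem:Xk:C-Xk}, noting $\clTa{a} \cap \clL = \clLa{a}$.

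For the three necessity implications, I would assume the stability inclusion and, for each atom of Post's lattice that the target clone does not contain, produce a witness against the inclusion by selecting a subclass of $\clChar{k} \cap \clYksi{a}$ of the form $\clD{i} \cap \clChar{j} \cap \clBoth{b}{a}$ (the second coordinate being $a$ ensures containment in $\clYksi{a}$) and applying the appropriate item of Lemma~\ref{lem:non}. Concretely, for \ref{lem:XkEa:XkEa-C} the assumption $(\clChar{k} \cap \clYksi{a}) C \subseteq \clChar{k} \cap \clYksi{a}$ forces $\clIo, \clIi, \clIstar, \clLambdac, \clVc, \clSM \nsubseteq C$ via Lemma~\ref{lem:non}\ref{lem:non:K-Ia-CaEb}, \ref{lem:non:K-Ia-Xj}, \ref{lem:non:K-LV}, \ref{lem:non:K-SM}, and these six non\hyp{}containments together imply $C \subseteq \clLc$; for \ref{lem:XkEa:X1Ea-C} the same items (dropping the $\clSM$ exclusion, which is unnecessary because $\clSM \subseteq \clSc$) yield $C \subseteq \clSc$; for \ref{lem:XkEa:C-XkEa} Lemma~\ref{lem:non}\ref{lem:non:Inota-K-CaEb}, \ref{lem:non:Istar-K-CaEb}, \ref{lem:non:LV-K-Xj}, \ref{lem:non:SM-K}, \ref{lem:non:SM-X1Ca} exclude $\clIa{\overline{a}}, \clIstar, \clLambdac, \clVc, \clSM$, giving $C \subseteq \clLa{a}$.

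No step is expected to pose any real difficulty: the proof is a transliteration of Proposition~\ref{prop:XkCa} under the $\clCon{a} \leftrightarrow \clYksi{a}$ symmetry, and the ingredients (Lemma~\ref{lem:non} as well as the earlier stability propositions) are phrased in a parameterized way that covers the $\clCon{}$ and $\clYksi{}$ cases uniformly. The only point requiring minor attention is that each chosen subclass $\clD{i} \cap \clChar{j} \cap \clBoth{b}{a}$ indeed sits inside $\clChar{k} \cap \clYksi{a}$, which is immediate from $\clBoth{b}{a} \subseteq \clYksi{a}$ and $\clD{i} \cap \clChar{j} \subseteq \clChar{k}$ for the relevant choices of $i$ and $j$.
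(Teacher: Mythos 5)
Your proposal is correct and matches the paper's proof essentially line for line: both directions of each part are obtained by the same combination of Lemma~\ref{lem:suff-int} with Propositions~\ref{prop:Ca} and \ref{prop:Xk} for sufficiency, and the same items of Lemma~\ref{lem:non} for necessity, under the $\clCon{a} \leftrightarrow \clYksi{a}$ symmetry with Proposition~\ref{prop:XkCa}. The extra verifications you include (that $\clTi \cap \clLS = \clLc$, $\clTi \cap \clS = \clSc$, and that the witness classes $\clD{i} \cap \clChar{j} \cap \clBoth{b}{a}$ lie in $\clChar{k} \cap \clYksi{a}$) are correct and only make explicit what the paper leaves implicit.
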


\begin{proof}
\ref{lem:XkEa:XkEa-C}
Lemma~\ref{lem:suff-int} and Propositions~\ref{prop:Ca}\ref{lem:Ca:Ea-C} and \ref{prop:Xk}\ref{lem:Xk:Xk-C} imply that $(\clChar{k} \cap \clYksi{a}) C \subseteq \clChar{k} \cap \clYksi{a}$ whenever $C \subseteq \clTi \cap \clLS = \clLc$.
Conversely, if $(\clChar{k} \cap \clYksi{a}) C \subseteq \clChar{k} \cap \clYksi{a}$, then $C$ includes neither $\clIo$, $\clIi$, $\clIstar$, $\clLambdac$, $\clVc$, nor $\clSM$ by Lemma~\ref{lem:non}\ref{lem:non:K-Ia-CaEb}, \ref{lem:non:K-Ia-Xj}, \ref{lem:non:K-LV}, \ref{lem:non:K-SM}, so $C \subseteq \clLc$.

\ref{lem:XkEa:X1Ea-C}
Lemma~\ref{lem:suff-int} and Propositions~\ref{prop:Ca}\ref{lem:Ca:Ea-C} and \ref{prop:Xk}\ref{lem:Xk:X1-C} imply that $(\clChar{1} \cap \clYksi{a}) C \subseteq \clChar{1} \cap \clYksi{a}$ whenever $C \subseteq \clTi \cap \clS = \clSc$.
Conversely, if $(\clChar{1} \cap \clYksi{a}) C \subseteq \clChar{1} \cap \clYksi{a}$, then $C$ includes neither $\clIo$, $\clIi$, $\clIstar$, $\clLambdac$, nor $\clVc$ by Lemma~\ref{lem:non}\ref{lem:non:K-Ia-CaEb}, \ref{lem:non:K-Ia-Xj}, \ref{lem:non:K-LV}, so $C \subseteq \clSc$.

\ref{lem:XkEa:C-XkEa}
Lemma~\ref{lem:suff-int} and Propositions~\ref{prop:Ca}\ref{lem:Ca:C-Ea} and \ref{prop:Xk}\ref{lem:Xk:C-Xk} imply that $C (\clChar{k} \cap \clCon{a}) \subseteq \clChar{k} \cap \clCon{a}$ whenever $C \subseteq \clTa{a} \cap \clL = \clLa{a}$.
Conversely, if $C (\clChar{k} \cap \clYksi{a}) \subseteq \clChar{k} \cap \clYksi{a}$, then $C$ includes neither $\clIa{\overline{a}}$, $\clIstar$, $\clLambdac$, $\clVc$, nor $\clSM$ by Lemma~\ref{lem:non}\ref{lem:non:Inota-K-CaEb}, \ref{lem:non:Istar-K-CaEb}, \ref{lem:non:LV-K-Xj}, \ref{lem:non:SM-K}, \ref{lem:non:SM-X1Ca}, so $C \subseteq \clLa{a}$.
\end{proof}

\begin{proposition}
\label{prop:XkP0}
Let $k \in \IN_{+}$, and let $C$ be a clone.
\begin{enumerate}[label={\upshape{(\roman*)}}, leftmargin=*, widest=iii]
\item\label{lem:XkP0:XkP0-C}
For $k \geq 2$,
$(\clChar{k} \cap \clEven) C \subseteq \clChar{k} \cap \clEven$ if and only if $C \subseteq \clLc$.
\item\label{lem:XkP0:X1P0-C}
$(\clChar{1} \cap \clEven) C \subseteq \clChar{1} \cap \clEven$ if and only if $C \subseteq \clS$.
\item\label{lem:XkP0:C-XkP0}
For $k \geq 2$,
$C (\clChar{k} \cap \clEven) \subseteq \clChar{k} \cap \clEven$ if and only if $C \subseteq \clL$.
\item\label{lem:XkP0:C-X1P0}
$C (\clChar{1} \cap \clEven) \subseteq \clChar{1} \cap \clEven$ for any clone $C$.
\end{enumerate}
\end{proposition}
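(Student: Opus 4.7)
The strategy mirrors that of Propositions~\ref{prop:Xk}--\ref{prop:XkEa}: use Lemma~\ref{lem:suff-int} together with the single-factor stability results of Propositions~\ref{prop:Xk} and \ref{prop:odd-even} for the sufficient directions, and apply Lemma~\ref{lem:non} to a carefully chosen subclass of $\clChar{k} \cap \clEven$ to exclude each atom clone outside the stated bound for the necessary directions.

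A first observation is that the identity $\clChar{1} \cap \clEven = \clChar{0}$ furnished by Lemma~\ref{lem:Selezneva} disposes of statement \ref{lem:XkP0:C-X1P0} immediately via Lemma~\ref{lem:X0apu}\ref{lem:X0apu:OmegaX0inX0}, and it yields the sufficiency half of \ref{lem:XkP0:X1P0-C} via Lemma~\ref{lem:X0apu}\ref{lem:X0apu:X0SinX0}. For sufficiency in \ref{lem:XkP0:XkP0-C}, feeding Propositions~\ref{prop:Xk}\ref{lem:Xk:Xk-C} and \ref{prop:odd-even}\ref{lem:odd-even:even-C} into Lemma~\ref{lem:suff-int}\ref{lem:suff-int:right} gives the bound $\clLS \cap \clTc = \clLc$; analogously, for sufficiency in \ref{lem:XkP0:C-XkP0}, Lemma~\ref{lem:suff-int}\ref{lem:suff-int:left} applied to Propositions~\ref{prop:Xk}\ref{lem:Xk:C-Xk} and \ref{prop:odd-even}\ref{lem:odd-even:C-even} gives the bound $\clL \cap \clAll = \clL$.

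For each necessity direction I will pick a subclass $K = \clD{i} \cap \clChar{j} \cap \clBoth{0}{0} \subseteq \clChar{k} \cap \clEven$ (valid whenever $j \leq k$, since $\clBoth{0}{0} \subseteq \clEven$) and invoke Lemma~\ref{lem:non} to rule out each atom clone of Post's lattice that is not contained in the purported bound. Concretely, for \ref{lem:XkP0:XkP0-C} (with $k \geq 2$), taking $i = k+1$ and $j = k$ and applying items \ref{lem:non:K-Ia-Xj}, \ref{lem:non:K-Istar-Pa}, \ref{lem:non:K-LV}, \ref{lem:non:K-SM} of Lemma~\ref{lem:non} will exclude $\clIo, \clIi, \clIstar, \clLambdac, \clVc, \clSM$, forcing $C \subseteq \clLc$. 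For \ref{lem:XkP0:X1P0-C}, taking $i = j = 1$ and applying items \ref{lem:non:K-Ia-Pa} and \ref{lem:non:K-LV} excludes $\clIo, \clIi, \clLambdac, \clVc$, which are precisely the atoms outside $\clS$ (note $\clIstar, \clSM \subseteq \clS$), forcing $C \subseteq \clS$. For \ref{lem:XkP0:C-XkP0}, the same kind of $K$ with $j = k \geq 2$ together with items \ref{lem:non:LV-K-Xj} and \ref{lem:non:SM-K} excludes $\clLambdac, \clVc, \clSM$, forcing $C \subseteq \clL$.

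The only genuine subtlety is the qualitative gap between \ref{lem:XkP0:XkP0-C} and \ref{lem:XkP0:X1P0-C}: Lemma~\ref{lem:non}\ref{lem:non:K-Istar-Pa} requires $j \geq 2$, which is available exactly when $k \geq 2$ and is what allows us to rule out $\clIstar$; for $k = 1$, the clone $\clIstar$ must in fact be tolerated because negation preserves reflexivity, explaining why the bound jumps from $\clLc$ up to $\clS$. In each application one has to verify that the witnesses furnished by Lemma~\ref{lem:non} lie in $\clChar{k} \cap \clEven$ (immediate from the choice of $K$) and that their compositions escape either $\clChar{k}$ or $\clEven$, which is exactly what the cited items of Lemma~\ref{lem:non} provide.
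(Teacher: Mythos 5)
Your proposal is correct and follows essentially the same route as the paper: sufficiency of (i) and (iii) via Lemma~\ref{lem:suff-int} combined with Propositions~\ref{prop:Xk} and \ref{prop:odd-even}, the $k=1$ cases via the identity $\clChar{1} \cap \clEven = \clChar{0}$ and Lemma~\ref{lem:X0apu}, and necessity by excluding the relevant minimal clones through Lemma~\ref{lem:non}. The only (harmless) deviation is that for part (i) you rule out $\clIo$ and $\clIi$ via Lemma~\ref{lem:non}\ref{lem:non:K-Ia-Xj} with $i=k+1$, $j=k$, where the paper cites \ref{lem:non:K-Ia-Pa}; both items apply to subclasses of $\clChar{k} \cap \clEven$ and yield the same conclusion.
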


\begin{proof}
\ref{lem:XkP0:XkP0-C}
Lemma~\ref{lem:suff-int} and Propositions~\ref{prop:odd-even}\ref{lem:odd-even:even-C} and \ref{prop:Xk}\ref{lem:Xk:Xk-C} imply that $(\clChar{k} \cap \clEven) C \subseteq \clChar{k} \cap \clEven$ whenever $C \subseteq \clLc \cap \clTc = \clLc$.
Conversely, if $(\clChar{k} \cap \clEven) C \subseteq \clChar{k} \cap \clEven$, then $C$ includes neither $\clIo$, $\clIi$, $\clIstar$, $\clLambdac$, $\clVc$, nor $\clSM$ by Lemma~\ref{lem:non}\ref{lem:non:K-LV}, \ref{lem:non:K-SM}, \ref{lem:non:K-Ia-Pa}, \ref{lem:non:K-Istar-Pa}, so $C \subseteq \clLc$.

\ref{lem:XkP0:X1P0-C}
Assume first that $C \subseteq \clS$.
Since $\clChar{1} \cap \clEven = \clChar{0}$, it follows from Lemma~\ref{lem:X0apu}\ref{lem:X0apu:X0SinX0} that
$(\clChar{1} \cap \clEven) C \subseteq \clChar{0} \clS \subseteq \clChar{0} = \clChar{1} \cap \clEven$.
Conversely, if $(\clChar{1} \cap \clEven) C \subseteq \clChar{1} \cap \clEven$, then $C$ includes neither $\clIo$, $\clIi$, $\clLambdac$, nor $\clVc$ by Lemma~\ref{lem:non}\ref{lem:non:K-LV}, \ref{lem:non:K-Ia-Pa}, so $C \subseteq \clS$.

\ref{lem:XkP0:C-XkP0}
Lemma~\ref{lem:suff-int} and Propositions~\ref{prop:odd-even}\ref{lem:odd-even:C-even} and \ref{prop:Xk}\ref{lem:Xk:C-Xk} imply that $C (\clChar{k} \cap \clEven) \subseteq \clChar{k} \cap \clEven$ whenever $C \subseteq \clL \cap \clAll = \clL$.
Conversely, if $C (\clChar{k} \cap \clEven) \subseteq \clChar{k} \cap \clEven$, then $C$ includes neither $\clLambdac$, $\clVc$, nor $\clSM$ by Lemma~\ref{lem:non}\ref{lem:non:LV-K-Xj}, \ref{lem:non:SM-K} (note that $\clBoth{0}{0} \subseteq \clEven$), so $C \subseteq \clL$.

\ref{lem:XkP0:C-X1P0}
Observing that $\clChar{1} \cap \clEven = \clChar{0}$, this follows immediately from Lemma~\ref{lem:X0apu}\ref{lem:X0apu:OmegaX0inX0}.
\end{proof}

\begin{proposition}
\label{prop:XkP1}
Let $k \in \IN_{+}$, and let $C$ be a clone.
\begin{enumerate}[label={\upshape{(\roman*)}}, leftmargin=*, widest=iii]
\item\label{lem:XkP1:XkP1-C}
For $k \geq 2$,
$(\clChar{k} \cap \clOdd) C \subseteq \clChar{k} \cap \clOdd$ if and only if $C \subseteq \clLc$.

\item\label{lem:XkP1:X1P1-C}
$(\clChar{1} \cap \clOdd) C \subseteq \clChar{1} \cap \clOdd$ if and only if $C \subseteq \clS$.

\item\label{lem:XkP1:C-XkP1}
For $k \geq 2$,
$C (\clChar{k} \cap \clOdd) \subseteq \clChar{k} \cap \clOdd$ if and only if $C \subseteq \clLS$.

\item\label{lem:XkP1:C-X1P1}
$C (\clChar{1} \cap \clOdd) \subseteq \clChar{1} \cap \clOdd$ if and only if $C \subseteq \clS$.
\end{enumerate}
\end{proposition}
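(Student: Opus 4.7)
The four statements split naturally according to whether $k = 1$ (Parts (ii) and (iv)) or $k \geq 2$ (Parts (i) and (iii)). First, for the case $k = 1$, I will invoke Lemma~\ref{lem:Selezneva} to identify $\clChar{1} \cap \clOdd$ with $\clS$, reducing Parts (ii) and (iv) to the claims that, for a clone $C$, $\clS C \subseteq \clS$ iff $C \subseteq \clS$ and $C \clS \subseteq \clS$ iff $C \subseteq \clS$. Sufficiency is immediate because $\clS$ is itself a clone, so $\clS C \subseteq \clS \clS \subseteq \clS$ and $C \clS \subseteq \clS \clS \subseteq \clS$ whenever $C \subseteq \clS$. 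For necessity, the identity $\pr_1^{(1)}(g) = g$ together with $\pr_1^{(1)} \in \clS$ gives $C \subseteq \clS C$, and the identity $f(\pr_1^{(n)}, \dots, \pr_n^{(n)}) = f$ together with $\clIc \subseteq \clS$ gives $C \subseteq C \clS$; in both cases stability collapses to $C \subseteq \clS$.

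For $k \geq 2$, sufficiency in Parts (i) and (iii) follows from Lemma~\ref{lem:suff-int} applied to the decomposition $\clChar{k} \cap \clOdd$. Combining Propositions~\ref{prop:Xk}\ref{lem:Xk:Xk-C} and \ref{prop:odd-even}\ref{lem:odd-even:odd-C} shows that $C \subseteq \clLS \cap \clTc = \clLc$ is sufficient for $(\clChar{k} \cap \clOdd) C \subseteq \clChar{k} \cap \clOdd$, and combining Propositions~\ref{prop:Xk}\ref{lem:Xk:C-Xk} and \ref{prop:odd-even}\ref{lem:odd-even:C-odd} shows that $C \subseteq \clL \cap \clS = \clLS$ is sufficient for $C (\clChar{k} \cap \clOdd) \subseteq \clChar{k} \cap \clOdd$.

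For necessity in Part (i), I will feed witnesses into Lemma~\ref{lem:non}. The witnesses in the proof of Lemma~\ref{lem:non}\ref{lem:non:DiXjPa} (with $a = 1$, $j = k$) all lie in $\clD{i} \cap \clChar{k} \cap \clOdd \subseteq \clChar{k} \cap \clOdd$, so the non-containments carry over to our larger class, and parts \ref{lem:non:K-Ia-Pa} and \ref{lem:non:K-Istar-Pa} (the latter using $k \geq 2$) yield $\clIo, \clIi, \clIstar \nsubseteq C$, hence $C \subseteq \clTc$. Similarly, the witnesses from Lemma~\ref{lem:non}\ref{lem:non:DiXjCaEb} (with $a = 0$, $b = 1$, $j = k$) lie in $\clD{i} \cap \clChar{k} \cap \clBoth{0}{1} \subseteq \clChar{k} \cap \clOdd$, and parts \ref{lem:non:K-LV} and \ref{lem:non:K-SM} (using $j = k \geq 2$) give $\clLambdac, \clVc, \clSM \nsubseteq C$. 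Inspection of Post's lattice confirms that the only clones contained in $\clTc$ and avoiding $\clLambdac$, $\clVc$, and $\clSM$ are the subclones of $\clLc$, so $C \subseteq \clLc$.

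For necessity in Part (iii), since $\clIc \subseteq \clLS \subseteq \clChar{k} \cap \clOdd$, the identity $f(\pr_1^{(n)}, \dots, \pr_n^{(n)}) = f$ gives $C \subseteq C(\clChar{k} \cap \clOdd) \subseteq \clOdd$. Applying Lemma~\ref{lem:non}\ref{lem:non:LV-K-Xj} and \ref{lem:non:SM-K} (with $j = k \geq 2$) to the same witnesses yields $\clLambdac, \clVc, \clSM \nsubseteq C$, and Post's lattice then forces $C \subseteq \clL$. Combining $C \subseteq \clL$ with $C \subseteq \clOdd$ gives $C \subseteq \clL \cap \clOdd = \clLS$ (see Definition~\ref{def:Dk}). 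The principal difficulty throughout is bookkeeping: verifying that the witnesses packaged in Lemma~\ref{lem:non} for the smaller classes $\clD{i} \cap \clChar{j} \cap \clBoth{a}{b}$ and $\clD{i} \cap \clChar{j} \cap \clParity{a}$ actually sit inside $\clChar{k} \cap \clOdd$, and reading off the compact bounds $\clLc$ and $\clLS$ from the six local exclusions via Post's lattice.
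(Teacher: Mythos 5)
Your proof is correct and, for parts (i) and (iii) as well as the sufficiency halves of (ii) and (iv), it follows the paper's route exactly: Lemma~\ref{lem:suff-int} combined with Propositions~\ref{prop:Xk} and \ref{prop:odd-even} for sufficiency, and the witness functions packaged in Lemma~\ref{lem:non} (parts \ref{lem:non:K-Ia-Pa}, \ref{lem:non:K-Istar-Pa}, \ref{lem:non:K-LV}, \ref{lem:non:K-SM} for right stability; \ref{lem:non:Ia-K-P1}, \ref{lem:non:LV-K-Xj}, \ref{lem:non:SM-K} for left stability) together with Post's lattice for necessity; your check that those witnesses indeed lie in $\clChar{k} \cap \clOdd$ is the same bookkeeping the paper relies on. Where you genuinely diverge is in the necessity direction of parts (ii) and (iv): the paper again excludes the atoms $\clIo$, $\clIi$, $\clLambdac$, $\clVc$ via Lemma~\ref{lem:non} and reads $C \subseteq \clS$ off Post's lattice, whereas you exploit the identities $\pr_1^{(1)}(g) = g$ and $f(\pr_1^{(n)}, \dots, \pr_n^{(n)}) = f$ together with $\pr_1^{(1)} \in \clS$ and $\clIc \subseteq \clS = \clChar{1} \cap \clOdd$ to obtain $C \subseteq \clS C \subseteq \clS$ (resp.\ $C \subseteq C \clS \subseteq \clS$) directly. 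This is shorter and avoids Post's lattice entirely for those two parts; it works precisely because the class under consideration is itself a clone, so the same shortcut is not available for the $k \geq 2$ cases (e.g.\ $\clSc \subseteq \clChar{k} \cap \clOdd$ but $\clSc \nsubseteq \clLc$, so the $\clSM$-witness is still indispensable there, as you correctly note). Your variant of the necessity of (iii) --- first deducing $C \subseteq \clOdd$ from $\clIc \subseteq \clChar{k} \cap \clOdd$, then excluding $\clLambdac$, $\clVc$, $\clSM$ to get $C \subseteq \clL$, and intersecting to reach $\clL \cap \clOdd = \clLS$ --- is a harmless reshuffling of the paper's direct exclusion of the five atoms $\clIo$, $\clIi$, $\clLambdac$, $\clVc$, $\clSM$.
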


\begin{proof}
\ref{lem:XkP1:XkP1-C}
Lemma~\ref{lem:suff-int} and Propositions~\ref{prop:odd-even}\ref{lem:odd-even:odd-C} and \ref{prop:Xk}\ref{lem:Xk:Xk-C} imply that $(\clChar{k} \cap \clOdd) C \subseteq \clChar{k} \cap \clOdd$ whenever $C \subseteq \clLS \cap \clTc = \clLc$.
Conversely, if $(\clChar{k} \cap \clOdd) C \subseteq \clChar{k} \cap \clOdd$, then $C$ includes neither $\clIo$, $\clIi$, $\clIstar$, $\clLambdac$, $\clVc$, nor $\clSM$ by Lemma~\ref{lem:non}\ref{lem:non:K-LV}, \ref{lem:non:K-SM}, \ref{lem:non:K-Ia-Pa}, \ref{lem:non:K-Istar-Pa}, so $C \subseteq \clLc$.

\ref{lem:XkP1:X1P1-C}
Assume first that $C \subseteq \clS$.
Since $\clChar{1} \cap \clOdd = \clS$ and $\clS$ is a clone, it is immediately obvious that $(\clChar{1} \cap \clOdd) C \subseteq \clS \clS \subseteq \clS = \clChar{1} \cap \clOdd$.
Conversely, if $(\clChar{1} \cap \clOdd) C \subseteq \clChar{1} \cap \clOdd$, then $C$ includes neither $\clIo$, $\clIi$, $\clLambdac$, nor $\clVc$ by Lemma~\ref{lem:non}\ref{lem:non:K-LV}, \ref{lem:non:K-Ia-Pa}, so $C \subseteq \clS$.

\ref{lem:XkP1:C-XkP1}
Lemma~\ref{lem:suff-int} and Propositions~\ref{prop:odd-even}\ref{lem:odd-even:C-odd} and \ref{prop:Xk}\ref{lem:Xk:C-Xk} imply that $C (\clChar{k} \cap \clOdd) \subseteq \clChar{k} \cap \clOdd$ whenever $C \subseteq \clL \cap \clS = \clLS$.
Conversely, if $C (\clChar{k} \cap \clOdd) \subseteq \clChar{k} \cap \clOdd$, then $C$ includes neither $\clIo$, $\clIi$, $\clLambdac$, $\clVc$, nor $\clSM$ by Lemma~\ref{lem:non}\ref{lem:non:Ia-K-P1}, \ref{lem:non:LV-K-Xj}, \ref{lem:non:SM-K}, so $C \subseteq \clLS$.

\ref{lem:XkP1:C-X1P1}
Assume first that $C \subseteq \clS$.
Since $\clChar{1} \cap \clOdd = \clS$ and $\clS$ is a clone, it is immediately obvious that $C (\clChar{1} \cap \clOdd) \subseteq \clS \clS \subseteq \clS = \clChar{1} \cap \clOdd$.
Conversely, if $C (\clChar{1} \cap \clOdd) \subseteq \clChar{1} \cap \clOdd$, then $C$ includes neither $\clIo$, $\clIi$, $\clLambdac$, nor $\clVc$ by Lemma~\ref{lem:non}\ref{lem:non:Ia-K-P1}, \ref{lem:non:LV-K-Xj}, so $C \subseteq \clS$.
\end{proof}

\begin{proposition}
\label{prop:XkCaEb}
Let $k \in \IN_{+}$, $a, b \in \{0,1\}$, and let $C$ be a clone.
\begin{enumerate}[label={\upshape{(\roman*)}}, leftmargin=*, widest=iii]
\item\label{lem:XkCaEb:XkCaEb-C}
For $k \geq 2$,
$(\clChar{k} \cap \clBoth{a}{b}) C \subseteq \clChar{k} \cap \clBoth{a}{b}$ if and only if $C \subseteq \clLc$.

\item\label{lem:XkCaEb:X1CaEb-C}
$(\clChar{1} \cap \clBoth{a}{b}) C \subseteq \clChar{1} \cap \clBoth{a}{b}$ if and only if $C \subseteq \clSc$.

\item\label{lem:XkCaEb:C-XkCaEb}
If $k \geq 2$, then
$C (\clChar{k} \cap \clBoth{a}{b}) \subseteq \clChar{k} \cap \clBoth{a}{b}$ if and only if $C \subseteq \clLa{a} \cap \clLa{b}$.

\item\label{lem:XkCaEb:C-X1CaEa}
If $a = b$, then
$C (\clChar{1} \cap \clBoth{a}{b}) \subseteq \clChar{1} \cap \clBoth{a}{b}$ if and only if $C \subseteq \clTa{a}$.

\item\label{lem:XkCaEb:C-X1CaEb}
If $a \neq b$, then
$C (\clChar{1} \cap \clBoth{a}{b}) \subseteq \clChar{1} \cap \clBoth{a}{b}$ if and only if $C \subseteq \clSc$.
\end{enumerate}
\end{proposition}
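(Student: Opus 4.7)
The approach mirrors that of Propositions~\ref{prop:XkCa} and~\ref{prop:XkEa}. For each of the five statements, sufficiency is obtained by combining the relevant stability results for $\clChar{k}$ (Proposition~\ref{prop:Xk}) and for $\clBoth{a}{b}$ (Proposition~\ref{prop:CaEb}) via the intersection Lemma~\ref{lem:suff-int}, while necessity is obtained by invoking Lemma~\ref{lem:non} to exclude every minimal clone of Post's lattice that lies outside the claimed upper bound for $C$.

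For parts~(i)--(iii) the combinations are routine. In~(i) with $k \geq 2$, Propositions~\ref{prop:Xk}\ref{lem:Xk:Xk-C} and~\ref{prop:CaEb}\ref{lem:CaEb:CaEb-C} together with Lemma~\ref{lem:suff-int}\ref{lem:suff-int:right} yield sufficiency ($\clLS \cap \clTc = \clLc$), while Lemma~\ref{lem:non}\ref{lem:non:K-Ia-CaEb}, \ref{lem:non:K-Ia-Xj}, \ref{lem:non:K-LV}, \ref{lem:non:K-SM} together exclude $\clIo, \clIi, \clIstar, \clLambdac, \clVc, \clSM$, forcing $C \subseteq \clLc$. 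Statement~(ii) is analogous but uses Proposition~\ref{prop:Xk}\ref{lem:Xk:X1-C}, so the intersection is $\clS \cap \clTc = \clSc$; since $j = 1$ the $\clChar{j}$\hyp{}half of Lemma~\ref{lem:non}\ref{lem:non:K-SM} is not available, but $\clSM \subseteq \clSc$ makes this harmless. Statement~(iii) follows from Propositions~\ref{prop:Xk}\ref{lem:Xk:C-Xk} and~\ref{prop:CaEb}\ref{lem:CaEb:C-CaEb} ($C \subseteq \clL \cap \clTa{a} \cap \clTa{b} = \clLa{a} \cap \clLa{b}$); necessity uses the left\hyp{}composition non\hyp{}inclusions Lemma~\ref{lem:non}\ref{lem:non:Inota-K-CaEb}--\ref{lem:non:Ia-K-CaEb}, \ref{lem:non:Istar-K-CaEb}, \ref{lem:non:LV-K-Xj}, \ref{lem:non:SM-K}.

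Parts~(iv) and~(v) exploit that $\clChar{1} \cap \clBoth{a}{b}$ simplifies: by Lemma~\ref{lem:Selezneva}, $\clChar{1} \cap \clBoth{a}{a} = \clChar{0} \cap \clBoth{a}{a}$ (since $\clBoth{a}{a} \subseteq \clEven$) and $\clChar{1} \cap \clBoth{a}{b} = \clS \cap \clBoth{a}{b}$ when $a \neq b$ (since $\clBoth{a}{b} \subseteq \clOdd$). In~(iv), Lemma~\ref{lem:X0apu}\ref{lem:X0apu:OmegaX0inX0} gives unconditionally $\clAll \clChar{0} \subseteq \clChar{0}$, so sufficiency reduces via Lemma~\ref{lem:suff-int}\ref{lem:suff-int:left} to the $\clBoth{a}{a}$\hyp{}part, namely $C \subseteq \clTa{a}$ by Proposition~\ref{prop:CaEb}\ref{lem:CaEb:C-CaEb} (with $a = b$); necessity follows from Lemma~\ref{lem:non}\ref{lem:non:Inota-K-CaEb} and~\ref{lem:non:Istar-K-CaEb}, which exclude $\clIa{\overline{a}}$ and $\clIstar$, the only minimal clones lying outside $\clTa{a}$. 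In~(v), $\clS$ is itself a clone, so $C \subseteq \clS$ automatically yields $C \clS \subseteq \clS$; intersecting with $C \clBoth{a}{b} \subseteq \clBoth{a}{b}$ (equivalent to $C \subseteq \clTa{a} \cap \clTa{b} = \clTc$) gives $C \subseteq \clS \cap \clTc = \clSc$. Necessity parallels that of~(iii), with the hypothesis $a \neq b$ activating the $\clChar{1}$\hyp{}non\hyp{}inclusions in Lemma~\ref{lem:non}\ref{lem:non:LV-K-Xj}.

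The principal piece of bookkeeping will be checking, case by case, that the non\hyp{}inclusions supplied by Lemma~\ref{lem:non}\ref{lem:non:DiXjCaEb} (which are stated for $K = \clD{i} \cap \clChar{j} \cap \clBoth{a}{b}$) transfer to the relevant superclass $\clChar{k} \cap \clBoth{a}{b}$; this is immediate because each witness produced in the proof of Lemma~\ref{lem:non} already lies in $\clD{i} \cap \clChar{j} \cap \clBoth{a}{b}$ and therefore also in any larger class. The remaining routine task is to verify, via Post's lattice, that the resulting collection of excluded minimal clones exactly pinpoints the claimed upper bound on $C$ in each of the five cases.
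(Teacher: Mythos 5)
Your proposal is correct and follows essentially the same strategy as the paper's proof: sufficiency via Lemma~\ref{lem:suff-int} combining Propositions~\ref{prop:Xk} and~\ref{prop:CaEb}, and necessity by using Lemma~\ref{lem:non} to exclude the minimal clones of Post's lattice lying outside the claimed bound, with the witnesses transferring from $\clD{i} \cap \clChar{j} \cap \clBoth{a}{b}$ to the larger class exactly as you note. The one point of divergence is the sufficiency direction of part~\ref{lem:XkCaEb:C-X1CaEb}, where you apply Lemma~\ref{lem:suff-int}\ref{lem:suff-int:left} to the factorization $\clChar{1} \cap \clBoth{a}{b} = \clS \cap \clBoth{a}{b}$ using that $\clS$ is a clone and that $C \clBoth{a}{b} \subseteq \clBoth{a}{b}$ for $C \subseteq \clTc$, whereas the paper treats the case $\clBoth{1}{0}$ by a direct computation showing $\clSc (\clS \setminus \clSc) \subseteq \clS \setminus \clSc$; your route is valid and slightly more uniform.
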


\begin{proof}
\ref{lem:XkCaEb:XkCaEb-C}
Lemma~\ref{lem:suff-int} and Propositions~\ref{prop:CaEb}\ref{lem:CaEb:CaEb-C} and \ref{prop:Xk}\ref{lem:Xk:Xk-C} imply that $(\clChar{k} \cap \clBoth{a}{b}) C \subseteq \clChar{k} \cap \clBoth{a}{b}$ whenever $C \subseteq \clLS \cap \clTc = \clLc$.
Conversely, if $(\clChar{k} \cap \clBoth{a}{b}) C \subseteq \clChar{k} \cap \clBoth{a}{b}$, then $C$ includes neither $\clIo$, $\clIi$, $\clIstar$, $\clLambdac$, $\clVc$, nor $\clSM$ by Lemma~\ref{lem:non}\ref{lem:non:K-Ia-CaEb}, \ref{lem:non:K-LV}, \ref{lem:non:K-SM}, so $C \subseteq \clLc$.

\ref{lem:XkCaEb:X1CaEb-C}
Lemma~\ref{lem:suff-int} and Propositions~\ref{prop:CaEb}\ref{lem:CaEb:CaEb-C} and \ref{prop:Xk}\ref{lem:Xk:X1-C} imply that $(\clChar{1} \cap \clBoth{a}{b}) C \subseteq \clChar{1} \cap \clBoth{a}{b}$ whenever $C \subseteq \clS \cap \clTc = \clSc$.
Conversely, if $(\clChar{1} \cap \clBoth{a}{b}) C \subseteq \clChar{1} \cap \clBoth{a}{b}$, then $C$ includes neither $\clIo$, $\clIi$, $\clIstar$, $\clLambdac$, nor $\clVc$ by Lemma~\ref{lem:non}\ref{lem:non:K-Ia-CaEb}, \ref{lem:non:K-LV}, so $C \subseteq \clSc$.

\ref{lem:XkCaEb:C-XkCaEb}
Lemma~\ref{lem:suff-int} and Propositions~\ref{prop:CaEb}\ref{lem:CaEb:C-CaEb} and \ref{prop:Xk}\ref{lem:Xk:C-Xk} imply that $C (\clChar{k} \cap \clBoth{a}{b}) \subseteq \clChar{k} \cap \clBoth{a}{b}$ whenever $C \subseteq \clL \cap \clTa{a} \cap \clTa{b} = \clLa{a} \cap \clLa{b}$.

Assume now that $C (\clChar{k} \cap \clBoth{a}{b}) \subseteq \clChar{k} \cap \clBoth{a}{b}$.
Then $C$ includes neither $\clIstar$, $\clLambdac$, $\clVc$, nor $\clSM$ by Lemma~\ref{lem:non}\ref{lem:non:Istar-K-CaEb}, \ref{lem:non:LV-K-Xj}, \ref{lem:non:SM-K}.
If $a \neq b$, then $C$ includes neither $\clIo$ nor $\clIi$ by Lemma~\ref{lem:non}\ref{lem:non:Ia-K-CaEb}, so $C \subseteq \clLc = \clLa{a} \cap \clLa{b}$.
If $a = b$, then $C$ does not include $\clIa{\overline{a}}$ by Lemma~\ref{lem:non}\ref{lem:non:Inota-K-CaEb}, so $C \subseteq \clLa{a} = \clLa{a} \cap \clLa{b}$.

\ref{lem:XkCaEb:C-X1CaEa}
Assume $C \subseteq \clTa{a}$.
We have $C (\clChar{1} \cap \clBoth{a}{a}) \subseteq \clTa{a} (\clChar{0} \cap \clCon{a}) \subseteq \clChar{0} \cap \clCon{a} = \clChar{1} \cap \clBoth{a}{a}$, where the second inclusion holds because
$\clTa{a} (\clChar{0} \cap \clCon{a}) \subseteq \clAll \clChar{0} \subseteq \clChar{0}$ by Lemma~\ref{lem:X0apu}\ref{lem:X0apu:OmegaX0inX0}
and
$\clTa{a} (\clChar{0} \cap \clCon{a}) \subseteq \clTa{a} \clCon{a} \subseteq \clCon{a}$ as can be easily seen.

Assume now that $C (\clChar{1} \cap \clBoth{a}{a}) \subseteq \clChar{1} \cap \clBoth{a}{a}$.
Then the clone $C$ includes neither $\clIa{\overline{a}}$ nor $\clIstar$ by Lemma~\ref{lem:non}\ref{lem:non:Inota-K-CaEb}, \ref{lem:non:Istar-K-CaEb}, so $C \subseteq \clTa{a}$.

\ref{lem:XkCaEb:C-X1CaEb}
Assume first that $C \subseteq \clSc$.
Since $\clChar{1} \cap \clBoth{0}{1} = \clChar{1} \cap \clOdd \cap \clCon{0} = \clS \cap \clCon{0} = \clSc$, we have
\[
C (\clChar{1} \cap \clBoth{0}{1})
\subseteq \clSc \clSc
\subseteq \clSc
= \clChar{1} \cap \clBoth{0}{1}.
\]
Note also that $\clChar{1} \cap \clBoth{1}{0} = \clChar{1} \cap \clOdd \cap \clCon{1} = \clS \cap \clCon{1} = \clS \setminus \clSc$.
For any $f \in \clSc^{(n)}$, $g_1, \dots, g_n \in (\clS \setminus \clSc)^{(m)}$, it holds that $f(g_1, \dots, g_1) \in \clS$ and
\[
f(g_1, \dots, g_n)(0, \dots, 0) = f(g_1(0, \dots, 0), \dots, g_n(0, \dots, 0)) = f(1, \dots, 1) = 1,
\]
so $f(g_1, \dots, g_n) \notin \clSc$, that is, $f(g_1, \dots, g_n) \in \clS \setminus \clSc$.
Consequently, $\clSc (\clS \setminus \clSc) \subseteq \clS \setminus \clSc$, and it follows that
\[
C (\clChar{1} \cap \clBoth{1}{0}) \subseteq \clSc (\clS \setminus \clSc) \subseteq \clS \setminus \clSc = \clChar{1} \cap \clBoth{1}{0}.
\]

Assume now that $C (\clChar{1} \cap \clBoth{a}{b}) \subseteq \clChar{1} \cap \clBoth{a}{b}$.
Then $C$ includes neither $\clIo$, $\clIi$, $\clIstar$, $\clLambdac$, nor $\clVc$ by Lemma~\ref{lem:non}\ref{lem:non:Ia-K-CaEb}, \ref{lem:non:Istar-K-CaEb}, \ref{lem:non:LV-K-Xj}, so $C \subseteq \clSc$.
\end{proof}

\begin{proposition}
\label{prop:Dk}
Let $k \in \IN_{+}$, and let $C$ be a clone.
\begin{enumerate}[label={\upshape{(\roman*)}}, leftmargin=*, widest=ii]
\item\label{lem:Dk:Dk-C}
$\clD{k} C \subseteq \clD{k}$ if and only if $C \subseteq \clL$.
\item\label{lem:Dk:C-Dk}
$C \clD{k} \subseteq \clD{k}$ if and only if $C \subseteq \clL$.
\end{enumerate}
\end{proposition}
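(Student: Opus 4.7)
The plan is to prove both parts via the standard scheme: sufficiency is read off from the generators of $\clL$, and necessity is established by exhibiting an explicit degree\hyp{}increasing composition whenever $C$ contains a non\hyp{}linear function. There is no single delicate step; the only mild point, which is the main obstacle such as it is, is choosing the witness inner or outer functions so that the non\hyp{}linearity of an alleged $g \in C \setminus \clL$ visibly contributes to the degree of the composition rather than being absorbed by cancellation or by the ambient degree budget of $k$.

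For sufficiency in both parts, I would apply Lemmata~\ref{lem:right-stab-gen} and \ref{lem:left-stab-gen} with the generating set $\{x_1 + x_2, 1\}$ of $\clL$ listed after Figure~\ref{fig:Post}. The class $\clD{k}$ is minor\hyp{}closed by Lemma~\ref{lem:Dk-closed}. For right stability it then suffices to verify that $f \ast 1, f \ast (x_1 + x_2) \in \clD{k}$ for every $f \in \clD{k}$, which is immediate since substituting a polynomial of degree at most $1$ into a single argument cannot raise the degree. For left stability, the two generators yield $1 \in \clD{0} \subseteq \clD{k}$ and $(x_1 + x_2)(g_1, g_2) = g_1 + g_2 \in \clD{k}$ for $g_1, g_2 \in \clD{k}^{(n)}$, again by Lemma~\ref{lem:Dk-closed}.

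For necessity in part~\ref{lem:Dk:Dk-C}, assume $\clD{k} C \subseteq \clD{k}$ and fix an arbitrary $g \in C$ of arity $m$. Taking $f := x_1 x_2 \cdots x_k \in \clD{k}$ and composing it with $g$ in its first argument and with the projections $\pr_{m+1}^{(m+k-1)}, \dots, \pr_{m+k-1}^{(m+k-1)}$ in its remaining $k-1$ arguments, I obtain a function in $\clD{k} C$ whose Zhegalkin polynomial is $g(x_1, \dots, x_m) \cdot x_{m+1} \cdots x_{m+k-1}$; since the fresh variables $x_{m+1}, \dots, x_{m+k-1}$ do not appear in $g$, no cancellation occurs and the resulting degree equals $\deg(g) + (k - 1)$. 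The hypothesis forces this to be at most $k$, whence $\deg(g) \leq 1$, i.e., $g \in \clL$. For necessity in part~\ref{lem:Dk:C-Dk}, assume $C \clD{k} \subseteq \clD{k}$ and fix $g \in C$ of arity $n$. Put $f_j := x_{(j-1)k + 1} \cdots x_{jk} \in \clD{k}$ for $j = 1, \dots, n$; since the $f_j$ use pairwise disjoint blocks of variables, the expansion $g(f_1, \dots, f_n) = \sum_{S \in \monomials{g}} \prod_{j \in S} f_j$ has no cancellations, and each summand has degree $\card{S} \cdot k$. Hence $g(f_1, \dots, f_n) \in C \clD{k}$ has degree exactly $\deg(g) \cdot k$, and the hypothesis forces $\deg(g) \cdot k \leq k$, i.e.\ $\deg(g) \leq 1$, so $g \in \clL$. (Alternatively, one could extract the same conclusion from Lemma~\ref{lem:non}\ref{lem:non:K-LV}, \ref{lem:non:K-SM} and \ref{lem:non:LV-K-Xj}, \ref{lem:non:SM-K} applied to the subclass $K := \clD{k} \cap \clBoth{0}{0} \subseteq \clD{k}$, combined with the fact from Post's lattice that a clone $C$ avoiding each of $\clLambdac, \clVc, \clSM$ lies in $\clL$; the direct route above is shorter.)
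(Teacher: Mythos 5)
Your proof is correct. The sufficiency direction coincides with the paper's: both reduce to the generators $x_1 + x_2$ and $1$ of $\clL$ via Lemmata~\ref{lem:right-stab-gen} and \ref{lem:left-stab-gen} and the fact that $\clD{k}$ is minor\hyp{}closed and closed under sums. The necessity direction, however, is genuinely different. The paper argues indirectly: it invokes the catalogue of non\hyp{}inclusions in Lemma~\ref{lem:non} to rule out $\clLambdac$, $\clVc$, and $\clSM$ as subclones of $C$, and then reads off $C \subseteq \clL$ from Post's lattice. You instead show directly that every $g \in C$ is linear, by exhibiting a composition whose degree is $\deg(g) + (k-1)$ (for right stability) or $\deg(g) \cdot k$ (for left stability) and letting the degree bound $k$ force $\deg(g) \leq 1$; the disjoint\hyp{}variable-block constructions correctly preclude cancellation of monomials. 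Your route is more elementary and self\hyp{}contained for this particular proposition -- it needs neither Lemma~\ref{lem:non} nor the Post-lattice fact that a clone avoiding $\clLambdac$, $\clVc$, $\clSM$ lies in $\clL$ -- whereas the paper's scheme is uniform across the many propositions of Section~\ref{sec:C1C2} and so amortizes the cost of Lemma~\ref{lem:non}. The only cosmetic imprecision is that the phrase ``the resulting degree equals $\deg(g)+(k-1)$'' (resp.\ $\deg(g)\cdot k$) fails for $g = 0$, where $\monomials{g} = \emptyset$ and the composition is $0$; this is harmless since $0 \in \clL$ anyway, but a one\hyp{}line remark excluding that case would make the argument airtight.
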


\begin{proof}
\ref{lem:Dk:Dk-C}
For sufficiency, it is enough to prove the claim for $C = \clL$.
Using the fact that $\clL = \clonegen{x_1 + x_2, \, 1}$, we apply Lemma~\ref{lem:right-stab-gen}.
It is easy to see that for any function $f \in \clD{k}$, we have $\deg(f \ast (x_1 + x_2)) \leq \deg(f) \leq k$ and $\deg(f \ast 1) \leq \deg(f) \leq k$, so $f \ast (x_1 + x_2), f \ast 1 \in \clD{k}$.
It follows from Lemma~\ref{lem:right-stab-gen} that $\clD{k} \clL \subseteq \clD{k}$.

For necessity, assume that $\clD{k} C \subseteq \clD{k}$.
Then $C$ includes neither $\clLambdac$, $\clVc$, nor $\clSM$ by Lemma~\ref{lem:non}\ref{lem:non:K-LV}, \ref{lem:non:K-SM}, so $C \subseteq \clL$.

\ref{lem:Dk:C-Dk}
For sufficiency, it is enough to prove the claim for $C = \clL$.
Using the fact that $\clL = \clonegen{x_1 + x_2, \, 1}$, we apply Lemma~\ref{lem:left-stab-gen}.
It is clear that for any $g_1, g_2 \in \clD{k}^{(m)}$, the functions
$(x_1 + x_2)(g_1, g_2) = g_1 + g_2$
and
$1(g_1) = 1$
have degree at most $k$, and are therefore members of $\clD{k}$.
It follows from Lemma~\ref{lem:left-stab-gen} that $\clL \clD{k} \subseteq \clD{k}$.

For necessity, assume that $C \clD{k} \subseteq \clD{k}$.
Then $C$ includes neither $\clLambdac$, $\clVc$, nor $\clSM$ by Lemma~\ref{lem:non}\ref{lem:non:LV-K-Xj}, \ref{lem:non:SM-K}, so $C \subseteq \clL$.
\end{proof}

\begin{proposition}
\label{prop:constants}
Let $a \in \{0,1\}$, and let $C$ be a clone.
\begin{enumerate}[label={\upshape{(\roman*)}}, leftmargin=*, widest=iii]
\item\label{lem:constants:D0}
$\clD{0} C \subseteq \clD{0}$
and
$C \clD{0} \subseteq \clD{0}$
for any clone $C$.
\item\label{lem:constants:D0Ca-C}
$(\clD{0} \cap \clCon{a}) C \subseteq \clD{0} \cap \clCon{a}$ for any clone $C$.
\item\label{lem:constants:C-D0Ca}
$C (\clD{0} \cap \clCon{a}) \subseteq \clD{0} \cap \clCon{a}$ if and only if $C \subseteq \clTa{a}$.
\end{enumerate}
\end{proposition}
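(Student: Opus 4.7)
The plan is to establish each of the three parts by direct calculation from the definition of composition, exploiting the fact that members of $\clD{0}$ are constant functions (of various arities).

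For part~\ref{lem:constants:D0}, both inclusions follow from the observation that a composition $f(g_1, \dots, g_n)$ is a constant function in either of the following two scenarios. If $f \in \clD{0}$, say $f = \cf{n}{b}$, then for any $g_1, \dots, g_n \in \clAll^{(m)}$, $f(g_1, \dots, g_n)(\vect{a}) = b$ for all $\vect{a} \in \{0,1\}^m$, so $f(g_1, \dots, g_n) = \cf{m}{b} \in \clD{0}$; this yields $\clD{0} C \subseteq \clD{0}$ for every $C$. Symmetrically, if $g_1, \dots, g_n \in \clD{0}$, say $g_i = \cf{m}{c_i}$ for each $i$, then for any $f \in \clAll^{(n)}$ and $\vect{a} \in \{0,1\}^m$, $f(g_1, \dots, g_n)(\vect{a}) = f(c_1, \dots, c_n)$, which is independent of $\vect{a}$; hence $f(g_1, \dots, g_n) \in \clD{0}$, proving $C \clD{0} \subseteq \clD{0}$ for every $C$.

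For part~\ref{lem:constants:D0Ca-C}, refine the first argument above: if $f \in \clD{0} \cap \clCon{a}$, then $f = \cf{n}{a}$, and for any $g_1, \dots, g_n$, the composition equals $\cf{m}{a} \in \clD{0} \cap \clCon{a}$. Thus right composition with any clone $C$ preserves $\clD{0} \cap \clCon{a}$.

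For part~\ref{lem:constants:C-D0Ca}, sufficiency is a straightforward calculation: if $C \subseteq \clTa{a}$, then for any $f \in C^{(n)}$ and $g_1, \dots, g_n \in \clD{0} \cap \clCon{a}$, each $g_i$ is the constant $a$ function of some arity $m$, so $f(g_1, \dots, g_n)(\vect{a}) = f(a, \dots, a) = a$ (using $f \in \clTa{a}$), giving $f(g_1, \dots, g_n) = \cf{m}{a} \in \clD{0} \cap \clCon{a}$. Conversely, assume $C(\clD{0} \cap \clCon{a}) \subseteq \clD{0} \cap \clCon{a}$ and let $f \in C^{(n)}$. Taking the $n$ unary constants $\cf{1}{a}, \dots, \cf{1}{a} \in \clD{0} \cap \clCon{a}$ as inner functions, the resulting composition is the unary constant $\cf{1}{f(a,\dots,a)}$, which lies in $\clCon{a}$ by hypothesis, forcing $f(a, \dots, a) = a$, i.e., $f \in \clTa{a}$. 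Since $f$ was arbitrary, $C \subseteq \clTa{a}$. No step here presents a real obstacle; the argument is entirely by unwinding the definitions, and the only subtlety is to invoke $f \in \clTa{a}$ at the one place where it is needed.
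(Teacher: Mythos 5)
Your proof is correct and follows essentially the same route as the paper: parts (i) and (ii) are the same direct observation that compositions with constant outer or inner functions are constant, and for part (iii) the paper derives sufficiency from Proposition~\ref{prop:Ca} and Lemma~\ref{lem:suff-int} (whose content is exactly your calculation $f(a,\dots,a)=a$) and proves necessity contrapositively by exhibiting a non-$a$-preserving $g$ composed with constant-$a$ functions, which is logically the same witness you use. No gaps.
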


\begin{proof}
\ref{lem:constants:D0}
Clear, as any composition in which either all inner functions are constant or the outer function is constant is a constant function.

\ref{lem:constants:D0Ca-C}
Clear, as for any $m$\hyp{}ary $g_1, \dots, g_n \in \clAll$ we have $\cf{n}{a}(g_1, \dots, g_n) = \cf{m}{a} \in \clD{0} \cap \clCon{a}$.

\ref{lem:constants:C-D0Ca}
Lemma~\ref{lem:suff-int}, Proposition~\ref{prop:Ca}\ref{lem:Ca:C-Ca}, and part \ref{lem:constants:D0} imply that $C (\clD{0} \cap \clCon{a}) \subseteq \clD{0} \cap \clCon{a}$ whenever $C \subseteq \clAll \cap \clTa{a} = \clTa{a}$.

Assume now that $C \nsubseteq \clTa{a}$.
Then there exists a $g \in C$ that does not preserve $a$, and we have $g(\cf{n}{a}, \dots, \cf{n}{a}) = \cf{n}{1-a} \notin \clCon{a}$.
Therefore $C (\clD{0} \cap \clCon{a}) \nsubseteq \clD{0} \cap \clCon{a}$.
\end{proof}

\begin{proposition}
\label{prop:DkCa}
Let $k \in \IN_{+}$, $a \in \{0,1\}$, and let $C$ be a clone.
\begin{enumerate}[label={\upshape{(\roman*)}}, leftmargin=*, widest=iii]
\item\label{lem:DkCa:DkCa-C}
$(\clD{k} \cap \clCon{a}) C \subseteq (\clD{k} \cap \clCon{a})$ if and only if $C \subseteq \clLo$.
\item\label{lem:DkCa:C-DkCa}
$C (\clD{k} \cap \clCon{a}) \subseteq (\clD{k} \cap \clCon{a})$ if and only if $C \subseteq \clLa{a}$.
\item\label{lem:DkCa:DkEa-C}
$(\clD{k} \cap \clYksi{a}) C \subseteq (\clD{k} \cap \clYksi{a})$ if and only if $C \subseteq \clLi$.
\item\label{lem:DkCa:C-DkEa}
$C (\clD{k} \cap \clYksi{a}) \subseteq (\clD{k} \cap \clYksi{a})$ if and only if $C \subseteq \clLa{a}$.
\end{enumerate}
\end{proposition}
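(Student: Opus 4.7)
The proof follows the template established in Propositions~\ref{prop:XkCa} and \ref{prop:XkEa}: each of the four equivalences is handled by deducing sufficiency from Lemma~\ref{lem:suff-int} together with the one\hyp{}sided stability results already proved, and by deducing necessity from applications of Lemma~\ref{lem:non} to a carefully chosen nonempty subclass of $K$.

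For sufficiency I would decompose each target clone as an intersection, namely $\clLo = \clL \cap \clTo$, $\clLi = \clL \cap \clTi$, and $\clLa{a} = \clL \cap \clTa{a}$. Then Lemma~\ref{lem:suff-int}\ref{lem:suff-int:right} together with Propositions~\ref{prop:Ca}\ref{lem:Ca:Ca-C} (resp.\ \ref{lem:Ca:Ea-C}) and \ref{prop:Dk}\ref{lem:Dk:Dk-C} yields part~(i) (resp.~(iii)); while Lemma~\ref{lem:suff-int}\ref{lem:suff-int:left} together with Propositions~\ref{prop:Ca}\ref{lem:Ca:C-Ca} (resp.\ \ref{lem:Ca:C-Ea}) and \ref{prop:Dk}\ref{lem:Dk:C-Dk} yields part~(ii) (resp.~(iv)).

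For necessity I would work throughout with the auxiliary class $K' := \clD{k} \cap \clChar{1} \cap \clBoth{a}{a}$, which is nonempty, is contained in both $\clD{k} \cap \clCon{a}$ and $\clD{k} \cap \clYksi{a}$, and fits the template of Lemma~\ref{lem:non}\ref{lem:non:DiXjCaEb} with $i = k$, $j = 1$, $b = a$. Since $K' \subseteq K$, any non\hyp{}inclusion of the shape $K' C_0 \nsubseteq K$ or $C_0 K' \nsubseteq K$ certifies that $C_0 \nsubseteq C$. For part~(i), the items \ref{lem:non:K-Ia-CaEb}, \ref{lem:non:K-LV}, and \ref{lem:non:K-SM} then rule out $\clIi$, $\clIstar$, $\clLambdac$, $\clVc$, $\clSM$ from $C$, which by Post's lattice forces $C \subseteq \clLo$. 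For part~(iii), the same three items instead rule out $\clIo$, $\clIstar$, $\clLambdac$, $\clVc$, $\clSM$, forcing $C \subseteq \clLi$. For parts~(ii) and~(iv), I would first apply item \ref{lem:non:Inota-K-CaEb} (valid for every nonempty class) to exclude $\clIa{\overline{a}}$, and then items \ref{lem:non:Istar-K-CaEb}, \ref{lem:non:LV-K-Xj}, and \ref{lem:non:SM-K} applied to $K'$ to exclude $\clIstar$, $\clLambdac$, $\clVc$, $\clSM$, pinning $C$ down to $\clLa{a}$.

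The main obstacle is purely notational bookkeeping: for each of the four parts one must match each non\hyp{}inclusion of Lemma~\ref{lem:non} against the correct ambient target ($\clD{k}$, $\clCon{a}$, or $\clYksi{a}$) so that the contradiction used to exclude a minimal clone $C_0$ is genuine, and then verify on Post's lattice that the resulting list of exclusions is exactly what is needed to single out $\clLo$, $\clLi$, or $\clLa{a}$. Beyond this, no new idea is required; the argument is entirely parallel to that of Propositions~\ref{prop:XkCa} and~\ref{prop:XkEa}.
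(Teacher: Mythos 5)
Your proposal is correct and follows essentially the same route as the paper: sufficiency via Lemma~\ref{lem:suff-int} combined with Propositions~\ref{prop:Ca} and~\ref{prop:Dk}, and necessity by excluding the minimal clones $\clIo$, $\clIi$, $\clIstar$, $\clLambdac$, $\clVc$, $\clSM$ through the relevant items of Lemma~\ref{lem:non} applied to a subclass of the form $\clD{i} \cap \clChar{j} \cap \clBoth{a}{b}$. The only (harmless) difference is that you make the choice of auxiliary class $\clD{k} \cap \clChar{1} \cap \clBoth{a}{a}$ explicit, whereas the paper leaves the parameters $j$ and $b$ implicit; your cited lemma items and resulting lists of excluded clones coincide exactly with the paper's.
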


\begin{proof}
\ref{lem:DkCa:DkCa-C}
Lemma~\ref{lem:suff-int} and Propositions~\ref{prop:Ca}\ref{lem:Ca:Ca-C} and \ref{prop:Dk}\ref{lem:Dk:Dk-C} imply that $(\clD{k} \cap \clCon{a}) C \subseteq \clD{k} \cap \clCon{a}$ whenever $C \subseteq \clL \cap \clTo = \clLo$.
Conversely, if $(\clD{k} \cap \clCon{a}) C \subseteq \clD{k} \cap \clCon{a}$, then $C$ includes neither $\clIi$, $\clIstar$, $\clLambdac$, $\clVc$, nor $\clSM$ by Lemma~\ref{lem:non}\ref{lem:non:K-Ia-CaEb}, \ref{lem:non:K-LV}, \ref{lem:non:K-SM}, so $C \subseteq \clLo$.

\ref{lem:DkCa:C-DkCa}
Lemma~\ref{lem:suff-int} and Propositions~\ref{prop:Ca}\ref{lem:Ca:C-Ca} and \ref{prop:Dk}\ref{lem:Dk:C-Dk} imply that $C (\clD{k} \cap \clCon{a}) \subseteq \clD{k} \cap \clCon{a}$ whenever $C \subseteq \clL \cap \clTa{a} = \clLa{a}$.
Conversely, if $C (\clD{k} \cap \clCon{a}) \subseteq \clD{k} \cap \clCon{a}$, then $C$ includes neither $\clIa{\overline{a}}$, $\clIstar$, $\clLambdac$, $\clVc$, nor $\clSM$ by Lemma~\ref{lem:non}\ref{lem:non:Inota-K-CaEb}, \ref{lem:non:Istar-K-CaEb}, \ref{lem:non:LV-K-Xj}, \ref{lem:non:SM-K}, so $C \subseteq \clLa{a}$.

\ref{lem:DkCa:DkEa-C}
Lemma~\ref{lem:suff-int} and Propositions~\ref{prop:Ca}\ref{lem:Ca:Ea-C} and \ref{prop:Dk}\ref{lem:Dk:Dk-C} imply that $(\clD{k} \cap \clYksi{a}) C \subseteq \clD{k} \cap \clYksi{a}$ whenever $C \subseteq \clL \cap \clTi = \clLi$.
Conversely, if $(\clD{k} \cap \clYksi{a}) C \subseteq \clD{k} \cap \clYksi{a}$, then $C$ includes neither $\clIo$, $\clIstar$, $\clLambdac$, $\clVc$, nor $\clSM$ by Lemma~\ref{lem:non}\ref{lem:non:K-Ia-CaEb}, \ref{lem:non:K-LV}, \ref{lem:non:K-SM}, so $C \subseteq \clLi$.

\ref{lem:DkCa:C-DkEa}
Lemma~\ref{lem:suff-int} and Propositions~\ref{prop:Ca}\ref{lem:Ca:C-Ea} and \ref{prop:Dk}\ref{lem:Dk:C-Dk} imply that $C (\clD{k} \cap \clYksi{a}) \subseteq \clD{k} \cap \clYksi{a}$ whenever $C \subseteq \clL \cap \clTa{a} = \clLa{a}$.
Conversely, if $C (\clD{k} \cap \clYksi{a}) \subseteq \clD{k} \cap \clYksi{a}$, then $C$ includes neither $\clIa{\overline{a}}$, $\clIstar$, $\clLambdac$, $\clVc$, nor $\clSM$ by Lemma~\ref{lem:non}\ref{lem:non:Inota-K-CaEb}, \ref{lem:non:Istar-K-CaEb}, \ref{lem:non:LV-K-Xj}, \ref{lem:non:SM-K}, so $C \subseteq \clLa{a}$.
\end{proof}

\begin{proposition}
\label{prop:DkP0}
Let $k \in \IN_{+}$, and let $C$ be a clone.
\begin{enumerate}[label={\upshape{(\roman*)}}, leftmargin=*, widest=iii]
\item\label{lem:DkP0:DkP0-C}
For $k \geq 2$,
$(\clD{k} \cap \clEven) C \subseteq \clD{k} \cap \clEven$ if and only if $C \subseteq \clLc$.
\item\label{lem:DkP0:D1P0-C}
$(\clD{1} \cap \clEven) C \subseteq \clD{1} \cap \clEven$ if and only if $C \subseteq \clLS$.
\item\label{lem:DkP0:C-DkP0}
$C (\clD{k} \cap \clEven) \subseteq \clD{k} \cap \clEven$ if and only if $C \subseteq \clL$.
\end{enumerate}
\end{proposition}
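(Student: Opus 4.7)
The plan is to mirror the structure of the preceding Propositions~\ref{prop:XkP0}--\ref{prop:DkCaEb}: sufficiency combines Propositions~\ref{prop:Dk} and \ref{prop:odd-even} via Lemma~\ref{lem:suff-int}, while necessity applies Lemma~\ref{lem:non} to rule out offending atoms of Post's lattice from $C$. The twist lies in part (ii), where the bound $\clLc$ obtained by the naive intersection is strictly smaller than the correct answer $\clLS$; for that case a separate, direct verification is needed.

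For statement (i) with $k \geq 2$, sufficiency is Lemma~\ref{lem:suff-int}\ref{lem:suff-int:right} applied to Proposition~\ref{prop:Dk}\ref{lem:Dk:Dk-C} and Proposition~\ref{prop:odd-even}\ref{lem:odd-even:even-C}, yielding $(\clD{k} \cap \clEven) C \subseteq \clD{k} \cap \clEven$ whenever $C \subseteq \clL \cap \clTc = \clLc$. For necessity, assuming $(\clD{k} \cap \clEven) C \subseteq \clD{k} \cap \clEven$, I would apply Lemma~\ref{lem:non}\ref{lem:non:DiXjPa} with $K := \clD{k} \cap \clEven$ (so $i = j = k$, $\mathord{\ParRel} = \mathord{=}$) and Lemma~\ref{lem:non}\ref{lem:non:DiXjCaEb} with $K := \clD{k} \cap \clBoth{0}{0} \subseteq \clD{k} \cap \clEven$ (so $i = j = k$, $a = b = 0$). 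Parts \ref{lem:non:K-Ia-Pa}, \ref{lem:non:K-Istar-Pa} (which uses $k \geq 2$), \ref{lem:non:K-LV}, and \ref{lem:non:K-SM} together force $C$ to contain none of the atoms $\clIo, \clIi, \clIstar, \clLambdac, \clVc, \clSM$ of Post's lattice; inspecting the lattice then shows that the only remaining possibility is $C \subseteq \clLc$.

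For statement (iii), sufficiency is Lemma~\ref{lem:suff-int}\ref{lem:suff-int:left} applied to Proposition~\ref{prop:odd-even}\ref{lem:odd-even:C-even} (which holds for any clone) and Proposition~\ref{prop:Dk}\ref{lem:Dk:C-Dk}, giving $C \subseteq \clL$ as the sufficient condition. For necessity, Lemma~\ref{lem:non}\ref{lem:non:LV-K-Xj} and \ref{lem:non:SM-K} applied to $K := \clD{k} \cap \clBoth{0}{0} \subseteq \clD{k} \cap \clEven$ produce functions in $\{\clLambdac, \clVc, \clSM\} K$ of degree exceeding $k$; hence if any of these three atoms is a subclone of $C$ then $C(\clD{k} \cap \clEven) \nsubseteq \clD{k}$, a contradiction, forcing $C \subseteq \clL$.

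The genuinely interesting case is part (ii): the intersection trick only gives $\clLc$, whereas the correct bound is $\clLS$. For sufficiency I plan to invoke Lemma~\ref{lem:right-stab-gen} directly with the generating set $\clLS = \clonegen{\mathord{\oplus_3}, x_1 + 1}$ and verify that $f \ast \mathord{\oplus_3}$ and $f \ast (x_1 + 1)$ remain in $\clD{1} \cap \clEven$ for every $f \in \clD{1} \cap \clEven$. Writing $f = x_{j_1} + \dots + x_{j_r} + c$ with $r$ even, the substitution of $\mathord{\oplus_3}$ either leaves the number of variable summands unchanged (when the substituted argument is fictitious in $f$) or increases it by $2$, and substitution of $x_1 + 1$ preserves $r$ while at most flipping the constant term; linearity is obviously preserved in both cases. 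For necessity, the argument of part~(i) applies verbatim with $k = 1$, \emph{except} that Lemma~\ref{lem:non}\ref{lem:non:K-Istar-Pa} is unavailable (it requires $j \geq 2$); this is precisely why $\clIstar$ is no longer forbidden, and a scan of Post's lattice confirms that the largest clone avoiding $\{\clIo, \clIi, \clLambdac, \clVc, \clSM\}$ is $\clLS$. The only mildly delicate step is the parity bookkeeping in the sufficiency verification, but it is a short direct check.
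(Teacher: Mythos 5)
Your proof is correct and, for parts (i) and (iii), coincides with the paper's: sufficiency via Lemma~\ref{lem:suff-int} combined with Propositions~\ref{prop:Dk} and \ref{prop:odd-even}, and necessity by excluding the atoms $\clIo$, $\clIi$, $\clIstar$, $\clLambdac$, $\clVc$, $\clSM$ through Lemma~\ref{lem:non} (the paper likewise relies implicitly on $\clD{k} \cap \clBoth{0}{0} \subseteq \clD{k} \cap \clEven$ to import the non-inclusions that Lemma~\ref{lem:non}\ref{lem:non:DiXjCaEb} states only for the classes $\clD{i} \cap \clChar{j} \cap \clBoth{a}{b}$, so your explicit instantiation with $K := \clD{k} \cap \clBoth{0}{0}$ is exactly what is needed). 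The only divergence is the sufficiency direction of part (ii). The paper argues directly: for $f \in \clD{1} \cap \clEven$ and $g_1, \dots, g_n \in \clLS = \clD{1} \cap \clOdd$, the composition $f(g_1, \dots, g_n)$ lies in $\clL$ because all functions involved belong to that clone, and it is a sum of an even number of odd linear polynomials, hence even. You instead reduce to the generators $\mathord{\oplus_3}$ and $x_1 + 1$ of $\clLS$ via Lemma~\ref{lem:right-stab-gen} and track the parity of the number of variable summands under $f \ast \mathord{\oplus_3}$ and $f \ast (x_1 + 1)$; your bookkeeping is right (the count changes by $0$ or $2$, and $x_1 + 1$ at most flips the constant term), so this route is equally valid and arguably more uniform with the generator-based machinery used elsewhere in the paper, at the cost of checking two generators instead of one composition. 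The one point you should state explicitly is that Lemma~\ref{lem:right-stab-gen}\ref{minorG} also requires $\clD{1} \cap \clEven$ to be minor-closed; this is immediate from Lemmata~\ref{lem:Dk-closed} and \ref{lem:Pa-closed}, but it is a hypothesis of the lemma and should not be left tacit.
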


\begin{proof}
\ref{lem:DkP0:DkP0-C}
Lemma~\ref{lem:suff-int} and Propositions~\ref{prop:odd-even}\ref{lem:odd-even:even-C} and \ref{prop:Dk}\ref{lem:Dk:Dk-C} imply that $(\clD{k} \cap \clEven) C \subseteq \clD{k} \cap \clEven$ whenever $C \subseteq \clL \cap \clTc = \clLc$.
Conversely, if $(\clD{k} \cap \clEven) C \subseteq \clD{k} \cap \clEven$, then $C$ includes neither $\clIo$, $\clIi$, $\clIstar$, $\clLambdac$, $\clVc$, nor $\clSM$ by Lemma~\ref{lem:non}\ref{lem:non:K-LV}, \ref{lem:non:K-SM}, \ref{lem:non:K-Ia-Pa}, \ref{lem:non:K-Istar-Pa} so $C \subseteq \clLc$.

\ref{lem:DkP0:D1P0-C}
Assume first that $C \subseteq \clLS$.
Note that $\clLS = \clD{1} \cap \clOdd$ and $\clL = \clD{1}$, and
let $f \in (\clD{1} \cap \clEven)^{(n)}$ and $g_1, \dots, g_n \in (\clD{1} \cap \clOdd)^{(m)}$.
The composition $f(g_1, \dots, g_n)$ is a member of $\clL$ because the outer and inner functions all belong to $\clD{1} = \clL$.
Moreover, it is a sum of an even number of odd polynomials, that is, an even polynomial, so $f(g_1, \dots, g_n) \in \clEven$.
We conclude that $(\clD{1} \cap \clEven) C \subseteq \clD{1} \cap \clEven$.

Assume now that $(\clD{1} \cap \clEven) C \subseteq \clD{1} \cap \clEven$.
Then $C$ includes neither $\clIo$, $\clIi$, $\clLambdac$, $\clVc$, nor $\clSM$ by Lemma~\ref{lem:non}\ref{lem:non:K-LV}, \ref{lem:non:K-SM}, \ref{lem:non:K-Ia-Pa} so $C \subseteq \clLS$.

\ref{lem:DkP0:C-DkP0}
Lemma~\ref{lem:suff-int} and Propositions~\ref{prop:odd-even}\ref{lem:odd-even:C-even} and \ref{prop:Dk}\ref{lem:Dk:C-Dk} imply that $C (\clD{k} \cap \clEven) \subseteq \clD{k} \cap \clEven$ whenever $C \subseteq \clL \cap \clAll = \clL$.
Conversely, if $C (\clD{k} \cap \clEven) \subseteq \clD{k} \cap \clEven$, then $C$ includes neither $\clLambdac$, $\clVc$, nor $\clSM$ by Lemma~\ref{lem:non}\ref{lem:non:LV-K-Xj}, \ref{lem:non:SM-K}, so $C \subseteq \clL$.
\end{proof}

\begin{proposition}
\label{prop:DkP1}
Let $k \in \IN_{+}$, and let $C$ be a clone.
\begin{enumerate}[label={\upshape{(\roman*)}}, leftmargin=*, widest=iii]
\item\label{lem:DkP1:DkP1-C}
For $k \geq 2$,
$(\clD{k} \cap \clOdd) C \subseteq \clD{k} \cap \clOdd$ if and only if $C \subseteq \clLc$.
\item\label{lem:DkP1:D1P1-C}
$(\clD{1} \cap \clOdd) C \subseteq \clD{1} \cap \clOdd$ if and only if $C \subseteq \clLS$.
\item\label{lem:DkP1:C-DkP1}
$C (\clD{k} \cap \clOdd) \subseteq \clD{k} \cap \clOdd$ if and only if $C \subseteq \clLS$.
\end{enumerate}
\end{proposition}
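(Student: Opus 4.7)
The plan is to prove all three parts in parallel, following the two-pronged template established in the preceding propositions (in particular Propositions~\ref{prop:DkP0} and \ref{prop:XkP1}): sufficiency comes from Lemma~\ref{lem:suff-int} combined with the right- or left-stability results for the constituent classes $\clD{k}$ and $\clOdd$, while necessity comes from applying Lemma~\ref{lem:non} to rule out specific subclones of $C$ and then reading off Post's lattice the largest clone that avoids all the excluded atoms.

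For part (i), with $k \geq 2$, I would derive sufficiency by invoking Lemma~\ref{lem:suff-int}\ref{lem:suff-int:right} together with Proposition~\ref{prop:odd-even}\ref{lem:odd-even:odd-C} (which gives $\clOdd \, C \subseteq \clOdd$ whenever $C \subseteq \clTc$) and Proposition~\ref{prop:Dk}\ref{lem:Dk:Dk-C} (which gives $\clD{k} \, C \subseteq \clD{k}$ whenever $C \subseteq \clL$); since $\clL \cap \clTc = \clLc$, right-stability holds for any $C \subseteq \clLc$. For necessity, I would treat $K = \clD{k} \cap \clOdd$ as $\clD{k} \cap \clChar{k} \cap \clOdd$ (using $\clD{k} \subseteq \clChar{k}$) and apply Lemma~\ref{lem:non}\ref{lem:non:K-Ia-Pa}, \ref{lem:non:K-Istar-Pa}, \ref{lem:non:K-LV}, \ref{lem:non:K-SM} with $i = j = k$ and $a = 1$ to exclude $\clIo$, $\clIi$, $\clIstar$, $\clLambdac$, $\clVc$, and $\clSM$ as subclones of $C$; Post's lattice then forces $C \subseteq \clLc$.

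For part (ii), because $K = \clD{1} \cap \clOdd = \clL \cap \clS = \clLS$ is itself a clone, sufficiency is immediate: $C \subseteq \clLS$ yields $\clLS C \subseteq \clLS \clLS \subseteq \clLS$ by closure of a clone under composition. For necessity I would reuse exactly the same exclusions as in (i) except that Lemma~\ref{lem:non}\ref{lem:non:K-Istar-Pa} is unavailable since it requires $j \geq 2$; the remaining exclusions of $\clIo, \clIi, \clLambdac, \clVc, \clSM$ identify $\clLS$ via Post's lattice as the maximal admissible clone, and the fact that $\clIstar \subseteq \clLS$ is precisely what makes this weaker set of exclusions still deliver $\clLS$.

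For part (iii), sufficiency follows from Lemma~\ref{lem:suff-int}\ref{lem:suff-int:left} together with Proposition~\ref{prop:odd-even}\ref{lem:odd-even:C-odd} (giving $C \clOdd \subseteq \clOdd$ whenever $C \subseteq \clS$) and Proposition~\ref{prop:Dk}\ref{lem:Dk:C-Dk} (giving $C \clD{k} \subseteq \clD{k}$ whenever $C \subseteq \clL$), so any $C \subseteq \clL \cap \clS = \clLS$ works. For necessity I would apply Lemma~\ref{lem:non}\ref{lem:non:Ia-K-P1} to rule out $\clIo$ and $\clIi$ (via $CK \subseteq \clOdd$), and Lemma~\ref{lem:non}\ref{lem:non:LV-K-Xj} and \ref{lem:non:SM-K} (whose counterexamples already lie in the subclass $\clD{k} \cap \clChar{k} \cap \clBoth{0}{1} \subseteq K$) to rule out $\clLambdac, \clVc, \clSM$ (via $CK \subseteq \clD{k}$); Post's lattice then gives $C \subseteq \clLS$. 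The only real wrinkle across the three parts is the case split at $k = 1$ in (ii), where the right-stable clone drops from $\clLc$ to $\clLS$, and that reduction comes down entirely to the unavailability of Lemma~\ref{lem:non}\ref{lem:non:K-Istar-Pa} at $j = 1$.
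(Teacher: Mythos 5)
Your proposal is correct and follows essentially the same route as the paper: sufficiency in (i) and (iii) via Lemma~\ref{lem:suff-int} with Propositions~\ref{prop:odd-even} and \ref{prop:Dk}, sufficiency in (ii) from $\clD{1} \cap \clOdd = \clLS$ being a clone, and necessity throughout by excluding the relevant minimal clones via Lemma~\ref{lem:non} (using $\clD{k} \subseteq \clChar{k}$ and the subclass $\clD{k} \cap \clChar{k} \cap \clBoth{0}{1}$ exactly as the paper does implicitly) and reading off Post's lattice. Your remark that the drop from $\clLc$ to $\clLS$ at $k=1$ in (ii) is precisely the unavailability of Lemma~\ref{lem:non}\ref{lem:non:K-Istar-Pa} correctly identifies the one genuine case distinction.
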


\begin{proof}
\ref{lem:DkP1:DkP1-C}
Lemma~\ref{lem:suff-int} and Propositions~\ref{prop:odd-even}\ref{lem:odd-even:odd-C} and \ref{prop:Dk}\ref{lem:Dk:Dk-C} imply that $(\clD{k} \cap \clOdd) C \subseteq \clD{k} \cap \clOdd$ whenever $C \subseteq \clL \cap \clTc = \clLc$.
Conversely, if $(\clD{k} \cap \clOdd) C \subseteq \clD{k} \cap \clOdd$, then $C$ includes neither $\clIo$, $\clIi$, $\clIstar$, $\clLambdac$, $\clVc$, nor $\clSM$ by Lemma~\ref{lem:non}\ref{lem:non:K-LV}, \ref{lem:non:K-SM}, \ref{lem:non:K-Ia-Pa}, \ref{lem:non:K-Istar-Pa}, so $C \subseteq \clLc$.

\ref{lem:DkP1:D1P1-C}
If $C \subseteq \clLS$, then,
since $\clD{1} \cap \clOdd = \clLS$ and $\clLS$ is a clone, it holds that
$(\clD{1} \cap \clOdd) C \subseteq \clLS \, \clLS \subseteq \clLS = \clD{1} \cap \clOdd$.
Conversely, if $(\clD{1} \cap \clOdd) C \subseteq \clD{1} \cap \clOdd$,
then $C$ includes neither $\clIo$, $\clIi$, $\clLambdac$, $\clVc$, nor $\clSM$ by Lemma~\ref{lem:non}\ref{lem:non:K-LV}, \ref{lem:non:K-SM}, \ref{lem:non:K-Ia-Pa}, so $C \subseteq \clLS$.

\ref{lem:DkP1:C-DkP1}
Lemma~\ref{lem:suff-int} and Propositions~\ref{prop:odd-even}\ref{lem:odd-even:C-odd} and \ref{prop:Dk}\ref{lem:Dk:C-Dk} imply that $C (\clD{k} \cap \clOdd) \subseteq \clD{k} \cap \clOdd$ whenever $C \subseteq \clL \cap \clS = \clLS$.
Conversely, if $C (\clD{k} \cap \clOdd) \subseteq \clD{k} \cap \clOdd$, then $C$ includes neither $\clIo$, $\clIi$, $\clLambdac$, $\clVc$, nor $\clSM$ by Lemma~\ref{lem:non}\ref{lem:non:Ia-K-P1}, \ref{lem:non:LV-K-Xj}, \ref{lem:non:SM-K}, so $C \subseteq \clLS$.
\end{proof}

\begin{proposition}
\label{prop:DkCaEb}
Let $k \in \IN_{+}$, $a, b \in \{0, 1\}$, and let $C$ be a clone.
\begin{enumerate}[label={\upshape{(\roman*)}}, leftmargin=*, widest=ii]
\item\label{lem:DkCaEb:DkCaEb-C}
$(\clD{k} \cap \clBoth{a}{b}) C \subseteq \clD{k} \cap \clBoth{a}{b}$ if and only if $C \subseteq \clLc$.
\item\label{lem:DkCaEb:C-DkCaEb}
$C (\clD{k} \cap \clBoth{a}{b}) \subseteq \clD{k} \cap \clBoth{a}{b}$ if and only if $C \subseteq \clLa{a} \cap \clLa{b}$.
\end{enumerate}
\end{proposition}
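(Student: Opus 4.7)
The plan is to mimic the pattern established by Propositions~\ref{prop:XkCaEb}, \ref{prop:DkCa}, \ref{prop:DkP0}, etc.: combine the known stability of $\clD{k}$ (Proposition~\ref{prop:Dk}) with the known stability of $\clBoth{a}{b}$ (Proposition~\ref{prop:CaEb}) via the intersection lemma (Lemma~\ref{lem:suff-int}) to obtain sufficiency, and then read off necessity from the catalogue of non-inclusions in Lemma~\ref{lem:non}.

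For part~\ref{lem:DkCaEb:DkCaEb-C}, sufficiency is immediate: if $C \subseteq \clLc = \clL \cap \clTc$, then $\clD{k} C \subseteq \clD{k}$ by Proposition~\ref{prop:Dk}\ref{lem:Dk:Dk-C} and $\clBoth{a}{b} C \subseteq \clBoth{a}{b}$ by Proposition~\ref{prop:CaEb}\ref{lem:CaEb:CaEb-C}, so Lemma~\ref{lem:suff-int}\ref{lem:suff-int:right} gives $(\clD{k} \cap \clBoth{a}{b}) C \subseteq \clD{k} \cap \clBoth{a}{b}$. For necessity, assume $(\clD{k} \cap \clBoth{a}{b}) C \subseteq \clD{k} \cap \clBoth{a}{b}$; taking $K = \clD{k} \cap \clBoth{a}{b}$ in Lemma~\ref{lem:non}\ref{lem:non:DiXjCaEb} with $i = j = k$ (noting that the lemma's hypothesis $i \geq j \geq 1$ holds), parts~\ref{lem:non:K-Ia-CaEb}, \ref{lem:non:K-LV}, and \ref{lem:non:K-SM} show that $C$ can contain none of $\clIo$, $\clIi$, $\clIstar$, $\clLambdac$, $\clVc$, $\clSM$, and a glance at Post's lattice (Figure~\ref{fig:Post}) yields $C \subseteq \clLc$.

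For part~\ref{lem:DkCaEb:C-DkCaEb}, sufficiency is equally straightforward: if $C \subseteq \clLa{a} \cap \clLa{b} = \clL \cap \clTa{a} \cap \clTa{b}$, then $C \clD{k} \subseteq \clD{k}$ by Proposition~\ref{prop:Dk}\ref{lem:Dk:C-Dk} and $C \clBoth{a}{b} \subseteq \clBoth{a}{b}$ by Proposition~\ref{prop:CaEb}\ref{lem:CaEb:C-CaEb}, so Lemma~\ref{lem:suff-int}\ref{lem:suff-int:left} yields $C (\clD{k} \cap \clBoth{a}{b}) \subseteq \clD{k} \cap \clBoth{a}{b}$. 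For necessity, assume $C (\clD{k} \cap \clBoth{a}{b}) \subseteq \clD{k} \cap \clBoth{a}{b}$. Lemma~\ref{lem:non}\ref{lem:non:DiXjCaEb} (parts \ref{lem:non:Istar-K-CaEb}, \ref{lem:non:LV-K-Xj}, \ref{lem:non:SM-K}) shows $C$ contains none of $\clIstar$, $\clLambdac$, $\clVc$, $\clSM$. If $a = b$, Lemma~\ref{lem:non}\ref{lem:non:Inota-K-CaEb} rules out $\clIa{\overline{a}}$, so $C \subseteq \clLa{a} = \clLa{a} \cap \clLa{b}$; if $a \neq b$, Lemma~\ref{lem:non}\ref{lem:non:Ia-K-CaEb} rules out both $\clIo$ and $\clIi$, so $C \subseteq \clLc = \clLa{a} \cap \clLa{b}$.

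No step here is a genuine obstacle: all the heavy lifting was already done in Lemma~\ref{lem:non} and in the previous propositions. The only minor care required is the case split $a = b$ versus $a \neq b$ in the necessity argument of part~\ref{lem:DkCaEb:C-DkCaEb}, which forces one to invoke different sub-items of Lemma~\ref{lem:non}\ref{lem:non:any}; this exactly parallels the case distinction already handled in Proposition~\ref{prop:XkCaEb}\ref{lem:XkCaEb:C-XkCaEb}.
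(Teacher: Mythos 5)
Your proposal is correct and follows essentially the same route as the paper's proof: sufficiency via Lemma~\ref{lem:suff-int} combined with Propositions~\ref{prop:Dk} and \ref{prop:CaEb}, and necessity by ruling out the minimal clones $\clIo$, $\clIi$, $\clIstar$, $\clLambdac$, $\clVc$, $\clSM$ through the same sub-items of Lemma~\ref{lem:non} (instantiated with $i = j = k$, so that $\clD{k} \cap \clChar{k} \cap \clBoth{a}{b} = \clD{k} \cap \clBoth{a}{b}$), including the identical case split on $a = b$ versus $a \neq b$ in part~(ii).
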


\begin{proof}
\ref{lem:DkCaEb:DkCaEb-C}
Lemma~\ref{lem:suff-int} and Propositions~\ref{prop:CaEb}\ref{lem:CaEb:CaEb-C} and \ref{prop:Dk}\ref{lem:Dk:Dk-C} imply that $(\clD{k} \cap \clBoth{a}{b}) C \subseteq \clD{k} \cap \clBoth{a}{b}$ whenever $C \subseteq \clL \cap \clTc = \clLc$.
Conversely, if $(\clD{k} \cap \clBoth{a}{b}) C \subseteq \clD{k} \cap \clBoth{a}{b}$, then $C$ includes neither $\clIo$, $\clIi$, $\clIstar$, $\clLambdac$, $\clVc$, nor $\clSM$ by Lemma~\ref{lem:non}\ref{lem:non:K-Ia-CaEb}, \ref{lem:non:K-LV}, \ref{lem:non:K-SM}, so $C \subseteq \clLc$.

\ref{lem:DkCaEb:C-DkCaEb}
Lemma~\ref{lem:suff-int} and Propositions~\ref{prop:CaEb}\ref{lem:CaEb:C-CaEb} and \ref{prop:Dk}\ref{lem:Dk:C-Dk} imply that $C (\clD{k} \cap \clBoth{a}{b}) \subseteq \clD{k} \cap \clBoth{a}{b}$ whenever $C \subseteq \clL \cap \clTa{a} \cap \clTa{b} = \clLa{a} \cap \clLa{b}$.

Assume now that $C (\clD{k} \cap \clBoth{a}{b}) \subseteq \clD{k} \cap \clBoth{a}{b}$.
Then $C$ includes neither $\clIstar$, $\clLambdac$, $\clVc$, nor $\clSM$ by Lemma~\ref{lem:non}\ref{lem:non:Istar-K-CaEb}, \ref{lem:non:LV-K-Xj}, \ref{lem:non:SM-K}.
If $a = b$, then $C$ does not include $\clIa{\overline{a}}$ by Lemma~\ref{lem:non}\ref{lem:non:Inota-K-CaEb}, so $C \subseteq \clLa{a} = \clLa{a} \cap \clLa{b}$.
If $a \neq b$, then $C$ includes neither $\clIo$ nor $\clIi$ by Lemma~\ref{lem:non}\ref{lem:non:Ia-K-CaEb}, so $C \subseteq \clLc = \clLa{a} \cap \clLa{b}$.
\end{proof}

\begin{proposition}
\label{prop:DiXj}
Let $i, j \in \IN_{+}$ with $i > j \geq 1$, and let $C$ be a clone.
\begin{enumerate}[label={\upshape{(\roman*)}}, leftmargin=*, widest=ii]
\item\label{lem:DiXj:DiXj-C}
$(\clD{i} \cap \clChar{j}) C \subseteq \clD{i} \cap \clChar{j}$ if and only if $C \subseteq \clLS$.
\item\label{lem:DiXj:C-DiXj}
$C (\clD{i} \cap \clChar{j}) \subseteq \clD{i} \cap \clChar{j}$ if and only if $C \subseteq \clL$.
\end{enumerate}
\end{proposition}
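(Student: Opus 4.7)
The proof plan is to parallel the previous propositions in this section: combine the already-established right/left stability results for $\clD{i}$ and $\clChar{j}$ separately (Propositions \ref{prop:Dk} and \ref{prop:Xk}) via Lemma \ref{lem:suff-int} to obtain the sufficient conditions, and extract the necessary conditions from the witnesses collected in Lemma \ref{lem:non}(ii). The crucial observation is that although Lemma \ref{lem:non}(ii) is stated for the smaller class $\clD{i} \cap \clChar{j} \cap \clBoth{a}{b}$, this is a subclass of $\clD{i} \cap \clChar{j}$, so the same witnesses immediately disprove stability of the larger class.

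For the sufficiency in (i), I would apply Lemma \ref{lem:suff-int}\ref{lem:suff-int:right} with $K_1 = \clD{i}$ and $K_2 = \clChar{j}$. By Proposition \ref{prop:Dk}\ref{lem:Dk:Dk-C}, $\clD{i} C \subseteq \clD{i}$ whenever $C \subseteq \clL$. By Proposition \ref{prop:Xk}\ref{lem:Xk:Xk-C}, \ref{lem:Xk:X1-C}, $\clChar{j} C \subseteq \clChar{j}$ whenever $C \subseteq \clLS$ (if $j \geq 2$) or $C \subseteq \clS$ (if $j = 1$). Since $\clL \cap \clLS = \clLS$ and $\clL \cap \clS = \clLS$, both cases yield $C \subseteq \clLS$ as a sufficient condition. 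For the sufficiency in (ii), the same lemma applied with the same $K_1, K_2$ together with Propositions \ref{prop:Dk}\ref{lem:Dk:C-Dk} and \ref{prop:Xk}\ref{lem:Xk:C-Xk} (both giving $\clL$) yields $C \subseteq \clL$ as a sufficient condition.

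For the necessity in (i), assume $(\clD{i} \cap \clChar{j}) C \subseteq \clD{i} \cap \clChar{j}$. The witnesses in Lemma \ref{lem:non}\ref{lem:non:DiXjCaEb}\ref{lem:non:K-Ia-Xj} (which requires $i > j$, as in our hypothesis), \ref{lem:non:K-LV}, and \ref{lem:non:K-SM} all lie in $\clD{i} \cap \clChar{j} \cap \clBoth{a}{b} \subseteq \clD{i} \cap \clChar{j}$ for a suitable choice of $a, b$, so they witness that $C$ can include none of $\clIo$, $\clIi$, $\clLambdac$, $\clVc$, $\clSM$. By Post's lattice this forces $C \subseteq \clLS$; note $\clIstar \subseteq \clLS$, so no further exclusion is needed. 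For the necessity in (ii), assume $C(\clD{i} \cap \clChar{j}) \subseteq \clD{i} \cap \clChar{j}$. The witnesses in Lemma \ref{lem:non}\ref{lem:non:DiXjCaEb}\ref{lem:non:LV-K-Xj} and \ref{lem:non:SM-K} (using the $\clD{i}$ part of each, which requires no extra assumption) exclude $\clLambdac$, $\clVc$, and $\clSM$ from $C$, and Post's lattice yields $C \subseteq \clL$.

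No serious obstacle is expected: the hard combinatorial content lives entirely in Lemma \ref{lem:non}\ref{lem:non:DiXjCaEb} and in the earlier Propositions \ref{prop:Dk} and \ref{prop:Xk}, and the argument here just assembles them. The only point to check carefully is that the $i > j$ hypothesis is used exclusively through clause \ref{lem:non:K-Ia-Xj} of the lemma (to exclude $\clIo$ and $\clIi$ in part (i)); the other invocations of Lemma \ref{lem:non}\ref{lem:non:DiXjCaEb} used above hold for all $i \geq j \geq 1$.
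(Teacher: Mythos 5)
Your proposal is correct and follows essentially the same route as the paper: sufficiency by combining Propositions~\ref{prop:Dk} and \ref{prop:Xk} through Lemma~\ref{lem:suff-int}, and necessity by invoking exactly the clauses \ref{lem:non:K-Ia-Xj}, \ref{lem:non:K-LV}, \ref{lem:non:K-SM} (resp.\ \ref{lem:non:LV-K-Xj}, \ref{lem:non:SM-K}) of Lemma~\ref{lem:non}. Your explicit remarks that the witnesses of Lemma~\ref{lem:non}\ref{lem:non:DiXjCaEb} live in a subclass of $\clD{i} \cap \clChar{j}$ and that the hypothesis $i > j$ enters only through clause \ref{lem:non:K-Ia-Xj} are accurate and make the argument slightly more self-aware than the paper's, but the substance is identical.
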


\begin{proof}
\ref{lem:DiXj:DiXj-C}
Lemma~\ref{lem:suff-int} and Propositions~\ref{prop:Xk}\ref{lem:Xk:Xk-C}, \ref{lem:Xk:X1-C} and \ref{prop:Dk}\ref{lem:Dk:Dk-C} imply $(\clD{i} \cap \clChar{j}) C \subseteq \clD{i} \cap \clChar{j}$ whenever $C \subseteq \clLS \cap \clL = \clLS$ if $k \geq 2$ and whenever $C \subseteq \clS \cap \clL = \clLS$ if $k = 1$.
Conversely, if $(\clD{i} \cap \clChar{j}) C \subseteq \clD{i} \cap \clChar{j}$, then $C$ includes neither $\clIo$, $\clIi$, $\clLambdac$, $\clVc$, nor $\clSM$ by Lemma~\ref{lem:non}\ref{lem:non:K-Ia-Xj}, \ref{lem:non:K-LV}, \ref{lem:non:K-SM}, so $C \subseteq \clLS$.

\ref{lem:DiXj:C-DiXj}
Lemma~\ref{lem:suff-int} and Propositions~\ref{prop:Xk}\ref{lem:Xk:C-Xk} and \ref{prop:Dk}\ref{lem:Dk:C-Dk} imply that $C (\clD{i} \cap \clChar{j}) \subseteq \clD{i} \cap \clChar{j}$ whenever $C \subseteq \clL \cap \clL = \clL$.
Conversely, if $C (\clD{i} \cap \clChar{j}) \subseteq \clD{i} \cap \clChar{j}$, then $C$ includes neither $\clLambdac$, $\clVc$, nor $\clSM$ by Lemma~\ref{lem:non}\ref{lem:non:LV-K-Xj}, \ref{lem:non:SM-K}, so $C \subseteq \clL$.
\end{proof}

\begin{proposition}
\label{prop:DiXjCa}
Let $i, j \in \IN_{+}$ with $i > j \geq 1$, $a \in \{0,1\}$, and let $C$ be a clone.
\begin{enumerate}[label={\upshape{(\roman*)}}, leftmargin=*, widest=iii]
\item\label{lem:DiXjCa:DiXjCa-C}
$(\clD{i} \cap \clChar{j} \cap \clCon{a}) C \subseteq \clD{i} \cap \clChar{j} \cap \clCon{a}$ if and only if $C \subseteq \clLc$.
\item\label{lem:DiXjCa:C-DiXjCa}
$C (\clD{i} \cap \clChar{j} \cap \clCon{a}) \subseteq \clD{i} \cap \clChar{j} \cap \clCon{a}$ if and only if $C \subseteq \clLa{a}$.
\item\label{lem:DiXjCa:DiXjEa-C}
$(\clD{i} \cap \clChar{j} \cap \clYksi{a}) C \subseteq \clD{i} \cap \clChar{j} \cap \clYksi{a}$ if and only if $C \subseteq \clLc$.
\item\label{lem:DiXjCa:C-DiXjEa}
$C (\clD{i} \cap \clChar{j} \cap \clYksi{a}) \subseteq \clD{i} \cap \clChar{j} \cap \clYksi{a}$ if and only if $C \subseteq \clLa{a}$.
\end{enumerate}
\end{proposition}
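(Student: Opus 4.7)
The proof will follow the exact same pattern as Propositions~\ref{prop:XkCa}, \ref{prop:XkEa}, and \ref{prop:DkCa}: combine sufficient stability conditions from the factor classes using Lemma~\ref{lem:suff-int}, and derive necessary conditions by invoking the appropriate non-inclusions in Lemma~\ref{lem:non} to rule out every clone not contained in the claimed upper bound.

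For part \ref{lem:DiXjCa:DiXjCa-C}, the sufficiency direction follows by applying Lemma~\ref{lem:suff-int}\ref{lem:suff-int:right} to the intersection $\clD{i} \cap \clChar{j} \cap \clCon{a}$, using the known right-stability results: Proposition~\ref{prop:Dk}\ref{lem:Dk:Dk-C} ($\clD{i}$ is right-stable under $\clL$), Proposition~\ref{prop:Xk}\ref{lem:Xk:Xk-C}/\ref{lem:Xk:X1-C} ($\clChar{j}$ is right-stable under $\clLS$ when $j \geq 2$ or under $\clS$ when $j=1$), and Proposition~\ref{prop:Ca}\ref{lem:Ca:Ca-C} ($\clCon{a}$ is right-stable under $\clTo$). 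The triple intersection in Post's lattice equals $\clLc$ in either case. For necessity, letting $K := \clD{i} \cap \clChar{j} \cap \clCon{a}$ and assuming $K C \subseteq K$, Lemma~\ref{lem:non}\ref{lem:non:K-Ia-CaEb} excludes $\clIi$ and $\clIstar$ from $C$ (otherwise $K C \nsubseteq \clCon{a}$); Lemma~\ref{lem:non}\ref{lem:non:K-Ia-Xj} excludes $\clIo$ (using the hypothesis $i > j$); and Lemma~\ref{lem:non}\ref{lem:non:K-LV}, \ref{lem:non:K-SM} exclude $\clLambdac$, $\clVc$, $\clSM$. Inspection of Post's lattice shows that any clone avoiding all these atoms is contained in $\clLc$.

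For part \ref{lem:DiXjCa:C-DiXjCa}, the sufficiency follows from Lemma~\ref{lem:suff-int}\ref{lem:suff-int:left} together with Proposition~\ref{prop:Dk}\ref{lem:Dk:C-Dk}, Proposition~\ref{prop:Xk}\ref{lem:Xk:C-Xk}, and Proposition~\ref{prop:Ca}\ref{lem:Ca:C-Ca}, yielding $C \subseteq \clL \cap \clL \cap \clTa{a} = \clLa{a}$. For necessity, assuming $C K \subseteq K$ with $K$ as above, Lemma~\ref{lem:non}\ref{lem:non:Inota-K-CaEb} and \ref{lem:non:Istar-K-CaEb} exclude $\clIa{\overline{a}}$ and $\clIstar$, while Lemma~\ref{lem:non}\ref{lem:non:LV-K-Xj}, \ref{lem:non:SM-K} exclude $\clLambdac$, $\clVc$, $\clSM$. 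By Post's lattice this forces $C \subseteq \clLa{a}$.

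Parts \ref{lem:DiXjCa:DiXjEa-C} and \ref{lem:DiXjCa:C-DiXjEa} are entirely analogous, using Proposition~\ref{prop:Ca}\ref{lem:Ca:Ea-C} and \ref{lem:Ca:C-Ea} in place of \ref{lem:Ca:Ca-C} and \ref{lem:Ca:C-Ca}; the same items of Lemma~\ref{lem:non} apply verbatim since they are formulated in terms of $\clBoth{a}{b}$ and specialise to both $\clCon{a}$ and $\clYksi{a}$ upon choosing appropriate witnesses. No step presents any genuine difficulty: the preceding propositions and the comprehensive toolbox provided by Lemma~\ref{lem:non} reduce the argument to a bookkeeping exercise, and the only point requiring attention is verifying that the relevant witnesses in Lemma~\ref{lem:non} (which are built in terms of $\clBoth{a}{b}$) lie in the smaller class $\clD{i} \cap \clChar{j} \cap \clCon{a}$ or $\clD{i} \cap \clChar{j} \cap \clYksi{a}$, which they do by inspection of the explicit polynomials $f_0, f_1, g_0, g_1, h_0, h_1$ given there.
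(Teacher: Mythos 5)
Your proposal is correct and follows essentially the same route as the paper: sufficiency via Lemma~\ref{lem:suff-int} combined with the stability results for the factor classes, and necessity via the non-inclusions of Lemma~\ref{lem:non} (the paper invokes Proposition~\ref{prop:DiXj} for the pair $\clD{i} \cap \clChar{j}$ rather than unfolding it into Propositions~\ref{prop:Dk} and \ref{prop:Xk} separately, but the resulting clone intersections are identical). Your closing remark correctly identifies the one point needing care, namely that the witnesses of Lemma~\ref{lem:non}, built inside $\clD{i} \cap \clChar{j} \cap \clBoth{a}{b}$, lie in the larger classes $\clD{i} \cap \clChar{j} \cap \clCon{a}$ and $\clD{i} \cap \clChar{j} \cap \clYksi{a}$ for a suitable choice of $b$.
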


\begin{proof}
\ref{lem:DiXjCa:DiXjCa-C}
Lemma~\ref{lem:suff-int} and Propositions~\ref{prop:Ca}\ref{lem:Ca:Ca-C} and \ref{prop:DiXj}\ref{lem:DiXj:DiXj-C} imply that $(\clD{i} \cap \clChar{j} \cap \clCon{a}) C \subseteq \clD{i} \cap \clChar{j} \cap \clCon{a}$ whenever $C \subseteq \clLS \cap \clTo = \clLc$.
Conversely, if $(\clD{i} \cap \clChar{j} \cap \clCon{a}) C \subseteq \clD{i} \cap \clChar{j} \cap \clCon{a}$, then $C$ includes neither $\clIo$, $\clIi$, $\clIstar$, $\clLambdac$, $\clVc$, nor $\clSM$ by Lemma~\ref{lem:non}\ref{lem:non:K-Ia-CaEb}, \ref{lem:non:K-Ia-Xj}, \ref{lem:non:K-LV}, \ref{lem:non:K-SM}, so $C \subseteq \clLc$.

\ref{lem:DiXjCa:C-DiXjCa}
Lemma~\ref{lem:suff-int} and Propositions~\ref{prop:Ca}\ref{lem:Ca:C-Ca}, \ref{prop:DiXj}\ref{lem:DiXj:C-DiXj} imply $C (\clD{i} \cap \clChar{j} \cap \clCon{a}) \subseteq \clD{i} \cap \clChar{j} \cap \clCon{a}$ whenever $C \subseteq \clL \cap \clTa{a} = \clLa{a}$.
Conversely, if $C (\clD{i} \cap \clChar{j} \cap \clCon{a}) \subseteq \clD{i} \cap \clChar{j} \cap \clCon{a}$, then $C$ includes neither $\clIa{\overline{a}}$, $\clIstar$, $\clLambdac$, $\clVc$, nor $\clSM$ by Lemma~\ref{lem:non}\ref{lem:non:Inota-K-CaEb}, \ref{lem:non:Istar-K-CaEb}, \ref{lem:non:LV-K-Xj}, \ref{lem:non:SM-K}, so $C \subseteq \clLa{a}$.

\ref{lem:DiXjCa:DiXjEa-C}
Lemma~\ref{lem:suff-int} and Propositions~\ref{prop:Ca}\ref{lem:Ca:Ea-C} and \ref{prop:DiXj}\ref{lem:DiXj:DiXj-C} imply that $(\clD{i} \cap \clChar{j} \cap \clYksi{a}) C \subseteq \clD{i} \cap \clChar{j} \cap \clYksi{a}$ whenever $C \subseteq \clLS \cap \clTi = \clLc$.
Conversely, if $(\clD{i} \cap \clChar{j} \cap \clYksi{a}) C \subseteq \clD{i} \cap \clChar{j} \cap \clYksi{a}$, then $C$ includes neither $\clIo$, $\clIi$, $\clIstar$, $\clLambdac$, $\clVc$, nor $\clSM$ by Lemma~\ref{lem:non}\ref{lem:non:K-Ia-CaEb}, \ref{lem:non:K-Ia-Xj}, \ref{lem:non:K-LV}, \ref{lem:non:K-SM}, so $C \subseteq \clLc$.

\ref{lem:DiXjCa:C-DiXjEa}
Lemma~\ref{lem:suff-int} and Propositions~\ref{prop:Ca}\ref{lem:Ca:C-Ea} and \ref{prop:DiXj}\ref{lem:DiXj:C-DiXj} imply that $C (\clD{i} \cap \clChar{j} \cap \clYksi{a}) \subseteq \clD{i} \cap \clChar{j} \cap \clYksi{a}$ whenever $C \subseteq \clL \cap \clTa{a} = \clLa{a}$.
Conversely, if $C (\clD{i} \cap \clChar{j} \cap \clYksi{a}) \subseteq \clD{i} \cap \clChar{j} \cap \clYksi{a}$, then $C$ includes neither $\clIa{\overline{a}}$, $\clIstar$, $\clLambdac$, $\clVc$, nor $\clSM$ by Lemma~\ref{lem:non}\ref{lem:non:Inota-K-CaEb}, \ref{lem:non:Istar-K-CaEb}, \ref{lem:non:LV-K-Xj}, \ref{lem:non:SM-K}, so $C \subseteq \clLa{a}$.
\end{proof}

\begin{proposition}
\label{prop:DiXjP0}
Let $i, j \in \IN_{+}$ with $i > j \geq 1$, and let $C$ be a clone.
\begin{enumerate}[label={\upshape{(\roman*)}}, leftmargin=*, widest=iii]
\item\label{lem:DiXjP0:DiXjP0-C}
For $j \geq 2$,
$(\clD{i} \cap \clChar{j} \cap \clEven) C \subseteq \clD{i} \cap \clChar{j} \cap \clEven$ if and only if $C \subseteq \clLc$.
\item\label{lem:DiXjP0:DiX1P0-C}
$(\clD{i} \cap \clChar{1} \cap \clEven) C \subseteq \clD{i} \cap \clChar{1} \cap \clEven$ if and only if $C \subseteq \clLS$.
\item\label{lem:DiXjP0:C-DiXjP0}
$C (\clD{i} \cap \clChar{j} \cap \clEven) \subseteq \clD{i} \cap \clChar{j} \cap \clEven$ if and only if $C \subseteq \clL$.
\end{enumerate}
\end{proposition}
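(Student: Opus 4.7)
The plan is to follow exactly the same recipe used in the preceding propositions (e.g., Propositions~\ref{prop:XkP0} and~\ref{prop:DkP0}): prove sufficiency by combining the stability results for each ``factor'' of the intersection via Lemma~\ref{lem:suff-int}, and prove necessity by exhibiting, for each clone $C$ not contained in the target clone, a generator of $C$ whose composition with a suitable witness function leaves the class; these witnesses are already packaged in Lemma~\ref{lem:non}.

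For part~\ref{lem:DiXjP0:DiXjP0-C}, when $j \geq 2$, sufficiency follows by combining Propositions~\ref{prop:Dk}\ref{lem:Dk:Dk-C}, \ref{prop:Xk}\ref{lem:Xk:Xk-C}, and \ref{prop:odd-even}\ref{lem:odd-even:even-C} via Lemma~\ref{lem:suff-int}\ref{lem:suff-int:right}, giving right-stability under $\clL \cap \clLS \cap \clTc = \clLc$. For necessity, if $(\clD{i} \cap \clChar{j} \cap \clEven) C \subseteq \clD{i} \cap \clChar{j} \cap \clEven$, then Lemma~\ref{lem:non}\ref{lem:non:K-LV}, \ref{lem:non:K-SM} exclude $\clLambdac$, $\clVc$, $\clSM$ from $C$, while Lemma~\ref{lem:non}\ref{lem:non:K-Ia-Pa}, \ref{lem:non:K-Istar-Pa} exclude $\clIo$, $\clIi$, $\clIstar$; the only clone containing none of these is $\clLc$.

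For part~\ref{lem:DiXjP0:DiX1P0-C}, the crucial observation is that $\clChar{1} \cap \clEven = \clChar{0}$ by Lemma~\ref{lem:Selezneva}, so the class in question equals $\clD{i} \cap \clChar{0}$. Sufficiency then follows from Proposition~\ref{prop:Dk}\ref{lem:Dk:Dk-C} and Lemma~\ref{lem:X0apu}\ref{lem:X0apu:X0SinX0} via Lemma~\ref{lem:suff-int}\ref{lem:suff-int:right}: right-stability under $\clL \cap \clS = \clLS$. For necessity, Lemma~\ref{lem:non}\ref{lem:non:K-LV}, \ref{lem:non:K-SM} exclude $\clLambdac$, $\clVc$, $\clSM$ (note that $j = 1$ still allows these applications, since the witness functions for $\clD{i}$-violation do not require $j \geq 2$), and Lemma~\ref{lem:non}\ref{lem:non:K-Ia-Pa} excludes $\clIo$ and $\clIi$; this forces $C \subseteq \clS \cap \clL = \clLS$.

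For part~\ref{lem:DiXjP0:C-DiXjP0}, sufficiency follows by combining Propositions~\ref{prop:Dk}\ref{lem:Dk:C-Dk}, \ref{prop:Xk}\ref{lem:Xk:C-Xk}, and \ref{prop:odd-even}\ref{lem:odd-even:C-even} via Lemma~\ref{lem:suff-int}\ref{lem:suff-int:left}, yielding left-stability under $\clL \cap \clL \cap \clAll = \clL$ (this works uniformly for all $j \geq 1$). For necessity, if $C (\clD{i} \cap \clChar{j} \cap \clEven) \subseteq \clD{i} \cap \clChar{j} \cap \clEven$, then Lemma~\ref{lem:non}\ref{lem:non:LV-K-Xj}, \ref{lem:non:SM-K} exclude $\clLambdac$, $\clVc$, $\clSM$ (using that the witnesses $f_0, g_0, h_0$ in the proof of Lemma~\ref{lem:non} lie in $\clBoth{0}{0} \subseteq \clEven$ and hence in our class), so $C \subseteq \clL$. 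No step here is a real obstacle; the main care needed is in part~\ref{lem:DiXjP0:DiX1P0-C}, where one must remember the identity $\clChar{1} \cap \clEven = \clChar{0}$ to explain why the bound on $C$ jumps from $\clLc$ to $\clLS$.
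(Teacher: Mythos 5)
Your proposal is correct and follows essentially the same approach as the paper: sufficiency via Lemma~\ref{lem:suff-int} applied to the stability results for the intersecting factors, and necessity via the non-inclusions of Lemma~\ref{lem:non} together with Post's lattice. The only (immaterial) difference is that you unfold some intermediate citations -- in part~(i) you combine three factor propositions where the paper reuses Proposition~\ref{prop:DiXj}, and in part~(ii) you invoke $\clChar{1} \cap \clEven = \clChar{0}$ and Lemma~\ref{lem:X0apu} directly where the paper cites Proposition~\ref{prop:XkP0}\ref{lem:XkP0:X1P0-C}, which is proved by exactly that observation.
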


\begin{proof}
\ref{lem:DiXjP0:DiXjP0-C}
Lemma~\ref{lem:suff-int} and Propositions~\ref{prop:odd-even}\ref{lem:odd-even:even-C} and \ref{prop:DiXj}\ref{lem:DiXj:DiXj-C} imply that $(\clD{i} \cap \clChar{j} \cap \clEven) C \subseteq \clD{i} \cap \clChar{j} \cap \clEven$ whenever $C \subseteq \clLS \cap \clTc = \clLc$.
Conversely, if $(\clD{i} \cap \clChar{j} \cap \clEven) C \subseteq \clD{i} \cap \clChar{j} \cap \clEven$, then $C$ includes neither $\clIo$, $\clIi$, $\clIstar$, $\clLambdac$, $\clVc$, nor $\clSM$ by Lemma~\ref{lem:non}\ref{lem:non:K-LV}, \ref{lem:non:K-SM}, \ref{lem:non:K-Ia-Pa}, \ref{lem:non:K-Istar-Pa}, so $C \subseteq \clLc$.

\ref{lem:DiXjP0:DiX1P0-C}
Lemma~\ref{lem:suff-int} and Propositions~\ref{prop:XkP0}\ref{lem:XkP0:X1P0-C} and \ref{prop:Dk}\ref{lem:Dk:Dk-C} imply that $(\clD{i} \cap \clChar{1} \cap \clEven) C \subseteq \clD{i} \cap \clChar{1} \cap \clEven$ whenever $C \subseteq \clS \cap \clL = \clLS$.
Conversely, if $(\clD{i} \cap \clChar{1} \cap \clEven) C \subseteq \clD{i} \cap \clChar{1} \cap \clEven$, then $C$ includes neither $\clIo$, $\clIi$, $\clLambdac$, $\clVc$, nor $\clSM$ by Lemma~\ref{lem:non}\ref{lem:non:K-LV}, \ref{lem:non:K-SM}, \ref{lem:non:K-Ia-Pa}, so $C \subseteq \clLS$.

\ref{lem:DiXjP0:C-DiXjP0}
Lemma~\ref{lem:suff-int} and Propositions~\ref{prop:odd-even}\ref{lem:odd-even:C-even} and \ref{prop:DiXj}\ref{lem:DiXj:C-DiXj} imply that $C (\clD{i} \cap \clChar{j} \cap \clEven) \subseteq \clD{i} \cap \clChar{j} \cap \clEven$ whenever $C \subseteq \clL \cap \clAll = \clL$.
Conversely, if $C (\clD{i} \cap \clChar{j} \cap \clEven) \subseteq \clD{i} \cap \clChar{j} \cap \clEven$, then $C$ includes neither $\clLambdac$, $\clVc$, nor $\clSM$ by Lemma~\ref{lem:non}\ref{lem:non:LV-K-Xj}, \ref{lem:non:SM-K}, so $C \subseteq \clL$.
\end{proof}

\begin{proposition}
\label{prop:DiXjP1}
Let $i, j \in \IN_{+}$ with $i > j \geq 1$, and let $C$ be a clone.
\begin{enumerate}[label={\upshape{(\roman*)}}, leftmargin=*, widest=iii]
\item\label{lem:DiXjP1:DiXjP1-C}
For $j \geq 2$,
$(\clD{i} \cap \clChar{j} \cap \clOdd) C \subseteq \clD{i} \cap \clChar{j} \cap \clOdd$ if and only if $C \subseteq \clLc$.
\item\label{lem:DiXjP1:DiX1P1-C}
$(\clD{i} \cap \clChar{1} \cap \clOdd) C \subseteq \clD{i} \cap \clChar{1} \cap \clOdd$ if and only if $C \subseteq \clLS$.
\item\label{lem:DiXjP1:C-DiXjP1}
$C (\clD{i} \cap \clChar{j} \cap \clOdd) \subseteq \clD{i} \cap \clChar{j} \cap \clOdd$ if and only if $C \subseteq \clLS$.
\end{enumerate}
\end{proposition}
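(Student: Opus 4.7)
The plan is to follow the same template that was used for Propositions~\ref{prop:XkP1} and \ref{prop:DkP1}, combining the relevant componentwise stability statements via Lemma~\ref{lem:suff-int} for sufficiency, and extracting necessity from Lemma~\ref{lem:non}. Because $\clD{i} \cap \clChar{j} \cap \clOdd$ is the intersection of $\clD{i} \cap \clChar{j}$ with $\clOdd$ (and, when $j=1$, also coincides with $\clD{i} \cap \clS$), each of the three items reduces to intersecting two already-known stability bounds.

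For part~(i), with $j \geq 2$, I would apply Lemma~\ref{lem:suff-int}\ref{lem:suff-int:right} to Proposition~\ref{prop:odd-even}\ref{lem:odd-even:odd-C} (giving $\clOdd C \subseteq \clOdd$ whenever $C \subseteq \clTc$) and Proposition~\ref{prop:DiXj}\ref{lem:DiXj:DiXj-C} (giving $(\clD{i} \cap \clChar{j}) C \subseteq \clD{i} \cap \clChar{j}$ whenever $C \subseteq \clLS$), so that $C \subseteq \clLS \cap \clTc = \clLc$ suffices. For necessity, assuming $(\clD{i} \cap \clChar{j} \cap \clOdd) C \subseteq \clD{i} \cap \clChar{j} \cap \clOdd$, I would invoke Lemma~\ref{lem:non}\ref{lem:non:K-LV}, \ref{lem:non:K-SM}, \ref{lem:non:K-Ia-Pa}, \ref{lem:non:K-Istar-Pa} (applied with $K = \clD{i} \cap \clChar{j} \cap \clOdd$; note that parts \ref{lem:non:K-Ia-Pa} and \ref{lem:non:K-Istar-Pa} are exactly the ones that force $\clIo, \clIi, \clIstar \nsubseteq C$ using only that $\clOdd$ is violated, with $\clIstar$ requiring $j \geq 2$) to exclude each of $\clIo, \clIi, \clIstar, \clLambdac, \clVc, \clSM$ from $C$, which by Post's lattice forces $C \subseteq \clLc$.

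For part~(ii), since $\clChar{1} \cap \clOdd = \clS$, I have $\clD{i} \cap \clChar{1} \cap \clOdd = \clD{i} \cap \clS$. Sufficiency follows from Lemma~\ref{lem:suff-int}\ref{lem:suff-int:right} combined with Proposition~\ref{prop:XkP1}\ref{lem:XkP1:X1P1-C} ($C \subseteq \clS$) and Proposition~\ref{prop:Dk}\ref{lem:Dk:Dk-C} ($C \subseteq \clL$), so $C \subseteq \clS \cap \clL = \clLS$ suffices. For necessity, Lemma~\ref{lem:non}\ref{lem:non:K-LV}, \ref{lem:non:K-SM}, \ref{lem:non:K-Ia-Pa} applied with $K = \clD{i} \cap \clChar{1} \cap \clOdd$ excludes $\clIo, \clIi, \clLambdac, \clVc, \clSM$ from $C$ (now $\clIstar$ is not ruled out, consistent with $\clLS \supseteq \clIstar$), yielding $C \subseteq \clLS$.

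For part~(iii), sufficiency comes from Lemma~\ref{lem:suff-int}\ref{lem:suff-int:left} together with Proposition~\ref{prop:odd-even}\ref{lem:odd-even:C-odd} ($C \clOdd \subseteq \clOdd$ iff $C \subseteq \clS$) and Proposition~\ref{prop:DiXj}\ref{lem:DiXj:C-DiXj} ($C (\clD{i} \cap \clChar{j}) \subseteq \clD{i} \cap \clChar{j}$ iff $C \subseteq \clL$), giving $C \subseteq \clS \cap \clL = \clLS$. For necessity, assuming $C (\clD{i} \cap \clChar{j} \cap \clOdd) \subseteq \clD{i} \cap \clChar{j} \cap \clOdd$, Lemma~\ref{lem:non}\ref{lem:non:Ia-K-P1}, \ref{lem:non:LV-K-Xj}, \ref{lem:non:SM-K} exclude $\clIo, \clIi, \clLambdac, \clVc, \clSM$ from $C$ (witness functions live in $\clD{i} \cap \clChar{j} \cap \clOdd$; e.g.\ $\monster{i} + x_{i+1}$ serves for left stability of $\clOdd$ via the constant argument tricks). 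This again forces $C \subseteq \clLS$.

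The only step that requires mild care is checking that the witnesses in Lemma~\ref{lem:non} can be chosen inside $\clOdd$ (as opposed to $\clEven$), so that the non-inclusions really rule out each targeted clone for the class $\clD{i} \cap \clChar{j} \cap \clOdd$; inspecting the proof of Lemma~\ref{lem:non}\ref{lem:non:DiXjCaEb} shows that the $f_1, g_1, h_1$ witnesses (for $b = \overline{a}$) are precisely the $\clOdd$ members when $a = 0$, so they transfer verbatim, and the remaining items \ref{lem:non:K-Ia-Pa}, \ref{lem:non:K-Istar-Pa}, \ref{lem:non:LV-K-Pa} are stated directly for $\clParity{a}$ with the odd witnesses built in. This makes the necessity direction routine once the correct lemma references are collected, so no step presents a genuine obstacle beyond bookkeeping.
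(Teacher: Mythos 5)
Your proposal is correct and follows essentially the same route as the paper's proof: sufficiency via Lemma~\ref{lem:suff-int} applied to Propositions~\ref{prop:odd-even}\ref{lem:odd-even:odd-C}/\ref{lem:odd-even:C-odd}, \ref{prop:DiXj}, \ref{prop:XkP1}\ref{lem:XkP1:X1P1-C}, and \ref{prop:Dk}\ref{lem:Dk:Dk-C}, and necessity by excluding exactly the clones $\clIo$, $\clIi$, $\clIstar$, $\clLambdac$, $\clVc$, $\clSM$ via the same items of Lemma~\ref{lem:non} that the paper cites. Your closing observation that the odd witnesses $f_1, g_1, h_1$ from Lemma~\ref{lem:non}\ref{lem:non:DiXjCaEb} (the case $b = \overline{a}$) lie in $\clBoth{a}{\overline{a}} \subseteq \clOdd$ is precisely the check that makes the necessity direction go through.
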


\begin{proof}
\ref{lem:DiXjP1:DiXjP1-C}
Lemma~\ref{lem:suff-int} and Propositions~\ref{prop:odd-even}\ref{lem:odd-even:odd-C} and \ref{prop:DiXj}\ref{lem:DiXj:DiXj-C} imply that $(\clD{i} \cap \clChar{j} \cap \clOdd) C \subseteq \clD{i} \cap \clChar{j} \cap \clOdd$ whenever $C \subseteq \clLS \cap \clTc = \clLc$.
Conversely, if $(\clD{i} \cap \clChar{j} \cap \clOdd) C \subseteq \clD{i} \cap \clChar{j} \cap \clOdd$, then $C$ includes neither $\clIo$, $\clIi$, $\clIstar$, $\clLambdac$, $\clVc$, nor $\clSM$ by Lemma~\ref{lem:non}\ref{lem:non:K-LV}, \ref{lem:non:K-SM}, \ref{lem:non:K-Ia-Pa}, \ref{lem:non:K-Istar-Pa}, so $C \subseteq \clLc$.

\ref{lem:DiXjP1:DiX1P1-C}
Lemma~\ref{lem:suff-int} and Propositions~\ref{prop:XkP1}\ref{lem:XkP1:X1P1-C} and \ref{prop:Dk}\ref{lem:Dk:Dk-C} imply that $(\clD{i} \cap \clChar{1} \cap \clOdd) C \subseteq \clD{i} \cap \clChar{1} \cap \clOdd$ whenever $C \subseteq \clS \cap \clL = \clLS$.
Conversely, if $(\clD{i} \cap \clChar{1} \cap \clOdd) C \subseteq \clD{i} \cap \clChar{1} \cap \clOdd$, then $C$ includes neither $\clIo$, $\clIi$, $\clLambdac$, $\clVc$, nor $\clSM$ by Lemma~\ref{lem:non}\ref{lem:non:K-LV}, \ref{lem:non:K-SM}, \ref{lem:non:K-Ia-Pa}, so $C \subseteq \clLS$.

\ref{lem:DiXjP1:C-DiXjP1}
Lemma~\ref{lem:suff-int} and Propositions~\ref{prop:odd-even}\ref{lem:odd-even:C-odd} and \ref{prop:DiXj}\ref{lem:DiXj:C-DiXj} imply that $C (\clD{i} \cap \clChar{j} \cap \clOdd) \subseteq \clD{i} \cap \clChar{j} \cap \clOdd$ whenever $C \subseteq \clL \cap \clS = \clLS$.
Conversely, if $C (\clD{i} \cap \clChar{j} \cap \clOdd) \subseteq \clD{i} \cap \clChar{j} \cap \clOdd$, then $C$ includes neither $\clIo$, $\clIi$, $\clLambdac$, $\clVc$, nor $\clSM$ by Lemma~\ref{lem:non}\ref{lem:non:Ia-K-P1}, \ref{lem:non:LV-K-Xj}, \ref{lem:non:SM-K}, so $C \subseteq \clLS$.
\end{proof}

\begin{proposition}
\label{prop:DiXjCaEb}
Let $i, j \in \IN_{+}$ with $i > j \geq 1$, $a, b \in \{0,1\}$, and let $C$ be a clone.
\begin{enumerate}[label={\upshape{(\roman*)}}, leftmargin=*, widest=ii]
\item\label{lem:DiXjCaEb:DiXjCaEb-C}
$(\clD{i} \cap \clChar{j} \cap \clBoth{a}{b}) C \subseteq \clD{i} \cap \clChar{j} \cap \clBoth{a}{b}$ if and only if $C \subseteq \clLc$.
\item\label{lem:DiXjCaEb:C-DiXjCaEb}
$C (\clD{i} \cap \clChar{j} \cap \clBoth{a}{b}) \subseteq \clD{i} \cap \clChar{j} \cap \clBoth{a}{b}$ if and only if $C \subseteq \clLa{a} \cap \clLa{b}$.
\end{enumerate}
\end{proposition}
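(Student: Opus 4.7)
The plan is to follow the exact pattern used in Propositions~\ref{prop:XkCaEb}, \ref{prop:DkCaEb}, and the other analogous results in this section: establish sufficiency via the intersection lemma (Lemma~\ref{lem:suff-int}) applied to previously proven stability results for the component classes, and establish necessity by invoking the catalogue of non-inclusions in Lemma~\ref{lem:non} to rule out every clone not contained in the target clone.

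For part~\ref{lem:DiXjCaEb:DiXjCaEb-C}, sufficiency follows from Lemma~\ref{lem:suff-int}\ref{lem:suff-int:right} together with Propositions~\ref{prop:CaEb}\ref{lem:CaEb:CaEb-C} (giving $\clBoth{a}{b} C \subseteq \clBoth{a}{b}$ for $C \subseteq \clTc$) and \ref{prop:DiXj}\ref{lem:DiXj:DiXj-C} (giving $(\clD{i} \cap \clChar{j}) C \subseteq \clD{i} \cap \clChar{j}$ for $C \subseteq \clLS$), since $\clLS \cap \clTc = \clLc$. For necessity, assume $(\clD{i} \cap \clChar{j} \cap \clBoth{a}{b}) C \subseteq \clD{i} \cap \clChar{j} \cap \clBoth{a}{b}$. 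By Lemma~\ref{lem:non}\ref{lem:non:DiXjCaEb} parts \ref{lem:non:K-Ia-CaEb}, \ref{lem:non:K-LV}, and \ref{lem:non:K-SM}, the clone $C$ cannot include any of $\clIo$, $\clIi$, $\clIstar$, $\clLambdac$, $\clVc$, or $\clSM$, which by inspection of Post's lattice (Figure~\ref{fig:Post}) forces $C \subseteq \clLc$.

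For part~\ref{lem:DiXjCaEb:C-DiXjCaEb}, sufficiency follows from Lemma~\ref{lem:suff-int}\ref{lem:suff-int:left} together with Propositions~\ref{prop:CaEb}\ref{lem:CaEb:C-CaEb} (giving $C \clBoth{a}{b} \subseteq \clBoth{a}{b}$ for $C \subseteq \clTa{a} \cap \clTa{b}$) and \ref{prop:DiXj}\ref{lem:DiXj:C-DiXj} (giving $C(\clD{i} \cap \clChar{j}) \subseteq \clD{i} \cap \clChar{j}$ for $C \subseteq \clL$), since $\clL \cap \clTa{a} \cap \clTa{b} = \clLa{a} \cap \clLa{b}$. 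For necessity, assume $C (\clD{i} \cap \clChar{j} \cap \clBoth{a}{b}) \subseteq \clD{i} \cap \clChar{j} \cap \clBoth{a}{b}$. Lemma~\ref{lem:non}\ref{lem:non:DiXjCaEb} parts \ref{lem:non:Istar-K-CaEb}, \ref{lem:non:LV-K-Xj}, and \ref{lem:non:SM-K} exclude $\clIstar$, $\clLambdac$, $\clVc$, $\clSM$ from $C$. Split into two cases: if $a = b$, then Lemma~\ref{lem:non}\ref{lem:non:Inota-K-CaEb} excludes $\clIa{\overline{a}}$ and we conclude $C \subseteq \clLa{a} = \clLa{a} \cap \clLa{b}$; if $a \neq b$, then Lemma~\ref{lem:non}\ref{lem:non:Ia-K-CaEb} excludes both $\clIo$ and $\clIi$, yielding $C \subseteq \clLc = \clLa{a} \cap \clLa{b}$.

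There is no real obstacle here: every ingredient has been assembled in the preceding propositions and in Lemma~\ref{lem:non}, and the argument is a mechanical combination mirroring Proposition~\ref{prop:DkCaEb} (with $\clD{k}$ replaced by $\clD{i} \cap \clChar{j}$ via Proposition~\ref{prop:DiXj}). The only minor subtlety is, as in Proposition~\ref{prop:DkCaEb}\ref{lem:DkCaEb:C-DkCaEb}, to split the necessity argument for part~\ref{lem:DiXjCaEb:C-DiXjCaEb} into the cases $a = b$ and $a \neq b$ when reading off the inclusion in $\clTa{a} \cap \clTa{b}$ from the excluded atoms of Post's lattice.
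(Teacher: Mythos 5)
Your proof is correct and follows essentially the same approach as the paper: sufficiency via Lemma~\ref{lem:suff-int} applied to previously established component results, and necessity by excluding the atoms of Post's lattice using Lemma~\ref{lem:non}, with the same case split on $a=b$ versus $a\neq b$ in part~(ii). The only (immaterial) difference is that you decompose the class as $(\clD{i}\cap\clChar{j})\cap\clBoth{a}{b}$ citing Propositions~\ref{prop:DiXj} and \ref{prop:CaEb}, whereas the paper decomposes it as $\clChar{j}\cap(\clD{i}\cap\clBoth{a}{b})$ citing Propositions~\ref{prop:Xk} and \ref{prop:DkCaEb}; both yield the same clone intersections, and your choice even avoids the paper's small case split on $j$ in the sufficiency of part~(i).
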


\begin{proof}
\ref{lem:DiXjCaEb:DiXjCaEb-C}
Lemma~\ref{lem:suff-int} and Propositions~\ref{prop:Xk}\ref{lem:Xk:Xk-C}, \ref{lem:Xk:X1-C} and \ref{prop:DkCaEb}\ref{lem:DkCaEb:DkCaEb-C} imply $(\clD{i} \cap \clChar{j} \cap \clBoth{a}{b}) C \subseteq \clD{i} \cap \clChar{j} \cap \clBoth{a}{b}$ whenever $C \subseteq \clLc \cap \clLS = \clLc$ if $j \geq 2$ and whenever $C \subseteq \clLc \cap \clS = \clLc$ if $j = 1$.
Conversely, if $(\clD{i} \cap \clChar{j} \cap \clBoth{a}{b}) C \subseteq \clD{i} \cap \clChar{j} \cap \clBoth{a}{b}$, then $C$ includes neither $\clIo$, $\clIi$, $\clIstar$, $\clLambdac$, $\clVc$, nor $\clSM$ by Lemma~\ref{lem:non}\ref{lem:non:K-Ia-CaEb}, \ref{lem:non:K-LV}, \ref{lem:non:K-SM}, so $C \subseteq \clLc$.

\ref{lem:DiXjCaEb:C-DiXjCaEb}
Lemma~\ref{lem:suff-int} and Propositions~\ref{prop:Xk}\ref{lem:Xk:C-Xk} and \ref{prop:DkCaEb}\ref{lem:DkCaEb:C-DkCaEb} imply that $C (\clD{i} \cap \clChar{j} \cap \clBoth{a}{b}) \subseteq \clD{i} \cap \clChar{j} \cap \clBoth{a}{b}$ whenever $C \subseteq \clLa{a} \cap \clLa{b} \cap \clL = \clLa{a} \cap \clLa{b}$.

Assume now that $C (\clD{i} \cap \clChar{j} \cap \clBoth{a}{b}) \subseteq \clD{i} \cap \clChar{j} \cap \clBoth{a}{b}$.
If $a = b$, then $C$ includes neither $\clIa{\overline{a}}$, $\clIstar$, $\clLambdac$, $\clVc$, nor $\clSM$ by Lemma~\ref{lem:non}\ref{lem:non:Inota-K-CaEb}, \ref{lem:non:Istar-K-CaEb}, \ref{lem:non:LV-K-Xj}, \ref{lem:non:SM-K}, so $C \subseteq \clLa{a} = \clLa{a} \cap \clLa{b}$.
If $a \neq b$, then $C$ includes neither $\clIo$, $\clIi$, $\clIstar$, $\clLambdac$, $\clVc$, nor $\clSM$ by Lemma~\ref{lem:non}\ref{lem:non:Ia-K-CaEb}, \ref{lem:non:Istar-K-CaEb}, \ref{lem:non:LV-K-Xj}, \ref{lem:non:SM-K}, so $C \subseteq \clLc = \clLa{a} \cap \clLa{b}$.
\end{proof}

\begin{proof}[Proof of Theorem~\ref{thm:C1C2-stability}]
The theorem puts together Propositions
\ref{prop:all-empty},
\ref{prop:Ca},
\ref{prop:odd-even},
\ref{prop:CaEb},
\ref{prop:Xk},
\ref{prop:XkCa},
\ref{prop:XkEa},
\ref{prop:XkP0},
\ref{prop:XkP1},
\ref{prop:XkCaEb},
\ref{prop:Dk},
\ref{prop:constants},
\ref{prop:DkCa},
\ref{prop:DkP0},
\ref{prop:DkP1},
\ref{prop:DkCaEb},
\ref{prop:DiXj},
\ref{prop:DiXjCa},
\ref{prop:DiXjP0},
\ref{prop:DiXjP1},
\ref{prop:DiXjCaEb}.
\end{proof}

With the help of Post's lattice (Figure~\ref{fig:Post}) and by reading off from Table~\ref{table:stability}, we can determine for any pair $(C_1,C_2)$ of clones which $\clLc$\hyp{}stable classes are $(C_1,C_2)$\hyp{}stable.
If $\clLc \subseteq C_2$, then any $(C_1,C_2)$\hyp{}stable class is $(\clIc,\clLc)$\hyp{}stable by Lemma~\ref{lem:stable-impl-stable} and hence also $\clLc$\hyp{}stable by Lemma~\ref{lem:Lc-simplify}.
Therefore, in the case when $\clLc \subseteq C_2$, the $(C_1,C_2)$\hyp{}stable classes are among the $\clLc$\hyp{}stable ones and they can be easily picked out from Table~\ref{table:stability}.
In particular, we have an explicit description of $(\clIc, C)$\hyp{}stable classes (``clonoids'' of Aichinger and Mayr \cite{AicMay-2016}) and $C$\hyp{}stable classes for $\clLc \subseteq C$.
The $\clLo$\hyp{}stable classes (see Corollary~\ref{cor:C-stable}\ref{cor:C-stable:L0}) were determined earlier by Kreinecker~\cite[Theorem~3.12]{Kreinecker-2019}.

\begin{corollary}
\label{cor:IcC-stable}
\leavevmode
\begin{enumerate}[label={\upshape{(\roman*)}}, leftmargin=*, widest=viii]
\item\label{cor:IcC-stable:Lc}
The $(\clIc, \clLc)$\hyp{}stable classes are
$\clAll$, $\clCon{a}$, $\clYksi{a}$, $\clParity{a}$, $\clBoth{a}{b}$, $\clD{k}$, $\clD{k} \cap \clCon{a}$, $\clD{k} \cap \clYksi{a}$, $\clD{k} \cap \clParity{a}$, $\clD{k} \cap \clBoth{a}{b}$, $\clChar{k}$, $\clChar{k} \cap \clCon{a}$, $\clChar{k} \cap \clYksi{a}$, $\clChar{k} \cap \clParity{a}$, $\clChar{k} \cap \clBoth{a}{b}$, $\clD{i} \cap \clChar{j}$, $\clD{i} \cap \clChar{j} \cap \clCon{a}$, $\clD{i} \cap \clChar{j} \cap \clYksi{a}$, $\clD{i} \cap \clChar{j} \cap \clParity{a}$, $\clD{i} \cap \clChar{j} \cap \clBoth{a}{b}$, $\clD{0}$, $\clD{0} \cap \clCon{a}$, $\clEmpty$, for $a, b \in \{0,1\}$, $\QuantifyParRel$, and $i, j, k \in \IN_{+}$ with $i > j \geq 1$.
\item
The $(\clIc, \clLS)$\hyp{}stable classes are
$\clAll$, $\clParity{a}$, $\clChar{k}$, $\clChar{k} \cap \clParity{a}$, $\clD{k}$, $\clD{k} \cap \clParity{a}$,
$\clD{i} \cap \clChar{j}$, $\clD{i} \cap \clChar{j} \cap \clParity{a}$,
$\clD{0}$, $\clEmpty$,
for $\QuantifyParRel$, and $i, j, k \in \IN_{+}$ with $i > j \geq 1$.
\item
The $(\clIc, \clLo)$\hyp{}stable classes are
$\clAll$, $\clCon{0}$, $\clYksi{0}$, $\clEven$, $\clBoth{0}{0}$, $\clChar{k}$, $\clChar{k} \cap \clCon{0}$, $\clChar{k} \cap \clYksi{0}$, $\clChar{k} \cap \clEven$, $\clChar{k} \cap \clBoth{0}{0}$, $\clD{k}$, $\clD{k} \cap \clCon{0}$, $\clD{k} \cap \clYksi{0}$, $\clD{k} \cap \clEven$, $\clD{k} \cap \clBoth{0}{0}$, $\clD{i} \cap \clChar{j}$, $\clD{i} \cap \clChar{j} \cap \clCon{0}$, $\clD{i} \cap \clChar{j} \cap \clYksi{0}$, $\clD{i} \cap \clChar{j} \cap \clEven$, $\clD{i} \cap \clChar{j} \cap \clBoth{0}{0}$, $\clD{0}$, $\clD{0} \cap \clCon{0}$, $\clEmpty$,
for $k \in \IN_{+}$.
\item
The $(\clIc, \clLi)$\hyp{}stable classes are
$\clAll$, $\clCon{1}$, $\clYksi{1}$, $\clEven$, $\clBoth{1}{1}$, $\clChar{k}$, $\clChar{k} \cap \clCon{1}$, $\clChar{k} \cap \clYksi{1}$, $\clChar{k} \cap \clEven$, $\clChar{k} \cap \clBoth{1}{1}$, $\clD{k}$, $\clD{k} \cap \clCon{1}$, $\clD{k} \cap \clYksi{1}$, $\clD{k} \cap \clEven$, $\clD{k} \cap \clBoth{1}{1}$, $\clD{i} \cap \clChar{j}$, $\clD{i} \cap \clChar{j} \cap \clCon{1}$, $\clD{i} \cap \clChar{j} \cap \clYksi{1}$, $\clD{i} \cap \clChar{j} \cap \clEven$, $\clD{i} \cap \clChar{j} \cap \clBoth{1}{1}$, $\clD{0}$, $\clD{0} \cap \clCon{1}$, $\clEmpty$,
for $k \in \IN_{+}$.
\item
The $(\clIc, \clL)$\hyp{}stable classes are
$\clAll$, $\clEven$, $\clChar{k}$, $\clChar{k} \cap \clEven$, $\clD{k}$, $\clD{k} \cap \clEven$, $\clD{i} \cap \clChar{j}$, $\clD{i} \cap \clChar{j} \cap \clEven$, $\clD{0}$, $\clEmpty$, for $k \in \IN_{+}$.
\item\label{cor:IcC-stable:Sc}
The $(\clIc, \clSc)$\hyp{}stable classes are
$\clAll$, $\clCon{a}$, $\clYksi{a}$, $\clParity{a}$, $\clBoth{a}{b}$, $\clChar{1} \cap \clParity{a}$, $\clChar{1} \cap \clBoth{a}{b}$, $\clD{0}$, $\clD{0} \cap \clCon{a}$, $\clEmpty$,
for $a, b \in \{0,1\}$ and $\QuantifyParRel$.
\item
The $(\clIc, \clS)$\hyp{}stable classes are
$\clAll$, $\clParity{a}$, $\clChar{1} \cap \clParity{a}$, $\clD{0}$, $\clEmpty$,
for $\QuantifyParRel$.
\item
The $(\clIc, \clTc)$\hyp{}stable classes are
$\clAll$, $\clCon{a}$, $\clYksi{a}$, $\clEven$, $\clBoth{a}{b}$, $\clChar{1} \cap \clEven$, $\clChar{1} \cap \clBoth{a}{a}$, $\clD{0}$, $\clD{0} \cap \clCon{a}$, $\clEmpty$,
for $a, b \in \{0,1\}$.
\item
The $(\clIc, \clTo)$\hyp{}stable classes are
$\clAll$, $\clCon{0}$, $\clYksi{0}$, $\clEven$, $\clBoth{0}{0}$, $\clChar{1} \cap \clEven$, $\clChar{1} \cap \clBoth{0}{0}$, $\clD{0}$, $\clD{0} \cap \clCon{0}$, $\clEmpty$.
\item
The $(\clIc, \clTi)$\hyp{}stable classes are
$\clAll$, $\clCon{1}$, $\clYksi{1}$, $\clEven$, $\clBoth{1}{1}$, $\clChar{1} \cap \clEven$, $\clChar{1} \cap \clBoth{1}{1}$, $\clD{0}$, $\clD{0} \cap \clCon{1}$, $\clEmpty$.
\item
The $(\clIc, \clAll)$\hyp{}stable classes are
$\clAll$, $\clEven$, $\clChar{1} \cap \clEven$, $\clD{0}$, $\clEmpty$.
\end{enumerate}
\end{corollary}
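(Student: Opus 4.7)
The plan is to deduce the corollary from Theorem~\ref{thm:Lc} and Theorem~\ref{thm:C1C2-stability} by a mechanical filtering argument. The first step is to verify that each of the clones $C$ appearing in items~(i)--(xi), namely $\clLc$, $\clLS$, $\clLo$, $\clLi$, $\clL$, $\clSc$, $\clS$, $\clTc$, $\clTo$, $\clTi$, $\clAll$, contains $\clLc$. All of these inclusions are immediate from Post's lattice (Figure~\ref{fig:Post}); the only mildly subtle case is $\clLc \subseteq \clSc$, which follows from the observation that every element of $\clLc$ is an odd sum of variables and is therefore self\hyp{}dual.

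Given $\clLc \subseteq C$, Lemma~\ref{lem:stable-impl-stable} together with the equivalence of $\clLc$\hyp{}stability and $(\clIc, \clLc)$\hyp{}stability (Lemma~\ref{lem:Lc-simplify}\ref{lem:Lc-simplify:Lc}) implies that every $(\clIc, C)$\hyp{}stable class is $\clLc$\hyp{}stable, hence appears in the list of Theorem~\ref{thm:Lc}. Conversely, for any $\clLc$\hyp{}stable class $K$ one automatically has $\clIc K \subseteq \clLc K \subseteq K$, so $K$ is $(\clIc, C)$\hyp{}stable precisely when $C K \subseteq K$. By Theorem~\ref{thm:C1C2-stability}, this latter condition is equivalent to $C \subseteq C_2^K$, where $C_2^K$ is the maximal clone recorded in the third column of Table~\ref{table:stability}.

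The remainder of the argument consists of going through Table~\ref{table:stability} line by line and, for each fixed $C$, keeping exactly those $\clLc$\hyp{}stable classes $K$ for which $C \subseteq C_2^K$. The relevant clone inclusions are read off from Post's lattice. Particular care is required when $C_2^K$ depends on a case distinction of the parameters (such as $k = 1$ versus $k \geq 2$, or $a = b$ versus $a \neq b$), since this is where the different items of the corollary diverge from the full list of Theorem~\ref{thm:Lc}; for instance, in item~(vi) one must notice that $\clChar{1} \cap \clBoth{a}{a}$ survives while $\clChar{1} \cap \clBoth{a}{\overline{a}}$ does, both because $\clSc \subseteq C_2^K$ in either case.

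The main obstacle is thus bookkeeping rather than substantive mathematics: once the correspondence above is set up, the eleven items of the corollary amount to systematic lookups in Table~\ref{table:stability} combined with Post\hyp{}lattice comparisons, and no new closure computations are needed beyond those already performed in Section~\ref{sec:C1C2}.
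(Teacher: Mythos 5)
Your proposal is correct and follows essentially the same route as the paper: the paper likewise observes that for $\clLc \subseteq C$ every $(\clIc,C)$\hyp{}stable class is $\clLc$\hyp{}stable by Lemmata~\ref{lem:stable-impl-stable} and \ref{lem:Lc-simplify}\ref{lem:Lc-simplify:Lc}, and then picks out the surviving classes from Table~\ref{table:stability} via Theorem~\ref{thm:C1C2-stability} and Post's lattice. (One cosmetic slip: the reduction needs the right\hyp{}composition inclusion $K \clIc \subseteq K \clLc \subseteq K$ rather than $\clIc K \subseteq \clLc K \subseteq K$, though both follow from $\clLc$\hyp{}stability.)
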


\begin{corollary}
\label{cor:C-stable}
\leavevmode
\begin{enumerate}[label={\upshape{(\roman*)}}, leftmargin=*, widest=viii]
\item\label{cor:C-stable:Lc}
The $\clLc$\hyp{}stable classes are
$\clAll$, $\clCon{a}$, $\clYksi{a}$, $\clParity{a}$, $\clBoth{a}{b}$, $\clD{k}$, $\clD{k} \cap \clCon{a}$, $\clD{k} \cap \clYksi{a}$, $\clD{k} \cap \clParity{a}$, $\clD{k} \cap \clBoth{a}{b}$, $\clChar{k}$, $\clChar{k} \cap \clCon{a}$, $\clChar{k} \cap \clYksi{a}$, $\clChar{k} \cap \clParity{a}$, $\clChar{k} \cap \clBoth{a}{b}$, $\clD{i} \cap \clChar{j}$, $\clD{i} \cap \clChar{j} \cap \clCon{a}$, $\clD{i} \cap \clChar{j} \cap \clYksi{a}$, $\clD{i} \cap \clChar{j} \cap \clParity{a}$, $\clD{i} \cap \clChar{j} \cap \clBoth{a}{b}$, $\clD{0}$, $\clD{0} \cap \clCon{a}$, $\clEmpty$, for $a, b \in \{0,1\}$, $\QuantifyParRel$, and $i, j, k \in \IN_{+}$ with $i > j \geq 1$.
\item
The $\clLS$\hyp{}stable classes are
$\clAll$, $\clChar{k}$, $\clChar{1} \cap \clParity{a}$, $\clD{k}$, $\clD{1} \cap \clParity{a}$,
$\clD{i} \cap \clChar{j}$, $\clD{i} \cap \clChar{1} \cap \clParity{a}$,
$\clD{0}$, $\clEmpty$,
for $\QuantifyParRel$ and $i, j, k \in \IN_{+}$ with $i > j \geq 1$.
\item\label{cor:C-stable:L0}
The $\clLo$\hyp{}stable classes are
$\clAll$, $\clCon{0}$, $\clD{k}$, $\clD{k} \cap \clCon{0}$, $\clD{0}$, $\clD{0} \cap \clCon{0}$, $\clEmpty$,
for $k \in \IN_{+}$.
\item
The $\clLi$\hyp{}stable classes are
$\clAll$, $\clYksi{1}$, $\clD{k}$, $\clD{k} \cap \clYksi{1}$, $\clD{0}$, $\clD{0} \cap \clCon{1}$, $\clEmpty$,
for $k \in \IN_{+}$.
\item
The $\clL$\hyp{}stable classes are
$\clAll$, $\clD{k}$, $\clD{0}$, $\clEmpty$, for $k \in \IN_{+}$.
\item\label{cor:C-stable:Sc}
The $\clSc$\hyp{}stable classes are
$\clAll$, $\clCon{a}$, $\clYksi{a}$, $\clParity{a}$, $\clBoth{a}{b}$, $\clChar{1} \cap \clParity{a}$, $\clChar{1} \cap \clBoth{a}{b}$, $\clD{0}$, $\clD{0} \cap \clCon{a}$, $\clEmpty$,
for $a, b \in \{0,1\}$ and $\QuantifyParRel$.
\item
The $\clS$\hyp{}stable classes are
$\clAll$, $\clChar{1} \cap \clParity{a}$, $\clD{0}$, $\clEmpty$,
for $\QuantifyParRel$.
\item
The $\clTc$\hyp{}stable classes are
$\clAll$, $\clCon{a}$, $\clYksi{a}$, $\clEven$, $\clBoth{a}{b}$, $\clD{0}$, $\clD{0} \cap \clCon{a}$, $\clEmpty$,
for $a, b \in \{0,1\}$.
\item
The $\clTo$\hyp{}stable classes are
$\clAll$, $\clCon{0}$, $\clD{0}$, $\clD{0} \cap \clCon{0}$, $\clEmpty$.
\item
The $\clTi$\hyp{}stable classes are
$\clAll$, $\clYksi{1}$, $\clD{0}$, $\clD{0} \cap \clCon{1}$, $\clEmpty$.
\item
The $\clAll$\hyp{}stable classes are
$\clAll$, $\clD{0}$, $\clEmpty$.
\end{enumerate}
\end{corollary}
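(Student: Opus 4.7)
The plan is to derive Corollary~\ref{cor:C-stable} as a direct consequence of Theorem~\ref{thm:C1C2-stability}. The key preliminary observation is that each of the eleven clones $\clLc, \clLS, \clLo, \clLi, \clL, \clSc, \clS, \clTc, \clTo, \clTi, \clAll$ appearing in the corollary contains $\clLc$. For the clones on the ``linear'' side of Post's lattice this is immediate from the definitions (for example, $\clLc = \clL \cap \clTc \subseteq \clL$, and similarly $\clLc$ is contained in $\clLo, \clLi, \clTc, \clTo, \clTi, \clAll$); for $\clSc$ and $\clS$ one checks that every linear constant\hyp{}preserving function $c_1 x_1 + \dots + c_n x_n$ with $c_1 + \dots + c_n = 1$ satisfies $f(\overline{\vect{a}}) = 1 + f(\vect{a}) = \overline{f(\vect{a})}$, so it is self\hyp{}dual, yielding $\clLc \subseteq \clSc \subseteq \clS$.

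With this in hand, for any clone $C$ in the above list and any $C$\hyp{}stable class $K$, Lemma~\ref{lem:stable-impl-stable} shows that $K$ is also $(\clIc, \clLc)$\hyp{}stable, and then Lemma~\ref{lem:Lc-simplify}\ref{lem:Lc-simplify:Lc} gives that $K$ is $\clLc$\hyp{}stable. Hence the $C$\hyp{}stable classes form a sub\hyp{}collection of the $\clLc$\hyp{}stable classes enumerated in Theorem~\ref{thm:Lc}. Theorem~\ref{thm:C1C2-stability} then provides, for every such $K$, clones $C_1^K$ and $C_2^K$ (listed in Table~\ref{table:stability}) such that $K$ is $C$\hyp{}stable precisely when $C \subseteq C_1^K \cap C_2^K$.

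It then remains to carry out a routine bookkeeping step. For each clone $C$ in the statement, one uses Post's lattice (Figure~\ref{fig:Post}) to read off the set of clones above $C$: for instance, the clones containing $\clLo$ are exactly $\{\clLo, \clL, \clTo, \clAll\}$, those containing $\clSc$ are exactly $\{\clSc, \clS, \clTc, \clTo, \clTi, \clAll\}$, those containing $\clLS$ are $\{\clLS, \clL, \clS, \clAll\}$, and the only clone containing $\clAll$ is $\clAll$ itself. One then scans Table~\ref{table:stability} row by row and retains those $\clLc$\hyp{}stable classes $K$ for which both $C_1^K$ and $C_2^K$ belong to this set; intersecting these selections with the table produces the lists given in the statement.

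The expected obstacle is not mathematical but organizational: the combination of eleven clones, the many rows of Table~\ref{table:stability}, and the parameter ranges ($a, b \in \{0,1\}$, $\QuantifyParRel$, $i, j, k \in \IN_{+}$ with $i > j \geq 1$) generates dozens of little subcases, and one must take care with the split behaviour at $\clChar{1}$ versus $\clChar{k}$ for $k \geq 2$ and at $\clD{1}$ versus $\clD{k}$ for $k \geq 2$, as well as with the anomalous classes $\clD{0}$, $\clD{0} \cap \clCon{a}$, and $\clEmpty$ at the bottom of the lattice. No new techniques are required beyond those already developed in Sections~\ref{sec:Lc-stable} and \ref{sec:C1C2}.
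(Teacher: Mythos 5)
Your proposal matches the paper's own derivation: the paper likewise reduces $C$-stability for each listed clone $C \supseteq \clLc$ to $\clLc$-stability via Lemma~\ref{lem:stable-impl-stable} and Lemma~\ref{lem:Lc-simplify}\ref{lem:Lc-simplify:Lc}, and then reads the lists off from Theorem~\ref{thm:C1C2-stability} and Table~\ref{table:stability} by checking, for each class $K$, whether $C \subseteq C_1^K$ and $C \subseteq C_2^K$ using Post's lattice. The approach and the supporting observations (including $\clLc \subseteq \clSc$) are correct.
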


Recall from Lemma~\ref{lem:Lc-simplify}\ref{lem:Lc-simplify:Lc} that $(\clIc,\clLc)$\hyp{}stability is equivalent to $\clLc$\hyp{}stability.
Therefore, as expected, the classes listed in
Corollary~\ref{cor:IcC-stable}\ref{cor:IcC-stable:Lc} are the same as those in
Corollary~\ref{cor:C-stable}\ref{cor:C-stable:Lc}.
By comparing Corollary~\ref{cor:IcC-stable}\ref{cor:IcC-stable:Sc} with Corollary~\ref{cor:C-stable}\ref{cor:C-stable:Sc}, we see also that $(\clIc,\clSc)$\hyp{}stability is equivalent to $\clSc$\hyp{}stability.
Whether the reason for this is a relationship similar to Lemma~\ref{lem:Lc-simplify} is beyond the scope of this paper.

\begin{corollary}
$\clSc$\hyp{}stability is equivalent to $(\clIc, \clSc)$\hyp{}stability.
\end{corollary}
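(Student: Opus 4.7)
The easy direction, $\clSc$-stability implying $(\clIc, \clSc)$-stability, will be an immediate application of Lemma~\ref{lem:stable-impl-stable}, since $\clIc \subseteq \clSc$. For the converse, the plan is to adapt the proof of Lemma~\ref{lem:Lc-simplify}\ref{lem:Lc-simplify:Lc} from the clone $\clLc = \clonegen{\oplus_3}$ to the clone $\clSc = \clonegen{\mu, \oplus_3}$. So fix a $(\clIc, \clSc)$-stable class $K$; then $K$ is minor-closed and $\clSc K \subseteq K$, and the goal is to derive $K \clSc \subseteq K$. By Lemma~\ref{lem:right-stab-gen} together with the generating set $\{\mu, \oplus_3\}$ of $\clSc$, it will suffice to prove that $f \ast \oplus_3 \in K$ and $f \ast \mu \in K$ for every $f \in K$.

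For $\oplus_3$ nothing new is needed: Lemma~\ref{lem:Lc-simplify}\ref{lem:Lc-simplify:triplesum} gives the identity $f \ast \oplus_3 = \oplus_3(f_{\sigma_1}, f_{\sigma_2}, f_{\sigma_3})$, and since the $f_{\sigma_i}$ are minors of $f$, hence in $K$, and $\oplus_3 \in \clSc$, the left-stability hypothesis $\clSc K \subseteq K$ yields $f \ast \oplus_3 \in K$. The key new step is to establish the companion identity $f \ast \mu = \mu(f_{\tau_1}, f_{\tau_2}, f_{\tau_3})$, where $\tau_i \colon \nset{n} \to \nset{n+2}$ sends $1 \mapsto i$ and $j \mapsto j+2$ for $2 \leq j \leq n$. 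This identity I expect to verify by the same pigeonhole trick that underlies Lemma~\ref{lem:Lc-simplify}\ref{lem:Lc-simplify:triplesum}: given $(a_1, \ldots, a_{n+2}) \in \{0,1\}^{n+2}$, two of $a_1, a_2, a_3$ coincide, say $a_i = a_j$; then $\mu(a_1, a_2, a_3) = a_i = a_j$, so among the three values $f(a_\ell, a_4, \ldots, a_{n+2})$ ($\ell \in \{1,2,3\}$) the common value $f(a_i, a_4, \ldots, a_{n+2})$ appears at least twice, making their majority equal to $f(\mu(a_1, a_2, a_3), a_4, \ldots, a_{n+2}) = (f \ast \mu)(a_1, \ldots, a_{n+2})$. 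Once this identity is in hand, $f_{\tau_i} \in K$ (by minor-closure) and $\mu \in \clSc$ together with $\clSc K \subseteq K$ will give $f \ast \mu \in K$, completing the argument.

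The only step that is not a direct computation is the appeal to $\clSc = \clonegen{\mu, \oplus_3}$, and this is where I expect the main friction: it is a standard fact read off Post's lattice (both $\mu$ and $\oplus_3$ lie in $\clSc$, and the only clone strictly containing $\clSM = \clonegen{\mu}$ and $\clLc = \clonegen{\oplus_3}$ that is still contained in $\clSc$ is $\clSc$ itself), but since it has not been spelled out earlier in the paper it should either be cited from a reference on Post's lattice or recorded as a short separate lemma. With that fact granted, the proof will be little more than a repetition of the triple-sum trick of Lemma~\ref{lem:Lc-simplify} with $\mu$ playing the same symmetric role as $\oplus_3$.
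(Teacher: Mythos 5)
Your proof is correct, but it is a genuinely different argument from the one in the paper. The paper obtains this corollary purely by inspection: it reads off the list of $(\clIc,\clSc)$\hyp{}stable classes and the list of $\clSc$\hyp{}stable classes from Table~\ref{table:stability} (Corollaries~\ref{cor:IcC-stable}\ref{cor:IcC-stable:Sc} and \ref{cor:C-stable}\ref{cor:C-stable:Sc}) and observes that the two lists coincide; it then explicitly remarks that whether there is a structural reason analogous to Lemma~\ref{lem:Lc-simplify} is ``beyond the scope of this paper.'' Your argument supplies exactly that structural reason: the identity $f \ast \mu = \mu(f_{\tau_1},f_{\tau_2},f_{\tau_3})$ is valid (the pigeonhole step goes through because on $\{0,1\}$ two of $a_1,a_2,a_3$ always agree, and the majority of three values of which two are equal is that common value), and combined with Lemma~\ref{lem:Lc-simplify}\ref{lem:Lc-simplify:triplesum} for $\oplus_3$ and Lemma~\ref{lem:right-stab-gen} applied to the generating set $\{\mu,\oplus_3\}$, it yields $K\clSc \subseteq K$ from minor\hyp{}closure and $\clSc K \subseteq K$ alone. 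The one external ingredient you rightly flag is $\clSc = \clonegen{\mu,\oplus_3}$; this is not among the generating sets recorded in Section~\ref{sec:Bf}, but it follows from Post's lattice since $\clSc$ is the join of $\clSM = \clonegen{\mu}$ and $\clLc = \clonegen{\oplus_3}$ there, so it should be stated with a reference or as a one\hyp{}line lemma. The trade\hyp{}off: the paper's route is shorter given that the entire classification machinery of Section~\ref{sec:C1C2} is already in place, whereas yours is self\hyp{}contained, independent of the classification, explains \emph{why} the coincidence holds, and generalizes to any clone generated by operations $g$ satisfying $g(a_1,a_2,a_3)=a_i$ whenever only the first argument of the outer function is substituted and two inputs agree.
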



\section{Final remarks and perspectives}
\label{sec:remarks}

Looking into directions of future research,
one may consider arbitrary pairs of clones $C_1$ and $C_2$ on arbitrary sets $A$ and $B$ and describe the closure system of $(C_1,C_2)$\hyp{}stable sets, which we shall denote by $\closys{(C_1,C_2)}$.
However, this task is challenging.
Firstly, there are uncountably many clones on sets with at least three elements (see \cite{YanMuc-1959}), and not all of them are known.
Secondly, for given clones $C_1$ and $C_2$, there may be uncountably many $(C_1,C_2)$\hyp{}stable classes, in which case an explicit description may be unattainable.

For this reason, a natural next step would be to consider $(C_1,C_2)$\hyp{}stability for clones $C_1$ and $C_2$ on the two\hyp{}element set $\{0,1\}$, which are well known (see Post~\cite{Post}).
Moreover, the cardinality of the closure system $\closys{(\clIc,C)}$ of $(\clIc,C)$\hyp{}stable classes of Boolean functions is known for every clone $C$ on $\{0,1\}$, due to the following result by Sparks~\cite{Sparks-2019}.
However, this result does not provide an explicit description of the $(\clIc,C)$\hyp{}stable classes, even for the cases where the number of $(\clIc,C)$\hyp{}stable classes is finite.

\begin{theorem}[{Sparks~\cite[Theorem~1.3]{Sparks-2019}}]
\label{thm:Sparks}
Let $A$ be a finite set with $\card{A} > 1$, and let $B := \{0,1\}$.
Denote by $\clProj{A}$ the clone of projections on $A$, and let $C$ be a clone on $B$.
Then the following statements hold.
\begin{enumerate}[label={\upshape{(\roman*)}}, leftmargin=*, widest=iii]
\item\label{Sparks:finite}
$\closys{(\clProj{A}, C)}$ is finite if and only if $C$ contains a near\hyp{}unanimity operation.
\item\label{Sparks:countable}
$\closys{(\clProj{A}, C)}$ is countably infinite if and only if $C$ contains a Mal'cev operation but no majority operation.
\item\label{Sparks:uncountable}
$\closys{(\clProj{A}, C)}$ has the cardinality of the continuum if and only if $C$ contains neither a near\hyp{}unanimity operation nor a Mal'cev operation.
\end{enumerate}
\end{theorem}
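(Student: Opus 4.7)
The plan is to handle the three equivalences separately. Since on $\{0,1\}$ the existence of a near-unanimity term in $C$ is equivalent to containing the ternary majority $\mu$ (via Post's classification), the three conditions on $C$ partition the lattice of clones on $\{0,1\}$ into mutually exclusive cases, so it suffices to prove the three forward implications.

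For part \ref{Sparks:finite}, assume $C$ contains a $(k+1)$-ary near-unanimity operation $m$. I would adapt the Baker--Pixley theorem to the clonoid setting by proving the following interpolation lemma: a function $f \colon A^n \to \{0,1\}$ lies in a given $(\clProj{A}, C)$-stable class $K$ if and only if the restriction of $f$ to every $k$-element subset $T \subseteq A^n$ agrees with the restriction to $T$ of some member of $K$. The nontrivial direction uses $m$ iteratively to glue such partial agreements into a genuine element of $K$, while the minor-closure of $K$ supplies the required inputs on arbitrary argument tuples. Since there are only finitely many possible ``$k$-local shadows'' (both $A$ and $\{0,1\}$ being finite), the lattice of stable classes is finite. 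The technical point to check is that the gluing is compatible with right composition by projections.

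For part \ref{Sparks:countable}, the upper bound $\leq \aleph_0$ is the Aichinger--Mayr result cited in the introduction: a Mal'cev term on the target forces every $(\clProj{A}, C)$-stable class to be determined by a single relation pair on the finite sets $A$ and $\{0,1\}$, and there are only countably many such pairs. For the lower bound $\geq \aleph_0$, I would imitate the strategy of our Theorem~\ref{thm:Lc}: fix a Mal'cev term $d \in C$ (on $\{0,1\}$ one takes $d = \oplus_3$), define degree-type classes $K_k$ by bounding the depth of $d$-compositions of the chosen generators, and verify $K_k \subsetneq K_{k+1}$ via a rank or pumping argument modelled on the degree of a Zhegalkin polynomial. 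The hypothesis that $C$ contains no near-unanimity is essential, since otherwise part \ref{Sparks:finite} would force the chain to collapse after finitely many steps.

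For part \ref{Sparks:uncountable}, the upper bound $\leq 2^{\aleph_0}$ is trivial because $\mathcal{F}_{A\{0,1\}}$ is countable, so the lattice sits inside $\mathcal{P}(\mathcal{F}_{A\{0,1\}})$. The main obstacle, and the heart of the theorem, is the lower bound. I would aim to construct a sequence $(f_i)_{i \in \IN}$ which is \emph{free} in the sense that $f_j \notin \gen[(\clProj{A}, C)]{\{f_i : i \in S\}}$ whenever $j \notin S$; this immediately yields an injection $S \mapsto \gen[(\clProj{A}, C)]{\{f_i : i \in S\}}$ from $\mathcal{P}(\IN)$ into $\closys{(\clProj{A}, C)}$, producing $2^{\aleph_0}$ distinct stable classes. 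The combinatorics of the $f_i$ must exploit both the absence of a Mal'cev (so no algebraic identity among the $f_i$ forces collapse) and the absence of a near-unanimity (so no interpolation forces unwanted containment). The main difficulty is threading this construction through Post's lattice -- the relevant $C$ sit inside $\clLambdac$, $\clVc$, or the $\clUinf$ or $\clWinf$ chains and their subclones -- and exhibiting in each case functions whose pairwise incomparability is robust under composition from the left by every operation of $C$.
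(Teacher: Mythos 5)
First, a point of comparison: the paper does not prove this statement at all. Theorem~\ref{thm:Sparks} is quoted from Sparks~\cite{Sparks-2019} purely as background for the concluding remarks of Section~\ref{sec:remarks}, so there is no in\hyp{}paper argument to measure your proposal against; what follows assesses the proposal on its own terms. Your opening reduction is sound: on $\{0,1\}$ the only majority operation is $\mu$, every clone with a near\hyp{}unanimity operation contains $\mu$, and hence the three hypotheses on $C$ are mutually exclusive and exhaustive while the three conclusions are mutually exclusive, so proving the three forward implications suffices. The overall roadmap --- a Baker--Pixley\hyp{}style interpolation for \ref{Sparks:finite}, the Aichinger--Mayr descending chain condition plus a strictly increasing degree\hyp{}type chain for \ref{Sparks:countable}, and an independent family for \ref{Sparks:uncountable} --- is the natural and, in outline, the correct strategy.

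There remain, however, two genuine gaps. In \ref{Sparks:finite}, the concluding step ``there are only finitely many possible $k$\hyp{}local shadows, hence finitely many stable classes'' does not follow as written: $n$ ranges over all arities, so a priori there are infinitely many $k$\hyp{}element subsets $T \subseteq A^n$ to consider. The missing ingredient is that such a $T$ has at most $\card{A}^k$ distinct columns, so by minor\hyp{}closure the restriction of $K$ to $T$ is governed by the part of $K$ of arity at most $\card{A}^k$; only after this reduction do the finiteness of $A$ and $B$ bound the number of classes. The second and more serious gap is \ref{Sparks:uncountable}, which is the heart of the theorem: you specify the properties the family $(f_i)_{i \in \IN}$ must have, but you do not construct it, and you yourself identify the construction --- which, by Lemma~\ref{lem:stable-impl-stable}, need only be carried out for the clones that are maximal subject to containing neither $\mu$ nor $\oplus_3$, essentially $\clLambda$, $\clV$ and members of the two infinite descending chains of Post's lattice --- as ``the main difficulty.'' As written, \ref{Sparks:uncountable} is a statement of intent rather than a proof, so the proposal cannot be accepted as a complete argument for the theorem.
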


Recall that an $n$\hyp{}ary operation $f \in \mathcal{O}_B$ with $n \geq 3$ is called a \emph{near\hyp{}unanimity operation} if
$f(x, \dots, x, y, x, \dots, x) = x$
for all $x, y \in B$, where the single occurrence of $y$ can occur in any of the $n$ argument positions.
A ternary near\hyp{}unanimity operation is called a \emph{majority operation.}
A ternary operation $f \in \mathcal{O}_B$ is called a \emph{Mal'cev operation} if
$f(y, y, x) = f(x, y, y) = x$
for all $x, y \in B$.

A clone $C$ on $\{0,1\}$ contains a Mal'cev operation but no majority operation (statement \ref{Sparks:countable}) if and only if $\clLc \subseteq C \subseteq \clL$; the $(C_1,C_2)$\hyp{}stable classes for clones $C_1$ and $C_2$ such that $\clLc \subseteq C_2$ are completely described in the current paper.
Regarding the situation when the closure system $\closys{(\clProj{A}, C)}$ is finite, that is, $C$ contains a near\hyp{}unanimity function (statement \ref{Sparks:finite}), the second author \cite{Lehtonen:SM} recently described completely the $(C_1,C_2)$\hyp{}stable classes in the special case when $C_2$ contains a majority operation, i.e., $\clSM \subseteq C_2$.
In the case when $C_2$ contains a near\hyp{}unanimity operation but no majority operation, we know from Theorem~\ref{thm:Sparks} \ref{Sparks:finite} and Lemma~\ref{lem:stable-impl-stable} that the closure system $\closys{(C_1,C_2)}$ is finite, but an explicit description thereof still eludes us.
In view of Theorem~\ref{thm:Sparks} \ref{Sparks:uncountable}, an explicit description of the $(C_1,C_2)$\hyp{}stable classes may be unattainable when $C_2$ contains neither a near\hyp{}unanimity operation nor a Mal'cev operation.
However, if the clone $C_1$ is large enough, the combination $(C_1,C_2)$ might provide a finite or countable closure system $\closys{(C_1,C_2)}$, the description of which might still be feasible.
This raises the following question: \emph{Which pairs $(C_1,C_2)$ of clones give rise to finite or countable closure systems?}


\section*{Acknowledgments}

This work is funded by National Funds through the FCT -- Funda\c{c}\~ao para a Ci\^encia e a Tecnologia, I.P., under the scope of the project UIDB/00297/2020 (Center for Mathematics and Applications) and the project PTDC/MAT-PUR/31174/\discretionary{}{}{}2017.


\end{document}